\documentclass[a4paper,10pt]{amsart}

\usepackage[T1]{fontenc}
\usepackage{lmodern}
\usepackage{amsmath}
\usepackage{amsfonts}
\usepackage{amssymb}
\usepackage{amsthm}
\usepackage{thmtools}
\usepackage{mathtools}
\usepackage{enumitem}
\usepackage{placeins}
\usepackage{multirow}
\usepackage{tikz}
\usetikzlibrary{cd,shapes.geometric}
\usepackage[pdftex,
            pdfauthor={},
            pdftitle={Vertex Superalgebras for Hypertoric Varieties and 3d Abelian Gauge Theories},
            pdfsubject={},
            pdfkeywords={},
            pdfproducer={},
            pdfcreator={}]{hyperref}
\usepackage{caption}
\usepackage{adjustbox}

\theoremstyle{plain}
\newtheorem{thm}{Theorem}[section]
\newtheorem{prop}[thm]{Proposition}
\newtheorem{cor}[thm]{Corollary}
\newtheorem{lem}[thm]{Lemma}
\newtheorem{conj}[thm]{Conjecture}

\newtheorem*{thm*}{Theorem}
\newtheorem*{conj*}{Conjecture}
\newtheorem*{claim*}{Claim}
\newtheorem*{prop*}{Proposition}

\theoremstyle{definition}
\newtheorem{defi}[thm]{Definition}

\newtheorem{conv}[thm]{Convention}
\newtheorem*{nota*}{Notation}
\newtheorem{rem}[thm]{Remark}
\newtheorem{ex}[thm]{Example}

\newtheoremstyle{introTheorems}
  {}
  {}
  {\itshape}
  {}
  {\bfseries}
  {}
  { }
  {\thmname{#1}
  \textnormal{\bf \thmnote{#3}.$\hspace{-.1cm}$}
  }
\theoremstyle{introTheorems}
\newtheorem{introTheorem}{Theorem}

\newtheorem{introConj}{Conjecture}

\newcommand{\Q}{\mathbb{Q}}
\newcommand{\Z}{\mathbb{Z}}
\newcommand{\Ns}{\mathbb{Z}_{>0}}
\newcommand{\N}{\mathbb{Z}_{\geq0}}
\newcommand{\C}{\mathbb{C}}
\newcommand{\R}{\mathbb{R}}

\newcommand{\wt}{\operatorname{wt}}
\newcommand{\tr}{\operatorname{tr}}
\renewcommand{\i}{\mathrm{i}}

\newcommand{\Hom}{\operatorname{Hom}}

\newcommand{\rk}{\operatorname{rk}}
\newcommand{\im}{\operatorname{im}}

\newcommand{\voa}{vertex operator algebra}

\newcommand{\svoa}{vertex operator superalgebra}

\newcommand{\SVOA}{Vertex Operator Superalgebra}
\newcommand{\vac}{\textbf{1}}
\newcommand{\ch}{\operatorname{ch}}

\newcommand{\id}{\operatorname{id}}
\newcommand{\eps}{\varepsilon}

\newcommand{\SLZ}{\operatorname{SL}_2(\mathbb{Z})}
\newcommand{\GL}{\operatorname{GL}}
\newcommand{\gl}{\mathfrak{gl}}

\newcommand{\g}{\mathfrak{g}}
\newcommand{\h}{\mathfrak{h}}

\renewcommand{\sl}{\mathfrak{sl}}
\newcommand{\psl}{\mathfrak{psl}}

\newcommand{\T}{\mathcal{T}}

\newcommand{\strat}{strongly rational}

\newcommand{\Com}{\operatorname{Com}}
\newcommand{\no}{\,{\raise0.25em\hbox{$\mathop{\hphantom{\cdot}}\limits^{_{\circ}}_{^{\circ}}$}}\,}

\newcommand{\diag}{\operatorname{diag}}
\newcommand{\Spec}{\operatorname{Spec}}
\newcommand{\Specm}{\operatorname{Specm}}
\newcommand{\Proj}{\operatorname{Proj}}
\newcommand{\qquot}{/\!\!/}
\newcommand{\hatotimes}{\operatorname{\hat{\otimes}}}
\newcommand{\HL}{V}

\newcommand{\hooklongrightarrow}{\lhook\joinrel\longrightarrow}

\begin{document}

\title[Vertex Superalgebras for Hypertoric Varieties]{Vertex Superalgebras for Hypertoric Varieties and 3d Abelian Gauge Theories}
\author[Tomoyuki Arakawa, Andrea~E.~V. Ferrari and Sven Möller]{Tomoyuki Arakawa,\textsuperscript{\lowercase{a},\lowercase{b}} Andrea~E.~V. Ferrari\textsuperscript{\lowercase{c},\lowercase{d},\lowercase{e}} and Sven Möller\textsuperscript{\lowercase{f},\lowercase{g}}}
\thanks{\textsuperscript{a}{Research Institute for Mathematical Sciences, Kyoto University, Kyoto, Japan}}
\thanks{\textsuperscript{b}{Okinawa Institute of Science and Technology, Onna, Okinawa, Japan}}
\thanks{\textsuperscript{c}{Deutsches Elektronen-Synchrotron DESY, Hamburg, Germany}}
\thanks{\textsuperscript{d}{School of Mathematics, University of Edinburgh, Edinburgh, United Kingdom}}
\thanks{\textsuperscript{e}{DAMTP, University of Cambridge, Cambridge, United Kingdom}}
\thanks{\textsuperscript{f}{Fachbereich Mathematik, Universität Hamburg, Hamburg, Germany}}
\thanks{\textsuperscript{g}{Department of Mathematics and Statistics, Maynooth University, Maynooth, Ireland}}
\thanks{Email: \scriptsize \href{tomoyuki.arakawa@oist.jp}{\nolinkurl{tomoyuki.arakawa@oist.jp}}, \href{mailto:andrea.e.v.ferrari@gmail.com}{\nolinkurl{andrea.e.v.ferrari@gmail.com}}, \href{mailto:math@moeller-sven.de}{\nolinkurl{math@moeller-sven.de}}}

\begin{abstract}
Hypertoric (or toric hyperkähler) varieties are a class of symplectic singularities and their resolutions, obtained as Hamiltonian reductions of a symplectic vector space acted on by a torus. In physics, they appear as Higgs (and Coulomb) branches of 3d $\mathcal{N}=4$ supersymmetric quantum field theories with abelian gauge group.

In this work, we construct an $\hbar$-adic (in the sense of microlocalisation) sheaf of \svoa{}s over a given smooth hypertoric variety. Its global sections give the $A$-twisted boundary \svoa{} of the corresponding 3d gauge theory. We use this to prove that the associated affine variety of this hypertoric \svoa{} recovers the singular hypertoric variety. This proves the 3d Higgs branch conjecture for a large class of boundary \svoa{}s. In particular, these \svoa{}s are quasi-lisse.

This is in contrast to the (purely even) hypertoric \voa{}s (and their $\hbar$-adic localisations) constructed previously by Kuwabara as global sections of sheaves on families of universal Poisson deformations of the hypertoric varieties. These are generally not quasi-lisse. We show that the \svoa{}s defined in this paper are (fermionic) simple-current extensions of those defined by Kuwabara, and investigate the consequences for symplectic duality and characters. We observe that the latter are upgraded from partial (or false) theta functions to quasimodular forms.
\end{abstract}

\maketitle

\setcounter{tocdepth}{1}
\tableofcontents
\setcounter{tocdepth}{2}


\section{Introduction}

Toric hyperkähler varieties, or hypertoric varieties for short, are hyperkähler (or quaternionic) analogues of toric varieties. They were introduced (as manifolds) by Bielawski and Dancer \cite{BD00} by applying the hyperkähler quotient construction of \cite{HKLR87} to a torus $G=\smash{(\C^\times)^M}$ acting on a quaternionic vector space $T^*V=\C^{2N}$. Here, we treat them as algebraic varieties (equipped with the Zariski topology), as described in \cite{HS02,PW07,Pro08},
\begin{equation*}
Y_\delta(\Delta)=\mu^{-1}(0)\qquot\!_\delta G=\Proj\Bigl(\bigoplus_{m\in\N}\C[\mu^{-1}(0)]^{G,m\delta}\Bigr)
\end{equation*}
obtained as algebraic symplectic or GIT quotients associated with the algebraic action of $G$ on the symplectic vector space $T^*V$, encoded in the integer weight matrix $\Delta\colon\Z^N\to\Z^M$. Here, $\delta\colon G\to\C^\times$ is a choice of effective stability parameter. The hypertoric variety $Y_\delta(\Delta)$ is projective over the affine quotient
\begin{equation*}
Y_0(\Delta)\coloneqq\mu^{-1}(0)\qquot G=\Spec\C[\mu^{-1}(0)]^G.
\end{equation*}
The latter is a symplectic singularity in the sense of \cite{Bea00}. If $\Delta$ is unimodular and $\delta$ generic, $Y_\delta(\Delta)$ is smooth and the natural projective morphism $\pi\colon Y_\delta(\Delta)\to Y_0(\Delta)$ is a conical symplectic resolution. We assume this from now on. In particular, $Y_0(\Delta)$ is normal and $\pi$ is birational, which implies that $\smash{\mathcal{O}_{Y_\delta(\Delta)}(Y_\delta(\Delta))=\C[Y_0(\Delta)]}$, i.e.\ $Y_\delta(\Delta)$ and $Y_0(\Delta)$ have the same global functions.

\medskip

Analogously to the celebrated Beilinson-Bernstein quantisation-localisation result \cite{BB91} for the Springer resolution $T^*(G/B)\to\mathcal{N}$ and following the idea of Kashiwara and Rouquier \cite{KR08} to quantise the Hamiltonian reduction process (which they applied to the symplectic resolution $\operatorname{Hilb}^n(T^*\C)\to(T^*\C)^n/S_n$ given by the punctual Hilbert scheme), one can consider also quantisations of hypertoric varieties $Y_\delta(\Delta)\to Y_0(\Delta)$. These were considered in \cite{MV98}, and a certain (micro)localisation was constructed in \cite{BK12,BLPW12} (see also \cite{Los12,Los22}). That is, there is a sheaf $\mathcal{A}_\hbar$ of noncommutative, $\hbar$-adic algebras on $Y_\delta(\Delta)$ whose algebra of ($\C^\times$-invariant) global sections $\smash{[\mathcal{A}_\hbar(Y_\delta(\Delta))]^{\C^\times}}$ equals the quantisation of $\C[Y_0(\Delta)]$. These examples fit into the framework of quantisations of conical symplectic resolutions in the sense of \cite{BLPW16a,BLPW16b}.

As suggested in \cite{KR08}, in the symplectic setting the localisation needs to be upgraded to a microlocalisation in the sense of microdifferential operators with homogenising parameter $\hbar$. To give a minimal example, in order to localise the quantisation of $T^*\C$, the Weyl algebra, on all of $T^*\C$ and not just on the pushforward to $\C$ (i.e.\ to quantise $\mathcal{O}_{T^*\C}$), we need to replace the differential operators $\mathcal{D}_\C$ by their ($\hbar$-adic) deformation quantisation version. For instance when considering the global sections, one may remove the $\hbar$-adic structure (which is too big in some sense) by considering a certain $\C^\times$-invariant structure.

Moreover, one may re-interpret the quantum Hamiltonian reduction to obtain quantised hypertoric varieties in terms of BRST cohomology \cite{Kuw15}.

\medskip

Vertex (super)algebras $V$ can be viewed as chiral quantisations of affine Poisson schemes \cite{Ara12}. In good cases, they quantise the arc space (or $\infty$-jet scheme) of the associated Poisson scheme $\tilde{X}_V$. In this work, we mostly focus on the underlying Poisson variety $\smash{X_V=(\tilde{X}_V)_\mathrm{red}}$.

Now, motivated by the 3d/2d-correspondence \cite{Gai19,CG19,CCG19} and the 3d Higgs branch conjecture \cite{BF25} (which we comment upon below), in this work we lift the quantisation-localisation of hypertoric varieties to vertex algebras. That is, we construct a sheaf $\smash{\mathcal{V}^\hbar_{\delta,\Delta}}$ of $\hbar$-adic vertex algebras in the sense of \cite{AKM15,Li04} on $Y_\delta(\Delta)$ by quantum Hamiltonian reduction (BRST cohomology).

There is a naive choice of BRST cohomology complex to quantise the Hamiltonian reduction construction of hypertoric varieties. However, the naive BRST differential does not square to zero. To correct this anomaly, we introduce a fermionic extension of the BRST complex, now with a well-defined differential. That is, following \cite{AKM23} (see also \cite{CSYZ25}), we construct $\smash{\mathcal{V}^\hbar_{\delta,\Delta}}$ as a sheaf of $\hbar$-adic vertex \emph{super}algebras on $\smash{Y_\delta(\Delta)}$.
\begin{thm*}[\autoref{sec:sheafBRST}]
There is a sheaf of $\hbar$-adic vertex superalgebras $\smash{\mathcal{V}^\hbar_{\delta,\Delta}}$ on $\smash{Y_\delta(\Delta)}$. It quantises $\smash{\bar{\mathcal{O}}_{J_\infty Y_\delta(\Delta)}}$, the arc space of a certain super-analogue $\smash{\bar{\mathcal{O}}_{Y_\delta(\Delta)}}$ of the hypertoric variety $\smash{\mathcal{O}_{Y_\delta(\Delta)}}$.
\end{thm*}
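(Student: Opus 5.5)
We construct $\mathcal{V}^\hbar_{\delta,\Delta}$ by a sheafified quantum Hamiltonian reduction, following the approach of \cite{AKM15,AKM23}, and work through four steps.

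\textbf{Step 1: the chiral quantisation of $T^*V$.} We start from the $\hbar$-adic $\beta\gamma$-system on $T^*V=\C^{2N}$, i.e.\ $N$ copies of the Weyl vertex algebra with generators $x_i,y_i$ and $[x_{i,(0)}y_j]=\hbar\delta_{ij}$. This algebra quantises the arc space $J_\infty T^*V$. We microlocalise it to obtain a sheaf $\mathcal{D}^{ch,\hbar}$ of $\hbar$-adic vertex algebras on $T^*V$, using Ore localisation with respect to the monomials $x^a y^b$ in the sense of \cite{AKM15}.

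The moment map $\mu=\Delta(x_iy_i)$ lifts to $M$ Heisenberg fields $J^a=\sum_i\Delta_{ai}\no x_iy_i\no$. Their level is the matrix $-\hbar^2\,\Delta\Delta^{T}$ (up to a sign convention), which is generally nonzero. This nonzero level is the anomaly: with the standard $bc$-ghosts for $\mathfrak{g}=\C^M$, the BRST charge $Q=\oint\sum_a c^aJ^a$ satisfies $Q^2\neq0$.

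\textbf{Step 2: the fermionic extension.} To cancel the anomaly we adjoin $N$ complex free fermions $\psi_i,\psi_i^*$, giving a $bc$-system of weight $1/2$, together with the $\mathfrak{gl}_1^N$-currents $\no\psi_i\psi_i^*\no$. We then set
\[
\tilde J^a=J^a+\sum_i\Delta_{ai}\no\psi_i\psi^*_i\no .
\]
The fermionic level is exactly opposite to the bosonic one, so the $\tilde J^a$ commute. Hence $Q=\oint\sum_a c^a\tilde J^a$ satisfies $Q^2=0$ on $C=\mathcal{D}^{ch,\hbar}\otimes\mathcal{F}_\psi\otimes\mathcal{F}_{bc}$. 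This fermionic correction is the key point of the construction.

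\textbf{Step 3: sheafifying the reduction.} We restrict to the $\delta$-semistable open set $(T^*V)^{\delta\text{-ss}}$. This set is $G$-stable and covered by $G$-invariant affine opens $U_f$, with $f$ ranging over semi-invariants of weight $m\delta$. We define
\[
\mathcal{V}^\hbar_{\delta,\Delta}(\pi(U_f))=H^0_Q\bigl(C(U_f)\bigr)
\]
and glue along the localisations $U_{fg}$. These define a basis of the topology of $Y_\delta(\Delta)$. Because $G$ acts freely on $\mu^{-1}(0)^{\delta\text{-ss}}$ (unimodularity and genericity), the classical BRST complex is a Koszul resolution. Its cohomology is concentrated in degree zero and equals $\C[J_\infty(\mu^{-1}(0)\cap U_f)]^{J_\infty G}$ tensored with the fermions modulo their reduction. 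This identifies the associated graded $\mathcal{V}/\hbar\mathcal{V}$ with the arc space of the super-analogue
\[
\bar{\mathcal{O}}_{Y_\delta(\Delta)}=\bigl(\mathcal{O}_{\mu^{-1}(0)}\otimes\Lambda(\psi,\psi^*)\bigr)\qquot G ,
\]
with odd coordinates reduced by the same Hamiltonian constraint.

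A spectral sequence in $\hbar$ then shows that quantum cohomology also vanishes outside degree $0$. Combined with $\hbar$-adic completeness and flatness, this gives the quantisation statement. Vanishing of the higher cohomology also makes the assignment a sheaf: the Čech compatibility passes from the classical to the quantum level by $\hbar$-adic lifting. Restriction maps are compatible with $Q$ because localisation commutes with $Q$, the elements $f$ being $\tilde J$-invariant up to the appropriate weight.

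\textbf{Main obstacle.} The main difficulty is this cohomology vanishing on each $U_f$, together with the compatibility of Ore microlocalisation with the fermionic BRST differential. I would try to prove it through the filtration by conformal weight and $\hbar$. The associated graded complex is a Chevalley–Eilenberg-type complex for the commutative algebra $\C[J_\infty\mathfrak{g}]$. Acyclicity of this complex reduces to $J_\infty\mu$ forming a regular sequence on $J_\infty(U_f)$, which holds since $\mu$ is smooth (flat) on the free locus. Arc spaces of smooth complete intersections are again complete intersections, so the regular-sequence property passes to arcs.
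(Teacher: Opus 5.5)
Your construction is the paper's: the same anomaly cancellation by $\mathcal{C}\ell(\Pi T^*V)$ with comoment map $\sum_j\Delta_{ij}(x_jy_j+\psi_j\phi_j)$, BRST reduction of the microlocalised free-field sheaf restricted to the semistable locus $\mathfrak{X}$, and descent to $Y_\delta(\Delta)$. You differ in how you verify the quantisation statement.

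The paper sheafifies $U\mapsto H^{\infty/2+0}_\mathrm{BRST}(\g,\mathcal{M}^\hbar_\mathrm{ss}(\mathfrak{U}))$. Following Kuwabara, it then uses the Hamiltonian trivialisations $\mathfrak{U}_J\cong T^*\C^{N-M}\times T^*G$ to exhibit $\mathcal{V}^\hbar_{\delta,\Delta}(U_J)$ as an explicit free-field algebra: a $\beta\gamma$-system on $T^*\C^{N-M}$ together with $N$ pairs of dressed fermions. Its reduction modulo $\hbar$ is then visibly $\bar{\mathcal{O}}_{J_\infty Y_\delta(\Delta)}(U_J)$.

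You instead define the sheaf on the basis $\pi(U_f)$ and argue intrinsically. First, $J_\infty\bar\mu$ is a regular sequence on $J_\infty U_f$, by smoothness of $\mu$ on the free locus plus a filtration by fermion number. Second, $J_\infty G$ acts freely. Third, $\hbar$-adic completeness and torsion-freeness lift the classical computation. This route is coordinate-free. The same lifting argument gives both the sheaf axiom and the isomorphisms $H^0(C(U_g))\cong H^0(C(U_{fg}))$ whenever $U_g\cap\mu^{-1}(0)=U_{fg}\cap\mu^{-1}(0)$, which your gluing needs. The paper's route instead yields explicit local generators $a_j^*,a_j$, which it reuses in the proof of \autoref{lem:lowertruncated}.

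One statement needs correcting: for the absolute complex $C$ you wrote down, the cohomology is not concentrated in degree $0$. The weight-zero ghosts $c_i$ are closed and not exact. In general $H^k\cong H^0\otimes\Lambda^k\g^*$; classically this is the Chevalley--Eilenberg cohomology of the torus directions of $J_\infty\g$. This leaves $H^0$ and the $\hbar$-torsion-freeness of $H^1$ intact, so the sheaf you build is unaffected. However, the vanishing outside degree $0$ holds only for the relative subcomplex cut out by $b_{i(0)}$ and $\bar\mu^*_\mathrm{ch}(a_i)_{(0)}$, which is what the paper uses.

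Also, your displayed formula for $\bar{\mathcal{O}}_{Y_\delta(\Delta)}$ should impose $\sum_j\Delta_{ij}(x_jy_j+\psi_j\phi_j)=0$, with $G$ acting on the fermions, rather than $\mu=0$. That is what the $\hbar\to0$ limit of your $\tilde J^a$ actually produces.
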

Our construction is in contrast to \cite{Kuw21}, where the same anomaly cancellation problem is resolved by constructing a sheaf of (purely even) $\hbar$-adic vertex algebras on a universal family of Poisson deformations $\smash{\tilde{Y}_\delta(\Delta)\to\g^*}$ of the hypertoric variety. Concretely, whilst in \cite{Kuw21} the naive Weyl vertex algebra $\smash{\mathcal{D}^\mathrm{ch}(T^*V)}$ on $T^*V$ (and its $\hbar$-adic localisation) entering the BRST cohomology is tensored by a Heisenberg vertex algebra to cancel the anomaly, we tensor it by the Clifford vertex superalgebra $\mathcal{C}\ell(\Pi T^*V)$.

\medskip

The $\C^\times$-invariant global sections of $\smash{\mathcal{V}^\hbar_{\delta,\Delta}}$,
\begin{equation*}
V(\Delta)=\bigl[\mathcal{V}_{\delta,\Delta}^{\hbar}(Y_\delta(\Delta))\bigr]^{\C^\times}
\end{equation*}
carry the structure of a vertex superalgebra in the usual sense. It inherits from the free-field \svoa{} $M=\mathcal{D}^\mathrm{ch}(T^*V)\otimes\mathcal{C}\ell(\Pi T^*V)$ in the input of the BRST reduction a natural conformal structure of central charge $c=-2N$; see \autoref{cor:conformalvector}. We call $V(\Delta)$ \emph{boundary hypertoric \svoa{}}. They already appear in \cite{BF25,BCDN23,Niu23} (see also \cite{AM21,Yos23,FS24,Sas25} for some special cases). We prove:
\begin{introTheorem}[\ref{thm:var}]
The boundary hypertoric \svoa{} $V(\Delta)$ is simple, self-contragredient and of CFT-type. Its associated Poisson variety
\begin{equation*}
X_{V(\Delta)}\cong Y_0(\Delta)
\end{equation*}
recovers the affine hypertoric variety $Y_0(\Delta)$. In particular, $V(\Delta)$ is quasi-lisse.
\end{introTheorem}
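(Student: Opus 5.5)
The strategy is to compute everything through the sheaf $\mathcal{V}^\hbar_{\delta,\Delta}$ on the smooth resolution $Y_\delta(\Delta)$ and then pass to global sections. Cohomology vanishing for conical symplectic resolutions lets global sections commute with taking $\hbar$-adic associated graded objects.

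\emph{Step 1: local structure of the sheaf.} On each affine open $U_S\subset Y_\delta(\Delta)$ of the standard cover (indexed by bases $S$ of the unimodular matroid of $\Delta$), I would show that the BRST cohomology localises to
\begin{equation*}
\mathcal{D}^{\mathrm{ch},\hbar}(T^*\C^{N-M})\otimes\mathcal{C}\ell(\Pi T^*\C^{N-M})
\end{equation*}
in a suitable microlocalised $\hbar$-adic sense, up to invertible lattice-type factors. The reason is that unimodularity lets one split off the gauge directions: the moment map components become free generators paired with the ghosts, and these cancel in cohomology, since the cohomology of a Koszul-type acyclic pair is trivial. The Clifford factor is exactly what makes the differential square to zero. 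It survives the reduction as the fermionic partner of the $2(N-M)$ surviving bosons.

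\emph{Step 2: associated graded and global sections.} From Step 1, the associated graded of $\mathcal{V}^\hbar_{\delta,\Delta}$ with respect to the Li filtration and $\hbar$ is $\bar{\mathcal{O}}_{J_\infty Y_\delta(\Delta)}$. This is the arc space of the superscheme whose even part is $\mathcal{O}_{Y_\delta}$ and whose odd part is a locally free sheaf, locally $\Pi T^*\C^{N-M}$, i.e.\ an odd copy of the tangent bundle. This is the first theorem stated above.

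Next I would use Grauert--Riemenschneider / Kaledin-type vanishing: $H^{i}(Y_\delta,\mathcal{O})=0$ for $i>0$, and more generally $H^{i}(Y_\delta,E)=0$ for $i>0$ for the relevant graded pieces $E$, which are built from tensor powers of the tangent bundle, possibly twisted by characters with weight of the correct sign. With this vanishing, an $\hbar$-adic spectral sequence argument shows that taking $\C^\times$-invariant global sections commutes with passing to the associated graded. Hence $\mathrm{gr}\,V(\Delta)$ embeds into, and in fact equals, $\Gamma\bigl(J_\infty\bar{\mathcal{O}}\bigr)^{\C^\times}$.

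Restricting to the degree-zero part of the $C_2$-filtration gives
\begin{equation*}
R_{V(\Delta)}\to\Gamma(Y_\delta,\bar{\mathcal{O}})\supset\Gamma(Y_\delta,\mathcal{O})=\C[Y_0(\Delta)].
\end{equation*}
The odd generators are nilpotent in $R_{V(\Delta)}$, so after reduction the image is $\C[Y_0]$. Surjectivity on the even functions comes from the classical surjection $\C[\mu^{-1}(0)]^G\leftarrow\C[T^*V]^G$ together with the vanishing above. This gives $X_{V(\Delta)}\cong Y_0(\Delta)$. Since $Y_0$ is a symplectic singularity and therefore has finitely many symplectic leaves, $V(\Delta)$ is quasi-lisse.

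\emph{Step 3: CFT type, self-contragredience and simplicity.} CFT type should follow from the character: the $\C^\times$-weights together with the conformal weights from \autoref{cor:conformalvector} give $V_0=\C\vac$ and $V_n=0$ for $n<0$, computed from the BRST complex via Euler characteristic. For simplicity, I would use that $V(\Delta)$ is a simple-current extension of Kuwabara's algebra, or more directly that the sheaf is locally simple: a nonzero ideal would give a nonzero subsheaf supported on the dense open torus chart, where the sheaf is a simple free-field algebra. Then the invariant bilinear form is nondegenerate, and self-contragredience follows from simplicity together with $V_1$ having $L_1V_1=0$.

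\emph{Main obstacle.} I expect the hardest step to be the vanishing and the exactness argument in Step 2: making sure the higher cohomology of the odd (fermionic) graded pieces on $Y_\delta$ vanishes, so that global sections of the $\hbar$-adic sheaf have the expected associated graded. My first attempt would be to reduce via the $\C^\times$-weight grading to finitely many coherent sheaves at each level. I would then invoke vanishing for equivariant vector bundles on conical symplectic resolutions, checking the signs of the weights case by case.
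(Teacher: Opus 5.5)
The gap is in Step 2, and it sits in the half of $X_{V(\Delta)}\cong Y_0(\Delta)$ that carries the content. Lifting the generators $J_i$, $W^a$ of \autoref{prop:ring-gen} needs no vanishing. It gives $(R_{V(\Delta)})_{\mathrm{red}}\twoheadrightarrow\C[Y_0(\Delta)]$, i.e.\ $Y_0(\Delta)\subseteq X_{V(\Delta)}$, which does not even imply quasi-lisseness. You must also show that the kernel of $R_{V(\Delta)}\to\Gamma(Y_\delta,\bar{\mathcal O}_{Y_\delta})$ is nil, and your argument for this fails on two counts.

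First, the vanishing you invoke is false. The $\partial$-degree-one piece of $\mathcal O_{J_\infty Y_\delta}$ is $\Omega^1_{Y_\delta}\cong T_{Y_\delta}$. For $\Delta=(1,1)$, $Y_\delta=T^*\mathbb P^1$, the subsheaf $p^*\Omega^1_{\mathbb P^1}$ contributes $H^1(\mathbb P^1,\mathcal O(-2))\neq0$ in fibre weight $0$. Since $H^0$ of the relative cotangent sheaf lives in positive fibre weight, this class survives, so $H^1(T^*\mathbb P^1,\Omega^1)\neq0$. Moreover, the odd part of $\bar{\mathcal O}_{Y_\delta}$ is not an odd tangent bundle. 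It is the rank-$2N$ bundle associated with the $G$-module $\Pi T^*V$ (locally there are $N$, not $N-M$, fermion pairs), a sum of line bundles of weights $\pm\Delta_j$. On $T^*\mathbb P^1$ one of $\psi_1\psi_2$, $\phi_1\phi_2$ spans a copy of $p^*\mathcal O(-2)$, again with $H^1\neq0$.

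Second, and more fundamentally, even an exact comparison $\Gamma(\mathrm{gr}\,\mathcal V)=\mathrm{gr}\,\Gamma(\mathcal V)$ only concerns the filtration $\Gamma(F_p\mathcal V)$ induced from the sheaf, not the Li filtration $F_pV$. The kernel in question consists of global sections lying locally but not globally in $C_2$. That is a global-generation problem, which no vanishing theorem addresses, and it is nontrivial already classically. The form $\lambda=\frac12\sum_i(x_i\partial y_i-y_i\partial x_i)$ is $G$-basic on $\mu^{-1}(0)$, since its contraction with the $\g$-action is $-\mu$. It therefore descends to a global section of $\Omega^1_{Y_\delta}\subset\mathcal O_{J_\infty Y_\delta}$ of $\mathbb S$-weight $1$ with $d\lambda\neq0$. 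On $T^*\mathbb P^1$ the weight-one part of $\C[Y_0]\,\partial\,\C[Y_0]$ consists of exact forms. So $\lambda$ is a nonzero class in the $C_2$-algebra of $\Gamma(\mathcal O_{J_\infty Y_\delta})$ that is invisible in $\C[Y_0]$. Up to sign, $\lambda$ is the symbol of the $\beta\gamma$ part of the conformal vector. This shows what is actually needed: a reason why such classes become nilpotent in $R_{V(\Delta)}$.

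That is the missing idea. The paper supplies it without computing $R_{V(\Delta)}$ from the sheaf at all; the sheaf only provides the map to $\C[Y_0(\Delta)]$. The steps are:
\begin{enumerate}
\item Kuwabara's injection $R_{V_\mathrm{min}(\Delta)}\hookrightarrow\C[\tilde Y_\delta(\Delta)]$ (\autoref{prop:toshiro-subalg}) gives the upper bound.
\item The free-field presentation shows that every summand of $V(\Delta)$ with $(l_1,\dots,l_N)\neq0$ lies over a Fock momentum of norm $\sum_il_i^2>0$. Hence $(f_{(-1)})^n\vac=0$ there, and $(R_{V_\mathrm{min}(\Delta)})_{\mathrm{red}}\twoheadrightarrow(R_{V(\Delta)})_{\mathrm{red}}$.
\item The Sugawara-type formula of \autoref{prop:confvect} expresses $\omega$ through odd weight-one fields and the $C_2$-cofinite $V_{J^\bot}$, so $\omega$ is nilpotent in $R_{V(\Delta)}$.
\item By \cite[Theorem~4.5]{AM24}, the Poisson centre, i.e.\ the deformation directions $\g^*$ of $\tilde Y_\delta(\Delta)$ (\autoref{prop:poiss-centre}), then dies in $(R_{V(\Delta)})_{\mathrm{red}}$.
\item A graded-dimension count finishes the proof.
\end{enumerate}

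In Step 3, your first option (simple-current extension of simple singlet--Heisenberg algebras, \autoref{prop:symplicity}) is what the paper uses. The ``locally simple'' alternative is insufficient: simplicity after localising to one dense chart does not force simplicity of $V(\Delta)$ without a two-element faithfulness argument as in \autoref{prop:faithful}.
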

The quasi-lisseté (in the sense of \cite{AK18}) follows because symplectic singularities have finitely many symplectic leaves \cite{Kal06}.

By contrast, the $\C^\times$-invariant global sections of the sheaf constructed in \cite{Kuw21}, which we call \emph{minimal hypertoric \voa{}s} and denote by $V_\mathrm{min}(\Delta)$ in this text, are in general not quasi-lisse. This is because the universal family $\tilde{Y}_\delta(\Delta)$ does not have finitely many symplectic leaves to begin with. We shall explain in detail how $V(\Delta)$ can be viewed as a (fermionic) simple-current extension of $V_\mathrm{min}(\Delta)$ tensored with a lattice \svoa{}; see \autoref{sec:summary}. From this perspective, the fact that $V(\Delta)$ has the zero fibre of the universal family of Poisson deformations $\smash{\tilde{Y}_\delta(\Delta)\to\g^*}$ as associated variety follows from a Sugawara-like construction for the conformal vector, based on which we can prove that its image in $\smash{(R_{V(\Delta)})_{\mathrm{red}}}$ is nilpotent; see \autoref{lem:nilpotent-stress}.

The boundary hypertoric \svoa{} can also be constructed directly (see \autoref{defi:svoa}) by quantum Hamiltonian reduction of the (nonlocalised) free-field \svoa{} $M=\mathcal{D}^\mathrm{ch}(T^*V)\otimes\mathcal{C}\ell(\Pi T^*V)$.
\begin{introTheorem}[\ref{thm:glob-sec}]
There is a natural isomorphism of \svoa{}s
\begin{equation*}
V(\Delta)\cong H_\mathrm{BRST}^{\infty/2+\bullet}(\g,M).
\end{equation*}
\end{introTheorem}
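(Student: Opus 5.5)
The plan is to compare the sheaf-theoretic global sections with the BRST cohomology of the unlocalised free-field algebra. This follows the strategy of Kuwabara for $V_\mathrm{min}(\Delta)$ and the Kashiwara--Rouquier approach in the associative setting.

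\textbf{Step 1: local vanishing.} Cover $Y_\delta(\Delta)$ by the affine opens $U_S=\{x\in Y_\delta(\Delta): x_i\neq0 \text{ or } y_i\neq 0 \text{ as dictated by } S\}$. These are indexed by bases $S$ of the matroid of $\Delta$ and are compatible with the stability $\delta$. On the preimage $\tilde U_S\subset\mu^{-1}(0)^{\delta\text{-ss}}$, the torus $G$ acts freely. Unimodularity gives $\tilde U_S\cong G\times \C^{2N-2M}\times\mathfrak g^*$ locally, with the moment map being a coordinate projection. On the $\hbar$-adic microlocalisation $\mathcal{M}^\hbar(\tilde U_S)$ of $M=\mathcal D^\mathrm{ch}(T^*V)\otimes\mathcal C\ell(\Pi T^*V)$, I will show that the BRST cohomology of $\mathcal{M}^\hbar(\tilde U_S)$ is concentrated in degree $0$. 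I will also show that this degree-$0$ cohomology equals an $\hbar$-adic $\beta\gamma$--$bc$ system on $U_S\times$(the residual fermions). This uses the standard argument that the moment map components, including their fermionic correction (which makes $d^2=0$), form a regular sequence in the associated graded. The filtration spectral sequence then reduces the claim to the classical Koszul complex of a free action, whose cohomology is $\mathcal O_{J_\infty U_S}$ (super-version) in degree $0$. I take these local statements as the content of \autoref{sec:sheafBRST}.

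\textbf{Step 2: gluing and global sections.} Since the $U_S$ form an affine cover and everything is compatible with restriction, the sheaf $\mathcal V^\hbar_{\delta,\Delta}$ has sections given by BRST cohomology of the localised complex. There is a natural map
\[
H^{\infty/2+\bullet}_\mathrm{BRST}(\mathfrak g,M)\otimes\C[\![\hbar]\!]\text{-completion}\longrightarrow \mathcal V^\hbar_{\delta,\Delta}(Y_\delta(\Delta)),
\]
induced by the localisation $M\to\mathcal M^\hbar(\tilde U_S)$. Taking $\C^\times$-invariants, which makes $\hbar$ the degree-two element and kills the completion, gives the candidate map $H_\mathrm{BRST}(\mathfrak g,M)\to V(\Delta)$. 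To show it is an isomorphism, I will pass to associated graded objects with respect to the Li/$\hbar$-filtration. On the right, the graded object is $\Gamma(Y_\delta, \bar{\mathcal O}_{J_\infty Y_\delta})^{\C^\times}$. On the left, I compare with the classical BRST complex, namely the Koszul--Chevalley complex of $\mathcal O(J_\infty(T^*V\times\Pi T^*V))$ with respect to $J_\infty\mathfrak g$.

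\textbf{Step 3: the classical comparison, which is the main obstacle.} At the classical level I need two facts. First, the classical BRST cohomology equals $\C[J_\infty\mu^{-1}(0)]^{J_\infty G}$ (super) concentrated in degree $0$. This requires $J_\infty\mu^{-1}(0)$ to be a complete intersection and the moment map to be flat on arcs, which I expect to follow from the super-analogue of the results on arc spaces of hypertoric complete intersections, or from a direct argument since $\mu$ is a collection of quadrics $\sum_i\Delta_{ai}x_iy_i$ plus fermionic terms. Second, these invariants coincide with $\Gamma(Y_\delta,\bar{\mathcal O}_{J_\infty Y_\delta})$, which follows from $J_\infty$ of the birational resolution preserving global functions together with normality.

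Granting these, the associated graded map is an isomorphism. Because both filtrations are separated and exhaustive in each conformal weight (the weight spaces are finite-dimensional, since the grading is bounded below for $\C^\times$-invariants), the map itself is an isomorphism. Finally, it is a homomorphism of vertex superalgebras since it is induced by the vertex algebra map $M\to\mathcal M^\hbar$. The hardest point is the flatness and degree-$0$ concentration on arcs. If a direct argument fails, my fallback is to compute characters on both sides: the Euler--Poincaré character of the BRST complex versus the character of the global sections via the local cover (a Čech computation), together with injectivity obtained from the spectral sequence.
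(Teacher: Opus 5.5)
Your candidate map is the right one (it is the paper's map $\phi$), but Step~3 is not merely unproven; it is false, and the argument rests on it. The fermionic anomaly cancellation is a purely quantum effect and cannot be seen on the classical page.

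With $d=\hbar^{-1}Q_{(0)}$, the double contractions $\sigma_{(1)}\sigma$ and $\tau_{(1)}\tau$ carry $\hbar^2$. Hence $d\sigma^\Delta_k=\pm\hbar\sum_{i=1}^M(\Delta\Delta^t)_{ik}\,\partial c_i$, which vanishes modulo $\hbar$. Classically the naive comoment map already gives $d^2=0$, and the $\psi\phi$-terms play no role there.

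So in the Koszul--Chevalley complex of $\C[J_\infty(T^*V\times\Pi T^*V)]$, the classes $[\partial c_i]$ survive in ghost degree~$1$ and $[\sigma^\Delta_i]$ survive in degree~$0$. They cancel only through the $\hbar$-linear differential. The spectral sequence therefore does not degenerate, and the unlocalised $\hbar$-adic cohomology has $\hbar$-torsion in degree~$1$.

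Concretely, take $\Delta=(1,1)$ at conformal weight~$1$. The classical relative complex contains the $16$ $G$-invariant quadratics $x_iy_j,\psi_i\phi_j,x_i\phi_j,y_i\psi_j$. All of them are classically closed, and only $\bar\mu^*_\mathrm{ch}(a_1)=d\,b_1$ is exact. This gives a $15$-dimensional classical $H^0$ and $0\neq[\partial c_1]\in H^1$. By contrast, $H^{\infty/2+0}_\mathrm{BRST}(\g,M)$ has a $14$-dimensional weight-$1$ space: its character is $q^{1/12}(1+14q+\cdots)$, because quantumly $x_iy_i$ and $\psi_i\phi_i$ are not closed.

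Thus your first classical fact fails, and "granting these, the associated graded map is an isomorphism" cannot be carried out. On $\mathfrak X$ the defect disappears, since for instance $d(x_1^{-1}\partial x_1)=\pm\partial c_1$ already classically. That is why your local statements in Step~1 are fine but do not transfer to the unlocalised $M$.

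The sheaf side is not under control either. Identifying $\mathrm{gr}$ of the global sections with $\Gamma(Y_\delta(\Delta),\bar{\mathcal O}_{J_\infty Y_\delta(\Delta)})$ needs $H^1(Y_\delta(\Delta),\mathcal V^\hbar_{\delta,\Delta})$ to be $\hbar$-torsion free. The fact that $\pi$ preserves global functions does not propagate to arc spaces or to the odd directions. Your \v{C}ech/character fallback likewise needs vanishing of all higher sheaf cohomology, which is not available.

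The paper avoids associated graded objects altogether:
\begin{itemize}
\item Injectivity of $\phi$ is immediate from the simplicity of $V(\Delta)$ (\autoref{prop:symplicity}).
\item For surjectivity it uses the faithfulness theorem, \autoref{prop:faithful}. Choose $b$ as in \autoref{prop:specialelements} so that $f=W^b$ and $g=W^{-b}$ lie in $I(\mathfrak Z)$, i.e.\ $\mathfrak D_f,\mathfrak D_g\subset\mathfrak X$. Take auxiliary conformal vectors $\omega^1,\omega^0$ in which $f$, respectively $g$, has weight~$0$. Lower-truncation of the global sections is checked on two charts.
\item After localising at $f$, the global sections become the sections over the affine open $\pi^{-1}(D_f)$. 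Its preimage in $\mathfrak X$ is the principal open $\mathfrak D_f$, where the sheaf is literally the localisation of $M$ and localisation commutes with BRST reduction. So $\phi_f$ is surjective, and likewise $\phi_g$.
\end{itemize}
The missing idea in your proposal is this reduction to two affine charts on which the comparison is tautological, rather than a comparison of classical limits.
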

This result uses a faithfulness theorem from \cite{ADS26} (see \autoref{prop:faithful}) and relies on the fact (see \autoref{prop:specialelements}) that we can find certain invariant polynomials $f,g\in\C[T^*V]^G$ whose principal open subsets $\mathfrak{D}_f$ and $\mathfrak{D}_g$ are contained in the semistable locus $\mathfrak{X}=(T^*V)_\delta^\mathrm{ss}$ used in the construction of $Y_\delta(\Delta)$ and that behave well with respect to certain auxiliary conformal structures.


\subsubsection*{Physical Background}

The study of vertex (super)algebras associated with symplectic singularities has been enriched by its connection to supersymmetric quantum field theory, and the fruitfulness of this interaction continues to grow.

According to the 4d/2d-correspondence~\cite{BLLPRR15}, it is possible to canonically assign a vertex algebra to a 4d $\mathcal{N}=2$ superconformal quantum field theory. Moreover, the Higgs branch conjecture~\cite{BR18} states that the associated Poisson variety of the vertex algebra arising from such a theory is isomorphic to the Higgs branch of the theory, which is expected to be a symplectic singularity. One can then speculate that at least in certain cases, the vertex algebras arising from 4d $\mathcal{N}=2$ theories are global sections of sheaves of $\hbar$-adic vertex algebras, in the sense of~\cite{AKM15}, on the respective (resolved) Higgs branches. The case of pure type-A gauge theories motivated the construction of sheaves of $\hbar$-adic vertex algebras on the Hilbert scheme of points given by two of the authors in \cite{AKM23}.

\medskip

More recently, an analogue of the Higgs branch conjecture has been investigated in a closely related physical context, boundary vertex algebras of topologically twisted 3d $\mathcal{N}=4$ supersymmetric quantum field theories. Indeed, in what is called the 3d/2d-correspondence, from any 3d $\mathcal{N}=4$ quantum field theory $\mathcal{T}$ one can produce two cohomological topological field theories, the $A$- and $B$-twist, which when put on a half-space support vertex algebras
\begin{equation*}
\smash{V^A(\mathcal{T})}\quad\text{and}\quad\smash{V^B(\mathcal{T})},
\end{equation*}
respectively, at their boundaries \cite{Gai19,CG19,CCG19}. This can be thought of as a nonunitary and generically logarithmic generalisation of the famous Chern-Simons/Wess-Zumino-Witten correspondence \cite{Wit89}. The parent 3d $\mathcal{N}=4$ theory $\mathcal{T}$ has two distinguished branches of vacua, the Higgs $\mathcal{M}_H(\mathcal{T})$ and Coulomb branch $\mathcal{M}_H(\mathcal{T})$, which are expected to be closely related to the boundary vertex algebras.

In the 3d setting, in good cases, both the Higgs and Coulomb branch are expected to be symplectic singularities (see, e.g., \cite{Kam22,WY23} for recent surveys). Mathematically, the Higgs branch $\mathcal{M}_H(\mathcal{T})$ is straightforwardly defined as a holomorphic symplectic quotient by the gauge group $G$ \cite{HKLR87}. The Coulomb branch $\mathcal{M}_C(\mathcal{T})$, on the other hand, is much more difficult to construct. The celebrated work of Braverman, Finkelberg and Nakajima \cite{BFN18} gives a definition based on the geometry of affine Grassmannians.

Now, the 3d Higgs branch conjecture \cite{BF25} formulated by one of the authors states that, under favourable circumstances (and with the right choice of boundary conditions), the associated variety of the boundary vertex algebra $V^A(\mathcal{T})$,
\begin{equation}\tag{\ref{eq:Higgs}}
X_{V^A(\mathcal{T})}\cong\mathcal{M}_H(\mathcal{T})
\end{equation}
recovers the Higgs branch $\mathcal{M}_H(\mathcal{T})$ of $\mathcal{T}$. This also leads to the expectation that the boundary vertex algebras $V^A(\mathcal{T})$ are quasi-lisse.

The hypertoric vertex superalgebras $V(\Delta)$ constructed in this work are the boundary vertex superalgebras of 3d $\mathcal{N}=4$ gauge theories with abelian gauge group $G=(\C^\times)^M$ \cite{BF25,BCDN23,Niu23}. Hence, \autoref{thm:var} states:
\begin{thm*}
The Higgs branch conjecture holds for all 3d $\mathcal{N}=4$ gauge theories with abelian gauge group and unimodular charge matrix $\Delta$.
\end{thm*}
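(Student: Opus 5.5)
The unstarred theorem just stated is a restatement of \autoref{thm:var} in physical language. The plan is therefore to identify the two sides of \eqref{eq:Higgs} with objects already introduced and then invoke \autoref{thm:var}.

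First, I will fix the dictionary. Let $\mathcal{T}$ be a 3d $\mathcal{N}=4$ gauge theory with gauge group $G=(\C^\times)^M$ acting on hypermultiplets $T^*V=\C^{2N}$ with unimodular charge matrix $\Delta$. Its Higgs branch is by definition the holomorphic symplectic quotient $\mathcal{M}_H(\mathcal{T})=\mu^{-1}(0)\qquot G=Y_0(\Delta)$. On the vertex algebra side, the boundary vertex superalgebra $V^A(\mathcal{T})$ for the relevant (Dirichlet-type) boundary condition is, following \cite{BF25,BCDN23,Niu23}, the BRST reduction of the free-field algebra $M=\mathcal{D}^\mathrm{ch}(T^*V)\otimes\mathcal{C}\ell(\Pi T^*V)$ by $\g$. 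Here the Clifford factor accounts for the boundary fermions that cancel the gauge anomaly. By \autoref{thm:glob-sec}, this BRST reduction is isomorphic to $V(\Delta)$. This step is a matter of matching conventions: charges, the choice of fermions, and the fact that the physics complex is exactly $H^{\infty/2+\bullet}_\mathrm{BRST}(\g,M)$. I expect it to be the only delicate point, and I would cite those references for the identification rather than reprove it.

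Second, unimodularity lets us choose a generic $\delta$ so that $Y_\delta(\Delta)\to Y_0(\Delta)$ is a conical symplectic resolution, which is the standing hypothesis of \autoref{thm:var}. That theorem then gives $X_{V(\Delta)}\cong Y_0(\Delta)$ as Poisson varieties. Combining the two identifications yields $X_{V^A(\mathcal{T})}\cong\mathcal{M}_H(\mathcal{T})$.

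Finally, quasi-lisseness follows from the finiteness of symplectic leaves of the symplectic singularity $Y_0(\Delta)$ \cite{Kal06}. This addresses the accompanying expectation.
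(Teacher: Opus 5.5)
Your proposal is correct and matches the paper, which likewise obtains this statement by identifying $V^A(\mathcal{T})$ with $V(\Delta)$ via \cite{BF25,BCDN23,Niu23} and $\mathcal{M}_H(\mathcal{T})$ with $Y_0(\Delta)$, and then invoking \autoref{thm:var}. (Your appeal to \autoref{thm:glob-sec} in the first step is not needed, since \autoref{defi:svoa} defines $V(\Delta)$ directly as $H^{\infty/2+\bullet}_\mathrm{BRST}(\g,M)$, and \autoref{thm:var} is stated for that object.)
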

This conjecture was proved in \cite{FS24} for the special case of minimal nilpotent orbit closures for $\sl_N$, which were identified with the associated varieties of $L_1(\psl_{N|N})$; see below.

Using symplectic duality (which reduces to Gale duality) and 3d mirror symmetry, this also shows an analogous result for the Coulomb branch $\smash{\mathcal{M}_C(\mathcal{T})}$ and the $B$-twisted boundary vertex algebra $\smash{V^B(\mathcal{T})}$.


\subsubsection*{Free-Field Realisations and Screening Kernels}

Given the construction of a vertex algebra as global sections of a sheaf of $\hbar$-adic vertex algebras on some (smooth) variety, one can obtain free-field realisations by restricting to nice open subsets. Examples of such free-field embeddings are given in \cite{Kuw21,AKM23} based on local trivialisations of the defining Hamiltonian actions.

This picture is particularly elegant in the case of the symplectic resolutions $\pi\colon Y_\delta(\Delta)\to Y_0(\Delta)$ given by hypertoric varieties. In fact, in \cite{BF25,BCDN23} explicit and rigorous free-field realisations for the hypertoric \svoa{}s $V(\Delta)$ were obtained in terms of a ``bosonisation'' of the Weyl vertex algebra $\mathcal{D}^\mathrm{ch}(T^*V)$ in the input of the BRST reduction,
\begin{equation*}
\mathcal{D}^\mathrm{ch}(T^*V)\hookrightarrow\mathcal{D}^\mathrm{ch}(T^*(\C^\times)^N).
\end{equation*}
This reproduces the well-known embedding of the Weyl vertex algebra into a half-lattice vertex algebra (or Heisenberg-lattice vertex algebra). It was shown that the BRST cohomology can be easily performed at the level of the bosonised fields. As a consequence, the vertex superalgebra $V(\Delta)$ can be realised by applying screening kernels to $\mathcal{D}^{\mathrm{ch}}(T^*(\C^\times)^{N-M})$ tensored with free fermions; see \autoref{prop:vanish_main2}.

Now, in principle our sheaf construction should enable us to prove the expectation of \cite{BF25} that these free fields can be identified with the local sections of the sheaves of $\hbar$-adic vertex superalgebras on openly embedded subsets $T^*(\C^{\times})^{N-M}\hookrightarrow Y_{\delta}(\Delta)$; see our remarks in \autoref{sec:freefieldrealisation}.


\subsubsection*{Symplectic Duality}

One of the most intriguing aspects of the study of symplectic singularities is that these come in pairs with many matching properties \cite{BLPW16b}. Many examples of symplectic dual pairs are provided by the Higgs and Coulomb branch $\mathcal{M}_H(\mathcal{T})$ and $\mathcal{M}_C(\mathcal{T})$ of the same 3d $\mathcal{N}=4$ theory $\mathcal{T}$. This phenomenon is closely related to 3d mirror symmetry in physics \cite{IS96}, which predicts the existence of a dual theory $\mathcal{T}^\vee$ that has Higgs and Coulomb branches swapped compared to $\mathcal{T}$. In this work, we explore how (some aspects of) symplectic duality can be upgraded to vertex algebras in the special case of hypertoric varieties.

For hypertoric varieties, symplectic duality is perhaps best understood, where it reduces to Gale duality \cite{BLPW10,BLPW12}; see \autoref{sec:sym-dual}. Indeed, to any hypertoric variety $Y_0(\Delta)$ one can associate another hypertoric variety, its symplectic dual $Y_0(\Delta\!^!)$. As main examples, we consider in this text the Gale dual symplectic singularities $\smash{Y_0(\Delta)=\overline{\mathbb{O}_\mathrm{min}(\sl_N)}=\{Z\in\sl_N\mid\rk(Z)\leq1\}}$, the minimal nilpotent orbit closure of $\sl_N$, and $\smash{Y_0(\Delta\!^!)=X_{A_{N-1}}=\C^2/{(\Z/N\Z)}}$, the Kleinian (or du Val) singularity of type $A_{N-1}$; see below.

\medskip

Physically, whilst $\smash{Y_0(\Delta)=\mathcal{M}_H(\mathcal{T})}$ is the Higgs branch of the 3d $\mathcal{N}=4$ supersymmetric field theory $\mathcal{T}$ with (abelian) gauge group action defined by the datum~$\Delta$, $Y_0(\Delta\!^!)=\mathcal{M}_H(\mathcal{T}^\vee)$ is the Higgs branch of the dual theory $\mathcal{T}^\vee$ defined by $\Delta\!^!$. By 3d mirror symmetry, this is then isomorphic to the Coulomb branch $\mathcal{M}_C(\mathcal{T})$ of $\mathcal{T}$,
\begin{equation*}
\mathcal{M}_H(\mathcal{T})=Y_0(\Delta)\quad\leftrightarrow\quad\mathcal{M}_C(\mathcal{T})=\mathcal{M}_H(\mathcal{T}^\vee)=Y_0(\Delta\!^!).
\end{equation*}
In general, the Coulomb branch $\mathcal{M}_C(\mathcal{T})$ of a 3d $\mathcal{N}=4$ gauge theory $\mathcal{T}$ is difficult to construct mathematically \cite{BFN18}, and the same is expected to be true for the $B$-twisted boundary vertex algebras $\smash{V^B(\mathcal{T})}$, which (in analogy to the better understood Higgs branch picture) should be understood as some kind of chiral quantisations of the Coulomb branch. Consequently, mathematical constructions of $\smash{V^B(\mathcal{T})}$ are very rare, unless they can be identified with the Higgs branch of a mirror dual theory.

For the special case of hypertoric varieties, a construction of $V^B(\mathcal{T})$ was proposed in \cite{BCDN23}, where an isomorphism $V^A(\T)\cong V^B(\T^\vee)$ was also proved. This was possible because in this case we are in the fortunate position that
\begin{equation*}
V^A(\mathcal{T})=V(\Delta)\quad\leftrightarrow\quad V^B(\mathcal{T})=V^A(\mathcal{T}^\vee)= V(\Delta\!^!).
\end{equation*}
In particular, we can construct $\smash{V^B(\T)}$ (and the corresponding sheaf) in the same way as $\smash{V^A(\T)}$, namely with the localisation-quantisation approach for Hamiltonian torus reductions pursued in this work. This is because the symplectic dual variety of a hypertoric variety is again (by 3d mirror symmetry) a hypertoric variety.

\medskip

Now, it is expected that many features of symplectic duality, here between $Y_0(\Delta)$ and $Y_0(\Delta\!^!)$, lift to the corresponding boundary vertex algebras, here $V(\Delta)$ and $V(\Delta\!^!)$. In this work, we explore some aspects of this.

One basic feature of symplectic duality is that the Lie algebra of torus Hamiltonian isometries of $Y_0(\Delta)$ is isomorphic to the Lie algebra of stability parameters of $Y_0(\Delta\!^!)$. It was claimed in \cite{BF25,BCDN23} that these are reflected by $\C^{N-M}$ inner and noninner automorphisms of $V(\Delta)$ and $V(\Delta\!^!)$, respectively, arising from the inner automorphisms of the tensor factor $\mathcal{C}\ell (\Pi T^*V)$ in the input of the BRST reduction. It was further pointed out in~\cite{BF25} (based on observations in \cite{CCG19}) that whenever a subalgebra $\C^\ell \subset \C^{N-M}$ of the infinitesimal torus Hamiltonian isometries of $Y(\Delta\!^!)$ extends to an $\sl_\ell$ infinitesimal Hamiltonian isometry, it is in bijection with $\sl_\ell$ noninner automorphisms of $V(\Delta\!^!)$, see \autoref{prop:outer}. In this paper we complete this picture and show that these are themselves in bijection with $L_1(\sl_\ell)$ affine subalgebras of $V(\Delta)$, see \autoref{prop:inner}. This can be thought of as a \svoa{} enhancement of the aforementioned symplectic duality statement.


\subsubsection*{Examples}

The results of this text are exemplified by two infinite families, which are mutual symplectic duals; see \autoref{sec:examples}. We consider the symplectic resolutions for the minimal nilpotent orbit closures
\begin{equation*}
T^*\mathbb{P}^{N-1}\to\overline{\mathbb{O}_\mathrm{min}(\sl_N)}\subset\sl_N^*
\end{equation*}
and the Kleinian singularities
\begin{equation*}
\widetilde{X}_{A_{N-1}}\to X_{A_{N-1}}=\C^2/{(\Z/N\Z)}.
\end{equation*}
The former is a symplectic reduction of $T^*\C^N$ by $G=\C^\times$ with $\Delta=(1,\dots,1)$ and the latter by $G^!=(\C^\times)^{N-1}$ with the Gale dual matrix $\Delta\!^!$.

The corresponding boundary hypertoric vertex superalgebras are, with the exception of certain special cases for small $N$,
\begin{equation*}
V(\Delta)=L_1(\psl_{N|N})\quad\text{and}\quad V(\Delta\!^!)=\tilde{\mathcal{W}}_{-N-1}(\sl_N,f_\mathrm{sub});
\end{equation*}
see \autoref{prop:ex1} and \autoref{prop:ex2}. The former is the simple quotient of the affine \svoa{} for $\psl_{N|N}$ at level $k=\pm1$, as was already shown in \cite{FS24}. The latter is a certain fermionic extension of the subregular $W$-algebra $\smash{\mathcal{W}_{-N-1}(\sl_N,f_\mathrm{sub})}$ for $\sl_N$ at level $k=-N-1$ \cite{FS04}.

Recall that $V(\Delta)$ can be understood as a simple-current extension of $V_\mathrm{min}(\Delta)$ from \cite{Kuw21} tensored with some lattice \svoa{} $V_{J^\bot}$. Concretely, in the above examples, there are the conformal embeddings
\begin{align*}
L_{-1}(\sl_N)\otimes V_{A_{N-1}}&\hookrightarrow L_1(\psl_{N|N}),\\
\intertext{which was already described in \cite{AM21,FS24},
and}
\mathcal{W}_{-N-1}(\sl_N,f_\mathrm{sub})\otimes V_{\Z(N)}&\hookrightarrow\tilde{\mathcal{W}}_{-N-1}(\sl_N,f_\mathrm{sub}),
\end{align*}
where $\Z(N)$ is the odd standard lattice $\Z$ with quadratic form scaled by $N$. In the last example, we note that $\mathcal{W}_{-N-1}(\sl_N,f_\mathrm{sub})$ is not exactly $V_\mathrm{min}(\Delta)$, but rather the fixed points under the (finite) Namikawa-Weyl group $\mathbb{W}$.

On the level of the associated varieties, these fermionic simple-current extensions correspond to intersecting with the nilpotent cone $\mathcal{N}\subset\sl_N^*$, i.e.
\begin{align*}
\overline{\mathbb{S}_\mathrm{min}(\sl_N)}&\hookleftarrow \overline{\mathbb{O}_\mathrm{min}(\sl_N)}\cap\mathcal{N}=\overline{\mathbb{S}_\mathrm{min}(\sl_N)}
\intertext{and}
\mathcal{S}_{f_\mathrm{sub}}&\hookleftarrow\mathcal{S}_{f_\mathrm{sub}}\cap\mathcal{N}\cong X_{A_{N-1}},
\end{align*}
respectively. Here, $\smash{\overline{\mathbb{S}_\mathrm{min}(\sl_N)}}$ is the closure of the unique minimal Dixmier sheet containing $\mathbb{O}_\mathrm{min}(\sl_N)$ and $\mathcal{S}_{f_\mathrm{sub}}\subset\sl_N^*$ is a Slodowy slice. Neither $\smash{\overline{\mathbb{S}_\mathrm{min}(\sl_N)}}$ nor $\mathcal{S}_{f_\mathrm{sub}}$ are symplectic varieties.


\subsubsection*{Characters}

By using a vanishing result for nonzero cohomologies \cite{BF25,Vor94} (see \autoref{prop:vanish}) and the Euler-Poincaré principle, \autoref{thm:glob-sec}, we can compute the characters of the boundary hypertoric vertex superalgebras $V(\Delta)$ and of their minimal versions $V_\mathrm{min}(\Delta)$ using the Euler-Poincaré principle. The former coincide with the half-indices of the 3d $\mathcal{N}=4$ abelian gauge theories to which they correspond via the 3d/2d-correspondence.

\medskip

As the minimal hypertoric vertex algebras $V_{\mathrm{min}}(\Delta)$ from \cite{Kuw21} are not quasi-lisse, not much can be concluded about the modular properties of their characters on general grounds. We show that they can be written as infinite sums of partial (or false) theta functions; see \autoref{prop:min-char}.

\medskip

However, when the $V_{\mathrm{min}}(\Delta)$ are extended to the quasi-lisse \svoa{}s $V(\Delta)$, one can conclude that their (super)characters must satisfy a modular linear differential equation \cite{AK18,Li23}. In fact, we expect them to be (quasi)modular forms (of mixed, nonnegative weights).

It is easy to show this for the supercharacters, which are of the simple form
\begin{equation*}
\operatorname{sch}_{V(\Delta)}(q)=\eta(q)^{2M},
\end{equation*}
where $\eta(q)$ is the Dedekind $\eta$-function; see \autoref{prop:boundary-schar}.

The characters take a more complicated form; see \autoref{prop:boundary-char} and \autoref{prop:boundarychar}. We only explore their modular properties in the two aforementioned series of examples. Concretely, in the case of the Kleinian singularity $Y_0(\Delta)=X_{A_{N-1}}$, we show in \autoref{prop:kleinian_char} that
\begin{equation*}
\ch_{V(\Delta)}(q)=\frac{\eta(q^2)^{2N}}{\eta(q)^{2N+2}}\cdot\Bigl(1+2^{N+1}\sum_{n=1}^\infty\frac{q^{Nn/2}}{(1+q^n)^N}\Bigr)
\end{equation*}
is quasimodular when $N$ is even, by expanding the Lambert-like series in terms of Eisenstein series. A similar analysis can be performed for the case of the minimal nilpotent orbit closures (see \autoref{ex:minimal_nilp_char}), where the quasimodularity of the character of $V(\Delta)=L_1(\psl_{N|N})$ is established up to $N=5$ (see also \cite{AM21}).

A more comprehensive analysis of the modularity of the hypertoric characters is left for future work.
\begin{introConj}[\ref{conj:quasimodular}]
The characters of the boundary hypertoric \svoa{}s $V(\Delta)$ are quasimodular, possibly of mixed nonnegative weights for some congruence subgroup of $\SLZ$ of level~$2$ with character.
\end{introConj}


\subsection*{Acknowledgements}

We are happy to thank Christopher Beem, Tudor Dimofte, Ian Grojnowski, Reimundo Heluani, Justin Hilburn, Thibault Juillard, Shashank Kanade, Andrew Linshaw, Toshiro Kuwabara, Matthias Storzer and Leonardo Rastelli for valuable discussions.

Tomoyuki Arakawa was partially supported by JSPS KAKENHI Grant Numbers 21H04993, 25K21659 and 26H019. Andrea E. V. Ferrari acknowledges support from the EPSRC grant EP/W020939/1 \emph{3d N=4 TQFTs}. Sven Möller acknowledges support from the DFG through the Emmy Noether Programme and the CRC 1624 \emph{Higher Structures, Moduli Spaces and Integrability}, project numbers 460925688 and 506632645.

This manuscript was submitted to the arXiv in coordination with the authors of \cite{CSYZ26}. We thank them for their cooperation.


\subsection*{Notation and Conventions}

All vector spaces, algebras, Lie algebras, vertex algebras, etc.\ are over the base field $\C$ unless otherwise noted. Throughout, a variety will be an integral, separated scheme of finite type over $\C$.

Let $G=(\C^\times)^M$ be a torus and $V$ a $G$-module. As usual, we denote the $G$-invariant elements of $V$ by $V^G=\{v\in V\mid g\cdot v=v\text{ for all }g\in G\}$. For a character $\delta\in\Hom(G,\C^\times)$, we let $V^{G,\delta}=\{v\in V\mid g\cdot v=\delta(g)v\text{ for all }g\in G\}$ denote the $(G,\delta)$-semi-invariant elements. We also consider fractional characters $\delta\in\Hom(G,\C^\times)\otimes_\Z\Q$. In that case, $V^{G,\delta}$ is zero unless $\delta\in\Hom(G,\C^\times)$.

A matrix $\Delta$ is called unimodular if all the maximal minors take values in $\{-1,0,1\}$.

Let $\hatotimes$ denote the completion of the tensor product $\smash{\otimes_{\C[[\hbar]]}}$ with respect to the $\hbar$-adic topology.

Following standard notation, let $(a;q)_\infty=\prod_{n=0}^\infty(1-aq^n)$ denote the infinite $q$-Pochhammer symbol and by $\eta(q)=\smash{q^{1/24}(q;q)_\infty}$ the Dedekind eta function. Moreover, denote by $\smash{\vartheta(z,q)=(q;q)_\infty(-q^{1/2}z;q)_\infty(-q^{1/2}z^{-1};q)_\infty=\sum_{n\in\Z}z^nq^{n^2/2}}$ the Jacobi theta function. It specialises to $\vartheta_3(q)=\vartheta(1,q)=\sum_{n\in\Z}q^{n^2/2}$, which is the theta series of the odd standard lattice $\Z$.


\section{Hypertoric and Quiver Varieties}

We review hypertoric (or toric hyperkähler) varieties, the main geometric objects of this text \cite{HKLR87,BD00} (see also \cite{HS02,PW07,Pro08}). They are a family of conical symplectic singularities \cite{Bea00} (and their resolutions) that can be realised as Hamiltonian reduction of a symplectic vector space $T^*V=T^*\C^N$ by the action of an algebraic torus $G=(\C^\times)^M$. We also refer to \cite{BK12,Kuw21} for a more thorough introduction in a similar context.

Special cases of hypertoric varieties are quiver varieties \cite{Nak94,Nak98} associated with quivers with abelian gauge nodes (see \autoref{sec:quiver}). Examples of these quiver hypertoric varieties include minimal nilpotent orbit closures for $\sl_N$ (see \autoref{sec:var1}) and Kleinian singularities of type $A_{N-1}$ (see \autoref{sec:var2}).


\subsection{Conical Symplectic Resolutions}\label{sec:conical}

Hypertoric varieties are in particular conical symplectic singularities that admit conical symplectic resolutions \cite{Bea00,BLPW16a,BLPW16b} (see also \cite{Kam22}). Recall that a \emph{symplectic resolution} is a morphism $\pi\colon X\to X_0$ of complex algebraic varieties such that
\begin{enumerate}
\item $X$ is smooth and has a symplectic structure,
\item $X_0$ is affine, normal and has a Poisson structure,
\item $\pi$ is projective, birational and Poisson.
\end{enumerate}
Moreover, a symplectic resolution is called \emph{conical} if there are $\C^\times$-actions on $X_0$ and $X$, compatible via $\pi$, such that $\C^\times$ contracts $X_0$ to a single point $0\in X_0$. We assume that the symplectic form has weight~$2$ under the $\C^\times$-action.

A conical symplectic resolution is \emph{Hamiltonian} if there are Hamiltonian actions of a torus $T$ on $X_0$ and $X$ such that $\pi$ is $T$-equivariant.


\subsection{Hypertoric Varieties and Their Symplectic Resolutions}\label{sec:hypertoric}

In the following, we introduce hypertoric varieties \cite{HKLR87,BD00}. Most of the results presented here can be found in \cite{HS02} and \cite{BK12}, or in the survey \cite{Pro08}. We mostly follow the notation used in \cite{Kuw21}.

\medskip

Let $M,N\in\Ns$ with $M\leq N$. We consider the $N$-dimensional vector space $V\cong\C^N$ and the $M$-dimensional torus $G=(\C^\times)^M$. We fix a basis of $V$ with corresponding coordinate functions $x_1,\dots,x_N\in V^*\subset\C[V]\cong\C[x_1,\dots,x_N]$ and assume that $G$ acts on $V\cong\C^N$ by transforming the coordinates as
\begin{equation*}
(t_1,\dots,t_M)\cdot x_j=t_1^{\Delta_{1j}}\dots t_M^{\Delta_{Mj}}x_j
\end{equation*}
for $(t_1,\dots,t_M)\in G$ and $j=1,\dots,N$, where $\Delta=(\Delta_{ij})_{1\leq i\leq M,1\leq j\leq N}$ is an $(M\times N)$-matrix with integer values $\Delta_{ij}\in\Z$.

In other words, we consider an algebraic $G$-action on $V$ and choose the basis such that the above coordinate functions are weight vectors with respect to this action. Indeed, if we denote by $\smash{\Delta_j=(\Delta_{ij})_{1\leq i\leq M}}$ the $j$-th column of the matrix~$\Delta$, then $\Delta_j$ is the weight of $x_j$ under the $G$-action for $j=1,\dots,N$. We call $\Delta\in\Z^{M\times N}$ the weight matrix.

The map $\smash{\Z^N\overset{\Delta}{\longrightarrow}\Z^M}$ defined by $\Delta$ is surjective if and only if the maximal minors of $\Delta$ (i.e.\ those of size $M\times M$) are relatively prime. We assume this in the following (cf.\ \autoref{conv:main}).

\medskip

We then consider the cotangent bundle $T^*V=V\oplus V^*$ of $V$. Let the coordinates $\smash{y_1,\dots,y_N\in V\subset\C[V^*]\cong\C[y_1,\dots,y_N]}$ be dual to $x_1,\dots,x_N$. The cotangent bundle $T^*V\cong\C^{2N}$ is naturally a symplectic vector space with the symplectic form $\sum_{i=1}^N dx_i\wedge dy_i$ and hence the coordinate ring $\C[T^*V]\cong\C[x_1,\dots,x_N,y_1,\dots,y_N]$ a Poisson algebra with Poisson bracket $\{y_i,x_j\}=\delta_{ij}$ and $\{x_i,x_j\}=\{y_i,y_j\}=0$ for $i,j=1,\dots,N$. The action of $G$ on $V$ induces an action on the symplectic vector space $T^*V\cong\C^{2N}$ characterised by the weight matrix $(\Delta,-\Delta)$ of size $M\times 2N$. In other words, $G$ acts on $\C^N\cong V^*\subset T^*V$ by transforming the coordinates as
\begin{equation*}
(t_1,\dots,t_M)\cdot y_j=t_1^{-\Delta_{1j}}\dots t_M^{-\Delta_{Mj}}y_j
\end{equation*}
for $(t_1,\dots,t_M)\in G=(\C^\times)^M$ and $j=1,\dots,N$.

Let $\g=\operatorname{Lie}(G)\cong\C^M$ denote the $M$-dimensional abelian Lie algebra corresponding to $G$. The above $G$-action is Hamiltonian, i.e.\ it can be characterised by a moment map, a morphism of affine varieties $\mu\colon T^*V\cong\C^{2N}\to\g^*\cong\C^M$, which here is of the form
\begin{equation*}
\mu((x_1,\dots,x_N,y_1,\dots,y_N))=\Bigl(\sum_{j=1}^N\Delta_{ij}x_jy_j\Bigr)_{1\leq i\leq M}.
\end{equation*}
The corresponding comoment map $\mu^*\colon\g\to\C[T^*V]$ is
\begin{equation*}
a_i\mapsto\sum_{j=1}^N\Delta_{ij}x_jy_j,
\end{equation*}
where $\{a_1,\dots,a_M\}$ is the standard basis of $\g\cong\C^M$. The comoment map satisfies
\begin{equation*}
\frac{d}{dt}(\exp(ta)\cdot f)\Bigr|_{t=0}\!\!=\{\mu^*(a),f\}
\end{equation*}
and
\begin{equation*}
0=\mu^*([a,b])=\{\mu^*(a),\mu^*(b)\}
\end{equation*}
for all $a,b\in\g$ and $f\in\C[T^*V]$.

Let $\{\cdot,\cdot\}$ denote the Poisson bracket on the structure sheaf $\smash{\mathcal{O}_{T^*V}}$ of the symplectic vector space $T^*V$. Then the induced $\g$-action on $\smash{\mathcal{O}_{T^*V}}$ is given by the comoment map via $a\mapsto\{\mu^*(a),\cdot\}$ for $a\in\g$. In other words, the $G$-action on $T^*V$ induces an action of $G$ on the structure sheaf $\smash{\mathcal{O}_{T^*V}}$.

\medskip

First, we consider the affine quotient, which is singular:
\begin{defi}[Affine Hypertoric Variety]
Consider the Hamiltonian action of $G=(\C^\times)^M$ on $T^*V=T^*\C^N$ determined by the weight matrix $\Delta\in\Z^{M\times N}$ with relatively prime maximal minors. The \emph{affine hypertoric variety} $Y_0(\Delta)$ associated with $\Delta$ is
\begin{equation*}
Y_0 (\Delta) \coloneqq \mu^{-1}(0) \qquot G = \Spec \left(\C [\mu^{-1}(0)]^G\right).
\end{equation*}
\end{defi}
By definition, the coordinate ring of $Y_0(\Delta)$ is $\smash{\C[Y_0(\Delta)]=\C[\mu^{-1}(0)]^G}$, the subring of $\smash{\C[\mu^{-1}(0)]=\C[T^*V]/(\{\sum_{j=1}^N\Delta_{ij}x_jy_j\}_{i=1}^M)=\C[T^*V]/(\{\mu^*(a_i)\}_{i=1}^M)}$ invariant under $G$.

\medskip

In the following we define conical symplectic resolutions of the hypertoric variety $Y_0(\Delta)$ as GIT quotients. We identify the fractional characters $\Hom(G,\C^\times)\otimes_\Z\Q$ of $G=(\C^\times)^M$ with $\Q^M$. A choice of $\delta\in\Q^M$ is called \emph{stability parameter}.

A point $p\in T^*V$ is called $\delta$-semistable if there is an $m\in\Ns$ and a function $f\in\C[T^*V]^{G,m\delta}$ (of weight $m\delta$ under the $G$-action) with $f(p)\neq 0$. If additionally the stabiliser subgroup $G_p$ is finite, then $p\in T^*V$ is called $\delta$-stable. We denote by $(T^*V)_\delta^\mathrm{s}$ and $\mathfrak{X}\coloneqq(T^*V)_\delta^\mathrm{ss}$ the subset of $\delta$-stable and $\delta$-semistable points of $T^*V$, respectively. The stability parameter $\delta$ is called effective if $(T^*V)_\delta^\mathrm{ss}\neq\emptyset$. There is a rational polyhedral fan $\Delta(G,T^*V)$ in $\Q^M$, the GIT fan, whose support is the set of all effective stability parameters $\delta\in\Q^M$ such that $\mathfrak{X}\neq\{0\}$ and whose walls are given by the stability parameters such that $\mathfrak{X}\neq(T^*V)_\delta^\mathrm{s}$. With the assumptions on $\Delta$, the maximal cones of $\Delta(G,T^*V)$ are $M$-dimensional. We also denote by $\mathfrak{Z}\coloneqq T^*V\setminus\mathfrak{X}$ the unstable locus, a closed subset of $T^*V$.

The semistable locus $\mathfrak{X}=(T^*V)_\delta^\mathrm{ss}$, an open subset of $T^*V$, has the following presentation, as shown for example in~\cite{Kon07} (see also \cite{Nag21}).
\begin{prop}\label{prop:stable-locus}
Let $\delta=(\delta_1,\dots,\delta_M)^t\in\mathbb{Q}^M$ be an effective stability parameter. Then a point $(x_1,\dots,x_N,y_1,\dots,y_N)\in T^*V$ lies in the semistable locus $\mathfrak{X}=(T^*V)_\delta^\mathrm{ss}$ if and only if there exist $a_1,\dots,a_N\in\Q_{\geq0}$ and $b_1,\dots,b_N\in\Q_{\geq0}$ such that 
\begin{equation*}
\delta_i=\sum_{\substack{j=1\\x_j\neq0}}^Na_j\Delta_{ij}-\sum_{\substack{j=1\\y_j\neq0}}^Nb_j\Delta_{ij}
\end{equation*}
for all $i=1,\dots,M$.
\end{prop}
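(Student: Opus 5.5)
The plan is to reduce semistability to a statement about which monomials are nonzero at $p$, and then to a statement about cones in $\Q^M$.

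\emph{Reduction to monomials.} The torus $G$ acts diagonally on $\C[T^*V]=\C[x_1,\dots,x_N,y_1,\dots,y_N]$, so every monomial $x^\alpha y^\beta$ is a weight vector of weight $\sum_j(\alpha_j-\beta_j)\Delta_j$. Hence $\C[T^*V]^{G,m\delta}$ is spanned by the monomials with $\sum_j(\alpha_j-\beta_j)\Delta_j=m\delta$. A semi-invariant $f$ satisfies $f(p)\neq0$ if and only if some monomial occurring in $f$ is nonzero at $p$. Therefore $p=(x,y)$ is $\delta$-semistable if and only if there exist $m\in\Ns$ and $\alpha,\beta\in\N^N$ such that:
\begin{itemize}
\item $\alpha_j=0$ whenever $x_j=0$,
\item $\beta_j=0$ whenever $y_j=0$,
\item $\sum_j\alpha_j\Delta_{ij}-\sum_j\beta_j\Delta_{ij}=m\delta_i$ for all $i$.
\end{itemize}

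\emph{Passing to rational coefficients.} Given such data, set $a_j=\alpha_j/m$ and $b_j=\beta_j/m$. Terms with $x_j=0$ or $y_j=0$ vanish, so these nonnegative rationals give exactly the representation in the statement.

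Conversely, take $a_j,b_j\in\Q_{\geq0}$ with $\delta_i=\sum_{x_j\neq0}a_j\Delta_{ij}-\sum_{y_j\neq0}b_j\Delta_{ij}$. Let $m$ be a common denominator of all the $a_j,b_j$, which also clears denominators in $\delta$ through this identity. Define $\alpha_j=ma_j$ when $x_j\neq0$ and $\alpha_j=0$ otherwise, and likewise $\beta_j=mb_j$ when $y_j\neq0$ and $\beta_j=0$ otherwise. Then $f=x^\alpha y^\beta$ lies in $\C[T^*V]^{G,m\delta}$ and $f(p)=\prod x_j^{\alpha_j}\prod y_j^{\beta_j}\neq0$.

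The only point needing care is the fractional-character convention: $V^{G,m\delta}=0$ unless $m\delta$ is integral. The choice of $m$ ensures that $m\delta$ is integral, so this causes no difficulty. Effectivity of $\delta$ is not used beyond making the statement meaningful.

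I do not expect any real obstacle. The key observation is that semi-invariant spaces for a diagonal torus action are spanned by monomials, so nonvanishing at $p$ can be tested monomial by monomial.
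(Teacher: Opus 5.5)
Your proof is correct and complete. The paper gives no argument of its own here, only a citation to \cite{Kon07} (see also \cite{Nag21}), and your reduction to monomial semi-invariants of the diagonal torus action is the standard proof of this criterion. One claim is inaccurate as stated: the ``if'' half of ``a semi-invariant $f$ satisfies $f(p)\neq0$ if and only if some monomial occurring in $f$ is nonzero at $p$'' fails because of cancellation (e.g.\ the invariant $x_1y_1-x_2y_2$ vanishes wherever $x_1y_1=x_2y_2\neq0$). You never need that half, though: your reduction only uses the ``only if'' half, together with the fact that every monomial occurring in a weight-$m\delta$ semi-invariant is itself a semi-invariant of weight $m\delta$.
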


Notice that the $G$-action is closed on the inverse image $\mu^{-1}(0)$ under the moment map $\mu$. For a subset $S\subset T^*V$, we say that two points $p,q\in S$ are $S$-equivalent if the $G$-orbit closures $\overline{G\cdot p}$ and $\overline{G\cdot q}$ intersect in $S$.

\medskip

We then define (projective) hypertoric varieties associated with effective stability parameters $\delta\in\Q^M$ as GIT quotients:
\begin{defi}[Hypertoric Variety]
For an effective stability parameter $\delta\in\Q^M$, the \emph{hypertoric variety} $Y_\delta(\Delta)$ is the categorical quotient of the semistable locus $\mathfrak{X}$,
\begin{equation*}
Y_\delta(\Delta)\coloneqq\mu^{-1}(0)\qquot\!_\delta G= (\mu^{-1}(0)\cap\mathfrak{X})/{\sim},
\end{equation*}
where $\sim$ denotes $(\mu^{-1}(0)\cap\mathfrak{X})$-equivalence.
\end{defi}
It follows from the definition that the hypertoric variety $Y_\delta(\Delta)$ is a projective scheme over $Y_0(\Delta)$,
\begin{equation*}
Y_\delta(\Delta)\cong\Proj\Bigl(\bigoplus_{m\in\N}\C[\mu^{-1}(0)]^{G,m\delta}\Bigr)
\end{equation*}
and that there is a projective morphism
\begin{equation*}
\pi\colon Y_\delta(\Delta)\to Y_0(\Delta).
\end{equation*}

We recall some important properties of the hypertoric varieties $Y_\delta(\Delta)$ from \cite{HS02,BK12}. If the stability parameter $\delta$ is in the interior of a maximal cone of $\Delta(G,\mu^{-1}(0))$, then the hypertoric variety $Y_\delta(\Delta)$ is an orbifold (and coincides with the geometric quotient $T^*V/G$). In this case, we say that $\delta$ is \emph{generic}. $Y_\delta(\Delta)$ is smooth if and only if $\delta$ is generic and $\Delta$ is unimodular. In that case, $Y_\delta(\Delta)$ has complex dimension $2(N-M)$. Recall that we say that the matrix $\Delta$ is unimodular if every $(M\times M)$-minor takes values in $\{-1,0,1\}$. Moreover, the moment map $\mu$ is flat, and $\mu^{-1}(0)$ is a reduced complete intersection in $T^*V$.
\begin{conv}\label{conv:main}
Throughout the paper, we assume that the matrix $\Delta\in\Z^{M\times N}$ is surjective, unimodular and that $\delta\in\Q^M$ is a generic effective stability parameter.
\end{conv}
The symplectic structure on $T^*V$ induces a symplectic structure on $Y_\delta(\Delta)$, and a Poisson structure on $Y_0(\Delta)$. The morphism $\pi$ preserves these Poisson structures, i.e.\ there is a homomorphism of Poisson algebras $\smash{\mathcal{O}_{Y_0(\Delta)}\to\mathcal{O}_{Y_\delta(\Delta)}}$. There is also a conical $\C^\times$-action on $Y_\delta(\Delta)$ and $Y_0(\Delta)$ inherited from the linear scaling action of $\C^\times$ on $T^*V$. Overall, one can show that under the unimodularity assumption, $\pi\colon Y_\delta(\Delta)\to Y_0(\Delta)$ is a (conical and Hamiltonian) symplectic resolution.

The structure sheaf $\mathcal{O}_{Y_\delta(\Delta)}$ of $Y_\delta(\Delta)$ can also be obtained by Hamiltonian reduction of the structure sheaf $\mathcal{O}_{\mathfrak{X}}$ of $\mathfrak{X}=(T^*V)_\delta^\mathrm{ss}$. Indeed,
\begin{equation*}
\mathcal{O}_{Y_\delta(\Delta)}=\Bigl(p_{*}\bigl(\mathcal{O}_{\mathfrak{X}}\bigm/\sum_{i=1}^{M}\mathcal{O}_{\mathfrak{X}}\mu^*(a_i)\bigr)\Bigr)^\g
\end{equation*}
with the projection map $p\colon\mu^{-1}(0)\cap\mathfrak{X}\to Y_\delta(\Delta)$. We also note that $Y_\delta(\Delta)$ and $Y_0(\Delta)$ have the same global functions, i.e.
\begin{equation*}
\mathcal{O}_{Y_\delta(\Delta)}(Y_\delta(\Delta))=\C[Y_0(\Delta)]=\C[\mu^{-1}(0)]^G
\end{equation*}
In \autoref{thm:glob-sec}, we prove a vertex algebraic analogue of this statement.


\subsection{Torus Actions}\label{sec:torus-act}

Since $\Z^M$ is a free $\Z$-module, the surjective map $\Z^N\to\Z^M$ defined by the $(M\times N)$-matrix $\Delta$ can be extended to a short exact sequence
\begin{equation}\label{eq:SES}
\{0\}\longrightarrow\Z^{N-M}\overset{E}{\longrightarrow}\Z^N\overset{\Delta}{\longrightarrow}\Z^M\longrightarrow\{0\}.
\end{equation}
In other words, the columns of the $(N\times (N-M))$-matrix $E$ form a basis of the null space of $\Delta$. It follows that, given $\Delta$, the matrix $E$ is only well-defined up to a change of basis of this null space. However, this will not affect the corresponding hypertoric variety up to isomorphism (see \autoref{subsec:iso} for further comments on isomorphisms). Note that $E$ is unimodular if and only if $\Delta$ is. Hence, following \autoref{conv:main}, we assume that both $E$ and $\Delta$ are unimodular.

By applying the contravariant functor $\Hom(\,\cdot\,,\C^\times)$ to \eqref{eq:SES}, we obtain a short exact sequence of abelian groups
\begin{equation*}
\{1\}\longrightarrow G\overset{\Delta^t}{\longrightarrow}(\C^\times)^N\overset{E^t}{\longrightarrow}T\longrightarrow\{1\}
\end{equation*}
with $G\coloneqq (\C^\times)^M$ and $T\coloneqq (\C^\times)^{N-M}$, which precisely recovers the underlying action of $G$ on $T^*V$.

Fixing a generic effective stability parameter $\delta\in\Hom(G,\C^\times)\otimes_\Z\Q\cong\Q^M$, we obtain the symplectic resolution
\begin{equation*}
Y_\delta(\Delta)\to Y_0(\Delta)
\end{equation*}
of dimension $2(N-M)$ by Hamiltonian reduction by $G$. Then, there is a residual Hamiltonian action of $T=(\C^\times)^{N-M}$ on $Y_\delta(\Delta)$ and $Y_0(\Delta)$. This action is defined only up to the action of the torus $G=(\C^\times)^M$. It is fixed by a choice of splitting
\begin{equation*}
q\colon T\to(\C^\times)^N
\end{equation*}
of the above short exact sequence. The existence of such a splitting is guaranteed by the unimodularity assumption: any unimodular matrix can be extended to a square matrix of determinant $\pm 1$. We notice that $\epsilon\in\Hom(\C^\times,T)$ defines a Hamiltonian $\C^\times$-action on $Y_\delta(\Delta)$ and $Y_0(\Delta)$.


\subsection{Symplectic Duality (and 3d Mirror Symmetry)}\label{sec:sym-dual}

We briefly recall symplectic duality for hypertoric varieties \cite{BLPW16b}. See \cite{Kam22,WY23} for recent reviews. The closely related physical concept of 3d mirror symmetry, which we shall comment upon even more briefly, was introduced in \cite{IS96}.

Very roughly, the (expected) statement of \emph{symplectic duality} is that given some symplectic resolution $X\to X_0$ one can find a dual symplectic resolution $X^!\to X_0^!$, with certain structures of $X$ corresponding to certain other structures of $X^!$ and such that $(X^!)^!\cong X$. Most known examples of symplectic dual pairs $(X,X^!)$ come from Higgs and Coulomb branches of 3d $\mathcal{N}=4$ theories.

\medskip

While in full generality symplectic duality remains hypothetical, there are some well-understood families of examples. In fact, an early instance of symplectic duality is Gale duality of hypertoric varieties \cite{BLPW10,BLPW12}, which has been thoroughly studied. Indeed, given a hypertoric variety, its symplectic dual is also a hypertoric variety, namely the \emph{Gale dual} hypertoric variety.

We now describe the Gale dual hypertoric variety. Recall the short exact sequences
\begin{equation*}
\{0\}\longrightarrow\Z^{N-M}\overset{E}{\longrightarrow}\Z^N\overset{\Delta}{\longrightarrow}\Z^{M}\longrightarrow\{0\}
\end{equation*}
and
\begin{equation*}
\{1\}\longrightarrow G \overset{\Delta^t}{\longrightarrow}(\C^\times)^N\overset{E^t}{\longrightarrow}T \longrightarrow\{1\}
\end{equation*}
introduced in \autoref{sec:torus-act}, defining the hypertoric variety $Y_0(\Delta)$. From these, we obtain the following dual ones:
\begin{equation*}
\{0\}\longrightarrow\Z^M\overset{E^!}{\longrightarrow}\Z^N\overset{\Delta\!^!}{\longrightarrow}\Z^{N-M}\longrightarrow\{0\}
\end{equation*}
and
\begin{equation*}
\{1\}\longrightarrow G^!\overset{(\Delta\!^!)^t}{\longrightarrow}(\C^\times)^N\overset{(E^!)^t}{\longrightarrow}T^!\longrightarrow\{1\}
\end{equation*}
with $\Delta\!^!\coloneqq E^t$ and $E^!\coloneqq\Delta^t$ and $G^!\coloneqq T^\vee\cong(\C^\times)^{N-M}$ and $T^!\coloneqq G^\vee\cong(\C^\times)^M$. Here, $T^\vee$ and $G^\vee$ denote dual tori with the property $\Hom(\C^\times,T^\vee)\cong\Hom(T,\C^\times)$ and analogously for $G^\vee$.

Recall that we fixed $\delta\in\Hom(G,\C^\times)\cong\Hom(\C^\times,T^!)$, and that we can also fix $\epsilon\in\Hom(\C^\times,T)\cong\Hom(G^!,\C^\times)$. Suppose both are generic. Then
\begin{equation*}
Y_\delta(\Delta)\to Y_0(\Delta)\quad\text{and}\quad Y_\epsilon(\Delta\!^!)\to Y_0(\Delta\!^!)
\end{equation*}
are symplectic dual (here: Gale dual) symplectic resolutions, of dimension $2(N-M)$ and dimension $2M$, respectively, along with Hamiltonian $\C^\times$-actions on each. As discussed in \autoref{sec:torus-act}, the stability parameter $\delta$ becomes the parameter defining the $\C^\times$-action on the Gale dual and vice versa for $\epsilon$.

We mention that the two examples of hypertoric (quiver) varieties discussed in \autoref{sec:var1} and \autoref{sec:var2} are a Gale dual pair, as we shall explain in more detail there.

\medskip

Finally, let us comment on 3d mirror symmetry in the context of 3d abelian gauge theories. This is the statement that for any theory defined by the datum $\Delta$, there is a a mirror dual theory, defined by the datum $\Delta\!^!$, such that the Higgs branch of one theory is isomorphic, as a hyperkähler variety, to the Coulomb branch of the other theory, and vice versa (see also \autoref{sec:assvar}). Thus, the symplectic resolutions $Y_\delta(\Delta)\to Y_0(\Delta)$ and $Y_\epsilon(\Delta\!^!)\to Y_0(\Delta\!^!)$ are the Higgs and Coulomb branches and the Coulomb and Higgs branches of the theories defined by $\Delta$ and $\Delta\!^!$, respectively. 


\subsection{Poisson Isomorphisms}\label{subsec:iso} 

We continue with a few remarks on holomorphic symplectic (and therefore Poisson) isomorphisms between hypertoric varieties. Let $(\Delta^1,E^1)$ be a pair of unimodular $(M\times N)$- and $(N\times(N-M))$-matrices, respectively, entering the above short exact sequence~\eqref{eq:SES}. Recall that $\Delta^1$ and $E^1$ are defined up to a choice of basis, and different choices lead to isomorphic hypertoric varieties.

As we now explain, there are further operations that lead to Poisson isomorphism of hypertoric varieties. See, e.g., Proposition~1.3.2 in \cite{Pro08}, or the classification in~\cite{Nag21}, Theorem~4.2.
\begin{rem}\label{rem:swap-sign}
Let $(\Delta^2,E^2)$ be another pair of matrices that can be obtained from $(\Delta^1,E^1)$ by multiplying some of the columns of $\Delta^1$ and some of the rows of $E^1$ by an overall sign, or by swapping columns of $\Delta^1$ and rows of $E^1$. Then $Y_0(\Delta^1)$ is isomorphic as a holomorphic symplectic (and therefore Poisson) variety to $Y_0(\Delta^2)$, and $Y_0((\Delta^1)^!)$ is isomorphic as a holomorphic symplectic (and therefore Poisson) variety to $Y_0((\Delta^2)^!)$. Thus, the data defining the hypertoric variety $Y_0(\Delta)$ as a Poisson variety can equally well be taken to be the set of hyperplanes determined by the rows of $E$ in $\R^{N-M}$, upon fixing the standard realisation $\Z^{N-M}\subset\R^{N-M}$ without orientation.
\end{rem}

\medskip

Without loss of generality, we can further restrict the assumptions on $\Delta$ and $E$ slightly. We shall use this in \autoref{prop:specialelements}, for example. Recall that, as the map defined by $\Delta$ is surjective, it cannot have any zero row.
\begin{rem}\label{rem:zerocolumnrow}
We may also assume that $\Delta$ has no zero column. Indeed, if that were the case, we could split off the trivial (meaning ``unreduced'') hypertoric variety $T^*\C$ from $Y_\delta(\Delta)$ and from $Y_0(\Delta)$, corresponding to the short exact sequence
\begin{equation*}
\begin{tikzcd}
\{0\} \arrow[r]
& \Z^1 \arrow[r, "E=\pm1"]
& \Z^1 \arrow[r, "\Delta=0"]
& \Z^0 \arrow[r]
&\{ 0 \}
\end{tikzcd}
\end{equation*}
for $N=1$ and $M=0$.

Dually (in the symplectic duality sense), we may assume that $E$ has no zero row. If that were the case, we could split off the trivial (meaning ``fully reduced'') hypertoric variety $\{\mathrm{pt.}\}$ from $Y_\delta(\Delta)$ and from $Y_0(\Delta)$, corresponding to the  short exact sequence
\begin{equation*}
\begin{tikzcd}
\{0\} \arrow[r]
& \Z^0 \arrow[r, "E=0"]
& \Z^1 \arrow[r, "\Delta=\pm1"]
& \Z^1 \arrow[r]
&\{ 0 \}
\end{tikzcd}
\end{equation*}
for $N=1$ and $M=1$ (cf.\ \autoref{sec:smallexample}).
\end{rem}


\subsection{Coordinate Ring Generators}\label{sec:coordringgens}

We describe an explicit (finite) set of generators of the coordinate ring $\smash{\C[Y_0(\Delta)]=\C[T^*V]^G/(\{\sum_{j=1}^N\Delta_{ij}x_jy_j\}_{i=1}^M)}$ of the affine hypertoric variety $Y_0(\Delta)$. Recall the unimodular matrices $E$ and $\Delta$ entering the short exact sequence~\eqref{eq:SES}, expressed in a given basis. First, note that the bilinear expressions
\begin{equation*}
J_i\coloneqq\sum_{j=1}^NE_{ji}x_jy_j\in\C[T^*V]^G
\end{equation*}
for $i=1,\dots,N-M$ are $G$-invariant. Similarly, for any $a\in\Z^{N-M}$,
\begin{equation*}
W^a\coloneqq\prod_{\substack{j=1\\(Ea)_j>0}}^Nx_j^{(Ea)_j}\prod_{\substack{j=1\\(Ea)_j<0}}^Ny_j^{-(Ea)_j}\in\C[T^*V]^G.
\end{equation*}
We sometimes identify the $J_i,W^a\in\C[T^*V]^G$ with their images in the quotient $\C[Y_0(\Delta)]$ modulo the ideal $\smash{(\{\sum_{j=1}^N\Delta_{ij}x_jy_j\}_{i=1}^M)}$.

The following statements can be found in Lemma~2.17 and Remark~2.18 in \cite{Nag21} and in Section~6.1.2 of \cite{BDGH16}.
\begin{prop}\label{prop:coordring}
The coordinate ring of $Y_0(\Delta)$ is of the form
\begin{equation*}
\C[Y_0(\Delta)]\cong\C\bigl[\{x_jy_j\}_{j=1}^N\cup\{W^a\mid a\in\Z^{N-M}\}\bigr]\bigm/\bigl(\bigl\{\textstyle\sum_{j=1}^N\Delta_{ij}x_jy_j\bigr\}_{i=1}^M\bigr).
\end{equation*}
Noting that the rows of $\Delta$ together with the columns of $E$ span $\C^N$, this means that $\C[Y_0(\Delta)]$ is generated by the $J_i$ for $i=1,\dots,N-M$ and the $W^a$ for $a\in\Z^{N-M}$.
\end{prop}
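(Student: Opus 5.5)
The plan is to prove that the invariant ring $\C[T^*V]^G$ is generated by the monomials $x_jy_j$ together with the $W^a$, and then pass to the quotient. Invariant theory for tori reduces this to a combinatorial statement about monomials.

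Since $G$ is a torus acting diagonally on the coordinates $x_j,y_j$, $\C[T^*V]^G$ is spanned by the $G$-invariant monomials $x^\alpha y^\beta$ with $\alpha,\beta\in\N^N$. Such a monomial has weight $\Delta(\alpha-\beta)$, so it is invariant if and only if $\alpha-\beta\in\ker\Delta$. By the short exact sequence \eqref{eq:SES}, $\ker\Delta=E\Z^{N-M}$, so $\alpha-\beta=Ea$ for a unique $a\in\Z^{N-M}$.

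Set $\gamma_j=\min(\alpha_j,\beta_j)$. Then
\begin{equation*}
x^\alpha y^\beta=\prod_{j=1}^N(x_jy_j)^{\gamma_j}\cdot x^{\alpha-\gamma}y^{\beta-\gamma}.
\end{equation*}
For each $j$, at most one of $\alpha_j-\gamma_j$ and $\beta_j-\gamma_j$ is nonzero. Moreover $(\alpha-\gamma)-(\beta-\gamma)=Ea$. Hence $\alpha_j-\gamma_j=\max((Ea)_j,0)$ and $\beta_j-\gamma_j=\max(-(Ea)_j,0)$, so $x^{\alpha-\gamma}y^{\beta-\gamma}=W^a$. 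Every invariant monomial is therefore a product of the $x_jy_j$ and one $W^a$. This gives
\begin{equation*}
\C[T^*V]^G=\C\bigl[\{x_jy_j\}\cup\{W^a\}\bigr].
\end{equation*}
Only a finite subset of the $W^a$ is needed: the Hilbert basis of the semigroup $\{(\alpha,\beta)\}\cap\{\alpha-\beta\in\ker\Delta\}$ is finite by Gordan's lemma. The statement does not require this, however.

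Next we pass to $\C[\mu^{-1}(0)]^G$. Since $G$ is linearly reductive, taking invariants is exact. The ideal generated by the $G$-invariant elements $\mu^*(a_i)$ satisfies
\begin{equation*}
\bigl(\C[T^*V]\mu^*(a_i)\bigr)^G=\C[T^*V]^G\mu^*(a_i).
\end{equation*}
Therefore
\begin{equation*}
\C[\mu^{-1}(0)]^G=\C[T^*V]^G\bigm/\bigl(\{\mu^*(a_i)\}\bigr).
\end{equation*}
Here we use that $\mu^{-1}(0)$ is reduced, as recalled above, so that $\C[\mu^{-1}(0)]$ is the stated quotient. Together with the previous step, this yields the displayed presentation.

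For the final claim, we use that the rows of $\Delta$ and the columns of $E$ together span $\C^N$, by exactness of \eqref{eq:SES} tensored with $\C$. Hence each $x_jy_j$ is a linear combination of the $J_i$ and the $\mu^*(a_i)$. Modulo the ideal $(\{\mu^*(a_i)\})$, each $x_jy_j$ therefore lies in the span of the $J_i$.

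The only step needing care is the exactness of invariants. This holds by the Reynolds operator. Everything else is routine bookkeeping.
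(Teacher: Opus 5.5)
Your proof is correct and complete. The paper gives no argument of its own and only cites \cite{Nag21} and \cite{BDGH16}; your route is the standard argument for this fact: write each invariant monomial $x^\alpha y^\beta$ with $\alpha-\beta=Ea$ as $\prod_j(x_jy_j)^{\min(\alpha_j,\beta_j)}W^a$, then use exactness of torus invariants to get $\C[\mu^{-1}(0)]^G=\C[T^*V]^G/(\{\mu^*(a_i)\}_{i=1}^M)$. One small correction: the rows of $\Delta$ and the columns of $E$ do not span $\C^N$ merely by exactness of \eqref{eq:SES} tensored with $\C$. You also need $\operatorname{row}(\Delta)\cap\ker\Delta=\{0\}$, which holds because $\Delta\Delta^t$ is positive definite for the real full-rank matrix $\Delta$; the statement grants this fact anyway.
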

In fact, a finite number of generators suffices.
\begin{prop}[Coordinate Ring Generators]\label{prop:ring-gen}
The coordinate ring $\C[Y_0(\Delta)]$ is generated by $J_i$ for $i=1,\dots,N-M$ and $W^a$ for a finite number of $a\in\Z^{N-M}$.
\end{prop}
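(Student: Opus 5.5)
By \autoref{prop:coordring}, $\C[Y_0(\Delta)]$ is generated by the $J_i$ and by all $W^a$ with $a\in\Z^{N-M}$. So it suffices to find a finite set $S\subset\Z^{N-M}$ such that every $W^a$ is a polynomial in the $J_i$ and the $W^b$ with $b\in S$. This is a Gordan-type finiteness argument for the lattice cones cut out by the hyperplane arrangement given by the rows $e_j$ of $E$.

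First I would record the multiplication rule. For $a,b\in\Z^{N-M}$, the product $W^aW^b$ equals $W^{a+b}$ times $\prod_j (x_jy_j)^{m_j}$, where
\begin{equation*}
m_j=\min\bigl(|(Ea)_j|,|(Eb)_j|\bigr)\quad\text{if }(Ea)_j(Eb)_j<0,
\end{equation*}
and $m_j=0$ otherwise. In particular, $W^aW^b=W^{a+b}$ whenever $a$ and $b$ lie in a common closed chamber. By a chamber I mean a sign pattern: for $\sigma\in\{\pm1\}^N$ set
\begin{equation*}
C_\sigma=\{v\in\R^{N-M}\mid \sigma_j\langle e_j,v\rangle\geq0\text{ for all }j\}.
\end{equation*}
There are finitely many $C_\sigma$, each a rational polyhedral cone, and they cover $\R^{N-M}$. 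On $C_\sigma$ the map $a\mapsto W^a$ is a monoid homomorphism from $C_\sigma\cap\Z^{N-M}$ to monomials, because all exponents then enter with the same sign in each coordinate.

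Next I would apply Gordan's lemma. Each monoid $C_\sigma\cap\Z^{N-M}$ is finitely generated; let $S_\sigma$ be a finite generating set (e.g.\ its Hilbert basis) and put $S=\bigcup_\sigma S_\sigma$. Any $a\in\Z^{N-M}$ lies in some $C_\sigma$, so $a=\sum_k n_kb_k$ with $b_k\in S_\sigma$ and $n_k\in\N$. Then $W^a=\prod_k (W^{b_k})^{n_k}$ exactly in $\C[T^*V]^G$, by the homomorphism property on $C_\sigma$. Hence every $W^a$ lies in the subalgebra generated by $\{W^b\}_{b\in S}$. Together with the $J_i$, and using that the $x_jy_j$ are expressible via the $J_i$ modulo the moment map relations (as noted in \autoref{prop:coordring}), this gives a finite generating set.

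The only point needing care is checking the monomial identity on a chamber: for $a,b\in C_\sigma$ we have $(E(a+b))_j=(Ea)_j+(Eb)_j$ with both terms of sign $\sigma_j$ or zero, so the exponents of $x_j$ (respectively $y_j$) add. This is straightforward. I do not expect a real obstacle beyond invoking Gordan's lemma correctly for the rational cones $C_\sigma$. Unimodularity is not even needed here, though it would allow $S_\sigma$ to be taken as ray generators when the cones are simplicial and unimodular.
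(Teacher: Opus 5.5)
Your proposal is correct and takes the same approach as the paper's proof: cut $\Z^{N-M}$ into the finitely many rational polyhedral cones determined by the hyperplanes orthogonal to the rows of $E$, observe that $W^aW^b=W^{a+b}$ within each cone, and apply Gordan's lemma cone by cone. Your sign-pattern chambers $C_\sigma$ and the exponent check simply make explicit the monoid-homomorphism property that the paper asserts.
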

\begin{proof}
To prove the assertion using \autoref{prop:coordring}, we need to show that a finite subset of the generators listed is sufficient to generate all of them, in particular, the $W^a$ for all $a\in\Z^{N-M}$.

Consider the $N$ hyperplanes that pass through the origin in $\R^{N-M}$ and whose normal vectors are given, under the standard realisation $\Z^{N-M}\subset\R^{N-M}$, by the rows of $E$. These divide $\Z^{N-M}$ into finitely many rational polyhedral cones $C$. Then, for a given $C$,
\begin{equation*}
W^aW^b=W^{a+b}
\end{equation*}
for all $a,b\in C\cap\Z^{N-M}$. It follows from Gordan's lemma that within each region~$C$ there is a finite generating set of all $W^a$ with $a\in C\cap\Z^{N-M}$. It then follows that a finite subset of the $W^{a}$, $a\in\Z^{N-M}$, generates all of them.
\end{proof}

We shall describe examples of the generating sets guaranteed by \autoref{prop:ring-gen} below in \autoref{sec:var1} and \autoref{sec:var2}.

\medskip

The following consequence of \autoref{prop:ring-gen} will be useful later (see \autoref{thm:var}) when we determine the associated variety of the hypertoric \svoa{}s constructed in this text.
\begin{cor}\label{cor:poiss-cen}
The Poisson centre $Z(\C[Y_0(\Delta) ])$ of $\C[Y_0(\Delta)]$ is trivial.
\end{cor}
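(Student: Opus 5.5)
Since \autoref{prop:ring-gen} tells us that $\C[Y_0(\Delta)]$ is generated by the $J_i$ ($i=1,\dots,N-M$) and finitely many $W^a$, an element $z$ is Poisson central precisely when $\{J_i,z\}=0$ for all $i$ and $\{W^a,z\}=0$ for the generators $W^a$. I will use the brackets with the $J_i$ first, to force $z$ into degree zero for the residual torus $T$, and then the brackets with the $W^a$ to kill whatever remains.

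The first step is to compute the Hamiltonian vector fields of the $J_i$. From $\{y_j,x_k\}=\delta_{jk}$ we get $\{x_jy_j,x_k\}=-\delta_{jk}x_k$ and $\{x_jy_j,y_k\}=\delta_{jk}y_k$. Hence $\{J_i,W^a\}=-\bigl(\sum_j E_{ji}(Ea)_j\bigr)W^a=-(E^tEa)_i\,W^a$, and each $W^a$ is a weight vector for the $J_i$, with weight $-E^tEa$. This is the infinitesimal action of the residual torus $T$, and $W^a$ has $T$-weight determined by $a$.

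Because $E$ is injective, $E^tE$ is positive definite, so $E^tEa=0$ forces $a=0$. I will decompose $\C[Y_0(\Delta)]$ into $T$-weight spaces. By \autoref{prop:coordring}, the weight-$a$ space is spanned by $W^a$ times polynomials in the $x_jy_j$. Bracketing with $J_i$ acts on this space as multiplication by $-(E^tEa)_i$. So if $z$ is central, every component of nonzero weight vanishes, and $z$ is a polynomial $p$ in the $J_i$ alone. Here I use that, modulo the moment map relations, the $x_jy_j$ are linear combinations of the $J_i$, because the rows of $\Delta$ and the columns of $E$ span $\C^N$.

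The remaining step is to show that a polynomial $p(J_1,\dots,J_{N-M})$ Poisson-commuting with all $W^a$ is constant. We have
\[
\{p(J),W^a\}=-\sum_i \partial_ip(J)\,(E^tEa)_i\,W^a .
\]
I expect this step to be the main obstacle: one must ensure that this product does not vanish in the coordinate ring merely because of zero divisors or relations. Since $Y_0(\Delta)$ is a variety, its coordinate ring is a domain, and $W^a\neq0$ because monomials are not in the ideal generated by the moment map components $\sum_j\Delta_{ij}x_jy_j$. Therefore $\sum_i(E^tEa)_i\partial_ip(J)=0$ for all $a$ in the finite generating set. Since the $W^a$ generate, their weights span $\Q^{N-M}$, and $E^tE$ is invertible, so $\partial_ip(J)=0$ for every $i$.

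It remains to pass from $\partial_ip(J)=0$ to constancy of $p$. For this I need the $J_i$ to be algebraically independent in $\C[Y_0(\Delta)]$, which follows from dimension counting: their images generate the polynomial ring $\C[x_jy_j]/(\mu)\cong\C[J_1,\dots,J_{N-M}]$, which embeds in the domain. Hence $p$ is constant, and the Poisson centre of $\C[Y_0(\Delta)]$ is $\C$.
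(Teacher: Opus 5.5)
Your argument is correct and is essentially the paper's proof written out in full: the paper only says the claim follows from the presentation of $\C[Y_0(\Delta)]$ by the $J_i$ and $W^a$. Your weight-space argument supplies exactly what that one-liner leaves implicit: bracketing with the $J_i$ forces $T$-weight zero, and then bracketing with the $W^a$, together with invertibility of $E^tE$ and algebraic independence of the $J_i$, forces constancy. One harmless slip: with the convention $\{y_i,x_j\}=\delta_{ij}$ one gets $\{x_jy_j,x_k\}=+\delta_{jk}x_k$ and hence $\{J_i,W^a\}=+(E^tEa)_iW^a$, but your argument only uses that $E^tEa\neq0$ for $a\neq0$, so nothing changes.
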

\begin{proof}
This follows directly from the presentation of the ring given above.
\end{proof}
The corollary can also be given a more geometric proof; see the proof of Proposition~3.3 in \cite{AK18}.


\subsection{Unstable Locus}\label{sec:unstable}

In \autoref{sec:global} below, we want to show that a certain sheaf $\smash{\mathcal{V}_{\delta,\Delta}^{\hbar}}$ on $Y_\delta(\Delta)$ has no support outside the stable locus. To this end, here, we describe the ideal defining the unstable locus.

Recall that by \autoref{prop:stable-locus}, the unstable locus $\mathfrak{Z}=T^*V\setminus\mathfrak{X}$ is given by those points $(x_1,\dots,x_N,y_1,\dots,y_N)\in T^*V$ such that there are no $a_j\in\Q_{\geq0}$ and $b_j\in\Q_{\geq0}$ for $j=1,\dots,N$ satisfying
\begin{equation*}
\delta_i=\sum_{\substack{j=1\\x_j\neq0}}^Na_j\Delta_{ij}-\sum_{\substack{j=1\\y_j\neq0}}^Nb_j\Delta_{ij}
\end{equation*}
for all $i=1,\dots,M$. The unstable locus $\mathfrak{Z}$ is closed and we denote its defining ideal by $I(\mathfrak{Z})\subset\C[T^*V]$.

\begin{prop}[Unstable Locus]\label{prop:unstable-loc}
Fix an effective stability parameter $\delta\in\Q^M$. Then $I(\mathfrak{Z})$ contains the ideal $(m_d\mid \Delta d=\delta)$ in $\C[T^*V]$ for
\begin{equation*}
m_d\coloneqq\prod_{\substack{j=1,\dots,N\\d_j>0}}x_j\prod_{\substack{j=1,\dots,N\\d_j<0}}y_j
\end{equation*}
where $d\in\Q^N$ runs over the (nonempty) set of solutions of $\Delta d=\delta$.
\end{prop}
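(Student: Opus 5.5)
We have to show that each $m_d$ vanishes on every unstable point; equivalently, if $p\in T^*V$ satisfies $m_d(p)\neq0$ for some solution $d\in\Q^N$ of $\Delta d=\delta$, then $p$ is semistable. Since $I(\mathfrak{Z})$ is the ideal of all polynomials vanishing on $\mathfrak{Z}$, this shows that every generator $m_d$ lies in $I(\mathfrak{Z})$, and hence so does the ideal they generate.

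So fix $d$ with $\Delta d=\delta$ and a point $p=(x_1,\dots,x_N,y_1,\dots,y_N)$ with $m_d(p)\neq0$. By the definition of $m_d$, this means $x_j\neq0$ whenever $d_j>0$ and $y_j\neq0$ whenever $d_j<0$. Put $a_j\coloneqq d_j$ if $d_j>0$ and $a_j\coloneqq0$ otherwise, and $b_j\coloneqq-d_j$ if $d_j<0$ and $b_j\coloneqq0$ otherwise. These are nonnegative rationals with $d_j=a_j-b_j$.

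Now compute
\begin{equation*}
\sum_{\substack{j=1\\x_j\neq0}}^Na_j\Delta_{ij}-\sum_{\substack{j=1\\y_j\neq0}}^Nb_j\Delta_{ij}=\sum_{j:\,d_j>0}d_j\Delta_{ij}+\sum_{j:\,d_j<0}d_j\Delta_{ij}=(\Delta d)_i=\delta_i.
\end{equation*}
The sum over $j$ with $x_j\neq0$ contains all indices with $a_j\neq0$, because $a_j\neq0$ forces $d_j>0$ and hence $x_j\neq0$. Likewise, the sum over $j$ with $y_j\neq0$ contains all indices with $b_j\neq0$. Therefore restricting the sums as in \autoref{prop:stable-locus} loses no terms, and the displayed identity holds for every $i=1,\dots,M$. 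By \autoref{prop:stable-locus}, $p\in\mathfrak{X}$, so $p\notin\mathfrak{Z}$.

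For nonemptiness of the solution set: $\Delta$ is surjective over $\Z$, hence over $\Q$, so $\Delta d=\delta$ has a rational solution. The only point needing care is checking that the index restrictions in \autoref{prop:stable-locus} match the support of $m_d$; the argument above handles this. There is no real obstacle here.
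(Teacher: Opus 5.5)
Your proof is correct and follows the paper's argument exactly: you show that $m_d(p)\neq0$ forces $p\in\mathfrak{X}$ by setting $a_j=d_j$ for $d_j>0$ and $b_j=-d_j$ for $d_j<0$ (all other entries zero), and then invoke \autoref{prop:stable-locus}. Your explicit check that the index restrictions in that proposition match the support of $m_d$, and your justification that $\Delta d=\delta$ has a rational solution, are details the paper leaves implicit.
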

\begin{proof}
Fix a point $(x_1,\dots,x_N,y_1,\dots,y_N)\in T^*V$. It is in the semistable locus $\mathfrak{X}$ if and only if there are $\smash{a,b\in\Q_{\geq0}^N}$ such that
\begin{equation*}
\delta_i=\sum_{\substack{j=1\\x_j\neq0}}^N\Delta_{ij}a_j-\sum_{\substack{j=1\\y_j\neq0}}^N\Delta_{ij}b_j
\end{equation*}
for all $i=1,\dots,M$. We need to show that if $m_d(x_1,\dots,x_N,y_1,\dots,y_N)\neq0$ for one of the monomials $m_d$, $d\in\Q^N$ with $\Delta d=\delta$, then the point $(x_1,\dots,x_N,y_1,\dots,y_N)$ is already in the semistable locus $\mathfrak{X}$. Now, by definition, the assumption means that this $d\in\Q^N$ satisfies that all the $x_j$ with $d_j>0$ and $y_j$ with $d_j<0$ are nonzero. Then, for the $j\in\{1,\dots,N\}$ with $d_j>0$ and $d_j<0$ we set
\begin{equation*}
a_j\coloneqq d_j\in\Q_{>0}\qquad\text{and}\qquad b_j\coloneqq -d_j\in\Q_{>0},
\end{equation*}
respectively, and the remaining $a_j$ and $b_j$ to zero, in order to obtain a solution $\smash{a,b\in\Q_{\geq0}^N}$ to the semi-stability condition. Hence, this $(x_1,\dots,x_N,y_1,\dots,y_N)$ is in the semistable locus~$\mathfrak{X}$.
\end{proof}

\begin{rem}\label{rem:Zideal}
We can specialise the monomials $m_d$ in $I(\mathfrak{Z})$ by restricting to certain \emph{minimal} solutions of $\Delta d=\delta$.

Recall that the $(M\times N)$-matrix $\Delta$ has full rank~$M$ and is unimodular. In particular, some $M$-element subsets of its columns span $\Q^M$. Indeed, denoting as before the columns of $\Delta$ by $\Delta_j$, $j=1,\dots,N$, let $\mathcal{C}$ be the (nonempty) set of subsets $C=\{j_1<\dots<j_M\}\subset\{1,\dots,N\}$ of column indices such that the $(M\times M)$-submatrix $\Delta_C=(\Delta_{j_1},\dots,\Delta_{j_M})$ has determinant $\pm 1$.

Now, for any $C=\{j_1,\dots,j_M\}\in\mathcal{C}$, we can solve $\delta=\sum_{l=1}^Md_l\Delta_{j_l}$ by setting $\smash{d_l\coloneqq(\Delta_C^{-1}\delta)_l\in\Q}$. Extending the $d_l$ for $l=1,\dots,M$ with zeroes corresponding to the rows $\{1,\dots,N\}\setminus C$ yields a solution $d\in\Q^N$ of $\Delta d=\delta$. As it has $N-M$ zero entries, we can think of it as a minimal solution of $\Delta d=\delta$. (If $\delta$ is generic, then $\Delta_C^{-1}\delta$ has no zero entries, and so the minimal solutions $d\in\Q^N$ have exactly $N-M$ zero entries.)

Then, if we define the corresponding monomials
\begin{equation*}
m_C\coloneqq\prod_{\substack{l=1\\(\Delta_C^{-1}\delta)_l>0}}^Mx_{j_l}\prod_{\substack{l=1\\(\Delta_C^{-1}\delta)_l<0}}^My_{j_l}
\end{equation*}
for all $C\in\mathcal{C}$, these are clearly special cases of the monomials $m_d$ for $d\in\Q^N$ with $\Delta d=\delta$ that we defined in \autoref{prop:unstable-loc}. Hence, there is the inclusion of ideals
\begin{equation*}
\{m_C\mid C\in\mathcal{C}\}\subset\{m_d\mid \Delta d=\delta\}\subset I(\mathfrak{Z}).
\end{equation*}
\end{rem}

We recall the generators of the coordinate ring $\C[Y_0(\Delta)]$ from \autoref{prop:ring-gen}. We show that many of them lie in the defining ideal of the unstable locus:
\begin{prop}\label{prop:specialelements}
Let $I(\mathfrak{Z})$ be the defining ideal of the unstable locus $\mathfrak{Z}=T^*V\setminus\mathfrak{X}$. Then there exists some $b\in\Z^{N-M}\setminus\{0\}$ such that
\begin{equation*}
W^b\in I(\mathfrak{Z})\quad\text{and}\quad W^{-b}\in I(\mathfrak{Z}).
\end{equation*}
\end{prop}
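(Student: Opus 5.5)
The plan is to reduce the statement to \autoref{prop:unstable-loc}. I would show that for a suitable $b$ the monomial $W^b$ is divisible by one of the monomials $m_d$ with $\Delta d=\delta$. Then $W^b\in(m_d)\subset I(\mathfrak{Z})$, and the same argument applied to $-b$ gives $W^{-b}\in I(\mathfrak{Z})$.

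\emph{Step 1: choice of $b$.} By \autoref{rem:zerocolumnrow} we may assume that $E$ has no zero row. Each row $E_j$ then defines a genuine hyperplane $\{a\in\R^{N-M}\mid E_ja=0\}$. A finite union of $N$ proper hyperplanes cannot contain all of $\Z^{N-M}$, so I would choose $b\in\Z^{N-M}$ outside all of them. Then $(Eb)_j\neq0$ for every $j=1,\dots,N$, and in particular $b\neq0$. This is the only point that needs care: without it, indices with $(Eb)_j=0$ would force $d_j=0$ in Step 2, which need not be achievable.

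\emph{Step 2: a matching solution of $\Delta d=\delta$.} Fix any rational solution $d^0\in\Q^N$ of $\Delta d^0=\delta$; one exists by surjectivity, see also \autoref{rem:Zideal}. For rational $\lambda>0$ set
\begin{equation*}
d^\pm\coloneqq d^0\pm\lambda Eb.
\end{equation*}
Since $\Delta E=0$, we have $\Delta d^\pm=\delta$. Take $\lambda>\max_j|d^0_j|/|(Eb)_j|$. Then $\operatorname{sign}(d^\pm_j)=\pm\operatorname{sign}((Eb)_j)$ for all $j$, and every entry of $d^\pm$ is nonzero.

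\emph{Step 3: divisibility.} By construction, $m_{d^+}$ is the product of the $x_j$ with $(Eb)_j>0$ and the $y_j$ with $(Eb)_j<0$. Each of these variables appears in $W^b$ with exponent $|(Eb)_j|\geq1$, so $m_{d^+}$ divides $W^b$. In the same way, $m_{d^-}$ is the product of the $x_j$ with $(E(-b))_j>0$ and the $y_j$ with $(E(-b))_j<0$, so $m_{d^-}$ divides $W^{-b}$.

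Combining Steps 1–3 with \autoref{prop:unstable-loc} yields $W^{\pm b}\in I(\mathfrak{Z})$. I do not expect any real obstacle; the only point needing care is the genericity of $b$ in Step 1.
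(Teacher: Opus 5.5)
Your proposal is correct and follows the paper's proof step for step: you reduce to \autoref{prop:unstable-loc}, use \autoref{rem:zerocolumnrow} to choose $b$ with every entry of $Eb$ nonzero, and shift a rational solution of $\Delta d=\delta$ by a large multiple of $\pm Eb$ so that $m_{d^\pm}$ divides $W^{\pm b}$. One caveat applies equally to the paper's proof: the appeal to \autoref{rem:zerocolumnrow} is a standing hypothesis on $\Delta$, not a harmless normalisation for this statement, because if row $j$ of $E$ vanishes then $e_j$ lies in the row space of $\Delta$, so $d_j$ takes the same value (nonzero for generic $\delta$) for every solution of $\Delta d=\delta$, which puts $\{x_j=0\}$ or $\{y_j=0\}$ inside $\mathfrak{Z}$ while no $W^b$ involves $x_j$ or $y_j$.
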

\begin{proof}
Using \autoref{prop:unstable-loc}, we can rephrase the assertion as follows: we claim that there exist $b\in\Z^{N-M}$ and $d^+,d^-\in\Q^N$ such that $\Delta d^+=\Delta d^-=\delta$ and
\begin{align*}
d^+_i >0 &\;\Rightarrow\; (Eb)_i>0,& d_i^+ < 0 &\;\Rightarrow\; (Eb)_i <0,
\intertext{as well as}
d^-_i >0 &\;\Rightarrow\; -(Eb)_i>0,& d_i^- < 0 &\;\Rightarrow\; -(Eb)_i <0.
\end{align*}
Indeed, in that case, the monomials $m_{d^+}\mid W^b$ and $m_{d^-}\mid W^{-b}$, which shows that $W^b$ and $W^{-b}$ are in the ideal $(m_d\mid\Delta d=\delta)\subset I(\mathfrak{Z})$ of $\C[T^*V]$.

To prove the rephrased assertion, fix a solution $d\in\Q^N$ of $\Delta d=\delta$, which always exists as $\Delta$ has full rank. Then for any $r\in \Q$ and any vector $b\in\Z^{M-N}$, by the fact that $\Delta E=0$, $\Delta(d+rEb)=\delta$. By \autoref{rem:zerocolumnrow}, we may assume that $E$ does not have any zero row. Hence, there exists $b\in\Z^{M-N}\setminus\{0\}$ such that $Eb\in\Z^N\subset\Q^N$ has no zero entries. Then, for large enough $r\in\Q$, $d^+\coloneqq d+rEb$ has entries with the same sign as $Eb$, and $
d^-\coloneqq d-rEb$ has entries with the same sign as $-Eb$. This proves the assertion.
\end{proof}

\begin{rem}\label{rem:specialelements}
In view of \autoref{lem:lowertruncated}, one would want to prove a slightly stronger version of \autoref{prop:specialelements}, where one additionally requires that the solutions $d^+$ and $d^-$ of $\Delta d=\delta$ be minimal in the sense of \autoref{rem:Zideal}. In other words, we would like to show that it is possible to find sets $C,C'\in\mathcal{C}$ and a vector $b\in\Z^{M-N}$ such that $m_C\mid W^b$ and $m_{C'}\mid W^{-b}$.

This is likely true under no further (or only very mild) assumptions on the stability parameter $\delta\in\Q^M$, but we do not verify this here.

We can slightly reformulate the problem to finding two sets $C,C'\in\mathcal{C}$ such that the corresponding minimal solutions $d,d'$ of $\Delta d=\delta$ satisfy that $d$ and $-d'$ lie in the same closed orthant of $\Q^N$, which means that $d_j\leq0$ if and only if $-d'_j\leq0$ and $d_j\geq0$ if and only if $-d'_j\geq0$ for all $j=1,\dots,N$. Then, $v\coloneqq d-d'$ lies in the same orthant and also in the kernel of $\Delta$ and hence in the column space of $E$. By scaling, it is of the form $v=Eb$ for some $b\in\Z^{M-N}$ that then satisfies the conditions in the proof of \autoref{prop:specialelements}.
\end{rem}


\subsection{Universal Family of Poisson Deformations}\label{sec:deformation}

The hypertoric variety $Y_\delta(\Delta)$ has the universal family of Poisson deformations $\tilde{Y}_\delta(\Delta)$ over $\g^*\cong\C^M$, which we briefly review \cite{KV02,Los12,Los22,Nag21}.

Consider $\smash{\mathfrak{X}\times\g^*=(T^*V)_\delta^\mathrm{ss}\times\g^*}$ as a smooth algebraic Poisson variety, where the Poisson bracket on $\C[\g^*]=S(\g)$ is trivial. One can consider the extended moment map $\smash{\tilde\mu\colon\mathfrak{X}\times\g^*\to\g^*}$ such that the corresponding comoment map is given by $\tilde\mu^*\colon\g\to\C[\mathfrak{X}\times\g^*]=\C[\mathfrak{X}]\otimes\C[\g^*]$,
\begin{equation*}
a_i\mapsto\mu^*(a_i)-\tau_i^\Delta=\sum_{j=1}^N\Delta_{ij}x_jy_j-\tau_i^\Delta,
\end{equation*}
where $\{a_1,\dots,a_M\}$ and $\{\tau_1^\Delta,\dots,\tau_M^\Delta\}$ are the (same) standard basis of $\g\cong\C^M$ in the domain and codomain of $\tilde\mu$, respectively. Then there is an isomorphism $\tilde\mu^{-1}(0)\cong\mathfrak{X}$ that identifies $\tau_i^\Delta$ with $\mu^*(a_i)$ for $i=1,\dots,M$. The torus $G=(\C^\times)^M$ acts freely on $\mathfrak{X}\times\g^*$ and the action preserves $\tilde\mu^{-1}(0)$. We then define the Poisson manifold
\begin{equation*}
\tilde{Y}_\delta(\Delta)\coloneqq\tilde{\mu}^{-1}(0)/G\cong\mathfrak{X}/G.
\end{equation*}
The second projection map $\mathfrak{X}\times\g^*\to\g^*$ induces the morphism $\rho\colon\tilde{Y}_\delta(\Delta)\to\g^*$ of Poisson schemes, and the zero fibre is $\rho^{-1}(0)\cong Y_\delta(\Delta)$ as holomorphic symplectic manifolds. One can show that $\tilde{Y}_\delta(\Delta)$ is a universal family of filtered Poisson deformations of $Y_\delta(\Delta)$ over $\g^*\cong H^2(Y_\delta(\Delta),\C)$.

\begin{rem}\label{rem:namikawa}
The universal Poisson deformation
\begin{equation*}
\tilde{Y}_{\delta}(\Delta)\to\g^*
\end{equation*}
descends to the universal Poisson deformation
\begin{equation*}
\tilde{Y}_{0}(\Delta)\to\g^*/\mathbb{W}
\end{equation*}
where $\mathbb{W}$ is the (finite) Namikawa-Weyl group.  See \cite{Nag21}, for more details. While the Namikawa-Weyl group was used in \cite{Kuw21} to define vertex algebras associated with hypertoric varieties, we shall not make extensive use of this group for our vertex superalgebra construction over $Y_0(\Delta)$.
\end{rem}

The structure sheaf $\mathcal{O}_{\tilde{Y}_\delta(\Delta)}$ of $\tilde{Y}_\delta(\Delta)$ is obtained by Hamiltonian reduction of the structure sheaf $\mathcal{O}_{\mathfrak{X}\times\g^*}=\mathcal{O}_\mathfrak{X}\otimes_\C\mathcal{O}_{\g^*}$ of $\mathfrak{X}\times\g^*$. Indeed,
\begin{align*}
\mathcal{O}_{\tilde{Y}_\delta(\Delta)}&=\Bigl(\tilde{p}_{*}\bigl(\mathcal{O}_{\mathfrak{X}\times\g^*}\bigm/\sum_{i=1}^{M}\mathcal{O}_{\mathfrak{X}\times\g^*}\tilde\mu^*(a_i)\bigr)\Bigr)^\g\\
&=\Bigl(\tilde{p}_{*}\bigl(\mathcal{O}_{\mathfrak{X}\times\g^*}\bigm/\sum_{i=1}^{M}\mathcal{O}_{\mathfrak{X}\times\g^*}(\mu^*(a_i)-\tau_i^\Delta)\bigr)\Bigr)^\g
\end{align*}
with the projection $\tilde{p}\colon\mathfrak{X}\times\g^*\to\tilde{Y}_\delta(\Delta)$. It is an algebra over $\C[\g^*]=\C[\tau_1^\Delta,\dots,\tau_M^\Delta]$, and $Y_\delta(\Delta)$ is the fibre of $\rho\colon\tilde{Y}_\delta(\Delta)\to\g^*$ at $\tau_1^\Delta=\dots=\tau_M^\Delta=0$; cf.\ the expression for $\mathcal{O}_{Y_\delta(\Delta)}$ in \autoref{sec:hypertoric}.

For $\delta=0$, $\tilde{Y}_0(\Delta)\cong T^*V/G$ can be viewed as the (affine) Lawrence hypertoric variety (see \cite{Nag21} for details). The coordinate ring is
\begin{equation*}
\C[\tilde{Y}_0(\Delta)]=\bigl(\C[T^*V]\otimes\C[\g^*]/(\{\mu^*(a_i)-\tau^\Delta_i\}_{i=1}^M)\bigr)^G\cong\C[T^*V]^G.
\end{equation*}

\medskip

We now relate the coordinate rings of the hypertoric variety $Y_\delta(\Delta)$ and its deformation $\tilde{Y}_\delta(\Delta)$. Later, in \autoref{thm:var}, we use this to determine the associated variety of the hypertoric \svoa{}s constructed in this text.
\begin{prop}\label{prop:poiss-centre}
Let $I_{Z(\Delta)}$ be the ideal in $\C[\tilde{Y}_\delta(\Delta)]$ generated by the augmentation ideal of its Poisson centre. Then, as a Poisson algebra,
\begin{equation*}
\C[\tilde{Y}_\delta(\Delta)]/I_{Z(\Delta)}\cong\C[Y_0(\Delta)].
\end{equation*}
\end{prop}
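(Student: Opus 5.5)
The plan is to make everything explicit. First we identify $\C[\tilde{Y}_\delta(\Delta)]$ with the ring of invariants $\C[T^*V]^G$. Then we compute its Poisson centre directly as the polynomial ring $\C[\tau_1^\Delta,\dots,\tau_M^\Delta]$ with $\tau_i^\Delta=\mu^*(a_i)$. Finally we pass to the quotient using reductivity of $G$.

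\emph{Step 1: global functions.} Here $\C[\tilde{Y}_\delta(\Delta)]$ means the ring of global functions. The morphism $\tilde{Y}_\delta(\Delta)\to\tilde{Y}_0(\Delta)$ is projective and birational onto the normal affine Lawrence hypertoric variety. As in the undeformed case recalled in \autoref{sec:hypertoric}, this gives
\begin{equation*}
\C[\tilde{Y}_\delta(\Delta)]=\C[\tilde{Y}_0(\Delta)]\cong\C[T^*V]^G,
\end{equation*}
with $\tau_i^\Delta$ identified with $\mu^*(a_i)$. The invariant ring is spanned by the monomials $x^\alpha y^\beta$ with $\Delta(\alpha-\beta)=0$.

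\emph{Step 2: the Poisson centre.} Each $\tau_i^\Delta=\mu^*(a_i)$ is central, because its Hamiltonian vector field is the infinitesimal $G$-action, which kills invariants. For the converse, let $z$ be central.

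First take the bracket with $J_i=\sum_jE_{ji}x_jy_j$. This acts diagonally on monomials, with eigenvalue $\pm(E^t(\alpha-\beta))_i$. So every monomial of $z$ satisfies $E^t(\alpha-\beta)=0$. Together with $\Delta(\alpha-\beta)=0$ this forces $\alpha=\beta$, since the rows of $\Delta$ and the columns of $E$ span $\C^N$. Hence $z=f(u_1,\dots,u_N)$ is a polynomial in $u_j=x_jy_j$.

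Next take the bracket with $W^a$. Since $\{u_j,\cdot\}$ scales $x_j$ and $y_j$ by opposite signs, one gets, up to a global sign,
\begin{equation*}
\{f(u),W^a\}=W^a\sum_j(Ea)_j\,\partial_jf(u).
\end{equation*}
This vanishes for all $a\in\Z^{N-M}$ exactly when the derivative of $f$ along every vector of $\im E$ vanishes. Then $f$ is a polynomial in the linear forms annihilating $\im E=\ker\Delta$, i.e.\ in the rows of $\Delta$ applied to $u$. These are precisely the $\tau_i^\Delta$. So the centre is $\C[\tau_1^\Delta,\dots,\tau_M^\Delta]$.

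\emph{Step 3: the quotient.} The centre is nonnegatively graded with degree-zero part $\C$. Its augmentation ideal is therefore generated by the $\tau_i^\Delta$, so $I_{Z(\Delta)}=(\{\mu^*(a_i)\})\cap$-generated inside $\C[T^*V]^G$. Because $G$ is reductive, the Reynolds operator shows that $\bigl(\C[T^*V]/(\{\mu^*(a_i)\}_{i=1}^M)\bigr)^G$ equals $\C[T^*V]^G/(\{\mu^*(a_i)\}_{i=1}^M)\C[T^*V]^G$. The left-hand side is $\C[\mu^{-1}(0)]^G=\C[Y_0(\Delta)]$. The Poisson brackets agree because both are induced from $\C[T^*V]$.

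The main obstacle is Step 1: making precise that the global functions on the non-affine $\tilde{Y}_\delta(\Delta)$ are the invariants. I would deduce this from normality of $T^*V/G$ and the fact that the unstable locus $\mathfrak{Z}$ has codimension at least $2$ in $T^*V$, so that $\C[\mathfrak{X}]^G=\C[T^*V]^G$. The centre computation itself is routine once the spanning property of $\Delta$ and $E$ is used.
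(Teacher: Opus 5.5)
Your proof is correct, but it is organised differently from the paper's. The paper never computes the Poisson centre. It takes the ideal $J$ generated by $\rho^*(\g)$, observes that $J\subset I_{Z(\Delta)}$ and that $\C[\tilde{Y}_\delta(\Delta)]/J\cong\C[Y_0(\Delta)]$, and then invokes \autoref{cor:poiss-cen}. An element of the augmentation ideal of the centre maps to a central element of positive degree in $\C[Y_0(\Delta)]$, hence to $0$, so $I_{Z(\Delta)}\subset J$.

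You instead identify $\C[\tilde{Y}_\delta(\Delta)]$ with $\C[T^*V]^G$. You then determine the centre explicitly as $\C[\tau_1^\Delta,\dots,\tau_M^\Delta]=\rho^*\C[\g^*]$ from the brackets with the $J_i$ and the $W^a$, and identify the quotient using the Reynolds operator. Your route is longer but self-contained, and it buys three things:
\begin{enumerate}
\item It justifies two facts the paper only asserts: that the global functions on $\tilde{Y}_\delta(\Delta)$ are the invariants, and that $\C[T^*V]^G/(\{\mu^*(a_i)\}_{i=1}^M)\cong\C[\mu^{-1}(0)]^G$.
\item It avoids \autoref{cor:poiss-cen}, whose proof in the paper is a one-line remark.
\item It gives the stronger conclusion that the centre is exactly $\rho^*\C[\g^*]$.
\end{enumerate}
The paper's argument is shorter and more conceptual. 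It needs only that the zero fibre has trivial centre, so it would adapt to deformations whose centre cannot be computed by hand.

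One small correction to your closing remark. In Step~1, your argument that the map is projective and birational onto a normal affine variety already suffices. The codimension claim you propose as the justification is false in general: it fails whenever $E$ has a zero row. For example, for $\Delta=(1)$ and $\delta>0$ one has $\mathfrak{Z}=\{x=0\}$. It holds only after the reduction of \autoref{rem:zerocolumnrow}.
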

\begin{proof}
Let $J$ be the ideal of $\C[\tilde{Y}_\delta(\Delta)]$ generated by $\rho^*(x)$ for $x\in\g$,
where we recall that $\rho\colon\tilde{Y}_{\delta}(\Delta)\to\mathfrak{g}^*$ is the fibration. Then it follows that $J\subset I_{Z(\Delta)}$ and $\C[Y_0(\Delta)]=\C[\tilde{Y}_\delta(\Delta)]/J$. But by \autoref{cor:poiss-cen}, the Poisson centre of $\C[Y_0(\Delta)]$ is trivial. Hence $J= I_{Z(\Delta)}$.
\end{proof}


\subsection{Super Variety}\label{sec:super}

Similarly to \cite{AKM23}, we also define a super version of the hypertoric variety $Y_\delta(\Delta)$. To this end, consider $\bar{\mathcal{O}}_{\mathfrak{X}}\coloneqq\mathcal{O}_{\mathfrak{X}}\otimes\C[\Pi T^*V]$ where the coordinate ring $\C[\Pi T^*V]\cong\Lambda(\psi_1,\dots,\psi_N,\phi_1,\dots,\phi_N)$ is a Poisson superalgebra, as defined in \autoref{sec:freefield}. Then $\bar{\mathcal{O}}_{\mathfrak{X}}$ is a sheaf of Poisson superalgebras on $\mathfrak{X}$.

We consider the comoment map $\bar\mu^*\colon\g\to\bar{\mathcal{O}}_{\mathfrak{X}}(\mathfrak{X})$ defined by
\begin{equation*}
a_i\mapsto\mu^*(a_i)+\sum_{j=1}^N\Delta_{ij}\psi_j\phi_j=\sum_{j=1}^N\Delta_{ij}(x_jy_j+\psi_j\phi_j)
\end{equation*}
for $i=1,\dots,M$. The comoment map $\bar\mu^*$ induces an action of $\g$ on $\bar{\mathcal{O}}_{\mathfrak{X}}$ and the corresponding $G$-action.
Define
\begin{equation*}
\bar{\mathcal{O}}_{Y_\delta(\Delta)}=\Bigl(p_{*}\bigl(\bar{\mathcal{O}}_{\mathfrak{X}}\bigm/\sum_{i=1}^{M}\bar{\mathcal{O}}_{\mathfrak{X}}\bar\mu^*(a_i)\bigr)\Bigr)^\g
\end{equation*}
with, as before, the projection $\smash{p\colon\mu^{-1}(0)\cap\mathfrak{X}\to Y_\delta(\Delta)}$. Then $\smash{\bar{\mathcal{O}}_{Y_\delta(\Delta)}}$ is a sheaf of superalgebras on $Y_\delta(\Delta)$. The Poisson bracket on $\bar{\mathcal{O}}_{\mathfrak{X}}$ induces a Poisson bracket on $\bar{\mathcal{O}}_{Y_\delta(\Delta)}$.


\subsection{Local Trivialisation}\label{sec:localtrivial}

Briefly, we recall local trivialisations of the Hamiltonian $G$-actions on $\mathfrak{X}$ and $\mathfrak{X}\times\g^*$. We refer to \cite{Kuw21} for more details.

There is an affine open covering $\smash{\mathfrak{X}=\bigcup_J\mathfrak{U}_J}$ with $\mathfrak{U}_J\cong T^*\C^{N-M}\times T^*G$ of the semistable locus $\mathfrak{X}$ that trivialises the $G$-action and the moment map $\mu$. That is, if we set $U_J=(\mu^{-1}(0)\cap\mathfrak{U}_J)/G\subset Y_\delta(\Delta)$, we obtain an affine open covering $\smash{Y_\delta(\Delta)=\bigcup_JU_J}$ with $U_J\cong T^*\C^{N-M}$.

Similarly, if we consider the Poisson deformation $\smash{\tilde{Y}_\delta(\Delta)}$ of $\smash{Y_\delta(\Delta)}$ and define $\smash{\tilde{\mathfrak{U}}_J=\mathfrak{U}_J\times\g^*\subset\mathfrak{X}\times\g^*}$, we obtain an affine open covering $\smash{\mathfrak{X}\times\g^*=\bigcup_J\tilde{\mathfrak{U}}_J}$ with $\tilde{\mathfrak{U}}_J\cong T^*\C^{N-M}\times T^*G\times\g^*$. As $G$ acts trivially on $\g^*$, the $\smash{\tilde{\mathfrak{U}}_J}$ are also preserved by the $G$-action and we set $\smash{\tilde{U}_J=(\tilde\mu^{-1}(0)\cap\tilde{\mathfrak{U}}_J)/G\subset\tilde{Y}_\delta(\Delta)}$, which yields an affine open covering $\smash{\tilde{Y}_\delta(\Delta)=\bigcup_J\tilde{U}_J}$ with $\tilde{U}_J\cong T^*\C^{N-M}\times\g^*$.


\subsection{Example: Minimal Nilpotent Orbit Closure for \texorpdfstring{$\sl_N$}{sl\_N}}\label{sec:var1}

In the following two sections, we consider two (symplectic dual) examples of hypertoric varieties. In fact, they are examples of quiver hypertoric varieties, which we describe in more detail in \autoref{sec:quiver}.\footnote{For convenience, we shall work in the language of the hypertoric quiver varieties $Y_\delta(Q)$ associated with the quiver $Q$ rather than that of the Nakajima quiver varieties associated with $\mathcal{Q}$ or $\mathcal{Q}^\#$. In particular, it will be convenient not to distinguish between usual and framing edges.}

\medskip

For the first example, we consider the quiver in \autoref{fig:ex1} with $M=1$ and for some $N\geq1$. This case is discussed in detail in Example~2.20 in \cite{Nag21}. 

As all edges in $E$ are of the form $e_{01}^k$, we simply denote them by $e^k\coloneqq e_{01}^k$ for $k=1,\dots,N$, as already indicated in \autoref{fig:ex1}:
\begin{equation*}
\begin{tikzpicture}
\node (1) at (0,0) {$n_0$};
\node (2) at (2,0) {$n_1$};
\draw[<-] (2) to [bend right] node[above] {$e^N$} (1);
\path (2) -- (1) node [midway] {$\raisebox{+1.5ex}{\vdots}$};
\draw[<-] (2) to [bend left] node[below] {$e^1$} (1);
\end{tikzpicture}
\end{equation*}
The corresponding $(1\times N)$-matrix $\Delta$ and a choice of the $(N\times(N-1))$-matrix $E$ in the short exact sequence \eqref{eq:SES} are given by
\begin{equation*}
\Delta=(1,\dots,1)\quad\text{and}\quad E=\begin{pmatrix}1\\-1&1\\
&\ddots&\ddots\\
&&-1&1\\
&&&-1\end{pmatrix},
\end{equation*}
i.e.\ $\Delta$ has entries $\Delta_k\coloneqq\Delta_{1,{}_{01}^k}=1$ for $k=1,\dots,N$.

The symplectic vector space $T^*V\cong\C^{2N}$ has the coordinates $x_k\coloneqq\smash{x_{01}^{(k)}}$ and $y_k\coloneqq\smash{y_{01}^{(k)}}$ for $k=1,\ldots,N$. The action of $G=\C^\times$ on $T^*V$ is given by
\begin{equation*}
t\cdot x_k=tx_k,\quad t\cdot y_k=t^{-1} y_k,
\end{equation*}
for $t\in G$, and the corresponding moment map $\mu\colon T^*V\cong\C^{2N}\to\g^*\cong(\C)^*\cong\C$ is
\begin{equation*}
\mu((\dots,x_k,\dots,y_k,\dots))=\sum_{k=1}^Nx_ky_k
\end{equation*}
and the comoment map $\mu^*\colon\g\cong\C\to\C[T^*V]$
\begin{align*}
a&\mapsto\sum_{k=1}^Nx_ky_k,
\end{align*}
where $\{a\}$ is the standard basis of $\g\cong\C$.

We fix an effective stability parameter $\delta>0$ in $\Q$ that is generic. For $N=1$, both $Y_\delta(\Delta)$ and $Y_0(\Delta)$ are just a point, so we assume in the following that $N\geq2$. The (singular) affine hypertoric variety for the quiver $Q$ in \autoref{fig:ex1} is
\begin{equation*}
Y_0(\Delta)=Y_0(Q)\cong\{Z\in\sl_N\mid\rk(Z)\leq1\}=\overline{\mathbb{O}_\mathrm{min}(\sl_N)},
\end{equation*}
the minimal nilpotent orbit closure of $\sl_N$. The condition on the rank is equivalent to all $(2\times 2)$-minors vanishing. Recall that the minimal nilpotent orbit of $\sl_N$ is
\begin{equation*}
\mathbb{O}_\mathrm{min}(\sl_N)=\{Z\in\sl_N\mid\rk(Z)=1\}=\{Z\in \sl_N \mid Z^2=0\}.
\end{equation*}
Indeed, the coordinate ring of $Y_0(\Delta)$ is isomorphic to that of $\overline{\mathbb{O}_\mathrm{min}(\sl_N)}$,
\begin{align*}
\C[Y_0(\Delta)]&=\C[\mu^{-1}(0)]^G=\frac{\C[\{x_iy_j\}_{i,j=1,\dots,N}]}{\bigl(\sum_{k=1}^Nx_ky_k\bigr)}\\
&\cong\frac{\C[\{z_{ij}\}_{i,j=1,\dots,N}]}{\bigl(\sum_{k=1}^Nz_{kk},\{z_{ij}z_{kl}-z_{il}z_{kj}\mid 1\leq i,j,k,l\leq N\}\bigr)}=\C[\overline{\mathbb{O}_\mathrm{min}(\sl_N)}]
\end{align*}
with the identification $z_{ij}\mapsto x_iy_j$ for $i,j=1,\dots,N$, where $z_{ij}$ is the coordinate function in $\C[\gl_N]=\C[\{z_{ij}\}_{i,j=1,\dots,N}]$ corresponding to the $(i,j)$-th matrix element.

In view of \autoref{prop:ring-gen}, we point out that the finite set of generators of the coordinate ring $\C[Y_0(\Delta)]$ described there can be chosen as
\begin{equation*}
J_i=x_iy_i-x_{i+1}y_{i+1}\quad\text{for}\quad i=1,\dots,N-1
\end{equation*}
and
\begin{equation*}
W^{a(i,j)}=x_iy_j\quad\text{for}\quad i,j=1,\dots,N\text{ with }i\neq j
\end{equation*}
where $a(i,j)=(0,\dots,0,1,\dots,1,0,\dots,0)^t\in\Z^{N-1}$ with the first $1$ in the $i$-th position and the last in the $(j-1)$-st position if $i<j$ and similarly $a(i,j)=(0,\dots,0,-1,\dots,-1,0,\dots,0)^t\in\Z^{N-1}$ with the first $1$ in the $j$-th position and the last in the $(i-1)$-st position if $i>j$.

On the other hand, the smooth hypertoric quiver variety for the quiver $Q$ is the cotangent bundle of projective space
\begin{equation*}
Y_\delta(\Delta)=Y_\delta(Q)=\bigl\{(x,y)\bigm| x\neq0,\,\textstyle\sum_{k=1}^Nx_ky_k=0\bigr\}\big/\C^\times\cong T^*\mathbb{P}^{N-1},
\end{equation*}
of dimension $2(N-1)$, where we wrote $x=(x_1,\dots,x_N)$ and $y=(y_1,\dots,y_N)$.

The symplectic resolution
\begin{equation*}
\pi\colon Y_\delta(\Delta)\to Y_0(\Delta),
\end{equation*}
is concretely given by
\begin{equation*}
\pi(x,y)=\begin{pmatrix}
x_1y_1&\cdots&x_1y_M\\
\vdots&\ddots&\vdots\\
x_My_1&\cdots&x_My_M
\end{pmatrix}.
\end{equation*}
Under the isomorphisms $Y_\delta(\Delta)\cong T^*\mathbb{P}^{N-1}$ and $Y_0(\Delta)\cong \overline{\mathbb{O}_\mathrm{min}(\sl_N)}$, this recovers the well-known Springer resolution
\begin{equation*}
T^*\mathbb{P}^{N-1}\to\overline{\mathbb{O}_\mathrm{min}(\sl_N)}.
\end{equation*}


\subsection{Example: Kleinian Singularity of Type \texorpdfstring{$A_{N-1}$}{A\_(N-1)}}\label{sec:var2}

As second example, we consider the hypertoric quiver variety $Y_\delta(Q)$ associated with the quiver $Q$ in \autoref{fig:ex2} for some $N\geq2$ and with $M=N-1$. This is discussed, e.g., in Lemma~10.2 of \cite{HS02}, and in Example~2.19 of \cite{Nag21}. We label the edges $e_1,\dots,e_N$ as follows:
\begin{equation*}
\begin{tikzpicture}
\node (1) at (4,1) {$n_0$};
\node (2) at (2,0) {$n_1$};
\node (3) at (4,0) {$n_2$};
\node (4) at (6,0) {$n_{N-1}$};
\draw[<-] (2) to node[above] {$e_1$} (1);
\draw[<-] (3) to node[below] {$e_2$} (2);
\draw[<-] (4) -- (3) node [midway, fill=white] {$\dots$};
\draw[->] (4) to node[above] {$e_N$} (1);
\end{tikzpicture}
\end{equation*}
That is, we set $e_i\coloneqq e_{i-1,i}^1$ for $i=1,\dots,N$, reducing the lower right index modulo~$N$ for $i=N$. Note that, in contrast to \autoref{fig:ex2}, we reversed the orientation of the edge $e_N$ to obtain a more symmetric description. As we mentioned before, this does not affect the variety $Y_\delta(Q)$ up to isomorphism. (In \autoref{sec:quiver}, we assume that the node $n_0$ of $Q$ is a source to simplify the description as a Nakajima quiver variety. However, this is not relevant here.)

Then the $((N-1)\times N)$-matrix $\Delta$ and a choice of the $(N\times1)$-matrix $E$ in the short exact sequence \eqref{eq:SES} are given by
\begin{equation*}
\Delta^t=\begin{pmatrix}1\\-1&1\\
&\ddots&\ddots\\
&&-1&1\\
&&&-1\end{pmatrix}
\quad\text{and}\quad E^t=(1,\dots,1),
\end{equation*}
i.e.
\begin{equation*}
\Delta_{\ell i}=\delta_{\ell i}-\delta_{\ell,i-1}
\end{equation*}
for $\ell=1,\dots,N-1$ and $i=1,\dots,N$. The symplectic vector space $T^*V\cong\C^{2N}$ has coordinates $x_i\coloneqq\smash{x_{i-1,i}^{(1)}}$ and $y_i\coloneqq\smash{y_{i-1,i}^{(1)}}$ for $i=1,\dots,N$. The action of $G=(\C^\times)^{N-1}$ on $T^*V\cong\C^{2N}$ is
\begin{align*}
(t_1,\dots,t_{N-1})\cdot x_i&=t_{i-1}^{-1}t_ix_i,&(t_1,\dots,t_{N-1})\cdot y_i&=t_{i-1}t_i^{-1}y_i\\
\intertext{for $i=2,\dots,N-1$ and}
(t_1,\dots,t_{N-1})\cdot x_1&=t_1x_1,&(t_1,\dots,t_{N-1})\cdot y_1&=t_1^{-1}y_1,\\
(t_1,\dots,t_{N-1})\cdot x_N&=t_{N-1}^{-1}x_N,&(t_1,\dots,t_{N-1})\cdot y_N&=t_{N-1}y_N
\end{align*}
for $(t_1,\dots,t_{N-1})\in G$. The moment map $\mu\colon T^*V\cong\C^{2N}\to\g^*\cong(\C^{N-1})^*\cong\C^{N-1}$ is given by
\begin{equation*}
\mu((\dots,x_i,\dots,y_i,\dots))=\bigl(x_\ell y_\ell-x_{\ell+1}y_{\ell+1}
\bigr)_{1\leq\ell\leq N-1},
\end{equation*}
and the comoment map $\mu^*\colon\g\cong\C^{N-1}\to\C[T^*V]$ is
\begin{equation*}
a_\ell\mapsto x_\ell y_\ell-x_{\ell+1}y_{\ell+1},
\end{equation*}
where $\{a_1,\dots,a_{N-1}\}$ is the standard basis of $\g\cong\C^{N-1}$.

We fix a generic effective stability parameter $\delta\in\Z^{N-1}$, like $\delta=(1,\dots,1)^t$, for instance. The (singular) affine hypertoric variety can be identified as
\begin{equation*}
Y_0(\Delta)=Y_0(Q)\cong\C^2/{(\Z/N\Z)}\eqqcolon X_{A_{N-1}},
\end{equation*}
the Kleinian or du Val singularity of type $A_{N-1}$. Indeed, the coordinate ring of $Y_0(\Delta)$ is
\begin{align*}
\C[Y_0(\Delta)]&=\C[\mu^{-1}(0)]^G=\frac{\C[x_1\dots x_N,y_1\dots y_N,x_1y_1,\dots,x_Ny_N]}{(x_1y_1-x_2y_2,\dots,x_{N-1}y_{N-1}-x_Ny_N)}\\
&\cong\frac{\C[e,f,h]}{(ef-h^N)}\cong\C[x^N,y^N,xy]=\C[x,y]^{\Z/N\Z}=\C[X_{A_{N-1}}]
\end{align*}
with the identifications
\begin{equation*}
e\mapsto x_1\dots x_N,\quad f\mapsto y_1\dots y_N\quad\text{and}\quad h\mapsto x_1y_1\text{ (or any $x_iy_i$)}
\end{equation*}
and
\begin{equation*}
e\mapsto x^N,\quad f\mapsto y^N\quad\text{and}\quad h\mapsto xy
\end{equation*}
and with the action of $\Z/N\Z=\langle g\rangle$ on $\C^2=\Spec\C[x,y]$, $g\cdot(x,y)=(\xi x,\xi^{-1} y)$, where $\xi$ is a primitive $N$-th root of unity.

The finite set of generators of the coordinate ring $\C[Y_0(\Delta)]$ described in \autoref{prop:ring-gen} is given by
\begin{equation*}
J=x_1y_1+\dots+x_Ny_N
\end{equation*}
and
\begin{equation*}
W^1=x_1\dots x_N\quad\text{and}\quad W^{-1}=y_1\dots y_N,
\end{equation*}
corresponding to $Nh$, $e$ and $f$, respectively. Here, $a$ just takes values in $\Z$, and $a=\pm1$ suffice to generate the whole coordinate ring.

One can show that the smooth hypertoric quiver variety
\begin{equation*}
Y_\delta(\Delta)=Y_\delta(Q)\cong\widetilde{X}_{A_{N-1}}
\end{equation*}
gives the minimal resolution
\begin{equation*}
\widetilde{X}_{A_{N-1}}\to X_{A_{N-1}},
\end{equation*}
meaning that any other resolution of $X_{A_{N-1}}$ factors through it. It has dimension~$2$.


\subsubsection*{Special Case \texorpdfstring{$N=2$}{N=2}}

We mention that for $N=2$ the two constructions from \autoref{sec:var1} and from this section, \autoref{sec:var2}, give isomorphic symplectic varieties of dimension~$2$. Indeed, the underlying quivers, say $Q_1$ and $Q_2$, respectively, are identical up to a different choice of orientation:
\begin{equation*}
\begin{tikzpicture}
\node (1) at (0,0) {$n_0$};
\node (2) at (2,0) {$n_1$};
\draw[<-] (2) to [bend right] node[above] {$e^2$} (1);
\draw[<-] (2) to [bend left] node[below] {$e^1$} (1);
\node (3) at (5,0.5) {$n_0$};
\node (4) at (5,-0.5) {$n_1$};
\draw[<-] (4) to [bend right] node[right] {$e_1$} (3);
\draw[->] (4) to [bend left] node[left] {$e_2$} (3);
\node at (-1,0) {$Q_1$:};
\node at (3.5,0) {$Q_2$:};
\end{tikzpicture}
\end{equation*}
The corresponding weight matrices are
\begin{equation*}
\Delta^1=(1,1)\qquad\text{and}\qquad\Delta^2=(1,-1).
\end{equation*}
Indeed, for both quivers, $T^*V\cong\C^4$ with coordinates $x_1,x_2,y_1,y_2$ and symplectic form $dx_1\wedge dy_2+dx_2\wedge dy_2$. The two different actions of $G=\C^\times$ on $T^*V$
\begin{equation*}
t\cdot(tx_1,tx_2,t^{-1}y_1,t^{-1}y_2)\quad\text{and}\quad t\cdot(tx_1,t^{-1}x_2,t^{-1}y_1,ty_2)
\end{equation*}
for $t\in G$ are intertwined by the $G$-equivariant linear isomorphism $T^*V\to T^*V$
\begin{equation*}
(x_2,y_2)\mapsto (-y_2,x_2),
\end{equation*}
also preserving the symplectic form. Correspondingly, the linear isomorphism also intertwines the moment maps $\mu_1,\mu_2\colon T^*V\cong\C^4\to\g^*\cong\C^*\cong\C$,
\begin{equation*}
\mu_1((x_1,x_2,y_1,y_2))=x_1y_1+x_2y_2\quad\text{and}\quad\mu_2((x_1,x_2,y_1,y_2))=x_1y_1-x_2y_2.
\end{equation*}
Hence, the corresponding (singular) affine varieties are isomorphic,
\begin{equation*}
\overline{\mathbb{O}_\mathrm{min}(\sl_2)}\cong Y_0(\Delta^1)\cong Y_0(\Delta^2)\cong\C^2/(\Z/2\Z)=X_{A_1},
\end{equation*}
and the same is true for their resolutions, the hypertoric quiver varieties
\begin{equation*}
T^*\mathbb{P}^{N-1}\cong Y_{\delta_1}(\Delta^1)\cong Y_{\delta_2}(\Delta^2)\cong\widetilde{X}_{A_{N-1}},
\end{equation*}
see Lemma~2.25 in \cite{Nag21} for more details. For example, the above isomorphisms become apparent for the coordinate rings of the singular affine varieties
\begin{align*}
\C[Y_0(\Delta^1)]=\C[\mu_1^{-1}(0)]^G&=\frac{\C[x_1y_1,x_1y_2,x_2y_1,x_2y_2]}{(x_1y_1+x_2y_2)}\cong\C[\overline{\mathbb{O}_\mathrm{min}(\sl_2)}]\\[-6pt]
&\hspace*{6.4em}\rotatebox{270}{$\cong$}\\
\C[Y_0(\Delta^2)]=\C[\mu_2^{-1}(0)]^G&=\frac{\C[x_1x_2,y_1y_2,x_1y_1,x_2,y_2]}{(x_1y_1-x_2y_2)}=\C[x,y]^{\Z/2\Z}=\C[X_{A_1}]
\end{align*}
under $(x_2,y_2)\mapsto (-y_2,x_2)$.


\subsubsection*{Symplectic Duality}

The hypertoric quiver varieties from \autoref{sec:var1} and \autoref{sec:var2} are symplectic or Gale duals; see \autoref{sec:sym-dual}. Indeed, consider the short exact sequence
\begin{equation*}
\{0\}\longrightarrow\Z^{N-1}\overset{E}{\longrightarrow}\Z^N\overset{\Delta}{\longrightarrow}\Z^1\longrightarrow\{0\}
\end{equation*}
with the $(1\times N)$- and $(N\times(N-1))$-matrices
\begin{equation*}
\Delta=(1,\dots,1)\quad\text{and}\quad E=\begin{pmatrix}1\\-1&1\\
&\ddots&\ddots\\
&&-1&1\\
&&&-1\end{pmatrix}.
\end{equation*}
We also define $\Delta\!^!\coloneqq E^t$ and $E^!\coloneqq\Delta^t$. Then, it is apparent from the definitions above that
\begin{equation*}
Y_\delta(\Delta)\cong T^*\mathbb{P}^{N-1}\to\overline{\mathbb{O}_\mathrm{min}(\sl_N)}\quad\text{and}\quad Y_{\Delta\!^!}(\Delta\!^!)\cong\widetilde{X}_{A_{N-1}}\to X_{A_{N-1}}
\end{equation*}
are symplectic dual symplectic resolutions, for generic choices of effective stability parameters $\delta$ and $\Delta\!^!$, as described above. They have dimension $2(N-1)$ and $2$, respectively.


\section{\SVOA{}s for Hypertoric Varieties}\label{sec:hypertoricsvoa}

In this section, we consider two families of vertex operator (super)algebras by chiralising the construction of the (singular) affine hypertoric varieties $Y_0(\Delta)$ by Hamiltonian reduction. However, there are anomalies present that prevent the BRST differential from closing. Depending on the choice of anomaly cancellation, we call these \emph{minimal hypertoric \voa{}s} $V_\mathrm{min}(\Delta)$ and \emph{boundary hypertoric \svoa{}s} $V(\Delta)$.

The former were first defined and considered in \cite{Kuw21} and use Heisenberg \voa{}s for anomaly cancellation (see also \cite{NS25,Sas25} for the study of some examples, and \cite{CSYZ25} for the generalisation to nonabelian quivers), while the latter \cite{BF25,BCDN23,Niu23} use free fermions (see also \cite{Yos23,FS24,Sas25} for the study of some examples, and \cite{AKM23} for fermionic anomaly cancellation appearing in a different context).

As a consequence, the boundary hypertoric \svoa{}s are infinite-index conformal extensions of the minimal hypertoric \voa{}s (times a Heisenberg \voa{}). Here, we give a systematic study of these fermionic extensions. Based on this, we shall determine the associated variety of the boundary hypertoric \svoa{}s in \autoref{sec:assvar}. We shall see that they are, in contrast to the minimal hypertoric \voa{}s, chiral quantisations of the underlying affine hypertoric varieties, i.e.\ $X_{V(\Delta)}=Y_0(\Delta)$. This also means that they are quasi-lisse, while the minimal hypertoric \voa{}s in general are not.

Another interesting phenomenon related to these fermionic extensions, at least in the examples that we study, is that the characters of the hypertoric \voa{}s are upgraded from partial (or false) theta functions, which are not modular, to (quasi)modular forms, as we shall discuss in \autoref{sec:chars}.

Examples of the \svoa{} constructions in this section corresponding to the hypertoric varieties studied in \autoref{sec:var1} and \autoref{sec:var2} will be considered in \autoref{sec:examples}.

\medskip

While the \svoa{}s $V(\Delta)$ in this section are analogues (or, more precisely, chiral quantisations) of the coordinate ring $\C[Y_0(\Delta)]$ of the affine and singular hypertoric varieties $Y_0(\Delta)$, in \autoref{sec:sheaves} we shall also present a vertex algebraic analogue of the smooth versions $Y_\delta(\Delta)$. More precisely, we define certain sheaves $\smash{\mathcal{V}_{\delta,\Delta}^\hbar}$ of ($\hbar$-adic) \svoa{}s over the hypertoric varieties $Y_\delta(\Delta)$. By contrast, the sheaves constructed in \cite{Kuw21} using minimal anomaly cancellation live over the universal family $\tilde{Y}_\delta(\Delta)$ of Poisson deformations of the hypertoric variety rather than the variety itself.

In \autoref{sec:global} we compare these two constructions. Classically, the global sections $\mathcal{O}_{Y_\delta(\Delta)}(Y_\delta(\Delta))$ of the structure sheaf $\mathcal{O}_{Y_\delta(\Delta)}$ coincide with the coordinate ring of $Y_0(\Delta)$, $\C[Y_0(\Delta)]\cong\mathcal{O}_{Y_\delta(\Delta)}(Y_\delta(\Delta))$. Using the faithfulness theorem in \cite{ADS26}, we shall prove a vertex algebraic analogue of this statement.


\subsection{Free-Field Vertex Operator Superalgebras}\label{sec:freefield}

For the construction of the hypertoric \svoa{}s by quantum Hamiltonian reduction, we need the Weyl and Clifford vertex operator (super)algebras, also called $\beta\gamma$-system and $bc$-system, respectively.

\medskip

Let $V=\C^N$ for $N\in\N$ be a finite-dimensional vector space. As in \autoref{sec:hypertoric}, consider the cotangent bundle $T^*V=V\oplus V^*$ equipped with the natural symplectic form. Recall that $x_1,\dots,x_N$ and $y_1,\dots,y_N$ are the standard coordinate functions of $T^*V$; they are Darboux coordinates with respect to the symplectic form. The coordinate ring $\C[T^*V]\cong\C[x_1,\dots,x_N,y_1,\dots,y_N]$ is a Poisson algebra with Poisson bracket $\{y_i,x_j\}=\delta_{ij}$ and $\{x_i,x_j\}=\{y_i,y_j\}=0$ for $i,j=1,\dots,N$.

The Weyl vertex algebra or $\beta\gamma$-system $\mathcal{D}^\mathrm{ch}(T^*V)$ associated with $T^*V$ is the vertex algebra strongly generated by the fields $x_i(z)$ and $y_i(z)$ for $i=1,\dots,N$ subject to the operator product expansions
\begin{equation*}
x_i(z)y_j(w)\sim -\frac{\delta_{ij}}{(z-w)}
\end{equation*}
and $x_i(z)x_j(w)\sim 0$ and $y_i(z)y_j(w)\sim 0$. As a vector space,
\begin{equation*}
\mathcal{D}^\mathrm{ch}(T^*V)=\C[\{x_{i(-n)},y_{i(-n)}\mid \substack{i=1,\dots,N\\n\in\Ns}\}].
\end{equation*}

The $\beta\gamma$-system $\mathcal{D}^{\mathrm{ch}}(T^*V)$ admits an $N$-parameter family of conformal structures of central charge $\smash{c=\sum_{i=1}^N2(1-6a_i+6a_i^2)}$ defined by the conformal vectors
\begin{equation*}
\omega^{a}=\sum_{i=1}^N\bigl(a_i\partial x_i\,y_i-(1-a_i)x_i\partial y_i\bigr)
\end{equation*}
indexed by $a\in\C^N$. Each endows $\mathcal{D}^{\mathrm{ch}}(T^*V)$ with the structure of a (generalised) conformal vertex algebra. With respect to this conformal structure, the strong generators have $L_0$-weights
\begin{equation*}
\wt(x_i)=1-a_i\quad\text{and}\quad\wt(y_i)=a_i
\end{equation*}
for $i=1,\dots,N$.

In \autoref{sec:conf-vect}, we shall discuss what choices of the conformal vectors lie in the kernel of the BRST differential and hence descend to conformal structures on the BRST cohomology. One of these structures is the \emph{natural} conformal structure, defined by
\begin{align*}
\omega=\omega^{1/2}=\sum_{i=1}^N(\partial x_i\,y_i-x_i\partial y_i)/2
\end{align*}
for $a=(1/2,\dots,1/2)^t$. For this choice, $\mathcal{D}^{\mathrm{ch}}(T^*V)$ is a (generalised) \voa{} that is $\frac{1}{2}\N$-graded by $L_0$-weights and of central charge $c=-N$. The strong generators $x_i$ and $y_i$ all have $L_0$-weights $1/2$ in this case. If not otherwise noted, this is the conformal structure that we consider.

On the other hand, to prove certain technical results, we shall also make use of the \emph{auxiliary} conformal structures defined by the conformal vectors $\omega^1$ and $\omega^0$ for $a=(1,\dots,1)^t$ and $a=(0,\dots,0)^t$, respectively.

\medskip

Denote by $\Pi T^*V$ the odd vector space corresponding to $T^*V$. Let $\psi_1,\dots,\psi_N$ and $\phi_1,\dots,\phi_N$ denote the standard Poisson coordinates on $\Pi T^*V$, i.e.\ the coordinate ring $\C[\Pi T^*V]\cong\Lambda(\psi_1,\dots,\psi_N,\phi_1,\dots,\phi_N)$ is a Poisson superalgebra with Poisson bracket $\{\psi_i,\phi_j\}=\delta_{ij}$ and $\{\psi_i,\psi_j\}=\{\phi_i,\phi_j\}=0$ for $i,j=1,\dots,N$.

The Clifford vertex superalgebra or $bc$-system $\mathcal{C}\ell(\Pi T^*V)$ associated with $\Pi T^*V$ is the vertex superalgebra strongly generated by the odd fields $\psi_i(z)$ and $\phi_i(z)$ for $i=1,\dots,N$ satisfying the operator product expansions
\begin{equation*}
\psi_i(z)\phi_j(w)\sim\frac{\delta_{ij}}{(z-w)}
\end{equation*}
and $\psi_i(z)\psi_j(w)\sim 0$ and $\phi_i(z)\phi_j(w)\sim 0$. As a super vector space,
\begin{equation*}
\mathcal{C}\ell(\Pi T^*V)=\Lambda(\{\psi_{i(-n)},\phi_{i(-n)}\mid\substack{i=1,\dots,N\\n\in\Ns}\}).
\end{equation*}

Also the Clifford vertex superalgebra $\mathcal{C}\ell(\Pi T^*V)$ admits an $N$-parameter family of conformal structures of central charge $\smash{c=\sum_{i=1}^N2(-1+6b_i-6b_i^2)}$ defined by the conformal vectors
\begin{equation*}
\omega^b=\sum_{i=1}^N\bigl(b_i\partial\psi_i\,\phi_i-(1-b_i)\psi_i\partial\phi_i\bigr)
\end{equation*}
for $b\in\C^N$. Each endows $\mathcal{C}\ell(\Pi T^*V)$ with the structure of a (generalised) \svoa{}. With respect to this conformal structure, the strong generators have $L_0$-weights
\begin{equation*}
\wt(\psi_i)=1-b_i\quad\text{and}\quad\wt(\phi_i)=b_i
\end{equation*}
for $i=1,\dots,N$.

We shall discuss in \autoref{sec:conf-vect} what choices of the conformal vectors are good in the sense that they are in the kernel of the BRST cohomology. Among them is the \emph{natural} conformal structure defined by the conformal vector
\begin{equation*}
\omega=\omega^{1/2}=\sum_{i=1}^N(\partial\psi_i\,\phi_i-\psi_i\partial\phi_i)/2
\end{equation*}
for $b=(1/2,\dots,1/2)^t$. This makes $\mathcal{C}\ell(\Pi T^*V)$ a \svoa{} (of ``correct statistics'') of central charge $c=N$, where the strong generators $\psi_i$ and $\phi_i$ both have $L_0$-weight $1/2$.  If not mentioned otherwise, this is the conformal structure that we consider.

Again, to prove certain technical results, we shall also make use of the \emph{auxiliary} conformal structures defined by the conformal vectors $\omega^1$ and $\omega^0$ for $b=(1,\dots,1)^t$ and $b=(0,\dots,0)^t$, respectively.

The \svoa{} $\mathcal{C}\ell(\Pi T^*V)\cong V_{\Z^N}$ is isomorphic to the lattice \svoa{} $V_{\Z^N}$ (see also below) associated with the rank-$N$ odd \emph{standard lattice} $\Z^N=\langle\tau_1,\dots,\tau_N \rangle_\Z$ with bilinear form $\langle\tau_i,\tau_j \rangle=\delta_{ij}$, equipped with the standard conformal vector for a lattice vertex operator (super)algebra so that the central charge is $c=\rk(\Z^N)=N$. To make this isomorphism explicit, if we consider the lattice \svoa{} construction (see, e.g., \cite{Kac98}) involving the twisted group algebra $\C_\eps[\Z^N]$ with the explicit choice of $2$-cocycle such that $\eps(\tau_i,\tau_i)=1$, then $\psi_i\mapsto e_{\tau_i}$ and $\phi_i\mapsto e_{-\tau_i}$ (implying that $\tau_i=\psi_i\phi_i$) is a way to define this isomorphism.

Throughout, we identify $\mathcal{C}\ell(\Pi T^*V)=V_{\Z^N}$ via $\psi_i=e_{\tau_i}$ and $\phi_i=e_{-\tau_i}$. The conformal vector satisfies $\smash{\omega=\sum_{i=1}^N(\partial\psi_i\,\phi_i-\psi_i\partial\phi_i)/2=\sum_{i=1}^N\tau_i^2/2}$.

\medskip

To construct the BRST cohomology, we shall also need the \emph{ghost} Clifford \svoa{}. It only differs from the above \svoa{} by the choice of conformal vector. However, it shall be convenient to keep the notation somewhat separate. As in \autoref{sec:hypertoric}, consider the abelian Lie algebra $\g=(\gl_1)^M\cong\C^M$ for $M\in\N$ and the corresponding super vector space $\Pi T^*\g$ with standard coordinate functions $c_i\in\g^*\subset\C[\g]$ and $b_i\in\g\subset\C[\g^*]$. Then $\mathcal{C}\ell(\Pi T^*\g)=\Lambda(\{c_{i(-n)},b_{i(-n)}\mid\substack{i=1,\dots,M\\n\in\Ns}\})$ is strongly generated by the odd fields $c_i(z)$ and $b_i(z)$ for $i=1,\dots,M$ subject to the operator product expansions
\begin{equation*}
c_i(z)b_j(w)\sim\frac{\delta_{ij}}{(z-w)}
\end{equation*}
and $c_i(z)c_j(w)\sim 0$ and $b_i(z)b_j(w)\sim 0$. On the ghost Clifford vertex superalgebra we make the choice
\begin{equation*}
\omega=\sum_{i=1}^M\partial c_i\, b_i
\end{equation*}
of conformal vector, making it into a \svoa{} ($\Z$-graded by $L_0$-weights) of central charge $c=-2M$. The $L_0$-weights of $c_i$ and $b_i$ are $0$ and $1$, respectively.

By setting $\deg(c_i)=1$ and $\deg(b_i)=-1$, we also equip $\mathcal{C}\ell(\Pi T^*\g)$ with the ghost (or cohomological) grading, making $\mathcal{C}\ell(\Pi T^*\g)=\bigoplus_{\bullet\in\Z}\mathcal{C}\ell^\bullet(\Pi T^*\g)$ a $\Z$-graded \svoa{}.

\medskip

Finally, we introduce the notion of generalised half-lattice vertex (super)algebras. These interpolate between Heisenberg \voa{}s and lattice vertex (super)algebras (like $V_{\Z^N}$ mentioned above), which are infinite-order simple-current extensions of the former.

Let $\h=(\h,\langle\cdot,\cdot\rangle)$ be a finite-dimensional $\C$-vector space equipped with a nondegenerate,\footnote{Below, in \autoref{sec:hypertoricSVOA}, we also consider a Heisenberg vertex algebra equipped with the zero bilinear form. In that case, it becomes a commutative vertex algebra.} symmetric bilinear form $\langle\cdot,\cdot\rangle$. Associated with $\h$ is the usual Heisenberg (or free boson) \voa{} $\pi^\h$. Given a basis $\{h_1,\dots,h_N\}$ of $\h$, $\pi^\h$ is the vertex algebra strongly generated by the fields $h_i(x)$ for $i=1,\dots,N$ subject to the operator product expansions
\begin{equation*}
h_i(z)h_j(w)\sim \frac{\langle h_i,h_j\rangle}{(z-w)^2}.
\end{equation*}
As a vector space,
\begin{equation*}
\pi^\h=\C[\{h_{i(-n)}\mid \substack{i=1,\dots,N\\n\in\Ns}\}].
\end{equation*}
With the standard choice of conformal vector $\omega=\sum_{i=1}^Nh_i^2/2$, it has central charge $c=\dim(\h)$, but we may also consider shifted conformal structures. It has the irreducible Fock modules $\smash{\pi_\lambda^\h}$ for all $\lambda\in\h$.

Then, let $K\subset\h$ be a (rational) lattice inside $\h$, i.e.\ a free $\Z$-module such that the restriction of the bilinear form to $K$ takes values in $\Q$. Crucially, we allow the rank of $K$ to be smaller than $\dim(\h)$ and the restriction of the bilinear form to $K$ to be degenerate. We further assume that $K$ is even, i.e.\ that $\langle\lambda,\lambda\rangle\in2\Z$ for all $\lambda\in K$, or at least that $K$ is integral, i.e.\ that $\langle\lambda,\mu\rangle\in\Z$ for all $\lambda,\mu\in K$. Then the direct sum of Fock modules for $\pi^\h$
\begin{equation*}
\HL_K^\h\coloneqq\bigoplus_{\lambda\in K}\pi_\lambda^\h,
\end{equation*}
an infinite simple-current extension of $\pi^\h$ (see, e.g., \cite{GLM24a}), can be endowed with the (unique up to isomorphism) structure of a (simple) conformal vertex algebra if $K$ is even, or that of a conformal vertex superalgebra if $K$ is odd. We call $\HL_K^\h$ the \emph{Heisenberg-lattice vertex (super)algebra} or \emph{generalised half-lattice vertex (super)algebra}.

In the special case when $K=\{0\}$ is the zero lattice, this recovers the Heisenberg \voa{} $\smash{\HL_{\{0\}}^\h=\pi^\h}$ itself. In the special case when $K$ has full rank $\rk(K)=\dim(\h)$, i.e.\ when $\h=K\otimes_\Z\C$, we obtain the usual lattice vertex (super)algebra $\HL_K^\h=V_K$. We remark that, in general, a Heisenberg-lattice vertex (super)algebra cannot be decomposed into a tensor product of a lattice and a Heisenberg vertex (super)algebra. To understand this, consider the example of $\h=\C^2$ with basis $\{x,y\}$ and bilinear form matrix $\diag(1,-1)$, and choose the isotropic lattice $K=\Z(x+y)$. In fact, this recovers the usual half-lattice vertex algebra $\Pi$ \cite{BDT02}.

Given a half-lattice vertex (super)algebra $\HL_K^\h$, any coset $\gamma+K$ for $\gamma\in\h$ defines the generalised Fock module
\begin{equation*}
\HL_{\gamma+K}^\h\coloneqq\bigoplus_{\lambda\in\gamma+K}\pi_\lambda^\h,
\end{equation*}
which is an irreducible module for $\HL_K^\h$. We note that $\gamma$ does not necessarily have to lie in $K\otimes_\Z\C$, which can be a proper subspace of $\h$.


\subsection{Minimal and Boundary Hypertoric \SVOA{}s}\label{sec:hypertoricSVOA}

In the following, we define the two version of the hypertoric vertex operator (super)algebras by quantum Hamiltonian reduction of certain free-field vertex operator (super)algebras: the smaller \emph{minimal} hypertoric \voa{}s $V_\mathrm{min}(\Delta)$ and the larger \emph{boundary} hypertoric \svoa{}s $V(\Delta)$.

\medskip

We begin with the construction of the boundary hypertoric \svoa{}s, which are the main objects in this text. We let $M,N\in\Ns$ with $M\leq N$ and $\Delta=(\Delta_{ij})_{1\leq i\leq M,1\leq j\leq N}$ as in \autoref{sec:hypertoric}, in particular \autoref{conv:main}. We consider the ``matter'' free-field \svoa{}
\begin{equation*}
M\coloneqq\mathcal{D}^\mathrm{ch}(T^*V)\otimes\mathcal{C}\ell(\Pi T^*V)
\end{equation*}
of central charge $c=-N+N=0$ for $V\cong\C^N$ and the ``ghost'' \svoa{} $\mathcal{C}\ell(\Pi T^*\g)$ of central charge $c=-2M$ for $\g\cong\C^M$ (with the natural conformal structures defined in the previous section). Then, consider the tensor product \svoa{}
\begin{equation*}
\tilde{C}=M\otimes\mathcal{C}\ell(\Pi T^*\g)
\end{equation*}
of central charge $c=-2M$, which inherits the ghost $\Z$-grading $\tilde{C}=\bigoplus_{\bullet\in\Z}\tilde{C}^\bullet$ from $\mathcal{C}\ell(\Pi T^*\g)$ with $\tilde{C}^\bullet=M\otimes\mathcal{C}\ell^\bullet(\Pi T^*\g)$.

\medskip

In order to define the BRST cohomology, we consider the following $2N$ bosonic fields of $L_0$-weight~$1$ in $M$:
\begin{equation*}
\sigma_i\coloneqq x_iy_i,\quad \tau_i\coloneqq\psi_i\phi_i\quad\text{for }i=1,\dots,N.
\end{equation*}
They satisfy the operator product expansions
\begin{equation*}
\sigma_i(z)\sigma_j(w)\sim-\frac{\delta_{ij}}{(z-w)^2}\quad\text{and}\quad \tau_i(z)\tau_j(w)\sim\frac{\delta_{ij}}{(z-w)^2}
\end{equation*}
and $\sigma_i(z)\tau_j(w)\sim0$ for $i,j=1,\dots,N$. That is, they generate a Heisenberg vertex algebra $\pi^S\otimes\pi^T\subset M$ associated with the $2N$-dimensional vector space with nondegenerate bilinear form
\begin{equation*}
S\oplus T,\quad\diag(-1,\dots,-1,1,\dots,1)
\end{equation*}
with respect to the basis $\{\sigma_1,\dots,\sigma_n\}\cup\{\tau_1,\dots,\tau_N\}$. Below, in \autoref{sec:freefield2}, we shall complement the Heisenberg \voa{} $\pi^S$ by another Heisenberg \voa{} $\pi^R$ (with opposite bilinear form) in order to define a free-field embedding $\mathcal{D}^\mathrm{ch}(T^*V)\hookrightarrow\HL_K^{R\oplus S}$ into a half-lattice \voa{} extending $\pi^R\otimes\pi^S$.

The standard choice of conformal vector $\smash{\sum_{i=1}^N\tau_i^2/2}$ of the fermionic Heisenberg vertex algebra $\pi^T$ coincides with the conformal vector on $\mathcal{C}\ell(\Pi T^*V)$, i.e.\ the latter is a conformal extension of the former. It is an infinite-order simple-current extension, corresponding to the integral lattice $\Z^N$, as described above.

\medskip

Recall the short exact sequence \eqref{eq:SES}, $\{0\}\longrightarrow\Z^{N-M}\overset{E}{\longrightarrow}\Z^N\overset{\Delta}{\longrightarrow}\Z^M\longrightarrow\{0\}$ from \autoref{sec:torus-act}. Because $\Delta E=0$, the columns of $E$ are orthogonal to the rows of $\Delta$ with respect to the standard bilinear form $\diag(1,\dots,1)$ on $\C^N$ (or its negative). Moreover, they span all of $\C^N$. Using this, we make a base change in the bosonic and fermionic Heisenberg fields by defining the following fields
\begin{equation*}
\sigma^\Delta_i\coloneqq\sum_{j=1}^N\Delta_{ij}\sigma_j\quad\text{and}\quad\tau^\Delta_i\coloneqq\sum_{j=1}^N\Delta_{ij}\tau_j
\end{equation*}
for $i=1,\dots,M$, corresponding to the rows of $\Delta$, and
\begin{equation*}
\sigma^E_i\coloneqq\sum_{j=1}^NE_{ji}\sigma_j\quad\text{and}\quad\tau^E_i\coloneqq\sum_{j=1}^NE_{ji}\tau_j
\end{equation*}
for $i=1,\dots,N-M$, corresponding to the columns of $E$. We then define the following pairwise orthogonal spaces:
\begin{align*}
S^\Delta&\coloneqq\langle\sigma_1^\Delta,\dots,\sigma_M^\Delta\rangle_\C,&S^E&\coloneqq\langle\sigma_1^E,\dots,\sigma_{N-M}^E\rangle_\C,\\
T^\Delta&\coloneqq\langle\tau_1^\Delta,\dots,\tau_M^\Delta\rangle_\C,&T^E&\coloneqq\langle\tau_1^E,\dots,\tau_{N-M}^E\rangle_\C.
\end{align*}
(We can identify $T^\Delta$ with the abelian Lie algebra $\g\cong\C^M$; see also \autoref{sec:deformation}.) These spaces are equipped with the nondegenerate bilinear forms $-\Delta\Delta^t$, $-E^tE$, $\Delta\Delta^t$ and $E^tE$, respectively. Hence, the above Heisenberg vertex algebra is of the form
\begin{equation*}
\pi^S\otimes\pi^T\cong\pi^{S^\Delta}\otimes\pi^{S^E}\otimes\pi^{T^\Delta}\otimes\pi^{T^E}\subset M.
\end{equation*}

\smallskip

We introduce a chiralisation of the comoment map $\mu^*\colon\g\to\C[T^*V]$. Recall that the abelian Lie algebra $\g=\gl_1^M\cong\C^M$ with basis $\{a_1,\dots,a_M\}$ is naturally equipped with the zero invariant bilinear form, the Killing form. The corresponding affine vertex algebra is the commutative vertex algebra $V(\g)=\C[\{a_{i(-n)}\mid \substack{i=1,\dots,M\\n\in\Ns}\}]$. It is isomorphic to the rank-$M$ Heisenberg vertex algebra ``of level~$0$'' (i.e.\ with zero bilinear form, cf.\ \autoref{sec:freefield}).

The chiral comoment map is the $\C[\partial]$-linear map $\bar\mu^*_\mathrm{ch}\colon V(\g)\to M$ defined by
\begin{equation*}
\bar\mu^*_\mathrm{ch}(a_i)=\sum_{j=1}^N\Delta_{ij}(x_jy_j+\psi_j\phi_j)=\sum_{j=1}^N\Delta_{ij}(\sigma_j+\tau_j)=\sigma^\Delta_i+\tau^\Delta_i
\end{equation*}
for $i=1,\dots,M$. A direct calculation shows:
\begin{prop}
The chiral comoment map $\bar\mu^*_\mathrm{ch}\colon V(\g)\to M$ is a homomorphism of vertex superalgebras.
\end{prop}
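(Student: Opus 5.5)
Since $V(\g)$ is the commutative vertex algebra freely generated by the weight-one fields $a_1,\dots,a_M$, i.e.\ the rank-$M$ Heisenberg vertex algebra with zero bilinear form, the plan is to use its universal property. A $\C[\partial]$-linear map sending the generators $a_i$ to fields $A_i\in M$ extends uniquely to a vertex superalgebra homomorphism $V(\g)\to M$ (namely $a_{i_1(-n_1)}\cdots a_{i_k(-n_k)}\vac\mapsto A_{i_1(-n_1)}\cdots A_{i_k(-n_k)}\vac$) if and only if the $A_i$ are even and satisfy the same OPEs as the $a_i$, i.e.\ $A_i(z)A_k(w)\sim0$ for all $i,k$. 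This follows from the presentation of $V(\g)$ as the universal enveloping vertex algebra of the abelian Lie conformal algebra with zero bracket, or equivalently from the usual reconstruction theorem. The map $\bar\mu^*_\mathrm{ch}$ in the statement is exactly this extension, so the proposition reduces to checking the OPEs.

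For $A_i=\sigma^\Delta_i+\tau^\Delta_i$, the OPEs recorded above are
\begin{equation*}
\sigma_j(z)\sigma_l(w)\sim-\frac{\delta_{jl}}{(z-w)^2},\qquad\tau_j(z)\tau_l(w)\sim\frac{\delta_{jl}}{(z-w)^2},\qquad \sigma_j(z)\tau_l(w)\sim0.
\end{equation*}
They follow from Wick's theorem in $\mathcal{D}^\mathrm{ch}(T^*V)$ and $\mathcal{C}\ell(\Pi T^*V)$, respectively. Note in particular that $x_jy_j$ has no simple pole with itself, since the two single contractions cancel. Bilinearity then gives
\begin{equation*}
A_i(z)A_k(w)\sim\frac{-(\Delta\Delta^t)_{ik}+(\Delta\Delta^t)_{ik}}{(z-w)^2}=0.
\end{equation*}
This is the anomaly cancellation: the bosonic Heisenberg form $-\Delta\Delta^t$ on $S^\Delta$ cancels against the fermionic form $\Delta\Delta^t$ on $T^\Delta$. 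Each $A_i$ is even, being a sum of bilinears in bosons and in pairs of fermions.

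The only step needing care is the double-contraction computation confirming that $\sigma_j$ and $\tau_j$ have exactly opposite levels, including the signs coming from the normalisations $x_i(z)y_j(w)\sim-\delta_{ij}/(z-w)$ and $\psi_i(z)\phi_j(w)\sim\delta_{ij}/(z-w)$ and the fermionic reordering. Here I would use the identification $\tau_i=$ Heisenberg field of $V_{\Z^N}$ under $\psi_i=e_{\tau_i}$, which immediately gives norm $+1$. I would then compute the $\sigma_j\sigma_j$ double pole directly, which is $(-1)(-1)\cdot(-1)$ from the two crossed contractions, giving $-1$. Once these match the stated OPEs, the proposition follows.
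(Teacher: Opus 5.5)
Your proposal is correct and takes the same approach as the paper, whose proof consists exactly of checking $\bar\mu^*_\mathrm{ch}(a_i)(z)\bar\mu^*_\mathrm{ch}(a_j)(w)\sim0$. Your reduction via the universal property of the commutative vertex algebra $V(\g)$, together with the explicit cancellation $-(\Delta\Delta^t)_{ik}+(\Delta\Delta^t)_{ik}=0$ of the bosonic and fermionic double poles, spells out the ``direct calculation'' the paper leaves implicit.
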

\begin{proof}
We verify the operator product expansions $\bar\mu_\mathrm{ch}(a_i)(z)\bar\mu_\mathrm{ch}(a_j)(w)\sim0$ for $i,j=1,\dots,M$.
\end{proof}

\begin{rem}
The naive chiral comoment map $\smash{a_i\mapsto\sum_{j=1}^N\Delta_{ij}x_jy_j}$ would not lead to a BRST differential $d$ satisfying $d^2=0$. (In other words, the representation it defines does not have the correct level.) It hence must be modified by adding free fermions. This is called \emph{anomaly cancellation} in physics (see, e.g., \cite{CSYZ25} for a discussion in the context of quiver \svoa{}s).

Instead of $M=\mathcal{D}^\mathrm{ch}(T^*V)\otimes\mathcal{C}\ell(\Pi T^*V)$ it suffices to consider $\smash{\mathcal{D}^\mathrm{ch}(T^*V)\otimes\pi^{T^\Delta}}$ (which we call $M_\mathrm{min}$ below), a \voa{} of central charge $c=-N+M$ with the standard conformal vector on $\smash{\pi^{T^\Delta}}$, in order to define the above chiral comoment map. This is exactly the approach taken in \cite{Kuw21} to the define what we call minimal hypertoric \voa{}s $V_\mathrm{min}(\Delta)$. We shall argue in this work that it is beneficial, for several reasons, to consider instead the extension
\begin{equation*}
\pi^{T^\Delta}\subset\pi^{T^\Delta}\otimes\pi^{T^E}\cong\pi^T\subset\mathcal{C}\ell(\Pi T^*V)\cong V_{\Z^N}.
\end{equation*}
This lattice extension will partially survive also after the BRST reduction and will make the resulting \svoa{} $V(\Delta)$ better behaved than $V_\mathrm{min}(\Delta)$.
\end{rem}

\medskip

To define the BRST differential, we consider the odd vector
\begin{equation}\label{eq:Q}
Q\coloneqq\sum_{i=1}^M\bar\mu^*_\mathrm{ch}(a_i)c_i=\sum_{i=1}^M\sum_{j=1}^N\Delta_{ij}(\sigma_j+\tau_j)c_i=\sum_{i=1}^M(\sigma^\Delta_i+\tau^\Delta_i)c_i.
\end{equation}
Then $Q$ has $L_0$-weight~$1$ and ghost degree~$1$ and satisfies $Q_{(0)}Q=0$. We define $d\coloneqq Q_{(0)}$, which satisfies
\begin{equation*}
d^2=(Q_{(0)})^2=(Q_{(0)}Q)_{(0)}/2=0.
\end{equation*}
Moreover, $d$ has ghost degree~$1$ and $L_0$-weight~$0$. That is, $d$ commutes with $L_0$. More generally, one can show that the conformal vector $\omega$ of $\tilde{C}$ is in $\ker(d)$, which, by the Borcherds identity, is equivalent to $[d,L_n]=0$ for all $n\in\Z$. It follows that $(\tilde{C},d)$ is cochain complex of \svoa{}s of central charge $c=-2M$; see \autoref{sec:conf-vect} for details.

It will be advantageous to consider the relative cohomology. To this end, we consider the conformal subalgebra $C\subset\tilde{C}$ given by the kernel under the zero modes of $b_i$ and of $d\, b_i=Q_{(0)}b_i=\bar\mu^*_\mathrm{ch}(a_i)$ for all $i=1,\dots,M$, i.e.
\begin{equation*}
C\coloneqq\bigcap_{i=1}^M\ker(b_{i(0)})\cap\bigcap_{i=1}^M\ker((d\,b_i)_{(0)})=\bar{C}\cap\bigcap_{i=1}^M\ker((d\,b_i)_{(0)})\subset\tilde{C}
\end{equation*}
where $\bar{C}\coloneqq\bigcap_{i=1}^M\ker(b_{i(0)})$ is simply the usual rank-$M$ symplectic fermion \svoa{} strongly generated by $\partial c_i$ and $b_i$ for $i=1,\dots,M$.
\begin{prop}
$(\tilde{C},d)$ is a cochain complex of \svoa{}s of central charge $c=-2M$, and $(C,d)$ is a subcomplex of $(\tilde{C},d)$ with the same conformal structure.
\end{prop}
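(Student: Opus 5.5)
The plan is to reduce everything to statements about $Q$ and the conformal vector $\omega$ of $\tilde{C}$, using only OPEs of free fields. First I would verify $Q_{(0)}Q=0$. This is the anomaly cancellation. Write $J_i\coloneqq\bar\mu^*_\mathrm{ch}(a_i)=\sigma^\Delta_i+\tau^\Delta_i$. By the preceding proposition $J_i(z)J_j(w)\sim0$: the double poles $-\Delta\Delta^t$ from $\pi^{S}$ and $+\Delta\Delta^t$ from $\pi^{T}$ cancel. Since the $c_i$ are mutually commuting odd fields with regular OPEs among themselves and with the $J_j$, the OPE $Q(z)Q(w)$ then has no singular part, so $Q_{(0)}Q=0$. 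Hence $d^2=\frac12(Q_{(0)}Q)_{(0)}=0$, and $d$ has ghost degree $1$ because $Q$ does.

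Next I would show $d\omega=0$ for $\omega=\omega_M+\omega_{\mathrm{gh}}$. Equivalently, using skew-symmetry, I would check that $Q$ is a primary of weight $1$ up to a total derivative, i.e.\ $L_nQ=0$ for $n\geq1$ and $L_0Q=Q$. Note that $L_{-1}Q=\partial Q$ contributes nothing to $\omega_{(0)}$-type terms, since $(\partial Q)_{(0)}=0$.

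Each $\sigma_j=x_jy_j$ is primary of weight $1$ for the natural Weyl conformal vector. The reason is that $x_j,y_j$ both have weight $1/2$ and the would-be anomaly term $(L_1\sigma_j)$ is proportional to $a_j-(1-a_j)=2a_j-1=0$. The same holds for $\tau_j=\psi_j\phi_j$ with $b_j=1/2$ (equivalently $\tau_j$ is a lattice Heisenberg field for the standard $\omega=\sum\tau_j^2/2$).

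The ghost $c_i$ is primary of weight $0$ for $\omega_{\mathrm{gh}}=\sum\partial c_i\,b_i$. Hence $J_ic_i$ is a normally ordered product of a weight-$1$ primary and a weight-$0$ primary with regular mutual OPE, and is therefore weight-$1$ primary. By the Borcherds identity,
\begin{equation*}
[d,L_n]=\sum_{k\geq0}\binom{n+1}{k}(L_{k-1}Q)_{(n+1-k)}=(\partial Q)_{(n+1)}+(n+1)Q_{(n)}\cdot(\text{terms vanishing}),
\end{equation*}
which one checks vanishes. Equivalently, $Q_{(0)}\omega=-\omega_{(0)}Q+\partial(\cdots)$ reduces to $-\partial Q+\partial Q=0$. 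So $\omega\in\ker d$, $d$ commutes with all $L_n$, and $d$ is an odd derivation of all $n$-th products because it is a zero mode. This gives the first assertion, with central charge $-N+N-2M=-2M$.

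For the relative subcomplex I would use the identity $[d,b_{i(0)}]=(d\,b_i)_{(0)}=J_{i(0)}$, together with $[d,J_{i(0)}]=(dJ_i)_{(0)}=0$. The latter holds because $Q_{(0)}J_i=\sum_k (J_k)_{(0)}J_i\,c_k$-type terms, and these vanish by the regular OPE $J_kJ_i\sim0$ and the fact that $J_i$ does not involve ghosts.

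So for $v\in C$ we get $b_{i(0)}dv=-d\,b_{i(0)}v+J_{i(0)}v=0$ and $J_{i(0)}dv=d J_{i(0)}v=0$, giving $dC\subset C$. The space $C$ is a vertex subalgebra, being the common kernel of zero modes, which act as derivations.

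It remains to check $\omega\in C$. This holds because $b_{i(0)}\omega=-\omega_{(0)}b_i+\partial(\ldots)$ and $b_i$ has weight $1$, which gives a vanishing combination; explicitly $b_{i(0)}(\partial c_i\,b_i)=0$ since $b_{i(0)}\partial c_i=0$. Similarly $J_{i(0)}\omega=0$ because $J_i$ is weight-$1$ primary. Then $C$ carries the same conformal vector.

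The main obstacle is the primarity bookkeeping in the second paragraph: making sure the mixed terms in $Q_{(0)}\omega$ cancel exactly for the natural choice $a=b=1/2$. That computation is where the choice of conformal vector matters, as the later discussion of good conformal vectors in \autoref{sec:conf-vect} indicates.
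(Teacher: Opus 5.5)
Your proposal is correct and follows the paper's route. The subcomplex argument is exactly the paper's: $b_{i(0)}dv=-d\,b_{i(0)}v+(d\,b_i)_{(0)}v$ and $(d\,b_i)_{(0)}dv=d\,(d\,b_i)_{(0)}v$, with $d\,b_i=\bar\mu^*_\mathrm{ch}(a_i)$ and $d(d\,b_i)=0$. For the first assertion, which the paper calls immediate and whose conformal part ($\omega\in C\cap\ker(d)$) it defers to the direct computation behind \autoref{prop:conformalkernel}, you correctly supply the details via $\bar\mu^*_\mathrm{ch}(a_i)(z)\bar\mu^*_\mathrm{ch}(a_j)(w)\sim0$ and the fact that $Q$ is a weight-one primary for $a=b=(1/2,\dots,1/2)^t$. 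The only blemish is your displayed Borcherds formula, which should read $[L_n,d]=(\partial Q)_{(n+1)}+(n+1)(L_0Q)_{(n)}=-(n+1)Q_{(n)}+(n+1)Q_{(n)}=0$ once $L_kQ=0$ for $k\geq1$.
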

Again, we refer to \autoref{sec:conf-vect} for details regarding the conformal structure.
\begin{proof}
The first assertion is immediate. For the second assertion we observe that, as $d=Q_{(0)}$ is a derivation,
\begin{align*}
\mu_\mathrm{ch}(A)_{(0)} Q_{(0)} v &= Q_{(0)} \mu_\mathrm{ch}(A)_{(0)} v - (Q_{(0)} \mu_\mathrm{ch}(A))_{(0)} v = (Q_{(0)}^2 b)_{(0)} v = 0,\\
b_{(0)} Q_{(0)} v &= - Q_{(0)} b_{(0)} v + (Q_{(0)} b)_{(0)} v = \mu_\mathrm{ch}(A)_{(0)} v = 0
\end{align*}
for any $v\in C$.
\end{proof}

The following is the main object of this text:
\begin{defi}[Boundary Hypertoric \SVOA{}]\label{defi:svoa}
We consider the relative cohomology
\begin{equation*}
V(\Delta)\coloneqq H_\mathrm{BRST}^{\infty/2+\bullet}(\g,M)\coloneqq H^\bullet(C,d).
\end{equation*}
It follows from standard arguments (see, e.g., \cite{FBZ04}) that $V(\Delta)$ is a ($\Z$-graded by cohomological or ghost degree) \svoa{} of central charge $c=-2M$. We shall see in \autoref{prop:vanish} below that only the zeroth degree of $V(\Delta)$ is nonzero. We call $V(\Delta)=H^\bullet(C,d)=H^0(C,d)$ the \emph{boundary hypertoric \svoa{}}.
\end{defi}

\smallskip

As mentioned above, in order to make the connection to \cite{Kuw21}, we also consider the minimal BRST reduction with $\mathcal{C}\ell(\Pi T^*V)$ replaced by the Heisenberg vertex subalgebra $\smash{\pi^{T^\Delta}}$. That is, we replace $M=\mathcal{D}^\mathrm{ch}(T^*V)\otimes\mathcal{C}\ell(\Pi T^*V)$ by
\begin{equation*}
M_\mathrm{min}\coloneqq\mathcal{D}^\mathrm{ch}(T^*V)\otimes\pi^{T^\Delta},
\end{equation*}
$\tilde{C}=M\otimes\mathcal{C}\ell(\Pi T^*\g)$ by $\tilde{C}_\mathrm{min}\coloneqq M_\mathrm{min}\otimes\mathcal{C}\ell(\Pi T^*\g)$ and $C$ by $C_\mathrm{min}=C\cap\tilde{C}_\mathrm{min}$, all \voa{}s of central charge $c=-N-M$. We obtain the cochain complex $(\tilde{C}_\mathrm{min},d)$ and the subcomplex $(C_\mathrm{min},d)$.

\begin{defi}[Minimal Hypertoric \SVOA{}]
We define
\begin{equation*}
V_\mathrm{min}(\Delta)\coloneqq H_\mathrm{BRST}^{\infty/2+\bullet}(\g,M_\mathrm{min})\coloneqq H^\bullet(C_\mathrm{min},d),
\end{equation*}
a ($\Z$-graded by cohomological or ghost degree) \voa{} of central charge $c=-N-M$, which we call \emph{minimal hypertoric \voa{}}.
\end{defi}
 Also $V_\mathrm{min}(\Delta)=H^\bullet(C_\mathrm{min},d)=H^0(C_\mathrm{min},d)$ satisfies a vanishing of nonzero cohomologies. In \cite{Kuw21}, this \voa{} is simply called hypertoric \voa{}. We shall further explain how to describe $V(\Delta)$ as a (conformal) lattice extension of $\smash{V_\mathrm{min}(\Delta)\otimes\pi^{T^E}}$, both vertex operator (super)algebras of central charge $c=-2M$.


\subsection{Vanishing Theorem}\label{sec:vanish}

In this section, we study the minimal and hypertoric vertex operator (super)algebras $V_\mathrm{min}(\Delta)$ and $V(\Delta)$ concretely, i.e.\ we compute the relative BRST cohomologies. Along the way, we also prove a vanishing theorem for nonzero cohomologies, i.e.\ $H^{\infty/2+n}(\g,M_\mathrm{min})=\{0\}$ and $H^{\infty/2+n}(\g,M)=\{0\}$ for all $n\neq0$ \cite{Vor94} (see also \cite{BF25}).


\subsubsection{Free-Field Embedding}\label{sec:freefield2}

In order to describe the precise shape of the hypertoric vertex operator (super)algebras $V(\Delta)$ and $V_\mathrm{min}(\Delta)$, it is advantageous to embed the $\beta\gamma$-system into a Heisenberg-lattice vertex algebra
\begin{equation*}
\mathcal{D}^\mathrm{ch}(T^*V)\hookrightarrow\HL_K^{R\oplus S}
\end{equation*}
where $R\oplus S$ is the $2N$-dimensional vector space spanned by $\{\rho_i,\sigma_i\}_{i=1}^N$ equipped with the indefinite, symmetric bilinear form
\begin{equation*}
\langle\rho_i,\rho_j\rangle=\delta_{ij},\quad\langle\sigma_i,\sigma_j\rangle=-\delta_{ij}\quad\text{and}\quad\langle\rho_i,\sigma_j\rangle=0
\end{equation*}
and $K\subset R\oplus S$ is the isotropic sublattice of rank $N$ given by
\begin{equation*}
K\coloneqq\langle\{-\rho_i+\sigma_i\}_{i=1}^N\rangle_\Z.
\end{equation*}
Then $\mathcal{D}^\mathrm{ch}(T^*V)$ can be obtained as the intersection
\begin{equation*}
\ker(e^{\rho_1}_{(0)})\cap\dots\cap\ker(e^{\rho_N}_{(0)})
\end{equation*}
of kernels of screening operators in $\HL_K^{R\oplus S}$.

\begin{rem}\label{rem:localise}
As we shall explain in more detail in \autoref{sec:freefieldrealisation} below, the extension to $\smash{\HL_K^{R\oplus S}}$ corresponds to localising $\mathcal{D}^\mathrm{ch}(T^*V)$ from $T^*V\cong T^*\C^N$ to $T^*(\C^\times)^N$. In other words, the Heisenberg-lattice \voa{} $\smash{\HL_K^{R\oplus S}}$ is obtained by adjoining the inverse fields $x_i(z)^{-1}$ to the fields $x_i(z)$ and $y_i(z)$ that strongly generate $\mathcal{D}^\mathrm{ch}(T^*V)$ (see \cite{MSV99}).
\end{rem}

Overall, by again identifying $\mathcal{C}\ell(\Pi T^*V)\cong V_{\Z^N}$ with a lattice \svoa{} as in \autoref{sec:freefield} (for the lattice $J\coloneqq\langle\{\tau_i\}_{i=1}^N\rangle_\Z$, which we introduce below), in the boundary case, we obtain an embedding, defined by the screening kernels $\smash{\ker(e^{\rho_1}_{(0)})\cap\dots\cap\ker(e^{\rho_N}_{(0)})}$, of the free-field \svoa{}
\begin{equation*}
M=\mathcal{D}^\mathrm{ch}(T^*V)\otimes\mathcal{C}\ell(\Pi T^*V)\hookrightarrow\HL_L^\h
\end{equation*}
into a Heisenberg-lattice vertex superalgebra of rank $(2N,3N)$. Here, the underlying $3N$-dimensional quadratic space $\h\coloneqq R\oplus S\oplus T$ of the Heisenberg \voa{} is spanned by $\{\rho_i,\sigma_i,\tau_i\}_{i=1}^N$ with the indefinite bilinear form
\begin{equation*}
\langle\rho_i,\rho_j\rangle=\delta_{ij},\qquad\langle\sigma_i,\sigma_j\rangle=-\delta_{ij},\qquad
\langle\tau_i,\tau_j\rangle=\delta_{ij}
\end{equation*}
and the remaining inner products vanishing. Moreover, the (integral) lattice $L$ of rank $2N$ is given by
\begin{equation*}
L\coloneqq\langle\{-\rho_i+\sigma_i,\tau_i\}_{i=1}^N\rangle_\Z.
\end{equation*}
\begin{rem}[Parity]\label{rem:parity}
As the lattice $L$ is in general not even, but integral, it contains vectors $x$ of norm $\langle x,x\rangle/2\in\Z$ and those of norm $\langle x,x\rangle/2\in\Z+1/2$. Fock modules (and the submodules obtained by applying screening kernels) corresponding to the former have even parity, those corresponding to the latter have odd parity. As the $-\rho_i+\sigma_i$ are isotropic, all of this only depends on the $\tau_i$. In other words, Fock modules with momentum $\dots+\sum_{i=1}^N n_i\tau_i$ have parity $(-1)^{n_1+\dots+n_N}$. Cf.\ \autoref{rem:boundaryschar} below.
\end{rem}

\begin{rem}
In the minimal case, we replace $M$ in the input of the relative BRST cohomology by the vertex subalgebra $\smash{M_\mathrm{min}=\mathcal{D}^\mathrm{ch}(T^*V)\otimes\pi^{T^\Delta}}$, but it is sometimes more convenient to work with $\smash{M_\mathrm{min}\otimes\pi^{T^E}=\mathcal{D}^\mathrm{ch}(T^*V)\otimes\pi^T}$ instead. Then the localisation of $\mathcal{D}^\mathrm{ch}(T^*V)$, as described above, yields an embedding
\begin{equation*}
\mathcal{D}^\mathrm{ch}(T^*V)\otimes\pi^T\hookrightarrow \HL_K^\h,
\end{equation*}
with $\h$ of dimension $3N$ and $K$ an isotropic lattice of rank $N$, as above.
\end{rem}

\medskip

The usefulness of the above free-field embedding comes from the fact that it commutes in a certain sense with the BRST reduction. On the other hand, applying the BRST reduction to Fock modules turns out to be easy to compute, thanks to a vanishing result by Voronov \cite{Vor94} applied to the case of abelian Lie algebras $\g$ (see also \cite{BF25}).

We first summarise the result in the boundary case. The details will be given in the remainder of this section.
\begin{prop}[Free-Field Embedding]\label{prop:commute1}
Taking screening kernels and taking relative BRST cohomology commute in the following sense:
\begin{equation*}
\begin{tikzcd}
M\arrow[r,"\text{BRST}"]\arrow[d,hook,"\ker(e^{\rho_i}_{(0)})"]
&V(\Delta)=H_\mathrm{BRST}^{\infty/2+0}(\g,M)\arrow[d,hook,"\ker(e^{p^\bot(\rho_i)}_{(0)})"]\\
\HL_L^\h\arrow[r,"\text{BRST}"]
&\HL_{L^\bot}^{\h^\bot}=H_\mathrm{BRST}^{\infty/2+\bullet}(\g,\HL_L^\h)
\end{tikzcd}
\end{equation*}
\end{prop}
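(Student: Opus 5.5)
The proof splits into three steps: compute the bottom row of the square, identify the vertical screening maps, and then show that BRST cohomology commutes with taking the screening kernels.

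\emph{Step 1: the bottom row.} Since the BRST charge $Q=\sum_i(\sigma^\Delta_i+\tau^\Delta_i)c_i$ involves only Heisenberg fields, the complex $\HL_L^\h\otimes\mathcal{C}\ell(\Pi T^*\g)$ decomposes as a direct sum over $\lambda\in L$ of Fock modules $\pi^\h_\lambda$ tensored with the ghosts. On each summand the relative differential is the abelian semi-infinite differential for the action of the Heisenberg fields $\sigma^\Delta_i+\tau^\Delta_i$, which span an isotropic subspace $I\subset\h$. Voronov's vanishing result \cite{Vor94} for abelian $\g$ then applies. The relative cohomology of $\pi^\h_\lambda$ vanishes unless the zero modes of $I$ act trivially, i.e.\ $\lambda\in I^\bot$. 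In that case it is concentrated in degree~$0$ and equals the Fock module $\pi^{I^\bot/I}_{p^\bot(\lambda)}$. Setting $\h^\bot\coloneqq I^\bot/I$ and $L^\bot\coloneqq p^\bot(L\cap I^\bot)$, summing over $\lambda$ gives $H^{\infty/2+\bullet}_\mathrm{BRST}(\g,\HL_L^\h)=\HL^{\h^\bot}_{L^\bot}$. Compatibility of the vertex algebra structure follows from the fact that the cohomology classes are represented by the images of the $e^\lambda$ dressed by the Heisenberg fields.

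\emph{Step 2: the vertical maps.} The screening operators $e^{\rho_i}_{(0)}$ commute with $d$, because $\langle\rho_i,I\rangle=0$: indeed $\rho_i\perp\sigma_j,\tau_j$. So $\rho_i\in I^\bot$, its image $p^\bot(\rho_i)$ is defined, and the operator induced on cohomology is $e^{p^\bot(\rho_i)}_{(0)}$. The embedding $M\hookrightarrow\HL_L^\h$ is a map of complexes, since $Q$ lies in $M\otimes\mathcal{C}\ell$. Hence it induces a map $V(\Delta)\to\HL^{\h^\bot}_{L^\bot}$ with image inside $\bigcap_i\ker e^{p^\bot(\rho_i)}_{(0)}$.

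\emph{Step 3: kernels commute with cohomology (main obstacle).} It remains to show that this induced map is injective and surjective onto the intersection of kernels. The plan is to resolve $M$ by a Felder-type (Koszul) complex of screenings,
\[
0\to M\to\HL^\h_L\to\bigoplus_i\HL^\h_{L+\rho_i}\to\cdots,
\]
built from the $N$ commuting screenings $e^{\rho_i}_{(0)}$. One needs this complex to be exact in positive degrees. Since the screenings are mutually independent (the $\rho_i$ are orthonormal), it factorises as an $N$-fold tensor product of the rank-one resolution $\mathcal{D}^\mathrm{ch}(T^*\C)\to\Pi\to\Pi_{\rho}$ for a single $\beta\gamma$ pair. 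Exactness of that rank-one sequence, namely surjectivity of $e^{\rho}_{(0)}$ onto the relevant modules, has to be checked directly.

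Each term of the resolution is a sum of Fock modules, so by Step 1 its BRST cohomology is concentrated in degree $0$. A double complex (hypercohomology) spectral sequence then gives two conclusions. First, $H^{n}(\g,M)=0$ for $n\neq0$. Second, $H^0(\g,M)$ equals the degree-zero cohomology of the induced complex, i.e.\ the kernel of $\HL^{\h^\bot}_{L^\bot}\to\bigoplus_i\HL^{\h^\bot}_{L^\bot+p^\bot(\rho_i)}$, which is $\bigcap_i\ker e^{p^\bot(\rho_i)}_{(0)}$.

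The delicate points are exactness of the screening resolution and convergence of the spectral sequence. For convergence I would use the $L_0$-grading, together with the grading by $T$-momentum: the complexes are bounded in each bigraded piece, with finite-dimensional weight spaces once the momentum is fixed.
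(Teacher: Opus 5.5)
\textbf{Gap: the rank-one sequence in Step~3 is not exact.} The sequence $0\to\mathcal{D}^{\mathrm{ch}}(T^*\C)\to\Pi\xrightarrow{e^{\rho}_{(0)}}\Pi_\rho\to 0$ fails on the right. So your Koszul complex is not a resolution of $M$, and the double-complex argument has nothing to stand on.

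Here is a concrete counterexample. Take the summand of $S$-momentum $2\sigma$, where the screening is
\[
\pi^{\C\rho}_{-2\rho}\otimes\pi^{\C\sigma}_{2\sigma}\longrightarrow\pi^{\C\rho}_{-\rho}\otimes\pi^{\C\sigma}_{2\sigma}.
\]
Use the $c=-2$ conformal vector on $\pi^{\C\rho}$ for which $e^\rho$ is primary of weight~$1$, so that $e^\rho_{(0)}$ preserves weights. Then $\pi^{\C\rho}_{\lambda\rho}$ has lowest weight $\lambda(\lambda+1)/2$. This is $1$ for the source and $0$ for the target, so $e^{-\rho}\otimes e^{2\sigma}$ is not in the image.

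Structurally, $\langle\rho,\rho\rangle=1$ implies two things: $e^\rho_{(0)}$ squares to zero, and $e^{-\rho}_{(-1)}$ is a contracting homotopy (up to sign). Hence $e^\rho_{(0)}(\Pi)=\ker\bigl(e^\rho_{(0)}|_{\Pi_\rho}\bigr)$, which is a proper subspace; for instance $e^\rho_{(0)}e^{-\rho}=\pm\vac\neq0$.

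The correct resolution is the infinite Felder complex $0\to\mathcal{D}^{\mathrm{ch}}(T^*\C)\to\Pi\to\Pi_\rho\to\Pi_{2\rho}\to\cdots$. This is also why the singlet character is an infinite alternating sum of Fock characters, i.e.\ a false theta function (cf.\ \autoref{sec:chars}), and not $\ch\pi_0-\ch\pi_\rho$.

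\textbf{How to repair your route.} Replace the Koszul complex by the $N$-fold tensor product of these infinite complexes. Then:
\begin{itemize}
\item exactness comes from the homotopies $e^{-\rho_i}_{(-1)}$;
\item convergence follows from your grading argument, since the lowest weights of the terms grow quadratically;
\item vanishing in nonzero degrees needs exactness of the reduced complex on $\HL^{\h^\bot}_{L^\bot}$. This holds because the homotopy commutes with $d$ (as $\rho_i\perp\mathfrak{i}$) and therefore descends.
\end{itemize}
Your Steps~1 and~2 are fine.

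\textbf{The paper's route needs no resolution.} Since $\rho_i\perp S\oplus T$, while $Q$ and the relative constraints live in $\pi^{S\oplus T}\otimes\mathcal{C}\ell(\Pi T^*\g)$, the argument works summand by summand.
\begin{itemize}
\item Each Fock summand of $\HL^\h_L$ has the form $\pi^R_{-\lambda_r}\otimes\pi^{S\oplus T}_\nu$. The screenings act (up to sign) only on the first factor, and $d$ acts only on the second factor tensored with the ghosts.
\item The screening kernel of such a summand is therefore $M^R_{\lambda_r}\otimes\pi^{S\oplus T}_\nu$.
\item By K\"unneth over $\C$, its relative cohomology is $M^R_{\lambda_r}\otimes H^{\infty/2+\bullet}(\g,\pi^{S\oplus T}_\nu)$, and that of the Fock summand is $\pi^R_{-\lambda_r}\otimes H^{\infty/2+\bullet}(\g,\pi^{S\oplus T}_\nu)$. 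The second factor is given by \autoref{prop:vanish_Heisenberg}; this is \autoref{prop:vanish_singlet}.
\item Since $\ker(e^{\rho_i}_{(0)}\otimes1)=\ker(e^{\rho_i}_{(0)})\otimes H$, the square commutes summand by summand.
\item Vanishing in nonzero degrees comes for free, and $\rho_i\equiv p^\bot(\rho_i)$ modulo $\mathfrak{i}$ identifies the screenings.
\end{itemize}
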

The \svoa{} in the bottom right of the diagram is a certain Heisenberg-lattice vertex superalgebra of rank $(2N-M,3N-2M)$ that we describe below. Crucially, the vanishing result of \cite{Vor94} says that only the zeroth relative cohomology is nonzero and that applying it to $L\subset\h$ results in another Heisenberg-lattice vertex superalgebra obtained by collapsing certain $2M$ directions in the vector space and $M$ in the lattice. The commutativity of the diagram then says that we can determine $V(\Delta)$ by applying the screening kernels $\smash{\ker(e^{p^\bot(\rho_1)}_{(0)})\cap\dots\cap\ker(e^{p^\bot(\rho_N)}_{(0)})}$ to the Heisenberg-lattice vertex superalgebra $\smash{\HL_{L^\bot}^{\h^\bot}}$. Here, the $p^\bot(\rho_i)$ are certain projections of the $\rho_i$, which we describe below. The result will be that we can write $V(\Delta)$ as a conformal extension of a certain tensor product of singlet \voa{}s for $p=2$ \cite{Ada03} and Heisenberg \voa{}s.

Similarly, in the minimal case, one gets:
\begin{prop}[Free-Field Embedding]\label{prop:commute2}
Taking screening kernels and taking relative BRST cohomology commute in the following sense:
\begin{equation*}
\begin{tikzcd}
M_\mathrm{min}\otimes\pi^{T^E}\arrow[r,"\text{BRST}"]\arrow[d,hook,"\ker(e^{\rho_i}_{(0)})"]
&V_\mathrm{min}(\Delta)\otimes\pi^{T^E}=H^{\infty/2+0}(\g,M_\mathrm{min})\otimes\pi^{T^E}\arrow[d,hook,"\ker(e^{p^\bot(\rho_i)}_{(0)})"]\\
\HL_K^\h\arrow[r,"\text{BRST}"]
&\HL_{K^\bot}^{\h^\bot}=H^{\infty/2+0}(\g,\HL_K^\h)
\end{tikzcd}
\end{equation*}
\end{prop}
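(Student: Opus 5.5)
We follow the same strategy as for \autoref{prop:commute1}, now with $M_\mathrm{min}\otimes\pi^{T^E}=\mathcal{D}^\mathrm{ch}(T^*V)\otimes\pi^T$ in place of $M$. Since $\pi^{T^E}$ commutes with the BRST current $Q=\sum_i(\sigma_i^\Delta+\tau_i^\Delta)c_i$ (the spaces $T^E$ and $T^\Delta$ are orthogonal) and is untouched by the ghosts, we have
\begin{equation*}
H^{\infty/2+\bullet}(\g,M_\mathrm{min}\otimes\pi^{T^E})\cong H^{\infty/2+\bullet}(\g,M_\mathrm{min})\otimes\pi^{T^E}.
\end{equation*}
This justifies the top right corner of the diagram. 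The bottom row is then computed from the decomposition $\HL_K^\h=\bigoplus_{\lambda\in K}\pi_\lambda^\h$ into Fock modules.

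For the bottom row we use Voronov's vanishing result \cite{Vor94} for abelian $\g$. The semi-infinite cohomology of a Fock module $\pi^\h_\lambda$ with respect to the Heisenberg currents $\sigma^\Delta_i+\tau^\Delta_i$ vanishes unless $\langle\sigma^\Delta_i+\tau^\Delta_i,\lambda\rangle=0$ for all $i$. If this holds, the cohomology is concentrated in degree zero and equals the Fock module $\pi^{\h^\bot}_{p^\bot(\lambda)}$. Here:
\begin{itemize}
\item $\h^\bot$ is the quotient $(\g_\mathrm{ch})^\bot/\g_\mathrm{ch}$, where $\g_\mathrm{ch}=\langle\sigma^\Delta_i+\tau^\Delta_i\rangle$ is isotropic (since $-\Delta\Delta^t+\Delta\Delta^t=0$);
\item $p^\bot$ is the induced projection.
\end{itemize}
Summing over the $\lambda\in K$ orthogonal to $\g_\mathrm{ch}$ identifies the cohomology with $\HL_{K^\bot}^{\h^\bot}$, where $K^\bot=p^\bot(K\cap\g_\mathrm{ch}^\bot)$. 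A direct check that the generators $-\rho_j+\sigma_j$ pair with $\sigma^\Delta_i$ via $\Delta$ gives the concrete lattice, and the vertex algebra structure is the induced Heisenberg-lattice one.

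The core step is commutativity of the square, namely that
\begin{equation*}
H^0(\g,\ker e^{\rho}_{(0)})=\ker e^{p^\bot(\rho)}_{(0)}\ \text{ on } H^0(\g,\HL_K^\h).
\end{equation*}
The $\rho_j$ are orthogonal to $\g_\mathrm{ch}$, so $e^{\rho_j}_{(0)}$ anticommutes with the ghosts and commutes with $d$. It therefore induces maps on cohomology, and these are exactly $e^{p^\bot(\rho_j)}_{(0)}$. To handle the kernels we use the screening resolution
\begin{equation*}
0\to\mathcal{D}^\mathrm{ch}(T^*V)\otimes\pi^T\to\HL_K^\h\to\bigoplus_j \HL_{K+\rho_j}^\h\to\cdots
\end{equation*}
This is a Felder-type complex: it is a tensor product of $N$ rank-one complexes, each exact after the first step, because each $x_j$-localisation is resolved by a single screening. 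Every term is a sum of Fock modules, so BRST cohomology is concentrated in degree zero on each term. A spectral sequence argument then shows that BRST cohomology of the kernel equals the kernel of the induced maps, which gives the vanishing for $V_\mathrm{min}(\Delta)$ in nonzero degrees as well.

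The main obstacle is this last point. For $\mathcal{D}^\mathrm{ch}(\C)\subset\Pi$ the screening map $e^{\rho}_{(0)}$ is not surjective onto the full module $\HL_{K+\rho}$, so a naive resolution fails. We would instead use the two-term complex
\begin{equation*}
\HL_K\to \ker\bigl(\text{next screening}\bigr)
\end{equation*}
and argue termwise with Fock decompositions. The key fact is that on lattice shifts by multiples of $\rho_j$, the Voronov condition is unaffected, since $\rho_j\perp\g_\mathrm{ch}$. Consequently the relevant Fock components survive or vanish compatibly, and the kernel commutes with taking degree-zero cohomology.
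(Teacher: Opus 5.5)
Your preliminary steps are sound. Splitting off $\pi^{T^E}$ is fine. So is computing the bottom row summand by summand from Voronov's theorem: your condition $\lambda\in\mathfrak{i}^\bot$ is the precise one, and on $K$ it agrees with the paper's $\lambda\in\h^\bot$ because $K\cap\mathfrak{i}^\bot=K\cap\h^\bot$. You are also right that $e^{\rho_j}_{(0)}$ commutes with $d$ since $\rho_j\perp\mathfrak{i}$.

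The gap is the core step. You need that the relative cohomology of the screening kernel is concentrated in degree~$0$ and equals the joint kernel of the induced screenings on $\HL_{K^\bot}^{\h^\bot}$. Your complex $0\to\mathcal{D}^\mathrm{ch}(T^*V)\otimes\pi^T\to\HL_K^\h\to\bigoplus_j\HL^\h_{K+\rho_j}\to\cdots$ is not the resolution you describe, because the screening is not surjective, as you note yourself. So the spectral-sequence argument has no valid input. The proposed repair is circular: the kernel of the next screening inside $\HL^\h_{K+\rho_j}$ is again a screening kernel, not a sum of $\pi^\h$-Fock modules, so Voronov's theorem does not apply to it and computing its cohomology is the original problem. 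The fact that $\lambda\in\mathfrak{i}^\bot$ if and only if $\lambda+\rho_j\in\mathfrak{i}^\bot$ only tells you which Fock modules survive together. It says nothing about the cohomology of a proper subspace of a surviving Fock module, which is exactly what is at stake.

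The missing idea is that no resolution is needed, because the screenings and the BRST data live on orthogonal tensor factors. Since $\h=R\oplus(S\oplus T)$ orthogonally, $\HL_K^\h=\bigoplus_{m\in\Z^N}\pi^R_{-\sum_im_i\rho_i}\otimes\pi^S_{\sum_im_i\sigma_i}\otimes\pi^T_0$. Each $e^{\rho_j}_{(0)}$ acts, up to a cocycle sign, only on the $\pi^R$-factor. By contrast, $Q$ and the relative constraints $b_{i(0)}$, $(\sigma^\Delta_i+\tau^\Delta_i)_{(0)}$ act only on $\pi^S\otimes\pi^T\otimes\mathcal{C}\ell(\Pi T^*\g)$. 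Hence the relative complex of the kernel is $\bigoplus_mM^R_{\sum_im_i\rho_i}\otimes C_m$, where $C_m$ is the relative complex of the honest Fock module $\pi^S_{\sum_im_i\sigma_i}\otimes\pi^T_0$. Exactness of $\otimes_\C$ gives $H^\bullet(M^R_{\sum_im_i\rho_i}\otimes C_m)=M^R_{\sum_im_i\rho_i}\otimes H^\bullet(C_m)$. This is precisely the joint kernel of the $e^{\rho_j}_{(0)}\otimes\id$ on $\pi^R_{-\sum_im_i\rho_i}\otimes H^\bullet(C_m)$.

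Voronov applied to $C_m$ then gives vanishing for $\bullet\neq0$, and $H^0(C_m)\cong\pi^{S^E}_{p^E(\sum_im_i\sigma_i)}\otimes\pi^{T^E}_0$ if $\Delta m=0$ (zero otherwise). Finally, $R\oplus S^E\oplus T^E\cong\mathfrak{i}^\bot/\mathfrak{i}$ is matched with the paper's $\h^\bot$ via $\rho_j\mapsto p^\bot(\rho_j)$. This map is isometric because $\rho_j-p^\bot(\rho_j)\in\mathfrak{i}$ is isotropic and orthogonal to $R$, and it turns $e^{\rho_j}_{(0)}$ into $e^{p^\bot(\rho_j)}_{(0)}$. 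This is the singlet-Heisenberg route the paper takes (\autoref{prop:vanish_singlet}, and \autoref{sec:smallexample} for $N=M=1$).

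If you prefer to keep a Felder-type argument, the correct input is $\{e^{\rho_j}_{(0)},e^{-\rho_j}_{(-1)}\}=\pm\id$, which holds because $\langle\rho_j,\rho_j\rangle=1$; moreover $e^{-\rho_j}_{(-1)}$ also commutes with $d$. Then $\pm e^{\rho_j}_{(0)}e^{-\rho_j}_{(-1)}$ is a $d$-equivariant idempotent on $\HL_K^\h$ with image $\ker e^{\rho_j}_{(0)}$, and these idempotents commute for different $j$. Because the anticommutation relation persists on cohomology, the cohomology of the image is the kernel of the induced screening.
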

Here, on the bottom right, we consider a Heisenberg-lattice vertex algebra of rank $(N-M,3N-2M)$. We note that in all four corners, we can split off $\smash{\pi^{T^E}}$ as a tensor factor, which is not affected by the screening operators or the relative BRST cohomology. However, we keep this factor in order to be able to better compare the minimal and the boundary case (so that one is a conformal extension of the other).


\subsubsection{Smallest Example}\label{sec:smallexample}

We first describe the special case of $N=M=1$ with $\Delta=(1)$ of size $1\times 1$ and $E$ being of size $1\times 0$ (cf.\ \autoref{rem:zerocolumnrow}). This case can be seen as building block for the general case, which we describe below. Further examples will be considered later in \autoref{sec:examples}.

\medskip

We begin with the observation that the $\beta\gamma$-system $\mathcal{D}^\mathrm{ch}(T^*\C)$ contains the dual pair $\mathcal{M}(2)\otimes\pi^{\C\sigma}$ consisting of the singlet \voa{} $\mathcal{M}(2)=M_0$ for $\smash{p=2}$, i.e.\ with central charge $c=1-6(p-1)^2/p=-2$, \cite{Ada03} (see also \cite{Zam85,Wan98a,Wan98b}) and the Heisenberg \voa{} $\smash{\pi^{\C\sigma}=\pi_0^{\C\sigma}}$ for $S=\C\sigma$ with $\langle\sigma,\sigma\rangle=-1$.

Recall that the singlet \voa{} $\mathcal{M}(2)=M_0$ has certain \emph{atypical} irreducible modules $M_m\coloneqq\mathcal{M}_{n+1,1}$ for $m\in\Z$ that appear in the decomposition of the symplectic fermions $\mathsf{SF}\cong\bigoplus_{m\in\Z}M_m$ according to fermion number;\footnote{Similarly, the even subalgebra of the symplectic fermion \svoa{} $\mathsf{SF}$ is the triplet \voa{} $\mathcal{W}(2)$ for $p=2$. It is known to decompose as $\smash{\mathcal{W}(2)\cong\bigoplus_{m\in2\Z}M_m}$ into modules for the singlet \voa{} $\mathcal{M}(2)=M_0$.} these modules are all simple currents with fusion rules $M_m\boxtimes M_n\cong M_{m+n}$, but the complete representation theory is much more complicated \cite{AM17,CMY21,CMY23,BCDN23}. Then, $\mathcal{D}^\mathrm{ch}(T^*\C)$ decomposes as a module for $\mathcal{M}(2)\otimes\pi^{\C\sigma}$ as
\begin{equation*}
\mathcal{D}^\mathrm{ch}(T^*\C)=\bigoplus_{m\in\Z}M_m\otimes\pi_{m\sigma}^{\C\sigma},
\end{equation*}
where both the atypical singlet modules $M_m$ and the Fock modules $\pi_{m\sigma}^{\C\sigma}$ are simple currents with $\Z$-fusion rules. Now, localising $T^*\C$ to $T^*\C^\times$ (see \autoref{rem:localise}) corresponds to the screening kernel
\begin{equation*}
M_m=M_m^\rho\overset{\ker(e^\rho_{(0)})}{\hooklongrightarrow}\pi_{-m\rho}^{\C\rho}
\end{equation*}
with $\langle\rho,\rho\rangle=1$ and orthogonal to $\sigma$. Here, we add the upper index $\rho$ to the singlet modules $M_m$ to ``remember'' the embedding into the Heisenberg \voa{}. Overall, this yields the embedding
\begin{equation*}
\mathcal{D}^\mathrm{ch}(T^*\C)=\bigoplus_{m\in\Z}M^\rho_m\otimes\pi_{m\sigma}^{\C\sigma}\overset{\ker(e^\rho_{(0)})}{\hooklongrightarrow}\bigoplus_{m\in\Z}\pi_{-m\rho}^{\C\rho}\otimes\pi_{m\sigma}^{\C\sigma}=\HL_{\Z(-\rho+\sigma)}^{\C\rho\oplus\C\sigma}
\end{equation*}
into a Heisenberg-lattice vertex algebra, as described above.

\medskip

We apply the relative BRST cohomology defined by the chiral comoment map
\begin{equation*}
\bar\mu^*_\mathrm{ch}(a)=(xy+\psi\phi)=(\sigma+\tau)
\end{equation*}
(where $a$ is the standard basis vector of $\gl_1\cong\C$) to both sides of the above embedding, each tensored with the Heisenberg \voa{} $\smash{\pi^{\C\tau}}$, where $\langle\tau,\tau\rangle=1$ and orthogonal to both $\rho$ and $\sigma$. This leads to the following commutative diagram corresponding to the minimal hypertoric \voa{}:
\begin{equation*}
\begin{tikzcd}
\mathcal{D}^\mathrm{ch}(T^*\C)\otimes\pi^{\C\tau}=\bigoplus_{m\in\Z}M^\rho_m\otimes\pi_{m\sigma}^{\C\sigma}\otimes\pi_0^{\C\tau}\arrow[r,"\text{BRST}"]\arrow[d,hook,"\ker(e^\rho_{(0)})"]
&M_0^{\rho-\sigma-\tau}=\mathcal{M}(2)=V_\mathrm{min}(\Delta)\arrow[d,hook,"\ker(e^{\rho-\sigma-\tau}_{(0)})"]\\
\HL_{\Z(-\rho+\sigma)}^{\C\rho\oplus\C\sigma\oplus\C\tau}=\bigoplus_{m\in\Z}\pi_{-m\rho}^{\C\rho}\otimes\pi_{m\sigma}^{\C\sigma}\otimes\pi_0^{\C\tau}\arrow[r,"\text{BRST}"]
&\pi_0^{\C(-\rho+\sigma+\tau)}
\end{tikzcd}
\end{equation*}
The last line is precisely the statement of the vanishing result from \cite{Vor94}.

We explain this in more detail. Consider the $3$-dimensional space $\h=\langle\rho,\sigma,\tau\rangle_\C$ with the nondegenerate symmetric bilinear form defined by $\diag(1,-1,1)$, underlying the Heisenberg \voa{} and Fock modules in the input of the BRST reduction. The BRST differential defines the isotropic subspace
\begin{equation*}
\mathfrak{i}=\langle\sigma+\tau\rangle_\C,
\end{equation*}
which by the cohomology is projected to
\begin{equation*}
\mathfrak{i}^\bot/\mathfrak{i}=\langle\rho,\sigma+\tau\rangle_\C/\langle\sigma+\tau\rangle_\C,
\end{equation*}
where $\mathfrak{i}^\bot$ denotes the orthogonal complement of $\mathfrak{i}$ in $\h$. More precisely, $\mathfrak{i}^\bot/\mathfrak{i}$ is the new space underlying all the Fock modules, while at the same time only Fock modules with momenta in $\mathfrak{i}^\bot$ survive, meaning that the underlying lattice $\Z(-\rho+\sigma)$ is intersected with $\mathfrak{i}^\bot$. The space $\mathfrak{i}^\bot/\mathfrak{i}$ is $1$-dimensional, equipped with a nondegenerate symmetric bilinear form, isometric to, e.g., $\mathfrak{i}^\bot/\mathfrak{i}\cong\langle\rho\rangle_\C\cong\langle-\rho+\sigma+\tau\rangle_\C$. While the identification $\mathfrak{i}^\bot/\mathfrak{i}\cong\langle\rho\rangle_\C$ is certainly simpler in the present case (with $N=M=1$), it will turn out to be advantageous in the general case to rather identify
\begin{equation*}
\mathfrak{i}^\bot/\mathfrak{i}\cong\langle-\rho+\sigma+\tau\rangle_\C\eqqcolon\h^\bot.
\end{equation*}
This corresponds to choosing an orthogonal decomposition $\h=\h^\|\oplus\h^\bot$ where $\h^\|=\langle\sigma+\tau,-\rho+\sigma\rangle$ is a hyperbolic plane that is collapsed by the BRST reduction. Finally note that, modulo $\mathfrak{i}=\langle\sigma+\tau\rangle_\C$, the screening operator $\smash{e^\rho_{(0)}}$ equals $\smash{e^{\rho-\sigma-\tau}_{(0)}}$, which is the right form for our choice of $\h^\bot$. This gives the above commutative diagram, but as we just explained, one could replace all occurrences of $\rho-\sigma-\tau$ by $\rho$ om the right-hand side.

\medskip

Now, we can compute the \voa{} of interest (in the top right of the diagram) by applying the screening kernel $\smash{\ker(e^{\rho-\sigma-\tau}_{(0)})}$ to the Heisenberg \voa{} $\smash{\pi_0^{\C(\rho-\sigma-\tau)}}$. This then proves that the minimal hypertoric \voa{} $V_\mathrm{min}(\Delta)$ for $N=M=1$ and $\Delta=(1)$ is simply the singlet \voa{} $\mathcal{M}(2)$ \cite{Ada03}.

Moreover, \cite{Vor94} tells us that for the relative BRST complex, any cohomology besides the zeroth vanishes.

\medskip

We can extend the above picture to the boundary case, where we obtain, noting that (cf.\ \autoref{rem:parity}) the parity is given the parity of the coefficient of $\tau$ in the momentum, i.e.\ $(-1)^n$ on the left-hand side and $(-1)^m$ on the right-hand side:
\begin{equation*}
\begin{tikzcd}
\begin{tabular}{l}
$\mathcal{D}^\mathrm{ch}(T^*\C)\otimes\mathcal{C}\ell(\Pi T^*V)$\\
$=\bigoplus_{m,n\in\Z}M^\rho_m\otimes\pi_{m\sigma}^{\C\sigma}\otimes\pi_{n\tau}^{\C\tau}$
\end{tabular}
\arrow[r,"\text{BRST}"]\arrow[d,hook,"\ker(e^\rho_{(0)})"]
&\bigoplus_{m\in\Z}M_m^{\rho-\sigma-\tau}=\mathsf{SF}=V(\Delta)\arrow[d,hook,"\ker(e^{\rho-\sigma-\tau}_{(0)})"]\\
\begin{tabular}{l}
$\HL_{\Z(-\rho+\sigma)\oplus\Z\tau}^{\C\rho\oplus\C\sigma\oplus\C\tau}$\\
$=\bigoplus_{m,n\in\Z}\pi_{-m\rho}^{\C\rho}\otimes\pi_{m\sigma}^{\C\sigma}\otimes\pi_{n\tau}^{\C\tau}$
\end{tabular}
\arrow[r,"\text{BRST}"]
&\HL_{(-\rho+\sigma+\tau)\Z}^{\C(-\rho+\sigma+\tau)}\cong V_\Z
\end{tikzcd}
\end{equation*}
The vanishing result \cite{Vor94} works exactly as before, just that now the lattice direction $-\rho+\sigma+\tau$ in the orthogonal complement $\mathfrak{i}^\bot$ survives. Effectively, the relative BRST cohomology imposes the relation $m=n$ in the direct sum on the left-hand side, in addition to collapsing the ambient vector space from $\h$ to $\h^\bot$.

The boundary hypertoric \svoa{} $V(\Delta)$ (in the top right of the diagram) is now obtained by applying the screening kernel $\smash{\ker(e^{\rho-\sigma-\tau}_{(0)})}$ to the one-dimensional lattice \voa{} $\smash{\HL_{(-\rho+\sigma+\tau)\Z}^{\C(-\rho+\sigma+\tau)}}$. This gives the symplectic fermion \svoa{} $\mathsf{SF}$ as a simple-current extension of the singlet \voa{} $\mathcal{M}(2)$, as described above. We note that $(-\rho+\sigma+\tau)\Z$ is isomorphic to the odd standard lattice $\Z$. In the general case described below, $V(\Delta)$ will be a simple-current extension of Heisenberg and singlet \voa{}s. (Again, note that one could replace all occurrences of $\rho-\sigma-\tau$ by $\rho$ on the right-hand side of the above diagram.)

Finally, we point out that the boundary hypertoric \svoa{} $V(\Delta)$ is a simple-current extension of the minimal one $V_\mathrm{min}(\Delta)$ corresponding exactly to the odd standard lattice $\Z$. A similar statement will hold in the general case, though we have to consider $\smash{V_\mathrm{min}(\Delta)\otimes\pi^{T^E}}$, which happens to coincide with $V_\mathrm{min}(\Delta)$ here.


\subsubsection{Orthogonal Projections}

In the next section, we shall describe the relative BRST cohomology on the level of Fock modules in the general case. As in the example above, with the vanishing result from \cite{Vor94,BF25} this reduces to a problem in linear algebra, which we describe in the following.

\medskip

Consider the $3N$-dimensional space $\h\coloneqq R\oplus S\oplus T$ with basis $\{\rho_i,\sigma_i,\tau_i\}_{i=1}^N$ and bilinear form matrix
\begin{equation*}
\begin{pmatrix}
\id&0&0\\0&-\id&0\\0&0&\id
\end{pmatrix}.
\end{equation*}
The BRST differential defines the $M$-dimensional isotropic subspace
\begin{equation*}
\mathfrak{i}=\langle\{\sigma_i^\Delta+\tau_i^\Delta\}_{i=1}^M\rangle_\C,
\end{equation*}
and BRST cohomology will project this to the quotient space
\begin{align*}
\mathfrak{i}^\bot/\mathfrak{i}=\langle\{\rho_i^E,\sigma_i^E,\tau_i^E\}_{i=1}^{N-M}\cup\{\rho_i^\Delta,\sigma_i^\Delta+\tau_i^\Delta\}_{i=1}^M\rangle_\C/\langle\{\sigma_i^\Delta+\tau_i^\Delta\}_{i=1}^M\rangle_\C\\
\cong\langle\{\rho_i^E,\sigma_i^E,\tau_i^E\}_{i=1}^{N-M}\cup\{-\rho_i^\Delta+\sigma_i^\Delta+\tau_i^\Delta\}_{i=1}^M\rangle_\C\eqqcolon\h^\bot,
\end{align*}
which is $(3N-2M)$-dimensional, again equipped with a nondegenerate symmetric bilinear form. Here, $\rho_i^\Delta$ and $\rho_i^E$ are defined like $\sigma_i^\Delta,\tau_i^\Delta$ and $\sigma_i^E,\tau_i^E$ above, i.e.\ $\smash{\rho^\Delta_i\coloneqq\sum_{j=1}^N\Delta_{ij}\rho_j}$ for $i=1,\dots,M$ and $\smash{\rho^E_i\coloneqq\sum_{j=1}^NE_{ji}\rho_j}$ for $\smash{i=1,\dots,N-M}$. The above choice of quotient space corresponds to choosing the orthogonal decomposition $\h=\h^\|\oplus\h^\bot$ into the $2M$-dimensional (hyperbolic) subspace $\h^\|$ collapsed by the BRST reduction and spanned by
\begin{align*}
\sigma_i^\Delta+\tau_i^\Delta&\text{ for }i=1,\dots,M,\\
-\rho_i^\Delta+\sigma_i^\Delta&\text{ for }i=1,\dots,M
\end{align*}
and the $(3N-2M)$-dimensional orthogonal complement $\h^\bot$ spanned by
\begin{align*}
\rho_i^E,\sigma_i^E,\tau_i^E&\text{ for }i=1,\dots,N-M,\\
-\rho_i^\Delta+\sigma_i^\Delta+\tau_i^\Delta&\text{ for }i=1,\dots,M.
\end{align*}
These choices will be convenient in the following. (But note that we could have equally well chosen $\h^\bot$ with the basis vectors $-\rho_i^\Delta+\sigma_i^\Delta+\tau_i^\Delta$ replaced by $\rho_i^\Delta$ for all $i=1,\dots,M$ and with the corresponding choice of $\h^\|$.)

We denote by $p^\|$ and $p^\bot$ the orthogonal projections onto the subspaces $\h^\|$ and~$\h^\bot$, respectively. Explicitly, we may write them in the basis $\{\rho_i,\sigma_i,\tau_i\}_{i=1}^N$ as
\begin{equation*}
p^\bot=
\begin{pmatrix}
\id&p^\Delta&-p^\Delta\\-p^\Delta&p^E{-}p^\Delta&p^\Delta\\-p^\Delta&-p^\Delta&\id
\end{pmatrix}
\end{equation*}
where
\begin{equation*}
p^\Delta=\Delta^t(\Delta\Delta^t)^{-1}\Delta\quad\text{and}\quad p^E=E(E^t E)^{-1}E^t
\end{equation*}
are the orthogonal projections onto column spaces of $\Delta^t$ and $E$, respectively. These column spaces are orthogonal because $\Delta E=0$. Now, applying the relative BRST cohomology \cite{Vor94} projects from $\h$ to $\mathfrak{i}^\bot/\mathfrak{i}$, and, with the above choice of orthogonal decomposition $\h=\h^\|\oplus\h^\bot$, collapses $\h^\|$ while preserving $\h^\bot$ (see \autoref{prop:vanish_lattice} and \autoref{prop:vanish_lattice2} below).

Recall the lattices $\smash{K=\langle\{-\rho_i+\sigma_i\}_{i=1}^N\rangle_\Z}$ and $\smash{L=\langle\{-\rho_i+\sigma_i,\tau_i\}_{i=1}^N\rangle_\Z}$ in the minimal and boundary case, respectively. The former is isotropic, the latter integral (but degenerate). For later use, we also introduce the positive-definite, integral lattice $J\coloneqq\langle\{\tau_i\}_{i=1}^N\rangle_\Z$. Then $L=K\oplus J$.
The cohomology will reduce them to
{\allowdisplaybreaks
\begin{align*}
K^\bot=K\cap\h^\bot=K\cap p^\bot(K),\\
J^\bot=J\cap\h^\bot=J\cap p^\bot(J),\\
L^\bot=L\cap\h^\bot=L\cap p^\bot(L).
\end{align*}
}%
Strictly speaking, we should look at the intersections $K\cap\i^\bot$, $J\cap\i^\bot$ and $L\cap\i^\bot$ and then consider them modulo $\mathfrak{i}$, but with our choice of quotient $\h^\bot=\mathfrak{i}^\bot/\mathfrak{i}$, this amounts to the above expressions. In fact, $\h^\bot$ was chosen precisely so that these lattices have nice bases, as we shall see momentarily.

We now determine bases of these lattices. Here we use that the matrix $E$ is injective and unimodular.
\begin{lem}\label{lem:latbasis}
The lattices $K^\bot$, $J^\bot$ and $L^\bot$ of rank $N-M$, $N-M$ and $2N-M$ are of the form
{\allowdisplaybreaks
\begin{align*}
K^\bot&=\langle\{-\rho_i^E+\sigma_i^E\}_{i=1}^{N-M}\rangle_\Z,\\
J^\bot&=\langle\{\tau_i^E\}_{i=1}^{N-M}\rangle_\Z,\\
L^\bot&=\langle\{-\rho_i^E+\sigma_i^E\}_{i=1}^{N-M}\cup\{-\rho_i+\sigma_i+\tau_i\}_{i=1}^N\rangle_\Z,
\end{align*}
}%
respectively.
\end{lem}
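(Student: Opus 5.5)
The plan is to parametrise each lattice by its integer coefficient vectors and write down the linear conditions for a vector to lie in $\h^\bot$. Since $\h=\h^\|\oplus\h^\bot$ is an orthogonal decomposition and the form on $\h$ is nondegenerate, $\h^\bot$ is exactly the orthogonal complement of $\h^\|$. So a vector $v\in\h$ lies in $\h^\bot$ if and only if
\begin{equation*}
\langle v,\sigma_i^\Delta+\tau_i^\Delta\rangle=0\quad\text{and}\quad\langle v,-\rho_i^\Delta+\sigma_i^\Delta\rangle=0\quad\text{for all }i=1,\dots,M.
\end{equation*}
For $n,m\in\Z^N$ write $\rho^n=\sum_j n_j\rho_j$, and similarly $\sigma^n$ and $\tau^m$. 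The generic element of $L=K\oplus J$ is then $v=-\rho^n+\sigma^n+\tau^m$; taking $m=0$ gives $K$, and taking $n=0$ gives $J$.

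Next I would compute the pairings, using the bilinear form $\diag(\id,-\id,\id)$. This gives
\begin{equation*}
\langle v,\sigma_i^\Delta+\tau_i^\Delta\rangle=(\Delta(m-n))_i,
\end{equation*}
while
\begin{equation*}
\langle v,-\rho_i^\Delta+\sigma_i^\Delta\rangle=(\Delta n)_i-(\Delta n)_i=0
\end{equation*}
vanishes identically. Specialising:
\begin{itemize}
\item For $K$ ($m=0$), the condition is $\Delta n=0$.
\item For $J$ ($n=0$), the condition is $\Delta m=0$.
\item For $L$, the condition is $\Delta(m-n)=0$.
\end{itemize}
The key input is the exactness of \eqref{eq:SES} over $\Z$, not merely over $\Q$: we have $\ker(\Delta)\cap\Z^N=E\Z^{N-M}$. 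This saturation is the only real point, and it is exactly where the exactness of the integral sequence (equivalently the unimodularity and injectivity of $E$) is used. Hence $n=Ea$ for $K$ and $m=Ea$ for $J$, with $a\in\Z^{N-M}$. Since $\rho^{Ea}=\sum_i a_i\rho_i^E$, and similarly for $\sigma$ and $\tau$, this gives $K^\bot=\langle\{-\rho_i^E+\sigma_i^E\}\rangle_\Z$ and $J^\bot=\langle\{\tau_i^E\}\rangle_\Z$. Linear independence, and hence rank $N-M$, follows from the injectivity of $E$ together with the linear independence of the $\rho_j,\sigma_j,\tau_j$.

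For $L$, the condition becomes $m=n+Ea$ with $n\in\Z^N$ arbitrary and $a\in\Z^{N-M}$. Then
\begin{equation*}
v=\sum_{j=1}^N n_j(-\rho_j+\sigma_j+\tau_j)+\sum_{i=1}^{N-M}a_i\tau_i^E .
\end{equation*}
To reach the claimed generating set, I use the identity
\begin{equation*}
\tau_i^E=\sum_{j=1}^N E_{ji}(-\rho_j+\sigma_j+\tau_j)-(-\rho_i^E+\sigma_i^E).
\end{equation*}
It shows that the $\Z$-span of $\{-\rho_i^E+\sigma_i^E\}\cup\{-\rho_j+\sigma_j+\tau_j\}$ coincides with that of $\{\tau_i^E\}\cup\{-\rho_j+\sigma_j+\tau_j\}$, which is $L^\bot$. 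For rank $2N-M$, I check that the map $(n,a)\mapsto v$ is injective: the $\rho$-coefficients recover $n$, and then the $\tau$-coefficients give $Ea$, hence $a$, since $E$ is injective. The stated basis is obtained from this one by a unimodular change of basis, so it is also a $\Z$-basis.

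As a sanity check, each listed vector is orthogonal to $\h^\|$ directly by the same pairing computation. In every case the $\rho$-coefficient vector equals the $\sigma$-coefficient vector, and $m-n$ is either $0$ or lies in the column space of $E$; this is consistent with the description of $\h^\bot$ given before the lemma.
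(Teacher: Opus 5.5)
Your proof is correct, but it is organised differently from the paper's.

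\textbf{The paper's route.} The paper writes $K^\bot=K\cap p^\bot(K)$ and uses the explicit projection matrix to get $p^\bot(-\rho_i+\sigma_i)=p^E(-\rho_i+\sigma_i)$. This gives $K^\bot\subseteq\Z^N\cap E\Q^{N-M}$. It then proves $\Z^N\cap E\Q^{N-M}\subseteq E\Z^{N-M}$ by Cramer's rule on an invertible $(N-M)\times(N-M)$ submatrix of $E$, which has determinant $\pm1$ by unimodularity. For $L$ it rewrites $L=K\oplus\langle\{-\rho_i+\sigma_i+\tau_i\}_{i=1}^N\rangle_\Z$ and observes that the second summand already lies in $\h^\bot$. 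So $L^\bot$ is $K^\bot$ plus that summand, which is the stated basis. $J^\bot=J\cap T^E$ is handled like $K^\bot$.

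\textbf{Your route.} You test membership in $\h^\bot$ by pairing against the spanning vectors of $\h^\|$. This avoids the projection matrix entirely and gives the single criterion $\Delta m=\Delta n$ for all three lattices; the paper only records this criterion afterwards, in \autoref{rem:altrep}. You take saturation directly from the integral exactness of \eqref{eq:SES}, which holds by the choice of $E$. You also check the ranks, which the paper leaves implicit.

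\textbf{Comparison.} The paper's version stays inside the projection formalism it needs anyway for the screening operators $e^{p^\bot(\rho_i)}_{(0)}$, and its Cramer step uses only injectivity and unimodularity of $E$. Yours is shorter and treats the three lattices uniformly. For $L$ you can skip the final change of basis: solve $n=m-Ea$ instead of $m=n+Ea$, which gives $v=\sum_j m_j(-\rho_j+\sigma_j+\tau_j)-\sum_i a_i(-\rho_i^E+\sigma_i^E)$ directly in the claimed generators.

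\textbf{Two minor slips}, neither affecting the argument:
\begin{enumerate}
\item Integral exactness only requires $E\Z^{N-M}$ to be saturated in $\Z^N$. That follows from, but is weaker than, injectivity plus unimodularity of $E$, so ``equivalently'' overstates it.
\item In your closing sanity check, the $\rho$-coefficient vector of $-\rho^n+\sigma^n+\tau^m$ is $-n$, the negative of the $\sigma$-coefficient vector, not equal to it.
\end{enumerate}
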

\begin{proof}
First, we describe the lattice $K^\bot$. The inclusion $K^\bot\supseteq\langle\{-\rho_i^E+\sigma_i^E\}_{i=1}^{N-M}\rangle_\Z$ is immediate. It remains to show the other inclusion. First, we consider
\begin{equation*}
p^\bot(K)=\langle\{p^\bot(-\rho_i+\sigma_i)\}_{i=1}^N\rangle_\Z=\langle\{p^E(-\rho_i+\sigma_i)\}_{i=1}^N\rangle_\Z.
\end{equation*}
To simplify notation, we identify $K=\langle\{-\rho_i+\sigma_i\}_{i=1}^N\rangle_\Z$ with $\Z^N$, written as column vectors. Then $p^\bot(K)=p^E\Z^N$ and
\begin{align*}
K^\bot&=K\cap p^\bot(K)=\Z^N\cap p^E\Z^N=\Z^N\cap E(E^tE)^{-1}E^t\Z^N\\
&\subset\Z^N\cap E\Q^{N-M}.
\end{align*}
Now suppose that the vector $b$ belongs to $\Z^N\cap E\Q^{N-M}$. In other words, $b\in\Z^N$ and $b=Ex$ for some $x\in\Q^{N-M}$. Since $E$ is injective, its $N-M$ columns are linearly independent and we can find $N-M$ rows that are also linearly independent. Call $\tilde{E}$ the corresponding square submatrix of $E$. Since $E$ is unimodular, $\det(\tilde{E})\in\{\pm1\}$. The linear system of equations $Ex=b$ implies $\tilde{E}x=\tilde{b}$ where $b\in\Z^{N-M}$ is a subvector of $b\in\Z^N$ corresponding to the same rows. Because $\tilde{E}$ is invertible, we can use Cramer's rule to solve for $x$, and since $\tilde{E}$ and $\tilde{b}$ have only integer entries and because $1/\det(\tilde{E})\in\Z$, it follows that $x\in\Z^{N-M}$. Hence, $b\in E\Z^N$. We have hence shown that
\begin{equation*}
K^\bot\subset\Z^N\cap E\Q^{N-M}\subset E\Z^{N-M}=\langle\{-\rho_i^E+\sigma_i^E\}_{i=1}^{N-M}\rangle_\Z,
\end{equation*}
proving the assertion for $K^\bot$. In the last equality we use that, by definition, the $-\rho_i^E+\sigma_i^E$ for $i=1,\dots,N-M$ expressed in the basis $\{-\rho_i+\sigma_i\}_{i=1}^N$ are simply the columns of $E$.

We proceed with determining a basis of the lattice $L^\bot$. It is helpful to slightly rewrite the lattice $L$ as $L=\langle\{-\rho_i+\sigma_i,-\rho_i+\sigma_i+\tau_i\}_{i=1}^N\rangle_\Z$. Then
\begin{align*}
p^\bot(L)&=\langle\{p^\bot(-\rho_i+\sigma_i),p^\bot(-\rho_i+\sigma_i+\tau_i)\}_{i=1}^N\rangle_\Z\\
&=\langle\{p^E(-\rho_i+\sigma_i),-\rho_i+\sigma_i+\tau_i\}_{i=1}^N\rangle_\Z.
\end{align*}
The same argument as before then yields
\begin{equation*}
L^\bot=L\cap p^\bot(L)=\langle\{-\rho_i^E+\sigma_i^E\}_{i=1}^{N-M}\cup\{-\rho_i+\sigma_i+\tau_i\}_{i=1}^N\rangle_\Z.
\end{equation*}

Finally, we consider the lattice $J^\bot$. Here, we see that
\begin{equation*}
J^\bot=J\cap\h^\bot=J\cap T\cap\h^\bot=J\cap T^E=\langle\{\tau_i^E\}_{i=1}^{N-M}\rangle_\Z,
\end{equation*}
where in the last step we again argued like for the lattice $K$ to obtain the nontrivial inclusion $J\cap T^E\subset\langle\{\tau_i^E\}_{i=1}^{N-M}\rangle_\Z$.
\end{proof}
For the application of the screening operators, we can choose a more convenient basis of $\h^\bot=p^\bot(R)\oplus S^E\oplus T^E$ (an orthogonal direct sum) as follows:
\begin{align*}
p^\bot(\rho_i)&\text{ for }i=1,\dots,N,\\
\sigma_i^E,\tau_i^E&\text{ for }i=1,\dots,N-M.
\end{align*}
We note that the $p^\bot(\rho_i)$, which form a basis of $p^\bot(R)$, satisfy $\langle p^\bot(\rho_i),p^\bot(\rho_j)\rangle=\delta_{ij}$, like the original $\rho_i$ in $R$. This will play a role in \autoref{sec:vanishfock}, when we apply the screening operators.

Moreover, the lattice basis vectors of $K^\bot$ and $L^\bot$ can be written in terms of the new basis as
\begin{equation*}
-\rho_i^E+\sigma_i^E=-\sum_{j=1}^NE_{ji}p^\bot(\rho_j)+\sigma_i^E
\end{equation*}
for $i=1,\dots,N-M$ and
\begin{equation*}
-\rho_i+\sigma_i+\tau_i=-p^\bot(\rho_i)+\sum_{j=1}^{N-M}((E^tE)^{-1}E^t)_{ji}(\sigma_j^E+\tau_j^E)
\end{equation*}
for $i=1,\dots,N$, where in each case both summands are orthogonal.

\begin{rem}\label{rem:biggerK}
Even though $K\oplus J=L$ by definition, $K^\bot\oplus J^\bot\subset L^\bot$ is in general only a sublattice of $L^\bot$ of smaller rank (infinite index). However, we can consider the orthogonal decomposition $\h^\bot=(p^\bot(R)\oplus S^E)\oplus T^E$ and based on this the integral lattices
\begin{align*}
\overline{K^\bot}&\coloneqq L\cap(p^\bot(R)\oplus S^E)=L^\bot\cap(p^\bot(R)\oplus S^E),\\
J^\bot&=L\cap T^E=L^\bot\cap T^E
\end{align*}
of rank $N$ and $N-M$, respectively. Then $\smash{\overline{K^\bot}\oplus J^\bot\subset L^\bot}$ is a sublattice of $L^\bot$ of finite index. Cf.\ \autoref{rem:intermediate}.
\end{rem}


\subsubsection{Vanishing Theorem for Fock Modules}\label{sec:vanishfock}

We now come to the main result of this section, the vanishing result of \cite{Vor94}, applied to Fock modules (see also \cite{BF25}). Recall from \autoref{sec:freefield2} that we embedded $\smash{M_\mathrm{min}\otimes\pi^{T^E}}$ and $M$, the inputs of the BRST reduction, into Heisenberg-lattice vertex (super)algebras $\HL_K^\h$ and $\HL_L^\h$, respectively, i.e.\ into a direct sum of Fock modules for the Heisenberg \voa{} $\pi^\h=\pi^R\otimes\pi^S\otimes\pi^S$. Also recall the choice of quotient space $\h^\bot=\mathfrak{i}^\bot/\mathfrak{i}$.

\begin{prop}[Vanishing Theorem, Heisenberg Modules]\label{prop:vanish_Heisenberg}
The relative cohomologies of the Fock modules $\pi_\lambda^\h$ satisfy, for $\lambda\in\h$ and $n\in\Z$,
\begin{equation*}
H^{\infty/2+n}(\g,\pi_\lambda^\h)=\delta_{n,0}\delta_{\lambda\in\h^\bot}\pi_\lambda^{\h^\bot}.
\end{equation*}
\end{prop}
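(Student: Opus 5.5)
The plan is to reduce the computation to a tensor product of rank-one pieces and then apply the standard relative semi-infinite cohomology of an abelian Lie algebra acting on a Fock module through a free boson. Since $\g$ is abelian and the comoment fields $\bar\mu^*_\mathrm{ch}(a_i)=\sigma_i^\Delta+\tau_i^\Delta$ span the isotropic subspace $\mathfrak{i}\subset\h$, I first choose a Witt-type decomposition $\h=\h^\|\oplus\h^\bot$ as fixed above. Here $\h^\|$ is spanned by $\sigma_i^\Delta+\tau_i^\Delta$ and $-\rho_i^\Delta+\sigma_i^\Delta$, which is nondegenerate of signature $(M,M)$, and $\h^\bot\cong\mathfrak{i}^\bot/\mathfrak{i}$. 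Correspondingly, $\pi^\h_\lambda\cong\pi^{\h^\|}_{p^\|(\lambda)}\otimes\pi^{\h^\bot}_{p^\bot(\lambda)}$. The BRST current $Q$ only involves $\h^\|$ and the ghosts, so the complex factors as $\bigl(\pi^{\h^\|}_{p^\|\lambda}\otimes\mathcal{C}\ell(\Pi T^*\g)\bigr)_\mathrm{rel}\otimes\pi^{\h^\bot}_{p^\bot\lambda}$, with $d$ acting trivially on the second factor. By Künneth it therefore suffices to show that the relative cohomology of $\pi^{\h^\|}_{\nu}$ is $\delta_{n,0}\delta_{\nu=0}\C$. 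The condition $p^\|(\lambda)=0$ is the same as $\lambda\in\h^\bot$, which gives the claim.

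For the hyperbolic part, I diagonalise further. I pick a basis $u_i=\sigma_i^\Delta+\tau_i^\Delta$ of $\mathfrak{i}$ and a dual isotropic basis $v_i$ of $\h^\|$ with $\langle u_i,v_j\rangle=\delta_{ij}$; after a change of basis of $\g$ (the ghosts transform contragrediently) this is possible. The complex then becomes a tensor product of $M$ copies of the rank-one case: the Fock module of a hyperbolic plane spanned by $u,v$, with $d=(u\,c)_{(0)}$, relative to $b_{(0)}$ and $u_{(0)}$. In this rank-one case, the relative condition $u_{(0)}=0$ on the module forces $\langle u,\nu\rangle=0$. Writing $\nu=\alpha u+\beta v$, this gives $\beta=0$, so otherwise the relative complex is zero. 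With $\beta=0$, the differential is $d=\sum_{n\neq0}u_{(n)}c_{(-n)}+\alpha\, c_{(0)}$-type terms, and the $c_{(0)}$ part is killed by relativity.

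The key step, which I expect to be the main obstacle, is computing the rank-one cohomology. The plan is to filter (or grade) by $L_0$ and observe that the oscillators $u_{-n},v_{-n},b_{-n},c_{-n}$ ($n\geq1$) organise into a Koszul-type complex in which $d$ pairs $v_{-n}$ with $c_{-n}$ and $b_{-n}$ with $u_{-n}$ (since $[d,b_{-n}]=u_{-n}$ and $[d,v_{-n}]\propto c_{-n}$, using $\langle u,v\rangle=1$, $\langle u,u\rangle=0$). This is an acyclic quartet mechanism, so the cohomology is $\C|\nu\rangle$, concentrated in ghost degree $0$. It remains to show that the zero mode $\alpha$ is forced to vanish. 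Relativity with respect to $u_{(0)}$ only gives $\beta=0$, but the vacuum $|\alpha u\rangle$ is still a cocycle; I would check that it is a coboundary when $\alpha\neq0$, for instance via $d(v_{-1}\dots)$ or by a homotopy $h$ with $[d,h]=L_0$-like operator proportional to $\langle\nu,v\rangle=\alpha$, so that the cohomology vanishes for $\alpha\neq0$. This is exactly where I would follow Voronov's argument \cite{Vor94}: the homotopy $K=\sum_n v_{(n)}b_{(-n)}$ satisfies $[d,K]=N_{\mathrm{osc}}+\alpha\cdot(\text{const})$, which is invertible unless both the oscillator number and $\alpha$ vanish. This yields $\delta_{n,0}\delta_{\nu,0}$ and completes the proof.
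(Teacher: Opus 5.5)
\paragraph{Where the argument breaks.} Most of your reduction is sound. The splitting $\h=\h^\|\oplus\h^\bot$, factoring off $\pi^{\h^\bot}_{p^\bot(\lambda)}$ by K\"unneth, and the rank-one Koszul/quartet computation are all fine. That computation shows that for $\nu=\alpha u+\beta v$ the relative complex vanishes unless $\beta=\langle u,\nu\rangle=0$, and otherwise has cohomology $\C|\nu\rangle$ in degree~$0$. The paper itself gives no argument at this point; it simply invokes Voronov's vanishing theorem \cite{Vor94} (see also \cite{BF25}). So an explicit computation would be a more self-contained route.

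The gap is your final step, where you try to force $\alpha=0$. That cannot work, because $|\alpha u\rangle$ is a nonzero class for every $\alpha$.
\begin{itemize}
\item On the relative complex, $d=\sum_{n\geq1}(u_{-n}c_n+u_nc_{-n})$ preserves the oscillator degree.
\item $|\alpha u\rangle$ is the only state of oscillator degree~$0$.
\item There are no states of ghost degree $-1$ in oscillator degree~$0$, so nothing can hit $|\alpha u\rangle$.
\end{itemize}
Correspondingly, your homotopy identity is wrong. For $K=\sum_n v_{-n}b_n$ one finds $\{d,K\}=D+u_0v_0$, where $D$ is the oscillator degree in the $u,v,b,c$ sector. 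The zero-mode term is $u_0v_0=\langle u,\nu\rangle\langle v,\nu\rangle=\beta\alpha$, and it vanishes as soon as relativity has imposed $\beta=0$. For the same reason, the coefficient of $c_0$ in $d$ is $u_0=\beta$, not $\alpha$. So your intermediate claim, that the relative cohomology of $\pi^{\h^\|}_\nu$ is $\delta_{\nu,0}\C$, is false for $0\neq\nu\in\mathfrak{i}$.

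\paragraph{What is actually true, and the fix.} Your argument really proves
\[
H^{\infty/2+n}(\g,\pi^\h_\lambda)=\delta_{n,0}\,\delta_{\lambda\in\mathfrak{i}^\bot}\,\pi^{\h^\bot}_{p^\bot(\lambda)}.
\]
This differs from the displayed formula, read literally with $\h^\bot$ the chosen complement, exactly when $\lambda\in\mathfrak{i}^\bot\setminus\h^\bot$, that is, when $p^\|(\lambda)$ has a nonzero component in $\mathfrak{i}$.

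To obtain the statement as displayed, restrict to momenta with $\langle\lambda,-\rho_i^\Delta+\sigma_i^\Delta\rangle=0$ for all $i$, rather than trying to kill $\alpha$ with a homotopy.
\begin{itemize}
\item Since $\mathfrak{i}$ and $W=\langle -\rho_i^\Delta+\sigma_i^\Delta\rangle$ are complementary isotropic subspaces of $\h^\|$, this condition says exactly that the $\mathfrak{i}$-component of $p^\|(\lambda)$ vanishes. For such $\lambda$ one then has $\lambda\in\mathfrak{i}^\bot\iff\lambda\in\h^\bot$.
\item The condition holds for every $\lambda$ in the lattice $L$, hence also in $K$. This is because the $-\rho_j+\sigma_j$ are isotropic, mutually orthogonal, and orthogonal to $T$.
\item These are the only momenta used afterwards. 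This is why the paper chose $\h^\|$ to contain the $-\rho_i^\Delta+\sigma_i^\Delta$; compare its remark that one should really intersect with $\mathfrak{i}^\bot$ and work modulo $\mathfrak{i}$.
\end{itemize}
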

The vanishing result states that all relative cohomologies (even the zeroth) vanish unless $\lambda\in\h^\bot$. In that case, the Fock module survives, but with a reduction in rank from $\h$ to $\h^\bot$, i.e.\ from $3N$ to  $3N-2M$.

\medskip

From this, using the commutativity of the diagrams in \autoref{sec:freefield2}, zoomed in to the level of Fock modules, we can also determine the BRST cohomology of the direct summands of $M_\mathrm{min}\otimes\pi^{T^E}$ and $M$. We can then use this in \autoref{sec:vanish2} to compute the BRST cohomology of $M_\mathrm{min}\otimes\pi^{T^E}$ and $M$.

The ``matter'' free-field vertex operator (super)algebras in the input of the BRST reduction are conformal extensions
\begin{equation*}
M\supseteq M_\mathrm{min}\otimes\pi^{T^E}\supseteq M^R\otimes\pi^S\otimes\pi^T,
\end{equation*}
which we embed via the screening kernel
\begin{equation*}
\ker(e^{\rho_1}_{(0)})\cap\dots\cap\ker(e^{\rho_N}_{(0)})
\end{equation*}
into the rank-$(2N,3N)$ and rank-$(N,3N)$ Heisenberg-lattice vertex algebras
\begin{equation*}
\HL_L^\h\supseteq\HL_K^\h\supseteq\pi^R\otimes\pi^S\otimes\pi^T.
\end{equation*}
Here, we wrote for short $M^R=\smash{M_0^{\rho_1}\otimes\dots\otimes M_0^{\rho_n}}\cong\mathcal{M}(2)^{\otimes N}$ for $N$ tensor copies of the singlet \voa{}, with the superscript remembering the embedding into the Heisenberg \voa{} $\pi^R$ via the screening kernels.

Now, using that the relative BRST cohomology and applying the screening kernels commute, in the sense stated in \autoref{prop:commute1} and \autoref{prop:commute2}, which holds on the level of Fock modules, we obtain as a corollary the following vanishing result, again with the above choice of quotient space $\h^\bot=\mathfrak{i}^\bot/\mathfrak{i}$.
\begin{prop}[Vanishing Theorem, Singlet-Heisenberg Modules]\label{prop:vanish_singlet}
Consider $\lambda=\lambda_r+\lambda_s+\lambda_t\in\h=R\oplus S\oplus T$, and suppose that $\lambda_r$ is an integer linear combination of the $\rho_i$. We consider the module $M_{\lambda_r}^R\otimes\pi_{\lambda_s}^S\otimes\pi_{\lambda_t}^T$ for the singlet-Heisenberg \voa{} $M^R\otimes\pi^S\otimes\pi^T$. The relative cohomologies satisfy
\begin{equation*}
H^{\infty/2+n}(\g,M_{\lambda_r}^R\otimes\pi_{\lambda_s}^S\otimes\pi_{\lambda_t}^T)=\delta_{n,0}\delta_{\lambda\in\h^\bot}M_{p^\bot(\lambda_r)}^{p^\bot(R)}\otimes\pi_{p^E(\lambda_s)}^{S^E}\otimes\pi_{p^E(\lambda_t)}^{T^E}
\end{equation*}
for all $n\in\Z$. If the condition $\lambda\in\h^\bot$ is satisfied, then $\lambda=p^\bot(\lambda)=p^\bot(\lambda_r)+p^\bot(\lambda_s)+p^\bot(\lambda_t)=p^\bot(\lambda_r)+p^E(\lambda_s)+p^E(\lambda_t)\in\h^\bot= p^\bot(R)\oplus S^E\oplus T^E$, which is an orthogonal direct sum.
\end{prop}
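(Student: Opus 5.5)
We plan to derive the statement from \autoref{prop:vanish_Heisenberg} by resolving the singlet module through Fock modules and using that the BRST differential commutes with the screening operators. First, write $M^R_{\lambda_r}=\bigotimes_{i}M^{\rho_i}_{m_i}$ with $\lambda_r=-\sum_i m_i\rho_i$, $m_i\in\Z$. Each atypical singlet module $M^{\rho_i}_{m_i}$ is the kernel of the screening operator $e^{\rho_i}_{(0)}$ on $\pi^{\C\rho_i}_{-m_i\rho_i}$. For $p=2$ it sits in the (Felder-type) two-term exact sequence
\begin{equation*}
0\to M^{\rho_i}_{m_i}\to\pi^{\C\rho_i}_{-m_i\rho_i}\xrightarrow{e^{\rho_i}_{(0)}}\pi^{\C\rho_i}_{(-m_i+1)\rho_i}\to M^{\rho_i}_{m_i-1}\to0,
\end{equation*}
since the image of the screening is again a singlet module. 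Tensoring over $i$ and with $\pi^S_{\lambda_s}\otimes\pi^T_{\lambda_t}$ expresses $M^R_{\lambda_r}\otimes\pi^S_{\lambda_s}\otimes\pi^T_{\lambda_t}$ as the kernel of the commuting family of screenings $e^{\rho_i}_{(0)}$ acting on a direct sum of Fock modules $\pi^\h_\mu$ with $\mu\in\lambda+\sum_i\Z\rho_i$.

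Next we check that the screenings commute with the BRST differential, up to replacing $\rho_i$ by $p^\bot(\rho_i)$ modulo $\mathfrak{i}$. The key point is that $e^{\rho_i}$ has trivial OPE with $Q=\sum_k(\sigma^\Delta_k+\tau^\Delta_k)c_k$, because $\rho_i\perp S\oplus T$. Hence $e^{\rho_i}_{(0)}$ is a $d$-cochain map on the relative complexes, and on cohomology it becomes $e^{p^\bot(\rho_i)}_{(0)}$, since $\rho_i-p^\bot(\rho_i)\in\h^\|$ acts trivially on surviving momenta up to the identification $\mathfrak{i}^\bot/\mathfrak{i}\cong\h^\bot$.

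We then take cohomology via the double complex, or equivalently a hypercohomology spectral sequence, built from the Koszul-type screening resolution and the BRST complex. By \autoref{prop:vanish_Heisenberg} each Fock term has cohomology only in degree $0$, namely $\pi^{\h^\bot}_\mu$ if $\mu\in\h^\bot$ and zero otherwise. So the spectral sequence degenerates, and the BRST cohomology of our module is the kernel of the induced screenings on $\bigoplus\pi^{\h^\bot}_{\mu}$, provided the induced screening complex remains exact beyond degree $0$.

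There is a membership subtlety. Since $\rho_i\in\h$ need not lie in $\h^\bot$, the condition $\mu\in\h^\bot$ for $\mu=\lambda+\sum n_i\rho_i$ does not simply reduce to $\lambda\in\h^\bot$. We would handle this by noting that $\mu\in\mathfrak{i}^\bot$ is the actual condition, since $\rho_i\perp\mathfrak{i}$ gives $\langle\rho_i,\sigma^\Delta_k+\tau^\Delta_k\rangle=0$. Using the identification via $p^\bot$, the condition thus depends only on $\lambda$, and the surviving momenta are shifted by $\Z p^\bot(\rho_i)$.

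Since $\langle p^\bot(\rho_i),p^\bot(\rho_j)\rangle=\delta_{ij}$, the induced screening complex is again the $N$-fold tensor product of standard rank-one Felder complexes in the directions $p^\bot(\rho_i)$. It is therefore exact in the same pattern, and its kernel is $M^{p^\bot(R)}_{p^\bot(\lambda_r)}\otimes\pi^{S^E}_{p^E(\lambda_s)}\otimes\pi^{T^E}_{p^E(\lambda_t)}$. The vanishing for $n\neq0$ follows from the degeneration.

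We expect the main obstacle to be the spectral-sequence step. The screening resolution of singlet modules is not a finite resolution by Fock modules, since its cokernel is again a singlet module. We would therefore first try an inductive five-lemma argument on the short exact sequences $0\to M_m\to\pi\to M_{m-1}\to0$ for all $m$ simultaneously, combining the long exact BRST sequence with the Fock vanishing. This forces $H^{n}=H^{n+1}$ shifted, hence $H^{n}(M_m)=0$ for $n\neq0$ by boundedness of ghost degree in each conformal weight. It then identifies $H^0$ as the screening kernel.
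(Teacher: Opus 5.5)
Your route is the paper's: it gets this proposition as a corollary of the Fock-module vanishing (\autoref{prop:vanish_Heisenberg}) plus the compatibility of the screening kernels $\ker(e^{\rho_i}_{(0)})$ with BRST reduction. Your long-exact-sequence argument is one way to make that compatibility precise.

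The step that does not go through is the final reconciliation of the vanishing condition. You correctly note that the relative complex actually imposes $\lambda\in\mathfrak{i}^\bot$. However, $\mathfrak{i}^\bot=\h^\bot\oplus\mathfrak{i}$ is strictly larger than $\h^\bot$. Membership in $\h^\bot$ also requires $\langle\lambda,-\rho^\Delta_k+\sigma^\Delta_k\rangle=0$ for all $k$, and ``the identification via $p^\bot$'' does not supply this. In general it fails. Take $N=M=1$, $\Delta=(1)$, the singlet module $M_1=\ker(e^\rho_{(0)})\subset\pi_{-\rho}$, and $\lambda_s=\lambda_t=0$. Then $\lambda=-\rho\in\mathfrak{i}^\bot\setminus\h^\bot$. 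Yet $Q=(\sigma+\tau)c$ does not involve $\rho$, so the cohomology is $M_1\otimes H^{\infty/2+0}(\g,\pi^{S\oplus T}_0)\cong M_1\neq\{0\}$.

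What your argument actually proves is the version with $\delta_{\lambda\in\mathfrak{i}^\bot}$. The resulting momentum is the class of $\lambda$ modulo $\mathfrak{i}$, namely $p^\bot(\lambda)=p^\bot(\lambda_r)+p^E(\lambda_s)+p^E(\lambda_t)$. This agrees with the stated formula exactly when $\langle\lambda,-\rho^\Delta_k+\sigma^\Delta_k\rangle=0$. That holds automatically whenever $\lambda_r+\lambda_s$ lies in the isotropic space $K\otimes_\Z\C$. In particular it holds for every summand used in \autoref{sec:vanish2}, where the $R$-momentum is $-\sum m_i\rho_i$ and the $S$-momentum is $\sum m_i\sigma_i$. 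You need to add this hypothesis, or read $\lambda\in\h^\bot$ as a statement about the class of $\lambda$ in $\mathfrak{i}^\bot/\mathfrak{i}$. The paper's one-line derivation has the same looseness, inherited from its formulation of the Fock-module statement.

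Apart from this, the resolution and spectral-sequence machinery is unnecessary. Your remark that $e^{\rho_i}$ has trivial OPE with $Q$ extends to all of $\pi^R$. Both $Q$ and the relative constraints involve only $S\oplus T$ and the ghosts, so $M^R_{\lambda_r}$ is an inert tensor factor:
$H^{\infty/2+n}(\g,M^R_{\lambda_r}\otimes\pi^S_{\lambda_s}\otimes\pi^T_{\lambda_t})=M^R_{\lambda_r}\otimes H^{\infty/2+n}(\g,\pi^{S\oplus T}_{\lambda_s+\lambda_t})$.
Fock vanishing for $S\oplus T$ then gives vanishing for $n\neq0$ and the factor $\pi^{S^E}_{p^E(\lambda_s)}\otimes\pi^{T^E}_{p^E(\lambda_t)}$. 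Moreover, $\rho_i-p^\bot(\rho_i)=p^\Delta(\sigma_i+\tau_i)$ lies in $\mathfrak{i}$, whose fields are $d$-exact because $\sigma^\Delta_k+\tau^\Delta_k=d\,b_k$. Hence the isometry $\rho_i\mapsto p^\bot(\rho_i)$ carries $M^R_{\lambda_r}$ to $M^{p^\bot(R)}_{p^\bot(\lambda_r)}$.

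If you keep the long-exact-sequence argument, it does work. Iterate $H^n(M_m)\cong H^{n+1}(M_{m+1})$ for $n\geq1$ and $H^n(M_m)\cong H^{n-1}(M_{m-1})$ for $n\leq-1$; at fixed $L_0$-weight the ghost degree is bounded uniformly in $m$, so both chains eventually vanish. Fix two slips, though. First, the cokernel in your four-term sequence is $M_{m-2}$, not $M_{m-1}$, since the image of $e^\rho_{(0)}$ on $\pi_{-m\rho}$ is $M_{m-1}$. Second, what makes the induced screening equal $e^{p^\bot(\rho_i)}_{(0)}$ is that $\rho_i-p^\bot(\rho_i)$ lies in $\mathfrak{i}$, not merely in $\h^\|$. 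The directions $-\rho^\Delta_k+\sigma^\Delta_k$ in $\h^\|$ do not act trivially.
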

Here, we again wrote for short $M^{p^\bot(R)}=\smash{M_0^{p^\bot(\rho_1)}\otimes\dots\otimes M_0^{p^\bot(\rho_n)}}\cong\mathcal{M}(2)^{\otimes N}$ for $N$ tensor copies of the singlet \voa{} embedded into the Heisenberg \voa{} $\smash{\pi^{p^\bot(R)}}$. We use an analogous notation for the nontrivial simple-current modules of $M^R$ and $\smash{M^{p^\bot(R)}}$. Note that $p^\bot(\lambda_r)$ is still an integer linear combination of the $p^\bot(\rho_i)$, which form a basis of $p^\bot(R)$ and satisfy $\langle p^\bot(\rho_i),p^\bot(\rho_j)\rangle=\delta_{ij}$. So, this notation makes sense.

The expressions for $V_\mathrm{min}(\Delta)\otimes\pi^{T^E}$ and $V(\Delta)$ in \autoref{sec:vanish2} below will then be obtained from the above modules by summing over the lattices $K$ and $L$ in $\h$, respectively.


\subsubsection{Vanishing Theorem}\label{sec:vanish2}

We apply the vanishing result, \autoref{prop:vanish_singlet}, to the free-field vertex operator (super)algebras $\smash{M_\mathrm{min}\otimes\pi^{T^E}}$ and $M$ in order to obtain the minimal and boundary hypertoric vertex operator(super)algebras $\smash{V_\mathrm{min}(\Delta)\otimes\pi^{T^E}}$ and $V(\Delta)$, respectively. We shall see that they are certain simple-current extensions of Heisenberg and singlet \voa{}s.

\medskip

We begin with the computation of the BRST cohomology in the minimal hypertoric case. The free-field \voa{} in the input of the BRST reduction is of the form
\begin{align*}
M_\mathrm{min}\otimes\pi^{T^E}&=\mathcal{D}^\mathrm{ch}(T^*V)\otimes\pi^T\\
&=\bigoplus_{m_1,\dots,m_N\in\Z}M_{\sum_{i=1}^Nm_i\rho_i}^R\otimes\pi_{\sum_{i=1}^Nm_i\sigma_i}^S\otimes\pi_0^T,
\end{align*}
which we embed via the screening kernel
\begin{equation*}
\ker(e^{\rho_1}_{(0)})\cap\dots\cap\ker(e^{\rho_N}_{(0)})
\end{equation*}
into the rank-$(N,3N)$ Heisenberg-lattice vertex algebra
\begin{equation*}
\HL_K^\h=\bigoplus_{m_1,\dots,m_N\in\Z}\pi_{-\sum_{i=1}^Nm_i\rho_i}^R\otimes\pi_{\sum_{i=1}^Nm_i\sigma_i}^S\otimes\pi_0^T.
\end{equation*}
Recall the $3N$-dimensional quadratic space $\h=\langle\{\rho_i,\sigma_i,\tau_i\}_{i=1}^N\rangle_\C$ and the isotropic rank-$N$ lattice $K=\langle\{-\rho_i+\sigma_i\}_{i=1}^N\rangle_\Z$ from above.

After choosing the quotient space $\h^\bot=\mathfrak{i}^\bot/\mathfrak{i}$, the relative BRST cohomology (see \autoref{prop:vanish_Heisenberg}) collapses $\h^\|$ (as well as the lattice directions $-\rho_i^\Delta+\sigma_i^\Delta$ for $i=1,\dots,M$), while preserving $\h^\bot$. That is, when considering the Heisenberg-lattice extension $\HL_K^\h$ of $\smash{M_\mathrm{min}\otimes\pi^{T^E}=\mathcal{D}^\mathrm{ch}(T^*V)\otimes\pi^T}$, the vanishing theorem can be stated very simply as:
\begin{prop}[Vanishing Theorem, Lattice]\label{prop:vanish_lattice}
The relative cohomologies satisfy
\begin{equation*}
H^{\infty/2+n}(\g,\HL_K^\h)=\delta_{n,0}\HL_{K^\bot}^{\h^\bot}
\end{equation*}
for $n\in\Z$, where $K^\bot=K\cap\h^\bot$.
\end{prop}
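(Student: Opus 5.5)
The plan is to reduce the statement to the Fock-module result \autoref{prop:vanish_Heisenberg} by decomposing $\HL_K^\h$ into Fock modules for $\pi^\h$. Then I would check which summands survive and what the surviving summands assemble into. As a $\pi^\h$-module, $\HL_K^\h=\bigoplus_{\lambda\in K}\pi_\lambda^\h$. The relative BRST complex $C(\HL_K^\h)$ (the subcomplex of $\HL_K^\h\otimes\mathcal{C}\ell(\Pi T^*\g)$ cut out by $b_{i(0)}$ and $(d\,b_i)_{(0)}$) is built from the fields $\sigma_i^\Delta+\tau_i^\Delta$ and the ghosts only. Hence the differential $d=Q_{(0)}$ preserves each summand $\pi_\lambda^\h\otimes\mathcal{C}\ell(\Pi T^*\g)$, since $Q$ lies in $\pi^\h\otimes\mathcal{C}\ell(\Pi T^*\g)$ and its modes do not shift momentum. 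So the complex splits as a direct sum of complexes, and cohomology commutes with the direct sum:
\begin{equation*}
H^{\infty/2+n}(\g,\HL_K^\h)=\bigoplus_{\lambda\in K}H^{\infty/2+n}(\g,\pi_\lambda^\h).
\end{equation*}
By \autoref{prop:vanish_Heisenberg}, the summand for $\lambda$ vanishes unless $n=0$ and $\lambda\in\h^\bot$. In that case it is $\pi_\lambda^{\h^\bot}$. Here I would make precise that the zero-mode condition $(\sigma_i^\Delta+\tau_i^\Delta)_{(0)}=0$ means $\lambda\in\mathfrak{i}^\bot$, and that the identification $\mathfrak{i}^\bot/\mathfrak{i}\cong\h^\bot$ is the chosen one. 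For $\lambda\in K$ the two conditions agree, since the cohomology only sees the class of $\lambda$ modulo $\mathfrak{i}$ and we take representatives in $\h^\bot$. As a graded vector space this gives $\bigoplus_{\lambda\in K\cap\h^\bot}\pi_\lambda^{\h^\bot}$, which is the underlying space of $\HL_{K^\bot}^{\h^\bot}$ with $K^\bot=K\cap\h^\bot$. \autoref{lem:latbasis} identifies this lattice explicitly; it is not needed for the statement itself.

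Next I would upgrade this to an isomorphism of vertex algebras. The BRST cohomology is a vertex algebra, and the zeroth cohomology contains the image of $\pi^{\h^\bot}$. This image comes from the Heisenberg fields of $\h^\bot\subset\mathfrak{i}^\bot$: they commute with $Q$ and with the $b_i$, so they are closed cocycles, and they have the correct OPEs since $\h^\bot$ carries the restricted form. The cohomology also contains the classes of the vacuum vectors $e^\lambda\otimes\vac$ for $\lambda\in K^\bot$. These are cocycles because $\lambda\perp\mathfrak{i}$, and they are annihilated by the $b_{i(0)}$ and by $(\sigma_i^\Delta+\tau_i^\Delta)_{(0)}$. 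Their OPEs with each other and with $\pi^{\h^\bot}$ are those of the Heisenberg-lattice vertex algebra on $K^\bot$: the vertex operators $Y(e^\lambda,z)$ only involve $\lambda$, which lies in $\h^\bot$, and the cocycle $\eps$ restricts. This gives a vertex algebra homomorphism $\HL_{K^\bot}^{\h^\bot}\to H^{\infty/2+0}(\g,\HL_K^\h)$. By the summand-wise computation it maps each $\pi_\lambda^{\h^\bot}$ isomorphically onto the $\lambda$-summand, so it is an isomorphism.

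The main obstacle I expect is the first step done carefully: that the Fock-module result applies verbatim to each summand. In particular, the momentum $\lambda$ may have components along $\mathfrak{i}$ or along the collapsed hyperbolic direction $-\rho_i^\Delta+\sigma_i^\Delta$, and I must check that the surviving modules are exactly those with $\lambda\in\h^\bot$, not merely $\lambda\in\mathfrak{i}^\bot$. For $\lambda\in K$ I would verify this directly. The pairing of $\lambda=\sum m_j(-\rho_j+\sigma_j)$ with $\sigma_i^\Delta+\tau_i^\Delta$ is $-(\Delta m)_i$. So $\lambda\in\mathfrak{i}^\bot$ forces $\Delta m=0$, that is $m\in E\Z^{N-M}$ by unimodularity, and then $\lambda$ already lies in $\langle\{-\rho_i^E+\sigma_i^E\}\rangle\subset\h^\bot$.
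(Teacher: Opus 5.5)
Your proposal is correct and follows essentially the paper's route: split $\HL_K^\h$ into the Fock modules $\pi_\lambda^\h$ for $\lambda\in K$, apply the Fock-module vanishing result \autoref{prop:vanish_Heisenberg} (Voronov) summand by summand, and reassemble the surviving summands into $\HL_{K^\bot}^{\h^\bot}$. Your direct check that $K\cap\mathfrak{i}^\bot=K\cap\h^\bot$ makes precise a step the paper only asserts, when it says that taking $K\cap\mathfrak{i}^\bot$ modulo $\mathfrak{i}$ ``amounts to'' $K\cap\h^\bot$; the check uses that the pairing with $\sigma_i^\Delta+\tau_i^\Delta$ is $-(\Delta m)_i$, and that $\Delta m=0$ forces $m\in E\Z^{N-M}$, which already follows from exactness of \eqref{eq:SES} without unimodularity. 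Your explicit vertex-algebra upgrade likewise fills in a step the paper leaves implicit.
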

Here,
\begin{equation*}
K^\bot=K\cap\h^\bot=\langle\{-\rho_i^E+\sigma_i^E\}_{i=1}^{N-M}\rangle_\Z
\end{equation*}
is an isotropic lattice of rank $N-M$ (see \autoref{lem:latbasis}). To obtain $\smash{V_\mathrm{min}(\Delta)\otimes\pi^{T^E}}$ it remains to apply the screening operators $\smash{e^{\rho_i}_{(0)}}$, but we need to project them to $\h^\bot$ first by means of the orthogonal projection $p^\bot$. However, as $\rho_i-p^\bot(\rho_i)=p^\Delta(\sigma_i+\tau_i)\in\mathfrak{i}$ for all $i=1,\dots,N$, these are really the same screening operators in the cohomology.

We can write the rank-$(N-M,3N-2M)$ Heisenberg-lattice vertex algebra $H^{\infty/2+0}(\g,\HL_K^\h)=\HL_{K^\bot}^{\h^\bot}$ as
\begin{align*}
\HL_{K^\bot}^{\h^\bot}&=\bigoplus_{k_1,\dots,k_{N-M}\in\Z}\pi_{\sum_{i=1}^{N-M}k_i(-\rho_i^E+\sigma_i^E)}^{\h^\bot}\\
&=\bigoplus_{k_1,\dots,k_{N-M}\in\Z}\pi_{-\sum_{j=1}^N\sum_{i=1}^{N-M}E_{ji}k_ip^\bot(\rho_j)}^{p^\bot(R)}\otimes\pi_{\sum_{i=1}^{N-M}k_i\sigma_i^E}^{S^E}\otimes\pi_0^{T^E}.
\end{align*}
With this presentation, we can apply the screening kernels $\smash{\ker(e^{\rho_1}_{(0)})\cap\dots\cap\ker(e^{\rho_N}_{(0)})}$, which modulo $\mathfrak{i}$ are
\begin{equation*}
\ker(e^{p^\bot(\rho_1)}_{(0)})\cap\dots\cap\ker(e^{p^\bot(\rho_N)}_{(0)}),
\end{equation*}
to obtain, using \autoref{prop:commute1}:
\begin{prop}[Main Vanishing Theorem]\label{prop:vanish_main1}
The minimal hypertoric \voa{} is of the form
\begin{align*}
&H^{\infty/2+n}(\g,M_\mathrm{min})\otimes\pi^{T^E}=\delta_{n,0}V_\mathrm{min}(\Delta)\otimes\pi^{T^E}\\
&=\delta_{n,0}\bigoplus_{k_1,\dots,k_{N-M}\in\Z}M_{\sum_{j=1}^N\sum_{i=1}^{N-M}E_{ji}k_ip^\bot(\rho_j)}^{p^\bot(R)}\otimes\pi_{\sum_{i=1}^{N-M}k_i\sigma_i^E}^{S^E}\otimes\pi_0^{T^E},
\end{align*}
a $\Z^{N-M}$-simple current extension of $N$ tensor copies of the singlet \voa{} $\mathcal{M}(2)$ for $p=2$ and a rank-$2(N-M)$ Heisenberg \voa{}.
\end{prop}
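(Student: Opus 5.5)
The plan is to decompose the input $M_\mathrm{min}\otimes\pi^{T^E}=\mathcal{D}^\mathrm{ch}(T^*V)\otimes\pi^T$ into singlet-Heisenberg modules and compute the relative BRST cohomology summand by summand. We use the decomposition written just before the statement,
\begin{equation*}
M_\mathrm{min}\otimes\pi^{T^E}=\bigoplus_{m\in\Z^N}M^R_{\sum_i m_i\rho_i}\otimes\pi^S_{\sum_i m_i\sigma_i}\otimes\pi^T_0 .
\end{equation*}
The differential $d=Q_{(0)}$ and the relative conditions involve only the Heisenberg fields $\sigma^\Delta_i+\tau^\Delta_i$. These preserve each summand, since they act by Heisenberg modes commuting with the singlet factor and with the momentum grading. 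So the complex splits as a direct sum of subcomplexes, and cohomology commutes with the direct sum. To each summand we apply \autoref{prop:vanish_singlet} with $\lambda_r=\sum_i m_i\rho_i$, $\lambda_s=\sum_i m_i\sigma_i$ and $\lambda_t=0$. This immediately gives the vanishing for $n\neq0$. In degree zero only the summands with $\lambda=\sum_i m_i(\rho_i+\sigma_i)\in\h^\bot$ survive.

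Next we identify the surviving index set. The condition $\lambda\in\h^\bot$ is equivalent to $\lambda$ being orthogonal to $\h^\|$, i.e.\ to the vectors $\sigma^\Delta_j+\tau^\Delta_j$ and $-\rho^\Delta_j+\sigma^\Delta_j$. Pairing with the first family gives $-\Delta m=0$; pairing with the second gives $-\Delta m-\Delta m=0$, the same condition. Hence $m\in\ker\Delta=E\Q^{N-M}\cap\Z^N$, which equals $E\Z^{N-M}$ by the unimodularity (Cramer's rule) argument in the proof of \autoref{lem:latbasis}. Writing $m=Ek$, the surviving summand is
\begin{equation*}
M^{p^\bot(R)}_{p^\bot(\sum_j (Ek)_j\rho_j)}\otimes\pi^{S^E}_{p^E(\sum_j(Ek)_j\sigma_j)}\otimes\pi^{T^E}_0 .
\end{equation*}
Here $p^E(\sum_j (Ek)_j\sigma_j)=\sum_i k_i\sigma^E_i$ because $Ek$ already lies in the column space of $E$. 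Also $p^\bot(\sum_j m_j\rho_j)=\sum_j m_j p^\bot(\rho_j)$ by linearity. This yields exactly the claimed sum over $k\in\Z^{N-M}$.

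It remains to justify the structural description. The subspaces $p^\bot(R)$, $S^E$ and $T^E$ are mutually orthogonal, and the $p^\bot(\rho_j)$ are orthonormal. Hence the $k=0$ summand is $\mathcal{M}(2)^{\otimes N}\otimes\pi^{S^E}\otimes\pi^{T^E}$, a product of $N$ singlets and a Heisenberg algebra of rank $2(N-M)$. The remaining summands are tensor products of atypical singlet simple currents and Fock modules, which fuse additively in $k$. This gives a $\Z^{N-M}$-simple-current extension, and its vertex algebra structure is compatible with the screening realisation inside $\HL_{K^\bot}^{\h^\bot}$ via \autoref{prop:commute2}.

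The main obstacle is the first step. We must ensure that BRST cohomology genuinely commutes with the infinite direct sum decomposition, i.e.\ that $d$ and the relative constraints respect the singlet-Fock grading. We must also ensure that \autoref{prop:vanish_singlet} applies to each summand with the correct identification of the singlet modules after projection. Both rest on the commutativity in \autoref{prop:commute2}, which we take as given.
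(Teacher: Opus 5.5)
Your proposal is correct and is essentially the paper's own argument. The paper's main route is the lattice vanishing result \autoref{prop:vanish_lattice}, which gives $\HL_{K^\bot}^{\h^\bot}$ with $K^\bot$ from \autoref{lem:latbasis}, followed by the projected screening kernels $\ker(e^{p^\bot(\rho_j)}_{(0)})$ via the commutative diagram; the paper then states explicitly that this is equivalent to applying \autoref{prop:vanish_singlet} summand by summand to the singlet--Heisenberg decomposition of $M_\mathrm{min}\otimes\pi^{T^E}$, which is exactly what you do, with your surviving index set $\Delta m=0$, i.e.\ $m=Ek$, matching the presentation in \autoref{rem:altrep}.
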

In other words, the result is obtained by applying \autoref{prop:vanish_singlet} to the above decomposition of $\smash{M_\mathrm{min}\otimes\pi^{T^E}}$ as singlet-Heisenberg modules.

\medskip

We then come to the boundary hypertoric \svoa{}, which we describe in a similar way. The ``matter'' free-field \svoa{} in the input of the BRST reduction is of the form
\begin{equation*}
M=\bigoplus_{\substack{m_1,\dots,m_N\in\Z\\n_1,\dots,n_N\in\Z}}M_{\sum_{i=1}^Nm_i\rho_i}^R\otimes\pi_{\sum_{i=1}^Nm_i\sigma_i}^S\otimes\pi_{\sum_{i=1}^Nn_i\tau_i}^T.
\end{equation*}
It embeds via the screening kernels $\smash{\ker(e^{\rho_1}_{(0)})\cap\dots\cap\ker(e^{\rho_N}_{(0)})}$ into the rank-$(2N,3N)$ Heisenberg-lattice vertex superalgebra
\begin{equation*}
\HL_L^\h=\bigoplus_{\substack{m_1,\dots,m_N\in\Z\\n_1,\dots,n_N\in\Z}}\pi_{-\sum_{i=1}^Nm_i\rho_i}\otimes\pi_{\sum_{i=1}^Nm_i\sigma_i}^S\otimes\pi_{\sum_{i=1}^Nn_i\tau_i}^T,
\end{equation*}
recalling the integral (and degenerate) lattice $L=\langle\{-\rho_i+\sigma_i,\tau_i\}_{i=1}^N\rangle_\Z$. In both cases, by \autoref{rem:parity}, the parity is given by $(-1)^{n_1+\dots+n_N}$. We consider the same orthogonal decomposition $\h=\h^\|\oplus\h^\bot$ as before, corresponding to a choice of quotient space $\h^\bot=\mathfrak{i}^\bot/\mathfrak{i}$. Analogously to \autoref{prop:vanish_lattice}, the vanishing result of \cite{Vor94} states:
\begin{prop}[Vanishing Theorem, Lattice]\label{prop:vanish_lattice2}
The relative cohomologies satisfy
\begin{equation*}
H^{\infty/2+n}(\g,\HL_L^\h)=\delta_{n,0}\HL_{L^\bot}^{\h^\bot}
\end{equation*}
for $n\in\Z$, where $L^\bot=L\cap\h^\bot$.
\end{prop}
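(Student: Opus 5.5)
The plan is to reduce the claim to the Fock-module vanishing theorem, \autoref{prop:vanish_Heisenberg}, by decomposing $\HL_L^\h$ into Fock modules and checking that relative BRST cohomology commutes with this direct sum. By definition,
\begin{equation*}
\HL_L^\h=\bigoplus_{\lambda\in L}\pi_\lambda^\h,
\end{equation*}
and each summand $\pi_\lambda^\h\otimes\mathcal{C}\ell(\Pi T^*\g)$ is preserved by $d=Q_{(0)}$ and by the zero modes $b_{i(0)}$ and $(d\,b_i)_{(0)}$. The reason is that $Q=\sum_i(\sigma_i^\Delta+\tau_i^\Delta)c_i$ only involves Heisenberg fields and ghosts, so it does not change the momentum $\lambda$. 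Hence the relative complex $C$ built from $\HL_L^\h$ splits as a direct sum of the relative complexes of the individual Fock modules, and cohomology commutes with this (algebraic) direct sum. We get
\begin{equation*}
H^{\infty/2+n}(\g,\HL_L^\h)=\bigoplus_{\lambda\in L}H^{\infty/2+n}(\g,\pi_\lambda^\h)=\delta_{n,0}\bigoplus_{\lambda\in L\cap\h^\bot}\pi_\lambda^{\h^\bot}.
\end{equation*}

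Next I would spell out why the surviving set is $L\cap\h^\bot$. On $\pi_\lambda^\h$, the zero mode $(\sigma_i^\Delta+\tau_i^\Delta)_{(0)}$ acts by $\langle\sigma_i^\Delta+\tau_i^\Delta,\lambda\rangle$. The relative condition $\ker((d\,b_i)_{(0)})$ therefore forces $\lambda\in\mathfrak{i}^\bot$. Within $\mathfrak{i}^\bot$, the Fock module reduces, \`a la Voronov \cite{Vor94}, to a Fock module over $\mathfrak{i}^\bot/\mathfrak{i}\cong\h^\bot$ with momentum the class of $\lambda$. This is the content of \autoref{prop:vanish_Heisenberg}, whose statement is phrased with our chosen representative $\h^\bot$. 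I would check that for $\lambda\in L\cap\mathfrak{i}^\bot$ the class in $\mathfrak{i}^\bot/\mathfrak{i}$ is $p^\bot(\lambda)$. Then I would use that the lattice $L$ meets $\h^\|$ transversally enough that $L\cap\mathfrak{i}^\bot$ modulo $\mathfrak{i}$ is identified with $L\cap\h^\bot$. This is exactly the identification used in \autoref{lem:latbasis}, which also supplies the explicit basis of $L^\bot$.

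Finally, the direct sum $\bigoplus_{\lambda\in L^\bot}\pi_\lambda^{\h^\bot}$ has to be identified with $\HL_{L^\bot}^{\h^\bot}$ as a vertex superalgebra, not just as a $\pi^{\h^\bot}$-module. Since cohomology is a vertex superalgebra and the vertex operators of $\HL_L^\h$ between Fock summands induce nonzero intertwiners on cohomology, the result is a simple-current extension of $\pi^{\h^\bot}$ by the integral lattice $L^\bot$. The parity is inherited from the $\tau$-coefficients, as in \autoref{rem:parity}. Uniqueness of such extensions up to isomorphism then gives $\HL_{L^\bot}^{\h^\bot}$.

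The main obstacle is this last identification, together with the claim that the $\lambda\in\mathfrak{i}^\bot$ but $\lambda\notin\h^\bot$ contribute nothing new. I would handle the latter by arguing that the relevant classes coincide modulo $\mathfrak{i}$ with elements of $L^\bot$. The point is that $L$ contains no nonzero vector of $\mathfrak{i}$ but its projection behaves well, and this is checked via the explicit bases of \autoref{lem:latbasis} using unimodularity of $E$.
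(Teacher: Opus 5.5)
Your route is the paper's own: its proof is the single line ``decompose $\HL_L^\h$ into Fock summands and apply \autoref{prop:vanish_Heisenberg}''. However, the step you single out as the main obstacle is not right as you frame it.

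By the Koszul computation on the hyperbolic factor $\h^\|$ (Voronov), every $\lambda\in\mathfrak{i}^\bot$ yields a \emph{nonzero} degree-$0$ cohomology $\pi^{\h^\bot}_{p^\bot(\lambda)}$, whatever its $\mathfrak{i}$-component. So a hypothetical $\lambda\in L\cap\mathfrak{i}^\bot$ with $\lambda\notin\h^\bot$ would contribute its own summand and could not ``contribute nothing new''. Nor could its class coincide modulo $\mathfrak{i}$ with some $\lambda'\in L^\bot$: then $\lambda-\lambda'\in L\cap\mathfrak{i}=\{0\}$, as you observe yourself. There is thus no redundancy mechanism to exhibit. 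What has to be proved is that such $\lambda$ do not exist, i.e.\ $L\cap\mathfrak{i}^\bot\subseteq\h^\bot$. This is also what justifies your first display, since the Fock-level result really involves $\lambda\in\mathfrak{i}^\bot$ rather than $\lambda\in\h^\bot$.

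This inclusion is immediate and needs neither \autoref{lem:latbasis} nor unimodularity of $E$. We have $\h^\|=\mathfrak{i}\oplus\mathfrak{i}'$ with $\mathfrak{i}'=\langle\{-\rho_i^\Delta+\sigma_i^\Delta\}_{i=1}^M\rangle_\C\subset K\otimes_\Z\C$. Moreover $L\perp\mathfrak{i}'$, because $K$ is totally isotropic and $J\perp R\oplus S$. Hence $L\cap\mathfrak{i}^\bot=L\cap(\mathfrak{i}\oplus\mathfrak{i}')^\bot=L\cap\h^\bot=L^\bot$, and for these $\lambda$ the class in $\mathfrak{i}^\bot/\mathfrak{i}$ is represented by $\lambda$ itself. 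Concretely, $\sum_i m_i(-\rho_i+\sigma_i)+\sum_i n_i\tau_i\in\mathfrak{i}^\bot$ if and only if $\Delta m=\Delta n$, which is the description in \autoref{rem:altrep}.

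For the vertex superalgebra structure you can likewise bypass the uniqueness-of-extensions argument and the unproved claim that the induced intertwiners are nonzero. $\HL_{L^\bot}^{\h^\bot}$ is a vertex subalgebra of $\HL_L^\h$ containing no ghosts, and it is killed by $b_{i(0)}$, $(d\,b_i)_{(0)}$ and $d$ because $\h^\bot\perp\mathfrak{i}$. The same Koszul computation on the $\h^\|$-and-ghost factor then shows that its inclusion into the relative complex is a quasi-isomorphism. This gives the identification as vertex superalgebras directly.
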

This follows directly from the vanishing result for the Fock modules stated in \autoref{prop:vanish_lattice}. Here
\begin{equation*}
L^\bot=L\cap\h^\bot=\langle\{-\rho_i^E+\sigma_i^E\}_{i=1}^{N-M}\cup\{-\rho_j+\sigma_j+\tau_j\}_{j=1}^N\rangle_\Z
\end{equation*}
is an integral (and degenerate) lattice of rank $2N-M$ (see \autoref{lem:latbasis}). In order to apply the projected screening operators $\smash{e^{p^\bot(\rho_j)}_{(0)}}$ (which, modulo $\mathfrak{i}$, coincide with the $\smash{e^{\rho_j}_{(0)}}$) for $j=1,\dots,N$ to obtain $V(\Delta)$, we write $\smash{H^{\infty/2+0}(\g,\HL_L^\h)=\HL_{L^\bot}^{\h^\bot}}$ as
{\allowdisplaybreaks
\begin{align*}
\HL_{L^\bot}^{\h^\bot}&=\bigoplus_{\substack{k_1,\dots,k_{N-M}\in\Z\\l_1,\dots,l_N\in\Z}}\pi_{\sum_{i=1}^{N-M}k_i(-\rho_i^E+\sigma_i^E)+\sum_{j=1}^Nl_i(-\rho_j+\sigma_j+\tau_j)}^{\h^\bot}\\
&=\bigoplus_{\substack{k_1,\dots,k_{N-M}\in\Z\\l_1,\dots,l_N\in\Z}}\pi_{-\sum_{j=1}^N\bigl(\sum_{i=1}^{N-M}E_{ji}k_i+l_j\bigr)p^\bot(\rho_j)}^{p^\bot(R)}\\
&\quad\otimes\pi_{\sum_{i=1}^{N-M}\bigl(k_i+\sum_{j=1}^N((E^tE)^{-1}E^t)_{ij}l_j\bigr)\sigma_i^E}^{S^E}\\
&\quad\otimes\pi_{\sum_{i=1}^{N-M}\sum_{j=1}^N((E^tE)^{-1}E^t)_{ij}l_j\tau_i^E}^{T^E}.
\end{align*}
}%
Recall from \autoref{rem:parity} that the parity in this \svoa{} is determined by the parity of the sum of the coefficients of the $\tau_i$, i.e.\ $(-1)^{l_1+\dots+l_N}$. Finally, we can apply the screening kernels
\begin{equation*}
\ker(e^{p^\bot(\rho_1)}_{(0)})\cap\dots\cap\ker(e^{p^\bot(\rho_N)}_{(0)})
\end{equation*}
to obtain, using \autoref{prop:commute2}, the hypertoric \svoa{}:
\begin{prop}[Main Vanishing Theorem]\label{prop:vanish_main2}
The boundary hypertoric \svoa{} is of the form
\begin{align*}
&H^{\infty/2+n}(\g,M)=\delta_{n,0}V(\Delta)\\
&=\delta_{n,0}\bigoplus_{\substack{k_1,\dots,k_{N-M}\in\Z\\l_1,\dots,l_N\in\Z}}M_{\sum_{j=1}^N\bigl(\sum_{i=1}^{N-M}E_{ji}k_i+l_j\bigr)p^\bot(\rho_j)}^{p^\bot(R)}\\
&\quad\otimes\pi_{\sum_{i=1}^{N-M}\bigl(k_i+\sum_{j=1}^N((E^tE)^{-1}E^t)_{ij}l_j\bigr)\sigma_i^E}^{S^E}\otimes\pi_{\sum_{i=1}^{N-M}\sum_{j=1}^N((E^tE)^{-1}E^t)_{ij}l_j\tau_i^E}^{T^E},
\end{align*}
a $\Z^{2N-M}$-simple current extension of $N$ tensor copies of the singlet \voa{} $\mathcal{M}(2)$ and a rank-$2(N-M)$ Heisenberg \voa{}, with parity given by $(-1)^{l_1+\dots+l_N}$.
\end{prop}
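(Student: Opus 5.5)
The plan mirrors the minimal case, \autoref{prop:vanish_main1}, with $K$ replaced by $L=K\oplus J$. First decompose $M$ as a module for the singlet-Heisenberg subalgebra $M^R\otimes\pi^S\otimes\pi^T$. This is the direct sum displayed just before \autoref{prop:vanish_lattice2}, with summand indices $(m_1,\dots,m_N,n_1,\dots,n_N)\in\Z^{2N}$. Since the BRST complex $C$ is compatible with this decomposition and relative cohomology commutes with direct sums, we get
\begin{equation*}
H^{\infty/2+n}(\g,M)=\bigoplus_{m,n'}H^{\infty/2+n}\bigl(\g,M^R_{\sum m_i\rho_i}\otimes\pi^S_{\sum m_i\sigma_i}\otimes\pi^T_{\sum n'_i\tau_i}\bigr).
\end{equation*}
Here the differential preserves each summand: $Q$ involves only $\sigma,\tau$ and the ghosts, which have momentum zero.

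Next apply \autoref{prop:vanish_singlet} to each summand. All cohomology in degree $n\neq0$ vanishes, which gives the factor $\delta_{n,0}$. In degree zero a summand survives exactly when $\lambda=-\sum m_i\rho_i+\sum m_i\sigma_i+\sum n'_i\tau_i\in\h^\bot$, that is, when the corresponding lattice vector of $L$ lies in $L^\bot=L\cap\h^\bot$. The sign convention matters here: in the screening embedding the singlet module $M_m^\rho$ sits in $\pi_{-m\rho}$. The surviving summand is $M^{p^\bot(R)}_{p^\bot(\cdot)}\otimes\pi^{S^E}_{p^E(\lambda_s)}\otimes\pi^{T^E}_{p^E(\lambda_t)}$.

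Then use \autoref{lem:latbasis} to parametrise $L^\bot$ by $(k,l)\in\Z^{N-M}\times\Z^N$ through the basis $\{-\rho_i^E+\sigma_i^E\}\cup\{-\rho_j+\sigma_j+\tau_j\}$. Rewrite each vector in the adapted basis $p^\bot(\rho_j),\sigma_i^E,\tau_i^E$ using the two displayed expansions preceding \autoref{rem:biggerK}. This reproduces exactly the $R$-, $S^E$- and $T^E$-labels in the statement. It is the same computation already written for $\HL_{L^\bot}^{\h^\bot}$, with Fock modules $\pi^{p^\bot(R)}_{-\mu}$ replaced by singlet modules $M^{p^\bot(R)}_{\mu}$. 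The parity claim follows from \autoref{rem:parity}: the $\tau$-coefficient sum of $\sum k_i(-\rho_i^E+\sigma_i^E)+\sum l_j(-\rho_j+\sigma_j+\tau_j)$ is $\sum l_j$. The summands are pairwise nonisomorphic simple currents: the singlet atypicals $M_m$ have $\Z$-fusion, and Fock modules are simple currents. They are indexed by $L^\bot\cong\Z^{2N-M}$, which gives the description as a $\Z^{2N-M}$-simple current extension of $\mathcal{M}(2)^{\otimes N}\otimes\pi^{S^E}\otimes\pi^{T^E}$, whose Heisenberg part has rank $2(N-M)$.

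The main obstacle is justifying the summand-wise use of \autoref{prop:vanish_singlet}, i.e.\ the commutativity in \autoref{prop:commute1} at the level of individual singlet-Heisenberg modules. I would argue through the short exact sequences resolving the singlet module $M^{\rho}_m$ by Fock modules via the screening $e^{\rho}_{(0)}$. Because $e^{\rho_j}_{(0)}$ commutes with $d$ (as $\rho_j$ is orthogonal to $\sigma,\tau$), and because only degree $0$ cohomology survives for Fock modules by \autoref{prop:vanish_Heisenberg}, the long exact sequence shows that cohomology of the kernel equals the kernel of the projected screening $e^{p^\bot(\rho_j)}_{(0)}$ on the cohomology. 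This uses $\rho_j-p^\bot(\rho_j)\in\mathfrak{i}$ to identify the two screenings in cohomology. Once that is granted, the rest is bookkeeping.
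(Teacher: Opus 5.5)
Your overall route is correct, and it is the alternative the paper itself points to right after the statement: apply \autoref{prop:vanish_singlet} summand by summand to the singlet--Heisenberg decomposition of $M$, then reparametrise the surviving momenta by $L^\bot$ via \autoref{lem:latbasis}. The paper's displayed argument instead first computes $H^{\infty/2+\bullet}(\g,\HL_L^\h)=\delta_{\bullet,0}\HL_{L^\bot}^{\h^\bot}$ and then takes kernels of the projected screenings; this leads to the same $(k,l)$-bookkeeping. Your label computations, your sign caveat for the $R$-momentum, and the parity $(-1)^{l_1+\dots+l_N}$ all check out.

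The weak point is your justification of the step you call the main obstacle. As written, the long exact sequence for $0\to M^\rho_m\to\pi_{-m\rho}\to\operatorname{im}e^\rho_{(0)}\to0$ does not close using only the Fock-module vanishing, because the third term is not a Fock module. You would also need two further facts:
\begin{enumerate}
\item exactness of the screening complex, so that the image is $M^\rho_{m-1}$;
\item an induction on $m$, using that the lowest $L_0$-weights of the $M^\rho_m$ grow with $|m|$, to kill the nonzero degrees.
\end{enumerate}

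None of this machinery is needed. The element $Q=\sum_i(\sigma_i^\Delta+\tau_i^\Delta)c_i$ and the relative constraints $b_{i(0)}$, $(\sigma_i^\Delta+\tau_i^\Delta)_{(0)}$ involve no $R$-fields. So on each summand the relative complex is $M^R_{\lambda_r}\otimes C(\g,\pi^S_{\lambda_s}\otimes\pi^T_{\lambda_t})$, with $d$ acting only on the second factor, and the cohomology is $M^R_{\lambda_r}\otimes H^{\infty/2+\bullet}(\g,\pi^{S\oplus T}_{\lambda_s+\lambda_t})$. By the Fock-module computation this vanishes unless $\bullet=0$ and $\lambda_s+\lambda_t\in\mathfrak{i}^\bot$, i.e.\ $\Delta m=\Delta n$; this condition does not depend on the $R$-momentum at all. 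When it holds, the cohomology equals $M^R_{\lambda_r}\otimes\pi^{S^E}_{p^E(\lambda_s)}\otimes\pi^{T^E}_{p^E(\lambda_t)}$, because $\mathfrak{i}^\bot\cap(S\oplus T)=\mathfrak{i}\oplus S^E\oplus T^E$. Renaming $M^R$ as $M^{p^\bot(R)}$ is just the isometry $\rho_j\mapsto p^\bot(\rho_j)$. The fact $\rho_j-p^\bot(\rho_j)\in\mathfrak{i}$ is needed only to check that this relabelling matches the embedding into $\HL_{L^\bot}^{\h^\bot}$ via the projected screenings.
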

Again, one can see the assertion also by applying \autoref{prop:vanish_singlet} to the above decomposition of $M$ as singlet-Heisenberg modules.

\begin{cor}[Vanishing Theorem]\label{prop:vanish}
The relative cohomologies satisfy
\begin{equation*}
H^{\infty/2+n}(\g,M)=\{0\}\quad\text{and}\quad H^{\infty/2+n}(\g,M_\mathrm{min})=\{0\}
\end{equation*}
for all $n\neq0$. In other words, the hypertoric vertex operator (super)algebras $V(\Delta)$ and $V_\mathrm{min}(\Delta)$ are only concentrated in ghost degree~$0$.
\end{cor}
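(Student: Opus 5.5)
The corollary follows from \autoref{prop:vanish_main2} and \autoref{prop:vanish_main1} by reading off the factor $\delta_{n,0}$, so the real work is to justify those statements. I will do this by descending from Fock modules to $M$ and $M_\mathrm{min}$.

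\textbf{Fock modules and lattice algebras.} First, the relative complex $C$ built from a single Fock module $\pi_\lambda^\h$ is the semi-infinite complex of the abelian Lie algebra $\widehat{\g}$ acting through the Heisenberg fields $\sigma^\Delta_i+\tau^\Delta_i$, which span the isotropic subspace $\mathfrak{i}$. I would split $\pi^\h=\pi^{\h^\|}\otimes\pi^{\h^\bot}$ along the orthogonal decomposition, with $\h^\|$ a sum of $M$ hyperbolic planes. The BRST differential acts only on the factor $\pi^{\h^\|}_{p^\|(\lambda)}\otimes\mathcal{C}\ell(\Pi T^*\g)$. That factor is $M$ tensor copies of the rank-one case $\h^\|=\C(\sigma+\tau)\oplus\C(-\rho+\sigma)$, where Voronov's computation \cite{Vor94} applies. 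The cohomology of each copy is $\C$ in degree $0$ if the momentum pairing with $\mathfrak{i}$ vanishes and $p^\|(\lambda)\in\mathfrak{i}$, and it is zero otherwise. This is a Koszul-type argument: the $\mathfrak{i}$-modes together with the $b$-ghosts, and the conjugate modes together with the $c$-ghosts, form contractible pairs. The relative condition removes the zero modes, and the remaining zero-mode condition $\lambda\in\mathfrak{i}^\bot$ forces vanishing when it fails. By the Künneth formula this gives \autoref{prop:vanish_Heisenberg}. Summing over the lattices $K$ and $L$ then yields \autoref{prop:vanish_lattice} and \autoref{prop:vanish_lattice2}, and the cohomology is concentrated in degree $0$.

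\textbf{Descent to $M$ and $M_\mathrm{min}$.} Next, I would transfer the vanishing to the singlet–Heisenberg modules and thus to $M$ and $M_\mathrm{min}\otimes\pi^{T^E}$, as in \autoref{prop:vanish_singlet}. Each atypical singlet module $M_m^{\rho_i}$ has a Felder-type resolution by Fock modules, $0\to M_m\to\pi_{-m\rho}\xrightarrow{e^\rho_{(0)}}\pi_{-(m-1)\rho}\to\cdots$. The screening operators $e^{\rho_j}_{(0)}$ commute with $d$, because the $\rho_j$ are orthogonal to $S\oplus T$, so the resolution is a resolution by BRST complexes. This gives a double complex. Its rows have cohomology only in BRST degree $0$ by the first step, so the spectral sequence degenerates. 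The total cohomology is then computed by the BRST-reduced resolution, namely the screening complex for the $p^\bot(\rho_j)$ on $\HL^{\h^\bot}$. Since $\langle p^\bot(\rho_i),p^\bot(\rho_j)\rangle=\delta_{ij}$, this reduced complex is again a Felder resolution and is exact away from its kernel. Hence $H^{\infty/2+n}$ of each summand vanishes for $n\neq0$. Summing over the direct-sum decompositions of $M$ and $M_\mathrm{min}\otimes\pi^{T^E}$ and removing the inert factor $\pi^{T^E}$, which is untouched by $d$, gives the corollary.

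\textbf{Main obstacle.} The hardest step is the descent argument. The resolutions must be exact in the appropriate (graded, infinite) sense so that the spectral sequence converges. This requires the $L_0$-eigenspaces of fixed degree to be finite-dimensional, which should hold after fixing momenta, by the bounded-below grading of the Fock modules. One must also check that the Felder complex remains exact after projection by $p^\bot$. If the resolution route proves delicate, I would instead argue directly that $d$ preserves the momentum decomposition and apply Voronov's theorem to the modules, since they are free over the relevant negative modes.
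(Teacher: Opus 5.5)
This is the paper's route (Voronov's Fock-module vanishing, pushed through the screening kernels to the singlet--Heisenberg summands and then summed), with your Felder-resolution double complex supplying a justification for the commutation of screenings and BRST reduction (\autoref{prop:commute1}, \autoref{prop:commute2}), which the paper invokes without proof. That step needs the Fock statement in the form you actually derived, namely nonzero whenever $\lambda\in\mathfrak{i}^\bot$ with cohomology $\pi^{\h^\bot}_{p^\bot(\lambda)}$, rather than only for $\lambda\in\h^\bot$, because the intermediate momenta $\lambda+q\rho_j$ lie in $\mathfrak{i}^\bot$ but generally not in $\h^\bot$.

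For the vanishing statement alone you can skip the resolution entirely. The differential $d$ and the relative constraints involve only $S\oplus T$ and the ghosts, so each summand gives $M^R_{\lambda_r}\otimes H^{\infty/2+n}(\g,\pi^{S\oplus T}_{\lambda_s+\lambda_t})$. This is zero for $n\neq0$ by your Koszul computation applied inside $S\oplus T$, which is the precise form of your fallback.
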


\begin{rem}[Alternative Presentation]\label{rem:altrep}
We can also give a slightly different presentation of $V_\mathrm{min}(\Delta)$ and $V(\Delta)$. Indeed, using the exactness of the sequence $\smash{\{0\}\to\Z^{N-M}\overset{E}{\to}\Z^N\overset{\Delta}{\to}\Z^M\to\{0\}}$, we write the lattices $K^\bot$, $J^\bot$ and $L^\bot$ as
\begin{align*}
K^\bot&=\{\textstyle\sum_{m_1,\dots,m_N\in\Z}m_i(-\rho_i+\sigma_i)\mid\Delta m=0\},\\
J^\bot&=\{\textstyle\sum_{n_1,\dots,n_N\in\Z}n_i\tau_i\mid\Delta n=0\},\\
L^\bot&=\{\textstyle\sum_{m_1,\dots,m_N\in\Z}m_i(-\rho_i+\sigma_i)+\sum_{n_1,\dots,n_N\in\Z}n_i(-\rho_i+\sigma_i+\tau_i)\mid\Delta m=0\}\\
&=\{\textstyle\sum_{m_1,\dots,m_N\in\Z}m_i(-\rho_i+\sigma_i)+\sum_{n_1,\dots,n_N\in\Z}n_i\tau_i\mid\Delta m=\Delta n\}
\end{align*}
where $m=(m_1,\dots,m_N)^t\in\Z^N$ and $n=(n_1,\dots,n_N)^t\in\Z^N$. Note that the condition $\Delta m=\Delta n$ is exactly the condition $\lambda\in\h^\bot$ in \autoref{prop:vanish_Heisenberg} and \autoref{prop:vanish_singlet} for $\lambda\in\h$ of the form $\smash{\lambda=\sum_{m_1,\dots,m_N\in\Z}m_i(-\rho_i+\sigma_i)+\sum_{n_1,\dots,n_N\in\Z}n_i\tau_i}$. The sublattice $K^\bot\oplus J^\bot\subset L^\bot$ differs from $L^\bot$ by the condition $\Delta m=0=\Delta n$ instead of just $\Delta m=\Delta n$, which implies that its rank is $M$ smaller. In fact, by the short exact sequence, the quotient is $L^\bot/(K^\bot\oplus J^\bot)\cong\Z^M$. Then
\begin{align*}
V_\mathrm{min}(\Delta)\otimes\pi^{T^E}&=\!\!\!\!\bigoplus_{\substack{m_1,\dots,m_N\in\Z\\\Delta m=0}}\!\!\!\!M_{\sum_{i=1}^Nm_ip^\bot(\rho_i)}^{p^\bot(R)}\otimes\pi_{\sum_{i=1}^Nm_i\sigma_i}^{S^E}\otimes\pi_0^{T^E}\\
&=\!\!\!\!\bigoplus_{\substack{m_1,\dots,m_N\in\Z\\\Delta m=0}}\!\!\!\!M_{\sum_{i=1}^Nm_ip^\bot(\rho_i)}^{p^\bot(R)}\otimes\pi_{\sum_{i=1}^{N-M}\sum_{j=1}^N((E^tE)^{-1}E^t)_{ij}m_j\sigma_i^E}^{S^E}\otimes\pi_0^{T^E}
\end{align*}
and
\begin{align*}
V(\Delta)&=\!\!\bigoplus_{\substack{m_1,\dots,m_N\in\Z\\n_1,\dots,n_N\in\Z\\\Delta m=\Delta n}}\!\!M_{\sum_{i=1}^Nm_ip^\bot(\rho_i)}^{p^\bot(R)}\otimes\pi_{\sum_{i=1}^N\sum_{j=1}^N(p^E)_{ij}m_j\sigma_i}^{S^E}\otimes\pi_{\sum_{i=1}^N\sum_{j=1}^N(p^E)_{ij}n_j\tau_i}^{T^E}\\
&=\!\!\bigoplus_{\substack{m_1,\dots,m_N\in\Z\\n_1,\dots,n_N\in\Z\\\Delta m=\Delta n}}\!\!M_{\sum_{i=1}^Nm_ip^\bot(\rho_i)}^{p^\bot(R)}\otimes\pi_{\sum_{i=1}^{N-M}\sum_{j=1}^N((E^tE)^{-1}E^t)_{ij}m_j\sigma_i^E}^{S^E}\\
&\quad\otimes\pi_{\sum_{i=1}^{N-M}\sum_{j=1}^N((E^tE)^{-1}E^t)_{ij}n_j\tau_i^E}^{T^E}.
\end{align*}
Here, the parity is given by $(-1)^{n_1+\dots+n_N}$.
\end{rem}

\begin{rem}[Intermediate \SVOA{}s]\label{rem:intermediate}
It will be convenient to also consider the following intermediate \svoa{}s
\begin{equation*}
V_\mathrm{min}(\Delta)\otimes\pi^{T^E}\hookrightarrow V_\mathrm{min}(\Delta)\otimes V_{J^\bot}\hookrightarrow\overline{V_\mathrm{min}(\Delta)}\otimes V_{J^\bot}\hookrightarrow V(\Delta)
\end{equation*}
with
\begin{align*}
V_\mathrm{min}(\Delta)\otimes V_{J^\bot}&=\biggl(\bigoplus_{k_1,\dots,k_{N-M}\in\Z}M_{\sum_{j=1}^N\sum_{i=1}^{N-M}E_{ji}k_ip^\bot(\rho_j)}^{p^\bot(R)}
\otimes\pi_{\sum_{i=1}^{N-M}k_i\sigma_i^E}^{S^E}\biggr)\\
&\quad\otimes\biggl(\bigoplus_{l_1,\dots,l_{N-M}\in\Z}\pi_{\sum_{i=1}^{N-M}l_i\tau_i^E}^{T^E}\biggr)
\end{align*}
with parity $(-1)^{\sum_{j=1}^N\sum_{i=1}^{N-M}l_iE_{ji}}=(-1)^{\sum_{j=1}^{N}(El)_j}$ or, in the alternative presentation of the lattices in \autoref{rem:altrep},
{\allowdisplaybreaks
\begin{align*}
V_\mathrm{min}(\Delta)\otimes V_{J^\bot}&=\biggl(\bigoplus_{\substack{m_1,\dots,m_N\in\Z\\\Delta m=0}}M_{\sum_{i=1}^Nm_ip^\bot(\rho_i)}^{p^\bot(R)}
\otimes\pi_{\sum_{i=1}^Nm_i\sigma_i}^{S^E}\biggr)\\
&\quad\otimes\biggl(\bigoplus_{\substack{n_1,\dots,n_N\in\Z\\\Delta n=0}}\pi_{\sum_{i=1}^Nn_i\tau_i}^{T^E}\biggr)\\
&=\biggl(\bigoplus_{\substack{m_1,\dots,m_N\in\Z\\\Delta m=0}}M_{\sum_{i=1}^Nm_ip^\bot(\rho_i)}^{p^\bot(R)}
\otimes\pi_{\sum_{i=1}^{N-M}\sum_{j=1}^N((E^tE)^{-1}E^t)_{ij}m_j\sigma_i^E}^{S^E}\biggr)\\
&\quad\otimes\biggl(\bigoplus_{\substack{n_1,\dots,n_N\in\Z\\\Delta n=0}}\pi_{\sum_{i=1}^{N-M}\sum_{j=1}^N((E^tE)^{-1}E^t)_{ij}n_j\tau_i^E}^{T^E}\biggr),
\end{align*}
}%
with parity $(-1)^{n_1+\dots+n_N}$. This \svoa{} involves the lattice \svoa{}
\begin{equation*}
V_{J^\bot}=\bigoplus_{l_1,\dots,l_{N-M}\in\Z}\pi_{\sum_{i=1}^{N-M}l_i\tau_i^E}^{T^E}
\end{equation*}
or, written differently,
\begin{equation*}
V_{J^\bot}=\bigoplus_{\substack{n_1,\dots,n_N\in\Z\\\Delta n=0}}\pi_{\sum_{i=1}^Nn_i\tau_i}^{T^E}=\bigoplus_{\substack{n_1,\dots,n_N\in\Z\\\Delta n=0}}\pi_{\sum_{i=1}^{N-M}\sum_{j=1}^N((E^tE)^{-1}E^t)_{ij}n_j\tau_i^E}^{T^E}.
\end{equation*}
The lattice $J^\bot\subset T^E$ is positive-definite and integral with bilinear form matrix $E^tE$ with respect to the basis $\{\tau_i^E\}_{i=1}^{N-M}$ of $J^\bot$. In particular, $V_{J^\bot}$ is \strat{}. This will be useful in determining the associated variety of $V(\Delta)$; see \autoref{thm:var} below. As in \autoref{rem:parity}, the parity of an element of $V_{J^\bot}$ is determined by the norm of the momentum of the Fock module in which it lies.

We note that $V_\mathrm{min}(\Delta)\otimes V_{J^\bot}$ is in general not a dual pair in $V(\Delta)$. It can be useful to consider the double commutant $\smash{\overline{V_\mathrm{min}(\Delta)}\coloneqq\Com_{V(\Delta)}(\Com_{V(\Delta)}(V_\mathrm{min}(\Delta)))}$, which is an infinite simple-current extension of $V_\mathrm{min}(\Delta)$. Then, $\overline{V_\mathrm{min}(\Delta)}\otimes V_{J^\bot}$ is a dual pair in $V(\Delta)$, and the latter is a finite simple-current extension of the former.

This observation is, before applying the screening kernels, mirrored by \autoref{rem:biggerK}. $K^\bot\oplus J^\bot\subset L^\bot$ does not have finite index (i.e.\ the same rank) in general, but we can consider the infinite-order extension $\smash{\overline{K^\bot}}$ of $K^\bot$ so that $\smash{\overline{K^\bot}}\oplus J^\bot$ is a finite-index (same-rank) sublattice of $L^\bot$. We forego a more detailed discussion here, and rather refer to the examples in \autoref{sec:examples}.
\end{rem}


\subsubsection{Summary}\label{sec:summary}

We summarise the constructions in this section in the following diagram, with the second and fourth of the four diagonal slices corresponding to \autoref{prop:commute1} and \autoref{prop:commute2}, respectively.

\begin{equation*}
\adjustbox{scale=0.75,center}{%
\begin{tikzcd}[row sep={35,between origins}, column sep={60,between origins}]
& M^{p^\bot(R)}\otimes\pi^{S^E}\otimes\pi^{T^E} \ar[hook,"\Z^{N-M}"]{rr}\ar[hook,"\ker(e^{p^\bot(\rho_i)}_{(0)})"' near end]{dd}\ar[<-,"\text{BRST}",sloped]{dl} && V_\mathrm{min}(\Delta)\otimes\pi^{T^E} \ar[hook,"\Z^{N-M}"]{rr}\ar[hook,"\ker(e^{p^\bot(\rho_i)}_{(0)})"' near end]{dd}\ar[<-,"\text{BRST}",sloped]{dl} && V_\mathrm{min}(\Delta)\otimes V_{J^\bot} \ar[hook,"\Z^M"]{rr}\ar[hook,"\ker(e^{p^\bot(\rho_i)}_{(0)})"' near end]{dd}\ar[<-,"\text{BRST}",sloped]{dl} && V(\Delta) \ar[hook,"\ker(e^{p^\bot(\rho_i)}_{(0)})"' near end]{dd}\ar[<-,"\text{BRST}",sloped]{dl} \\
M^R\otimes\pi^S\otimes\pi^T \ar[hook,crossing over,"\Z^N" near start]{rr}\ar[hook,"\ker(e^{\rho_i}_{(0)})"' near end]{dd} && M_\mathrm{min}\otimes\pi^{T^E} \ar[hook,crossing over,"\Z^{N-M}" near start]{rr} && M_\mathrm{min}\otimes V_{J^\bot} \ar[hook,crossing over,"\Z^M" near start]{rr} && M\\
& \pi^{\h^\bot} \ar[hook,"\Z^{N-M}" near start]{rr}\ar[<-,"\text{BRST}",sloped]{dl} && V_{K^\bot}^{\h^\bot} \ar[hook,"\Z^{N-M}" near start]{rr}\ar[<-,"\text{BRST}",sloped]{dl} && V_{K^\bot\oplus J^\bot}^{\h^\bot} \ar[hook,"\Z^M" near start]{rr}\ar[<-,"\text{BRST}",sloped]{dl} && V_{L^\bot}^{\h^\bot} \ar[<-,"\text{BRST}",sloped]{dl}\\
\pi^\h \ar[hook,"\Z^N"]{rr} && V_K^\h \ar[hook,"\Z^{N-M}"]{rr}\ar[hook,from=uu,crossing over,"\ker(e^{\rho_i}_{(0)})"' near end] && V_{K\oplus J^\bot}^\h \ar[hook,"\Z^M"]{rr}\ar[hook,from=uu,crossing over,"\ker(e^{\rho_i}_{(0)})"' near end] && V_L^\h \ar[hook,from=uu,crossing over,"\ker(e^{\rho_i}_{(0)})"' near end]
\end{tikzcd}}
\end{equation*}

The horizontal arrows describe infinite simple-current extensions, with the arrows labelled by the corresponding abelian group. For convenience, we record the lattice and Heisenberg rank of the Heisenberg-lattice vertex (super)algebras on the bottom of the diagram.
\begin{equation*}
\adjustbox{scale=0.75,center}{%
\begin{tikzcd}[row sep={35,between origins}, column sep={60,between origins}]
& (0,3N{-}2M) \ar[hook,"\Z^{N-M}"]{rr}\ar[<-,"\text{BRST}",sloped]{dl} && (N{-}M,3N{-}2M) \ar[hook,"\Z^{N-M}"]{rr}\ar[<-,"\text{BRST}",sloped]{dl} && (2N{-}2M,3N{-}2M) \ar[hook,"\Z^M"]{rr}\ar[<-,"\text{BRST}",sloped]{dl} && (2N{-}M,3N{-}2M) \ar[<-,"\text{BRST}",sloped]{dl} \\
(0,3N) \ar[hook,"\Z^N"]{rr} && (N,3N) \ar[hook,"\Z^{N-M}"]{rr} && (2N{-}M,3N) \ar[hook,"\Z^M"]{rr} && (2N,3N)
\end{tikzcd}}
\end{equation*}


\subsection{Simplicity and Simple-Current Extensions}\label{sec:simple}

In \autoref{sec:vanish} we showed that the minimal hypertoric \voa{} $V(\Delta)$ and the boundary hypertoric \svoa{}s $V_\mathrm{min}(\Delta)$ are simple-current extensions of tensor products of singlet and Heisenberg \voa{}s (see also \cite{BCDN23,Niu23}). This implies:
\begin{prop}\label{prop:symplicity}
The minimal and boundary hypertoric (vertex)operator superalgebras $V_\mathrm{min}(\Delta)$ and $V(\Delta)$ are simple vertex operator (super)algebras.
\end{prop}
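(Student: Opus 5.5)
The plan is to use the explicit decompositions in \autoref{prop:vanish_main1} and \autoref{prop:vanish_main2}. Write $W\coloneqq M^{p^\bot(R)}\otimes\pi^{S^E}\otimes\pi^{T^E}$. This is a tensor product of $N$ copies of the singlet \voa{} $\mathcal{M}(2)$ and a nondegenerate Heisenberg \voa{}, hence simple. The two decompositions exhibit $V_\mathrm{min}(\Delta)\otimes\pi^{T^E}$ and $V(\Delta)$ as direct sums $\bigoplus_{\lambda\in\Lambda}W_\lambda$, indexed by the lattice $\Lambda=K^\bot$ and $\Lambda=L^\bot$ respectively. Each $W_\lambda$ is a tensor product of atypical singlet modules $M_m$ and Fock modules, so each is a simple current. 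I will then apply the standard criterion: a simple-current extension $\bigoplus_{\lambda\in\Lambda}W_\lambda$ of a simple vertex operator (super)algebra $W$ is simple if the $W_\lambda$ are pairwise nonisomorphic simple $W$-modules and the vertex operators between nonzero summands are nonzero.

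First I check that the summands are pairwise nonisomorphic. The Heisenberg $\pi^{p^\bot(R)}\otimes\pi^{S^E}\otimes\pi^{T^E}\supset W$-weights of the top vectors are the momenta $\lambda\in\h^\bot$. These are distinct for distinct lattice points, because $L^\bot\subset\h^\bot$ embeds injectively. More concretely, the singlet label $\sum_j m_jp^\bot(\rho_j)$ together with the $S^E$- and $T^E$-momenta determines $(m,n)$ (respectively $k,l$). Hence the $W_\lambda$ are distinct irreducible $W$-modules.

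Next I show that any nonzero ideal $I$ contains the vacuum. Since $I$ is $W$-stable and the summands are nonisomorphic simple $W$-modules, $I=\bigoplus_{\lambda\in S}W_\lambda$ for some nonempty $S\subset\Lambda$. For $\lambda\in S$, I take $u\in W_{-\lambda}$ and $v\in W_\lambda$ and need some $u_{(n)}v$ with nonzero component in $W_0$. I will argue this inside the ambient Heisenberg-lattice vertex superalgebra $\HL^{\h^\bot}_{L^\bot}$ of \autoref{prop:commute1}, into which $V(\Delta)$ embeds by the screening kernels. There, products of $e^{\lambda}$-type vectors are nonzero by the standard lattice formula $e^{-\lambda}_{(n)}e^\lambda=\pm\vac$ for suitable $n$. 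The subtlety is that $e^{\pm\lambda}$ need not lie in the screening kernel. I would therefore use the known simplicity of the symplectic fermion/singlet simple-current structure: the nonzero intertwining operators $M_m\times M_{-m}\to M_0$ coming from $\mathsf{SF}$ (see \autoref{sec:smallexample}), tensored with Fock intertwiners, give nonzero products between the singlet-type highest weight vectors in $W_{\pm\lambda}$. Then $I\ni\vac$, so $I=V$.

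The main obstacle is exactly this nondegeneracy of the products $W_{-\lambda}\times W_\lambda\to W_0$. A cleaner route I will try first is to avoid it: $V(\Delta)$ is a $\Lambda$-graded simple-current extension, so by \cite{CMY21}-type results it is automatically simple once it is a vertex superalgebra whose summands are simple currents and the grading is by a group. Either a nonzero product always exists (the extension is an algebra object in a braided category, with nonzero multiplication on invertible objects), or it is verified by the free-field computation above. The same argument applies verbatim to $V_\mathrm{min}(\Delta)\otimes\pi^{T^E}$. Simplicity of $V_\mathrm{min}(\Delta)$ then follows because the tensor factor $\pi^{T^E}$ splits off and is simple.
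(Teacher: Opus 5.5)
This is essentially the paper's argument: $V_\mathrm{min}(\Delta)\otimes\pi^{T^E}$ and $V(\Delta)$ are simple-current extensions of the simple algebra $\mathcal{M}(2)^{\otimes N}\otimes\pi^{S^E}\otimes\pi^{T^E}$. You are in fact more careful than the paper, which only invokes the simplicity of simple-current extensions, because you isolate the nondegeneracy of $W_{-\lambda}\times W_\lambda\to W_0$ as the real point.

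Drop your ``cleaner route'', since a group-graded extension by simple currents need not be simple: square-zero extensions, where every product of two nonzero-degree components is set to zero, are counterexamples. You also do not need $e^{\pm\lambda}$ to lie in the screening kernel. The algebra $\HL^{\h^\bot}_{L^\bot}$ is itself simple, and in a simple vertex superalgebra the annihilator of a nonzero vector is an ideal. Hence $Y(u,z)v\neq0$ for any nonzero $u\in W_{-\lambda}$ and $v\in W_\lambda$, and by the grading this series lies in $W_0((z))$.
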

\begin{proof}
Both the singlet \voa{} $\mathcal{M}(2)$ \cite{Ada03} and the Heisenberg \voa{} are simple, and hence so are tensor products of them. The vertex operator (super)algebras in question are then simple-current extensions of such a tensor product. Hence, they must also be simple. The simplicity of simple-current extensions is well-established in the literature in the finite-index case, but the argument carries over to countable extensions without complication.
\end{proof}
The simplicity of $V_\mathrm{min}(\Delta)$ was also proved in \cite{Kuw21} using a commutant description \cite{Lin09}.

The simplicity of the boundary hypertoric \svoa{}s also follows from \cite{BG26}. They show that a cohomological \svoa{} such as $V(\Delta)$ is weakly graded-unitary because $M=\mathcal{D}^\mathrm{ch}(T^*V)\otimes\mathcal{C}\ell(\Pi T^*V)$ has this property, which is preserved by the relative BRST cohomology in good cases. Moreover, $V(\Delta)$ is of CFT-type by construction. Then, by Remark~2.18 in \cite{BG26}, $V(\Delta)$ is simple, self-contragredient and of CFT-type. The last statement also holds for $V_\mathrm{min}(\Delta)$. We summarise this is the following proposition:
\begin{prop}\label{prop:properties}
The minimal and boundary hypertoric (vertex)operator superalgebras $V_\mathrm{min}(\Delta)$ and $V(\Delta)$ are simple, self-contragredient and of CFT type.
\end{prop}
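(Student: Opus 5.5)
Simplicity is already \autoref{prop:symplicity}, so the plan is to establish self-contragredience and the CFT-type property from the explicit decompositions in \autoref{prop:vanish_main1} and \autoref{prop:vanish_main2}.

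\emph{CFT-type.} I would read off the $L_0$-grading from the decompositions. The relevant summands are:
\begin{itemize}
\item Heisenberg Fock modules $\pi_\lambda^{S^E}$, $\pi_\mu^{T^E}$ with conformal weight $\langle\lambda,\lambda\rangle/2$ (for the natural conformal vector, possibly shifted);
\item atypical singlet modules $M_m$, whose lowest weights are $m(m-1)/2\ge0$ (or $m(m+1)/2$ depending on conventions), vanishing only for $m\in\{0,\pm1\}$ with the appropriate sign.
\end{itemize}
The cleaner route is to compute weights upstairs in $M$, where everything is $\frac12\N$-graded with one-dimensional weight-$0$ space. In the relative complex $C$ the ghosts $\partial c_i$, $b_i$ have weight $\ge1$. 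Since $d$ preserves $L_0$ and cohomology is in degree $0$, $V(\Delta)$ is a subquotient of the ghost-number-$0$ part of $C$. Hence its weights are $\ge0$ and its weight-$0$ space is at most spanned by $\vac$, which is $d$-closed and not exact (exact elements have ghost degree $-1$ preimages of weight $0$, and there are none in $C$). The same argument applies verbatim to $C_\mathrm{min}$. One subtlety: in $M_\mathrm{min}$ the Heisenberg field $\pi^{T^\Delta}$ is bosonic with weight-$0$ vacuum only, so the argument goes through there as well.

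\emph{Self-contragredience.} For a CFT-type (super)algebra, $V'\cong V$ iff $L_1V_1=0$. So I would check that $L_1$ kills the weight-$1$ space. This space is spanned by images of the weight-$1$ Heisenberg fields $\sigma_i^E,\tau_i^E$ together with weight-$1$ bilinears in $x,y,\psi,\phi$ and fermionic/bosonic $\frac12$-weight pairs. For primary currents $L_1$ vanishes automatically. The step I expect to be the main obstacle is showing that no $\partial$-shifted Heisenberg terms spoil primality: with the natural conformal vector $\omega^{1/2}$ of $M$, the fields $\sigma_j$ and $\tau_j$ are primary, because the $\frac12$-weights make the $x_i\partial y_i$ and $\partial x_i\,y_i$ contributions symmetric. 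So no background charge appears, and it suffices to verify this by a direct OPE computation with $\omega^{1/2}$. Alternatively, I could construct an invariant bilinear form directly: $M$ carries a nondegenerate invariant form (a free-field form, with $L_1^\dagger=L_{-1}$), and $d$ is skew-adjoint relative to the ghost-extended form. That form descends to a nondegenerate form on cohomology by simplicity plus nonvanishing on $\vac$.

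For $V(\Delta)$, I would additionally cite the weak graded-unitarity argument of \cite{BG26} (Remark~2.18) as a cross-check; it yields all three properties at once.
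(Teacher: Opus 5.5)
Your proposal is correct, but your route to self-contragredience differs from the paper's. The paper takes simplicity from Proposition~\ref{prop:symplicity} and declares CFT-type to hold by construction. For self-contragredience it imports three facts from \cite{BG26}: $M$ is weakly graded-unitary, this property survives the relative BRST reduction, and a weakly graded-unitary vertex superalgebra of CFT-type is simple and self-contragredient (Remark~2.18 there). For $V_\mathrm{min}(\Delta)$ it merely asserts the same.

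You argue intrinsically instead. The weight-$0$ space of $C$ (resp.\ $C_\mathrm{min}$) is $\C\vac$, because every matter generator has weight $\tfrac12$ and the surviving ghosts $\partial c_i,b_i$ have weight $1$. You then use Li's criterion $L_1V_1=0$.

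The step you flag as the main obstacle is in fact immediate and needs no description of $V(\Delta)_1$. Since $\omega\in C\cap\ker(d)$, $L_1$ acts on $H^\bullet(C,d)$, and it already kills $C_1$. For a primary $a$ of weight $h$ one has $[L_1,a_{(-1)}]=(2h-1)a_{(0)}$, which vanishes at $h=\tfrac12$. Moreover $L_1b_i=0$ and $L_1\partial c_i=2L_0c_i=0$. In $C_\mathrm{min}$ the $\tau^\Delta_i$ are primary as well, since they carry no background charge.

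Your route is elementary and treats $V(\Delta)$ and $V_\mathrm{min}(\Delta)$ uniformly, where the paper is terse about the latter. The paper's route instead yields a stronger positivity property that gives simplicity and self-duality in one stroke.

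There are two things to tighten.
\begin{enumerate}
\item Li's correspondence between invariant bilinear forms and $(V_0/L_1V_1)^*$ only produces a nonzero form. Its nondegeneracy, and hence $V\cong V'$, uses that the radical is a proper ideal, so it requires simplicity. Your ``iff'' must therefore invoke Proposition~\ref{prop:symplicity}.
\item Li's theorem is formulated for $\Z$-graded vertex operator algebras. Here $V(\Delta)$ and $V_\mathrm{min}(\Delta)$ are $\tfrac12\N$-graded without correct statistics. Examples are the even fields $W^{\pm1}$ of weight $N/2$ for odd $N$ in the Kleinian case, or the odd weight-$1$ fields $x_i\phi_i$ in $V(\Delta)$. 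So $e^{2\pi i L_0}$ is not the parity operator, and the contragredient must be defined with a correspondingly adjusted twist to be an untwisted module. Li's argument adapts once such a convention is fixed, but you should state it; the paper delegates this to the framework of \cite{BG26}.
\end{enumerate}
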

The $L_0$-grading (see also \autoref{cor:conformalvector} below) of $V_\mathrm{min}(\Delta)$ and $V(\Delta)$ takes values in $(1/2)\N$, but in the case of $V(\Delta)$ there is in general no relation (``correct statistics'') between the parity and integrality of the $L_0$-grading. These properties are directly inherited from $M$.

\begin{rem}\label{rem:ext}
The results in \autoref{sec:vanish} can also be used to describe the boundary hypertoric \svoa{}s $V(\Delta)$ as a $\Z^N$-simple-current extension of the minimal one, or more precisely of $\smash{V_\mathrm{min}(\Delta)\otimes\pi^{T^E}}$. This allows us to relate the results in this work to those in \cite{Kuw21}. Indeed,
\begin{align*}
V(\Delta)&=\bigoplus_{l_1,\dots,l_N\in\Z}(V_\mathrm{min}(\Delta)\otimes\pi^{T^E})\\
&\quad\boxtimes\Bigl(M_{\sum_{j=1}^Nl_jp^\bot(\rho_j)}^{p^\bot(R)}\otimes\pi_{\sum_{i=1}^{N-M}\sum_{j=1}^N((E^tE)^{-1}E^t)_{ij}l_j(\sigma_i^E+\tau_i^E)}^{S^E\oplus T^E}\Bigr),
\end{align*}
with parity $(-1)^{l_1+\dots+l_N}$, recalling that
\begin{equation*}
V_\mathrm{min}(\Delta)\otimes\pi^{T^E}=\bigoplus_{k_1,\dots,k_{N-M}\in\Z}M_{\sum_{j=1}^N\sum_{i=1}^{N-M}E_{ji}k_i}^{p^\bot(R)}\otimes\pi_{\sum_{i=1}^{N-M}k_i\sigma_i^E}^{S^E}\otimes\pi_0^{T^E},
\end{equation*}
and where the fusion product $\boxtimes$ is taken over $\mathcal{M}(2)^{\otimes N}\otimes\pi_0^{S^E\oplus T^E}$, the underlying singlet-Heisenberg \voa{}. Writing $V_\mathrm{min}(\Delta)$ as an extension of $\smash{V_\mathrm{min}(\Delta)\otimes\pi^{T^E}}$ corresponds to the lattice quotient
\begin{align*}
L^\bot/K^\bot&=\langle\{-\rho_i^E+\sigma_i^E\}_{i=1}^{N-M}\cup\{-\rho_i+\sigma_i+\tau_i\}_{i=1}^N\rangle_\Z\bigm/\langle\{-\rho_i^E+\sigma_i^E\}_{i=1}^{N-M}\rangle_\Z\\
&\cong\langle\{-\rho_i+\sigma_i+\tau_i\}_{i=1}^N\rangle_\Z\cong\Z^N.
\end{align*}
\end{rem}

\begin{rem}\label{rem:intermediatesce}
We can also view $V(\Delta)$ as a $\Z^M$-simple-current extension of the intermediate \svoa{} $V_\mathrm{min}(\Delta)\otimes V_{J^\bot}$ from \autoref{rem:intermediate}. Indeed, the corresponding lattice quotient is
{\allowdisplaybreaks
\begin{align*}
L^\bot/(K^\bot\oplus J^\bot)&=\langle\{-\rho_i^E+\sigma_i^E\}_{i=1}^{N-M}\cup\{-\rho_i+\sigma_i+\tau_i\}_{i=1}^N\rangle_\Z\\
&\quad\bigm/\langle\{-\rho_i^E+\sigma_i^E\}_{i=1}^{N-M}\cup\{\tau_i^E\}_{i=1}^{N-M}\rangle_\Z\\
&=\langle\{-\rho_i^E+\sigma_i^E\}_{i=1}^{N-M}\cup\{-\rho_i+\sigma_i+\tau_i\}_{i=1}^N\rangle_\Z\\
&\quad\bigm/\langle\{-\rho_i^E+\sigma_i^E\}_{i=1}^{N-M}\cup\{-\rho_i^E+\sigma_i^E+\tau_i^E\}_{i=1}^{N-M}\rangle_\Z\\
&\cong\langle\{-\rho_i+\sigma_i+\tau_i\}_{i=1}^N\rangle_\Z\bigm/\langle\{-\rho_i^E+\sigma_i^E+\tau_i^E\}_{i=1}^{N-M}\rangle_\Z\\
&=\Z^N/E\Z^{N-M}\cong\Delta\Z^N=\Z^M
\end{align*}
}%
by the exactness of the sequence $\smash{\{0\}\to\Z^{N-M}\overset{E}{\to}\Z^N\overset{\Delta}{\to}\Z^M\to\{0\}}$. Then
\begin{align}
\label{eq:V-dec}
V(\Delta)&=\bigoplus_{(l_1,\dots,l_N)^t\in\Z^N/E\Z^{N-M}}\bigl(V_\mathrm{min}(\Delta)\otimes V_{J^\bot}\bigr)\\
\nonumber
&\quad\boxtimes\Bigl(M_{\sum_{j=1}^Nl_jp^\bot(\rho_j)}^{p^\bot(R)}\otimes\pi_{\sum_{i=1}^{N-M}\sum_{j=1}^N((E^tE)^{-1}E^t)_{ij}l_j(\sigma_i^E+\tau_i^E)}^{S^E\oplus T^E}\Bigr),
\end{align}
recalling that
\begin{align*}\nonumber
&V_\mathrm{min}(\Delta)\otimes V_{J^\bot}\\
&=\bigoplus_{\substack{k_1,\dots,k_{N-M}\in\Z\\l_1,\dots,l_{N-M}\in\Z}}M_{\sum_{j=1}^N\sum_{i=1}^{N-M}E_{ji}k_ip^\bot(\rho_j)}^{p^\bot(R)}
\otimes\pi_{\sum_{i=1}^{N-M}k_i\sigma_i^E}^{S^E}\otimes\pi_{\sum_{i=1}^{N-M}l_i\tau_i^E}^{T^E}.
\end{align*}
With the exact sequence, we can restate the sum $(l_1,\dots,l_N)^t\in\Z^N/E\Z^{N-M}$ as running over $s(\Z^M)\subset\Z^N$, where $s\colon\Z^M\to\Z^N$ is a choice of section of the map $\smash{\Z^N\overset{\Delta}{\to}\Z^M}$.

Based on the alternative presentation in \autoref{rem:altrep}, we can also write
\begin{align*}
V(\Delta)&=\bigoplus_{l_1,\dots,l_M\in\Z^M}\Biggl(\bigoplus_{\substack{m_1,\dots,m_N\in\Z\\\Delta m=l}}\!\!M_{\sum_{i=1}^Nm_ip^\bot(\rho_i)}^{p^\bot(R)}\otimes\pi_{\sum_{i=1}^{N-M}\sum_{j=1}^N((E^tE)^{-1}E^t)_{ij}m_j\sigma_i^E}^{S^E}\Biggr)\\
&\quad\otimes\Biggl(\bigoplus_{\substack{n_1,\dots,n_N\in\Z\\\Delta n=l}}\pi_{\sum_{i=1}^{N-M}\sum_{j=1}^N((E^tE)^{-1}E^t)_{ij}n_j\tau_i^E}^{T^E}\Biggr),
\end{align*}
with parity given by $(-1)^{n_1+\dots+n_N}$, recalling that $\Delta\Z^N=\Z^M$. This is an extension of $V_\mathrm{min}(\Delta)\otimes V_{J^\bot}$, which is recovered by collapsing the outer sum by setting $\smash{l_1=\dots=l_M=0}$ (cf.\ \autoref{rem:intermediate}).
\end{rem}


\subsection{Conformal Structures}\label{sec:conf-vect}

Recall from \autoref{sec:freefield} that we can equip the vertex superalgebra $\tilde{C}=M\otimes\mathcal{C}\ell^\bullet(\Pi T^*\g)=\mathcal{D}^\mathrm{ch}(T^*V)\otimes\mathcal{C}\ell(\Pi T^*V)\otimes\mathcal{C}\ell^\bullet(\Pi T^*\g)$ in the input of the BRST reduction with a $2N$-parameter family of conformal structures defined by
\begin{align*}
\omega^{a,b}=\sum_{i=1}^N\bigl(a_i\partial x_i\,y_i-(1-a_i)x_i\partial y_i\bigr)+\sum_{i=1}^N\bigl(b_i\partial\psi_i\,\phi_i-(1-b_i)\psi_i\partial\phi_i\bigr)+\sum_{i=1}^M\partial c_i\, b_i
\end{align*}
for $a,b\in\C^N$. Only some of these will descend to the BRST cohomology. Indeed, a direct computation shows:
\begin{prop}\label{prop:conformalkernel}
The conformal vector $\omega^{a,b}$ for $a,b\in\C^N$ on $\tilde{C}$ lies in $C$, and it lies in $\ker(d)$ if and only if
\begin{equation*}
\Delta(a-b)=0\in\C^M.
\end{equation*}
In that case, $\omega^{a,b}$ defines a conformal structure (of the same central charge) in the relative cohomology $\smash{V(\Delta)=H_\mathrm{BRST}^{\infty/2+\bullet}(\g,M)=H^\bullet(C,d)}$.
\end{prop}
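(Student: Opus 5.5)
The plan is a direct computation with the skew-symmetry formula for $\lambda$-brackets. I would use the formula in the form
\begin{equation*}
u_{(0)}v=\sum_{j\geq0}\frac{(-1)^{j+1}}{j!}\partial^j\bigl(v_{(j)}u\bigr)\quad\text{(up to the parity sign)},
\end{equation*}
applied with $v=\omega^{a,b}$. On the right-hand side, $\omega^{a,b}_{(0)}=\partial$ and $\omega^{a,b}_{(1)}=L_0$ act in the standard way. The only deviations from primarity enter through $\omega^{a,b}_{(2)}$ and higher modes.

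First I show that $\omega^{a,b}\in C$, which requires two checks.
\begin{enumerate}
\item For $b_{i(0)}\omega^{a,b}$: the matter part commutes with $b_i$. In the ghost part, $b_{i(0)}(\partial c_j\,b_j)$ only produces $(b_{i(0)}\partial c_j)$-type terms. These vanish because $b_{i(0)}c_j=\delta_{ij}\vac$ and $\partial\vac=0$.
\item For $J_i\coloneqq\sigma_i^\Delta+\tau_i^\Delta$, a bosonic weight-$1$ field, skew-symmetry gives
\begin{equation*}
J_{i(0)}\omega^{a,b}=-\partial J_i+\partial(L_0J_i)-\tfrac12\partial^2\bigl(\omega^{a,b}_{(2)}J_i\bigr)+\dots
\end{equation*}
The first two terms cancel since $L_0J_i=J_i$. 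The term $\omega^{a,b}_{(2)}J_i$ is a multiple of $\vac$, so it is killed by $\partial^2$. All higher terms vanish for weight reasons.
\end{enumerate}
Hence $(d\,b_i)_{(0)}\omega^{a,b}=0$, and so $\omega^{a,b}\in C$.

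Next I compute $d\,\omega^{a,b}=Q_{(0)}\omega^{a,b}$ in the same way, with $Q=\sum_i J_ic_i$ from \eqref{eq:Q}, which has weight $1$. The key local input is the anomaly constant of each current. I will compute $L_1\sigma_j=\omega^a_{(2)}(x_jy_j)$ and $L_1\tau_j=\omega^b_{(2)}(\psi_j\phi_j)$ directly from the OPEs in \autoref{sec:freefield}. They should be $\kappa(1-2a_j)\vac$ and $\kappa(1-2b_j)\vac$ respectively, with the same nonzero constant $\kappa$ for bosons and fermions, because the fermionic sign compensates the opposite sign of the two-point function. Hence
\begin{equation*}
L_1J_i=2\kappa\sum_j\Delta_{ij}(b_j-a_j)\vac.
\end{equation*}
Since $c_i$ is primary of weight $0$ for $\partial c\,b$, the only other contributions come from the ghost part. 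These are total derivatives that combine with the $\partial Q$ terms. Expanding, the contributions $-\partial Q+\partial(L_0Q)$ cancel, and what remains is
\begin{equation*}
d\,\omega^{a,b}=\text{const}\cdot\sum_{i=1}^M\bigl(\Delta(a-b)\bigr)_i\,\partial^2c_i,
\end{equation*}
possibly together with a term $\partial(J_i\partial c_i)$-type that I expect to vanish identically or to be proportional to the same coefficients. I must check this carefully, as it is the main bookkeeping obstacle. The vectors $\partial^2c_i$ are linearly independent in $\tilde{C}$, so $d\,\omega^{a,b}=0$ if and only if $\Delta(a-b)=0$.

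Finally, assume $\Delta(a-b)=0$. Then $\omega^{a,b}$ is a $d$-closed element of $C$ of ghost degree $0$. Its class in $H^0(C,d)$ satisfies the Virasoro OPE with the same central charge, because $d$ is a derivation of all $n$-th products and hence $H^\bullet(C,d)$ is a vertex superalgebra receiving the OPE of representatives. The $L_0$-grading and $L_{-1}=\partial$ descend since $[d,L_n]=0$, which follows from $d\,\omega^{a,b}=0$ via the Borcherds identity. This yields the conformal structure on $V(\Delta)$. The step I expect to require the most care is fixing the signs and the normalisation $\kappa$ in $L_1\sigma_j$ versus $L_1\tau_j$, so that the condition comes out as $\Delta(a-b)$ rather than $\Delta(a+b-1)$. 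I would first test the formula in the smallest example $N=M=1$ of \autoref{sec:smallexample}, where $a=b=1/2$ must work.
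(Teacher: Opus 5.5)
Your overall route is the direct computation the paper has in mind. You check $b_{i(0)}\omega^{a,b}=0$ and $J_{i(0)}\omega^{a,b}=0$ with $J_i\coloneqq\sigma^\Delta_i+\tau^\Delta_i$, then reduce $d\,\omega^{a,b}=Q_{(0)}\omega^{a,b}$ by skew-symmetry to $-\tfrac12\partial^2(L_1Q)$. Your arguments for $\omega^{a,b}\in C$ and for the descent to $H^\bullet(C,d)$ are fine. The gap is in the one step that decides the answer. You assert $L_1\sigma_j=\kappa(1-2a_j)\vac$ and $L_1\tau_j=\kappa(1-2b_j)\vac$ with the \emph{same} $\kappa$, and then write ``hence'' $L_1J_i=2\kappa\sum_j\Delta_{ij}(b_j-a_j)\vac$. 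That does not follow. Equal constants give $L_1J_i=2\kappa\sum_j\Delta_{ij}(1-a_j-b_j)\vac$, i.e.\ the wrong condition $\Delta\bigl(a+b-(1,\dots,1)^t\bigr)=0$, so your displayed formula is reached by fiat. Your proposed test would not catch this either, since $a=b=(1/2,\dots,1/2)^t$ satisfies both conditions. A discriminating check is $N=M=1$ with $a=b=1$ (the auxiliary structure $\omega^1$ of \autoref{rem:other-conf-str}), where only $\Delta(a-b)=0$ holds.

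Do the contraction instead. For $J=AB$ and $\omega=\wt(B)\,\partial A\,B-\wt(A)\,A\,\partial B$, the triple pole of $\omega(z)J(w)$ uses the same pair of contractions ($A$ with $B$, $B$ with $A$) as the double pole of $J(z)J(w)\sim k/(z-w)^2$. One finds $L_1J=k\,(1-2\wt(B))\vac$. Since $\sigma_j\sigma_j\sim-\frac{1}{(z-w)^2}$ but $\tau_j\tau_j\sim+\frac{1}{(z-w)^2}$, this gives $L_1\sigma_j=-(1-2a_j)\vac$ and $L_1\tau_j=+(1-2b_j)\vac$. There is no compensating fermionic sign. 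The relative sign that makes $\sigma_j+\tau_j$ null is exactly what turns the two anomalies into a difference, $L_1J_i=2(\Delta(a-b))_i\vac$.

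The extra term you hedge about does not arise. From $[L_1,J_{i(-1)}]=J_{i(0)}+(L_1J_i)_{(-1)}$, together with $J_{i(0)}c_i=0$ and $L_1c_i=0$, one gets $L_1Q=\sum_i(L_1J_i)c_i$. Moreover $L_nQ=0$ for $n\geq2$: it has weight $1-n<0$, whereas vectors of ghost degree $1$ that are neutral for all $\sigma_{j(0)},\tau_{j(0)}$ have nonnegative weight. Hence $d\,\omega^{a,b}=-\sum_{i=1}^M(\Delta(a-b))_i\,\partial^2c_i$, which vanishes if and only if $\Delta(a-b)=0$.
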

We remark that the conformal structure on the ghost Clifford vertex superalgebra $\mathcal{C}\ell^\bullet(\Pi T^*\g)$ has to be chosen exactly as it is. That is, even though $\mathcal{C}\ell^\bullet(\Pi T^*\g)$ also supports an $M$-parameter family of conformal structures, only one of them will yield a conformal vector in $\ker(d)$.

Further note that many of these conformal structures will define the same conformal structure in the cohomology, namely when they differ by a coboundary, $\smash{\omega^{a,b}-\omega^{a',b'}\in\im(d)}$.

\medskip

In this paper, unless mentioned otherwise, we specialise to the \emph{natural} choice of conformal structure on $\tilde{C}=M\otimes\mathcal{C}\ell^\bullet(\Pi T^*\g)=\mathcal{D}^\mathrm{ch}(T^*V)\otimes\mathcal{C}\ell(\Pi T^*V)\otimes\mathcal{C}\ell^\bullet(\Pi T^*\g)$, namely the one for $a=b=(1/2,\dots,1/2)^t$, which satisfies the condition in \autoref{prop:conformalkernel}. (In fact, we have already done so in our discussion above, cf.\ \autoref{prop:properties}.) Specifically, we consider the conformal vector
\begin{equation}\label{eq:conf}
\begin{split}
\omega&=\sum_{i=1}^N(\partial x_i\,y_i-x_i\partial y_i)/2+\sum_{i=1}^N(\partial\psi_i\,\phi_i-\psi_i\partial\phi_i)/2+\sum_{i=1}^M\partial c_i\, b_i\\
&=\sum_{i=1}^N(\partial x_i\,y_i-x_i\partial y_i)/2+\sum_{i=1}^N\tau_i^2/2+\sum_{i=1}^M\partial c_i\, b_i,
\end{split}
\end{equation}
where the Heisenberg subalgebra $\pi^T\subset\mathcal{C}\ell(\Pi T^*V)$ is generated by $\tau_i=\psi_i\phi_i$ for $i=1,\dots,N$. With regard to this conformal vector, the free-field generators in $\tilde{C}$ have $L_0$-weights
\begin{equation*}
\wt(x_j)=\wt(y_j)=1/2,\quad\wt(\psi_j)=\wt(\phi_j)=1/2,\quad\wt(c_i)=0,\wt(b_i)=1
\end{equation*}
for $j=1,\dots,N$ and $i=1,\dots,M$. This makes $\tilde{C}$ (and $C$) a \svoa{} ($\frac{1}{2}\N$-graded by $L_0$-weights) of central charge $\smash{c=-N+N-2M=-2M}$. Then, \autoref{prop:conformalkernel} implies:
\begin{cor}\label{cor:conformalvector}
The conformal vector \eqref{eq:conf} satisfies $\omega\in C\cap\ker(d)$. Hence, it defines a conformal vector, again of central charge $c=-2M$, in the relative cohomology $V(\Delta)=H_\mathrm{BRST}^{\infty/2+\bullet}(\g,M)=H^\bullet(C,d)$.
\end{cor}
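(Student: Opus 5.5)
The corollary follows from \autoref{prop:conformalkernel} once two things are checked: the natural parameters satisfy the kernel condition, and the induced vector really is a conformal vector of the same central charge in the relative cohomology. The plan is to verify these directly and then appeal to the standard descent argument.

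\emph{Kernel condition.} For $a=b=(1/2,\dots,1/2)^t$ we have $a-b=0$, so $\Delta(a-b)=0$ holds trivially. The same proposition also gives $\omega^{a,b}\in C$. To see this concretely, note that the ghost part $\sum_i\partial c_i\,b_i$ is annihilated by $b_{i(0)}$. Moreover, $(\sigma^\Delta_i+\tau^\Delta_i)_{(0)}$ acts on $\omega$ as minus the derivative of the OPE contribution, and this vanishes because $\sigma^\Delta_i+\tau^\Delta_i$ is a weight-one primary for the natural choice.

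\emph{Closedness under $d$.} If one wants to see $d\omega=0$ without quoting the proposition, compute $Q_{(0)}\omega=-\omega_{(0)}Q+\partial(\cdots)$ via skew-symmetry. This reduces to $L_{-1}Q$ minus total-derivative terms coming from the $L_{1}$-action on $Q$. Here $\sigma_i$ has $L_1\sigma_i\propto(1-2a_i)$ and $\tau_i$ has $L_1\tau_i\propto(1-2b_i)$. For $a=b=1/2$ both anomalies vanish, so $\sigma^\Delta_i+\tau^\Delta_i$ is primary of weight $1$ and $c_i$ has weight $0$. Hence $Q$ is primary of weight $1$ and $d\omega=Q_{(0)}\omega=0$. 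The real point here is that the weights of $Q$'s constituents are balanced. This bookkeeping is the only place where something can go wrong, namely the sign and coefficient of the $\partial c_i$ term against the $b_i$ part of the ghost conformal vector, which is exactly why the ghost conformal structure is forced.

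\emph{Descent.} Since $\omega\in C\cap\ker d$, its class $[\omega]\in H^0(C,d)$ is defined. The modes $L_n=\omega_{(n+1)}$ commute with $d$, because $[d,\omega_{(n+1)}]=(d\omega)_{(n+1)}=0$, so they act on cohomology. The Virasoro relations hold in $C$ with central charge $-N+N-2M=-2M$ and therefore persist for the classes. Finally, $L_{-1}$ acts as the translation operator and $L_0$ acts semisimply with the induced grading, both inherited from $\tilde C$. So $[\omega]$ is a conformal vector of central charge $-2M$ on $V(\Delta)$, using \autoref{prop:vanish} to identify $H^\bullet$ with $H^0$.
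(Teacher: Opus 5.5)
Your proposal is correct and follows the paper's argument: the corollary is \autoref{prop:conformalkernel} specialised to $a=b=(1/2,\dots,1/2)^t$, where $\Delta(a-b)=0$ holds automatically. Your direct checks (the third-order poles $L_1\sigma_i\propto 1-2a_i$ and $L_1\tau_i\propto 1-2b_i$ vanish, so $Q$ is primary of weight $1$ and the skew-symmetry expansion of $Q_{(0)}\omega$ vanishes) correctly supply the ``direct computation'' behind that proposition, while the final appeal to \autoref{prop:vanish} is harmless but unnecessary, since $[\omega]$ is a conformal vector on all of $H^\bullet(C,d)$ anyway.
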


\smallskip

Corresponding to $M\supseteq\mathcal{D}^\mathrm{ch}(T^*V)\otimes\pi^T=\mathcal{D}^\mathrm{ch}(T^*V)\otimes\pi^{T^\Delta}\otimes\pi^{T^E}=M_\mathrm{min}\otimes\pi^{T^E}$, we decompose $\omega$ as
\begin{equation*}
\omega=\omega_\mathrm{min}+\omega_{T^E}
\end{equation*}
with
\begin{align*}
\omega_\mathrm{min}&=\sum_{i=1}^N(\partial x_i\,y_i-x_i\partial y_i)/2+\sum_{i=1}^M\tau_i^\Delta\tau^i_\Delta/2+\sum_{i=1}^M\partial c_i\, b_i,\\
\omega_{T^E}&=\sum_{i=1}^{N-M}\tau_i^E\tau^i_E/2.
\end{align*}
Here, $\{\tau^i_\Delta\}_{i=1}^M$ and $\{\tau^i_E\}_{i=1}^{N-M}$ are dual bases for $\{\tau_i^\Delta\}_{i=1}^M$ and $\{\tau_i^E\}^{N-M}$ with respect to the restriction of the positive-definite bilinear form $\langle\cdot,\cdot\rangle$ on $T$ to the orthogonal subspaces $T^\Delta$ and $T^E$, respectively. That is, $\smash{\langle\tau^i_\Delta,\tau_j^\Delta\rangle=\delta_{ij}}$ for $i,j=1\dots,M$ and analogously for $T^E$. Explicitly, $\tau^i_\Delta$ is defined like $\tau_i^\Delta$, but with $\Delta$ replaced by $(\Delta\Delta^t)^{-1}\Delta$, and similarly for $\tau^i_E$.

\begin{rem}\label{rem:confext}
It is not difficult to see that $\omega_\mathrm{min}$ is a conformal vector for $V_\mathrm{min}(\Delta)$ of central charge $c=-N-M$ (this is the conformal structure in \cite{Kuw21}), $\omega_{T^E}$ one for $\smash{\pi^{T^E}}$ of central charge $c=N-M$ and that (cf.\ \autoref{rem:intermediate})
\begin{equation*}
V_\mathrm{min}(\Delta)\otimes\pi^{T^E}\hookrightarrow V_\mathrm{min}(\Delta)\otimes V_{J^\bot}\hookrightarrow V(\Delta)
\end{equation*}
are conformal extensions with conformal vector $\omega$.
\end{rem}

\smallskip

Although we endow the hypertoric \svoa{} with the conformal structure defined by $\omega$ for $a=b=(1/2,\dots,1/2)^t$, later, in \autoref{sec:global}, it will also be useful to consider other, \emph{auxiliary} conformal structures.

\begin{rem}[Auxiliary Conformal Structures]\label{rem:other-conf-str}
When $0\leq a_i\leq 1$ and $0\leq b_i\leq 1$ for $a,b\in\C^N$ satisfying the condition of \autoref{prop:conformalkernel}, the conformal structure $\omega^{a,b}$ is such that $C$ and hence $V(\Delta)$ is $\R_{\geq0}$-graded by $L_0$-weights.

In particular, for any subset $I\subset\{1,\dots,N\}$ the conformal vectors
\begin{equation*}
\omega^1\coloneqq\omega^{a,a}\quad\text{and}\quad\omega^0\coloneqq\omega^{a',a'}
\end{equation*}
with $a=(\delta_{1\in I},\dots,\delta_{N\in I})$ and $a'=(\delta_{1\notin I},\dots,\delta_{N\notin I})$ satisfy this condition. With respect to $\omega^1$, the free-field generators for $i=1,\dots,N$ have $L_0$-weights
\begin{align*}
\wt(x_i)=0,\wt(y_i)=1,\quad\wt(\psi_i)=0,\wt(\phi_i)=1
\end{align*}
if $i\in I$ and
\begin{align*}
\wt(x_i)=1,\wt(y_i)=0,\quad\wt(\psi_i)=1,\wt(\phi_i)=0
\end{align*}
if $i\notin I$, and vice versa for $\omega^0$. (The ghosts $c_i$ and $b_i$ have the same weights as before.)
\end{rem}

\smallskip

In the following, we rewrite the conformal vector $\omega$ up to coboundaries in $\im(d)$. This will be useful in \autoref{sec:assvar} below when we determine the associated variety of $V(\Delta)$.
\begin{prop}\label{prop:confvect}
The conformal vector $\omega$ of $V(\Delta)$ is of the form
\begin{equation*}
\omega=\sum_{i=1}^{N}\bigl((x_i\phi_i)(y_i\psi_i)-(y_i\psi_i)(x_i\phi_i)\bigr)/2 +\sum_{i=1}^{N-M}(\sigma_i^E + \tau_i^E)\tau_E^i,
\end{equation*}
where $\sigma_i^E+\tau_i^E\in V(\Delta)$ for all $i=1,\dots,N-M$ and $x_i\phi_i,y_i\psi_i\in V(\Delta)$ for all $i=1,\dots,N$.
\end{prop}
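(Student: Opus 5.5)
We will establish the claimed formula directly in $\tilde{C}$, modulo $\im(d)$ and modulo terms that vanish on the relative subcomplex $C$. We first check the membership claims. The fields $x_i\phi_i$ and $y_i\psi_i$ have charge zero under every $\bar\mu^*_\mathrm{ch}(a_k)=\sigma^\Delta_k+\tau^\Delta_k$. Indeed, $x_i$ has $\sigma_i$-charge opposite to that of $y_i$, and $\phi_i$ has $\tau_i$-charge $-1$ while $\psi_i$ has $+1$. Because of the sign conventions ($\langle\sigma_i,\sigma_i\rangle=-1$, $\langle\tau_i,\tau_i\rangle=1$), the $\sigma_i$-charge of $x_i$ and the $\tau_i$-charge of $\phi_i$ cancel. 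This is the same computation that makes $Q_{(0)}Q=0$. Hence these fields commute with $Q$ and contain no ghosts, so they lie in $C\cap\ker(d)$. The element $\sigma^E_i+\tau^E_i$ lies in $\ker(d)$ because $\langle\sigma^E_i+\tau^E_i,\sigma^\Delta_k+\tau^\Delta_k\rangle=-(E^t\Delta^t)_{ik}+(E^t\Delta^t)_{ik}=0$. Similarly, $\tau^i_E$ is orthogonal to $\mathfrak{i}$ (since $\Delta E=0$), so it also lies in $C\cap\ker(d)$.

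Next we compute the normally ordered products explicitly in $M$. Using Wick's theorem and the OPEs $x_i(z)y_i(w)\sim-1/(z-w)$ and $\psi_i(z)\phi_i(w)\sim1/(z-w)$, we expand $(x_i\phi_i)(y_i\psi_i)-(y_i\psi_i)(x_i\phi_i)$. The quartic term $x_iy_i\phi_i\psi_i$ cancels in the antisymmetrisation, up to total derivatives produced by reordering, and the single and double contractions give bilinears. We expect to obtain
\begin{equation*}
\tfrac12\bigl((x_i\phi_i)(y_i\psi_i)-(y_i\psi_i)(x_i\phi_i)\bigr)=\tfrac12(\partial x_i\,y_i-x_i\partial y_i)+\tfrac12\tau_i^2+\sigma_i\tau_i+\tfrac12\partial(\ldots),
\end{equation*}
or a similar expression, where the key feature is the cross term $\sigma_i\tau_i$ arising from the quartic normal ordering $x_iy_i\,\psi_i\phi_i$. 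Pinning down the exact coefficients and derivative terms is the routine but error-prone part. We would fix them by comparing with the Heisenberg–lattice free-field realisation of \autoref{sec:freefield2}, where $x_i\phi_i$ and $y_i\psi_i$ become exponentials of $\pm(\rho_i-\sigma_i-\tau_i)$-type vectors, possibly dressed by $\rho_i$ derivatives.

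Summing over $i$ gives $\omega_{\mathcal{D}}+\omega_{\mathcal{C}\ell}+\sum_i\sigma_i\tau_i$ plus possible derivative terms. It remains to show that
\begin{equation*}
-\sum_i\sigma_i\tau_i+\sum_{i=1}^{N-M}(\sigma^E_i+\tau^E_i)\tau^i_E\equiv\sum_{i=1}^M\partial c_i\,b_i\pmod{\im(d)},
\end{equation*}
up to matching derivative terms. To do this, we decompose $\sum_i\sigma_i\tau_i=\sum_k\sigma^\Delta_k\tau_\Delta^k+\sum_k\sigma^E_k\tau_E^k$ using the dual bases of $T^\Delta\oplus T^E$, and similarly write $\sum_i\tau_i^2=\sum_k\tau^\Delta_k\tau^k_\Delta+\sum_k\tau^E_k\tau^k_E$. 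The $E$-parts then combine into the term $\sum(\sigma^E_i+\tau^E_i)\tau^i_E$, after absorbing $\omega_{T^E}$. The remaining $\Delta$-parts assemble into expressions of the form $(\sigma^\Delta_k+\tau^\Delta_k)\tau^k_\Delta$, and these are coboundaries. Indeed, $d(b_k)=\sigma^\Delta_k+\tau^\Delta_k$, and $d(b_k\tau^k_\Delta)=(\sigma^\Delta_k+\tau^\Delta_k)\tau^k_\Delta+(\text{ghost term})$. Similarly, $d$ applied to a suitable combination involving $\partial c\,b$-type terms produces the ghost conformal vector, using the standard identity $\omega_\mathrm{gh}$-part $=d(\ldots)$ familiar from the abelian BRST setting. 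Since everything lies in $C$, these identities hold in $V(\Delta)$.

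The main obstacle is the exact Wick computation. In particular, we must track the signs of the fermionic reordering and the $\partial$ terms, so that the coefficients of $\sigma_i\tau_i$, of $\tau_i^2$ and of $\partial\tau_i,\partial\sigma_i$ match precisely. Any leftover total derivatives $\partial(\sigma^\Delta+\tau^\Delta)$ are coboundaries, but leftover $\partial\sigma^E_i$ or $\partial\tau^E_i$ would have to cancel by the antisymmetrisation. We would verify this first in the smallest example $N=M=1$ of \autoref{sec:smallexample}, where the symplectic fermion conformal vector must come out, and then extend the computation linearly to general $\Delta$.
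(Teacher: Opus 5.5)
Your strategy is the paper's: Wick-expand the antisymmetrised product, split along $T=T^\Delta\oplus T^E$ with the dual bases, and remove the $\Delta$-part by a coboundary built from $b_k\tau^k_\Delta$. Your membership checks are also fine. But the step that carries all the content, the Wick identity, is guessed with the wrong signs, and the congruence you then reduce to is false.

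In $M$ one has $(x\phi)_{(-1)}(y\psi)=\sum_k x_{(k)}y\otimes\phi_{(-2-k)}\psi$, with only $k\in\{-2,-1,0\}$ contributing, and similarly for the other order. Use $\phi_{(-1)}\psi=-\tau$, $\phi_{(-2)}\psi=\tfrac12(\tau^2-\partial\tau)$, $\psi_{(-2)}\phi=\tfrac12(\tau^2+\partial\tau)$, $y_{(-1)}x=\sigma$ and $y_{(-2)}x=x\,\partial y$. This gives
\begin{equation*}
(x\phi)(y\psi)=\partial x\,y-\sigma\tau-\tfrac12(\tau^2-\partial\tau),\qquad (y\psi)(x\phi)=x\,\partial y+\sigma\tau+\tfrac12(\tau^2+\partial\tau).
\end{equation*}
So the quartic term $\sigma\tau$ does not cancel in the antisymmetrisation but doubles, the $\partial\tau$ terms cancel, and exactly, with no derivative terms,
\begin{equation*}
\tfrac12\bigl((x_i\phi_i)(y_i\psi_i)-(y_i\psi_i)(x_i\phi_i)\bigr)=\tfrac12(\partial x_i\,y_i-x_i\partial y_i)-\tfrac12\tau_i^2-\sigma_i\tau_i .
\end{equation*}
You have $+\tfrac12\tau_i^2+\sigma_i\tau_i$ instead. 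Your subsequent target, $-\sum_i\sigma_i\tau_i+\sum_k(\sigma^E_k+\tau^E_k)\tau^k_E\equiv\sum_k\partial c_k\,b_k$ modulo $\im(d)$, is then not merely unproved but false. Split $\sum_i\sigma_i\tau_i$ along $p^\Delta+p^E=\mathrm{id}$ and add $d(\sum_k b_k\tau^k_\Delta)$: the target becomes equivalent to $\sum_i\tau_i^2\in\im(d)$. But $d\tau_i=\sum_k\Delta_{ki}\,\partial c_k$, so $d(\sum_i\tau_i^2)=2\sum_k\tau^\Delta_k\,\partial c_k\neq0$, and $\sum_i\tau_i^2$ is not even closed.

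The coboundary bookkeeping also needs correcting. Neither $(\sigma^\Delta_k+\tau^\Delta_k)\tau^k_\Delta$ nor $\partial c_k\,b_k$ is a coboundary, since neither is closed: $d$ sends them to $\pm(\sigma^\Delta_k+\tau^\Delta_k)\partial c_k$. So there is no separate identity writing the ghost conformal vector as $d(\ldots)$. What holds is the identity you already wrote, $d(b_k\tau^k_\Delta)=(\sigma^\Delta_k+\tau^\Delta_k)\tau^k_\Delta+\partial c_k\,b_k$, whose ghost term is exactly the ghost part of $\omega$.

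With the correct Wick identity the proof becomes the paper's one-liner:
\begin{equation*}
\omega=\sum_i\tfrac12\bigl((x_i\phi_i)(y_i\psi_i)-(y_i\psi_i)(x_i\phi_i)\bigr)+\sum_i(\sigma_i+\tau_i)\tau_i+\sum_k\partial c_k\,b_k .
\end{equation*}
The middle sum equals $\sum_k(\sigma^E_k+\tau^E_k)\tau^k_E+\sum_k(\sigma^\Delta_k+\tau^\Delta_k)\tau^k_\Delta$ by the dual-basis decomposition. The $\Delta$-part together with the ghost part is $d\bigl(\sum_k\tau^k_\Delta b_k\bigr)$, with $\tau^k_\Delta b_k\in C$. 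Nothing needs to be ``absorbed'' separately, and no leftover derivative terms appear.
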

\begin{proof}
The fields $\sigma_i+\tau_i$, $x_i\phi_i$ and $y_i\psi_i$ are in $C\cap\ker(d)$ for all $i=1,\dots,N$, and therefore so are the fields mentioned in the proposition. They hence define fields in the relative cohomology $V(\Delta)$. To show that $\omega$ can be written in the asserted form, notice that for all $i=1,\dots,N$,
\begin{align*}
&(\partial x_i\,y_i-x_i\partial y_i)/2+(\partial\psi_i\, \phi_i-\psi_i\partial \phi_i)/2=(\partial x_i\,y_i-x_i\partial y_i)/2+\tau_i^2/2\\
&=\bigl((x_i\phi_i)(y_i\psi_i)-(y_i\psi_i)(x_i\phi_i)\bigr)/2+(\sigma_i+\tau_i)\tau_i.
\end{align*}
It follows that
\begin{align*}
\omega&=\sum_{i=1}^{N}\bigl((x_i\phi_i)(y_i\psi_i)-(y_i\psi_i)(x_i\phi_i)\bigr)/2+\sum_{i=1}^N (\sigma_i+\tau_i)\tau_i+\sum_{i=1}^M\partial c_i\,b_i\\
&=\sum_{i=1}^{N}\bigl((x_i\phi_i)(y_i\psi_i)-(y_i\psi_i)(x_i\phi_i)\bigr)/2+\sum_{i=1}^{N-M}(\sigma^E_i+\tau^E_i)\tau_E^i\\
&\quad+\sum_{i=1}^M\bigl((\sigma^\Delta_i+\tau^\Delta_i)\tau_\Delta^i+\partial c_i\,b_i\bigr).
\end{align*}
Using that, for all $i=1,\dots,M$,
\begin{equation*}
(\sigma^\Delta_i+\tau^\Delta_i)\tau_\Delta^i+\partial c_i\,b_i=d(\tau^i_\Delta b_i)\in\im(d)
\end{equation*}
is a coboundary, the assertion follows.
\end{proof}
Then, by \autoref{rem:confext}, the conformal vectors $\omega_\mathrm{min}$ and $\omega_{T^E}$ of $V_\mathrm{min}(\Delta)$ and $\smash{\pi^{T^E}\subset V_{J^\bot}}$ (with $\omega=\omega_\mathrm{min}+\omega_{T^E}$) are given by
\begin{align*}
\omega_\mathrm{min}&=\sum_{i=1}^{N}\bigl((x_i\phi_i)(y_i\psi_i)-(y_i\psi_i)(x_i\phi_i)\bigr)/2+\sum_{i=1}^{N-M}(\sigma_i^E+\tau_i^E)^E_i\tau_E^i-\sum_{i=1}^{N-M}\tau_i^E\tau^i_E/2,\\
\omega_{T^E}&=\sum_{i=1}^{N-M}\tau_i^E\tau^i_E/2,
\end{align*}
respectively.

\begin{rem}
The above identities imply that $\omega$, $\omega_\mathrm{min}$ and $\omega_{T^E}$ all have Sugawara-like expressions in $V(\Delta)$, meaning that they can all be written in terms of weight-$1$ fields in $V(\Delta)$. This means that when one describes the strong generators of $V(\Delta)$, one does not need introduce the conformal vector as an additional strong generator.

On the other hand, because in the expression for $\omega_{\mathrm{min}}$ fields in $V_{J^\bot}$ are used, this may not always be the case within $V_\mathrm{min}(\Delta)$ itself. We will see an example of this below in \autoref{sec:kleinian_svoa}. In fact, to be more precise, to make this comment for $\omega_\mathrm{min}$, we should apply the reformulation in the proof of \autoref{prop:confvect} only to the ``$\Delta$-part'' of $\mathcal{D}^\mathrm{ch}(T^*V)$, that is, schematically
\begin{equation*}
\omega_\mathrm{min}=\sum_{i=1}^{M}\bigl((x_i\phi_i)(y_i\psi_i)-(y_i\psi_i)(x_i\phi_i)\bigr)_\Delta/2+\sum_{i=1}^{N-M}(\partial x_i\,y_i-x_i\partial y_i)_E/2.
\end{equation*}
Now, the expression on the left-hand side is still clearly not generated by fields that are in $V_\mathrm{min}(\Delta)$, so the argument persists. In some sense, this is the more natural expression for $\omega_\mathrm{min}$ as it does not depend on fields in $\smash{\pi^{T^E}}$.
\end{rem}


\section{BRST Cohomology Sheaves over Symplectic Resolutions}\label{sec:sheaves}

In this section, following the fermionic construction developed in \cite{AKM23}, we define a sheaf $\smash{\mathcal{V}_{\delta,\Delta}^\hbar}$ of $\hbar$-adic (i.e.\ microlocalised) \svoa{}s over the smooth (projective) hypertoric variety $\smash{Y_\delta(\Delta)}$ by quantum Hamiltonian reduction. In contrast to \cite{Kuw21}, this sheaf will be defined over the hypertoric variety (or a nilpotent thickening of it) rather than over its universal family of Poisson deformations $\smash{\tilde{Y}_\delta(\Delta)}$ (cf.\ \autoref{sec:deformation} and \autoref{sec:super}). This is due to a different anomaly cancellation, replacing the Heisenberg \voa{} with free fermions (see also \cite{Niu23}).

In the end, we will see in \autoref{sec:global} that the global sections $\smash{\mathcal{V}_{\delta,\Delta}^\hbar(Y_\delta(\Delta))}$, after removing the microlocalisation parameter $\hbar$ by considering the $\mathbb{S}$-invariant structure for a certain action by the torus $\mathbb{S}=\C^\times$, coincide with the boundary hypertoric \svoa{} $V(\Delta)$. That is, we show that the natural map
\begin{equation*}
V(\Delta)\to\bigl[\mathcal{V}_{\delta,\Delta}^\hbar(Y_\delta(\Delta))\bigr]^\mathbb{S}
\end{equation*}
is an isomorphism. To prove this, we use a faithfulness theorem from \cite{ADS26}. Similarly, the global sections of the sheaf over $\smash{\tilde{Y}_\delta(\Delta)}$ defined in \cite{Kuw21} reduce to the minimal hypertoric \voa{} $V_\mathrm{min}(\Delta)$, as was shown there.


\subsection{Sheaves of \texorpdfstring{$\hbar$}{ℏ}-Adic Vertex Operator Superalgebras}

We recall basic examples of sheaves of $\hbar$-adic vertex operator (super)algebras. We refer to \cite{AKM15,Kuw21,AKM23} for details of this microlocalisation technique for vertex (super)algebras.

\medskip

Let $V=\C^N$ and $T^*V=V\oplus V^*$ be equipped with the standard symplectic form, as in \autoref{sec:hypertoric}. We denote by $\smash{\mathcal{D}^\mathrm{ch}_{T^*V,\hbar}}$ the microlocalisation of the $\beta\gamma$-system or Weyl vertex algebra on $T^*V$. In particular, $\smash{\mathcal{D}^\mathrm{ch}_{T^*V,\hbar}}$ is a sheaf of $\hbar$-adic \voa{}s on $T^*V$ whose global sections $\smash{\mathcal{D}^\mathrm{ch}_{T^*V,\hbar}(T^*V)=\mathcal{D}^\mathrm{ch}(T^*V)_\hbar}$ are exactly the Weyl vertex algebra on $T^*V$ (see \autoref{sec:freefield}), in the $\hbar$-adic version, i.e.\ with operator product expansions
\begin{equation*}
x_i(z)y_j(w)\sim -\hbar\frac{\delta_{ij}}{(z-w)}
\end{equation*}
for $i,j=1,\dots,N$. Without the microlocalisation parameter $\hbar$, the infinite sums appearing in the operator product expansions of certain localisations, such as those involving both $x_i^{-1}$ and $y_i^{-1}$, would not be well-defined. If one only localises on $V$ rather than $T^*V$, then microlocalisation is not necessary and one arrives at the notion of the chiral de Rham complex on $V$ \cite{MSV99}.

Recall that the $\infty$-jet bundle (or arc space) $\smash{\mathcal{O}_{J_\infty T^*V}\coloneqq J_\infty\mathcal{O}_{T^*V}}$ naturally has the structure of a sheaf of vertex Poisson algebras on $T^*V$. The sheaf $\smash{\mathcal{D}^\mathrm{ch}_{T^*V,\hbar}}$ is a quantisation of $\smash{\mathcal{O}_{J_\infty T^*V}}$ in the sense that, as vertex Poisson algebras,
\begin{equation*}
\mathcal{D}^\mathrm{ch}_{T^*V,\hbar}/\hbar\mathcal{D}^\mathrm{ch}_{T^*V,\hbar}\cong\mathcal{O}_{J_\infty T^*V}.
\end{equation*}

\medskip

Similarly, let $W=(\C^M,\langle\cdot,\cdot\rangle)$ be a vector space equipped with a (typically nondegenerate) symmetric bilinear form $\langle\cdot,\cdot\rangle\colon W\times W\to\C$. We consider the corresponding sheaf $\smash{\mathcal{H}^W_\hbar}$ of $\hbar$-adic \voa{}s on $W^*$ whose global sections $\smash{\mathcal{H}^W_\hbar(W^*)=\pi^W_\hbar}$ are the $\hbar$-adic version of the Heisenberg \voa{} $\pi^W$ on $W$, which has the operator product expansions
\begin{equation*}
h(z)k(w)\sim\hbar^2\frac{\langle h,k\rangle}{(z-w)^2}
\end{equation*}
for $h,k\in W$. Again, $\smash{\mathcal{H}^W_\hbar}$ is a quantisation of the $\infty$-jet bundle $\smash{\mathcal{O}_{J_\infty W^*}}$ of the structure sheaf $\smash{\mathcal{O}_{W^*}}$ in the sense that there is a natural isomorphism of sheaves of vertex Poisson algebras
\begin{equation*}
\mathcal{H}^W_\hbar/\hbar\mathcal{H}^W_\hbar\cong\mathcal{O}_{J_\infty W^*}.
\end{equation*}

\medskip

Finally, given the odd vector space $\Pi T^*V$, we also consider the $\hbar$-adic version $\mathcal{C}\ell(\Pi T^*V)_\hbar$ of the Clifford \svoa{} or $bc$-system $\mathcal{C}\ell(\Pi T^*V)$ (see \autoref{sec:freefield}), with operator product expansions
\begin{equation*}
\psi_i(z)\phi_j(w)\sim\hbar\frac{\delta_{ij}}{(z-w)}
\end{equation*}
for $i=1,\dots,N$. Again, it also comes in the incarnation of the \emph{ghost} Clifford \svoa{} $\mathcal{C}\ell(\Pi T^*\g)_\hbar$ with
operator product expansions
\begin{equation*}
c_i(z)b_j(w)\sim\hbar\frac{\delta_{ij}}{(z-w)}
\end{equation*}
for $i=1,\dots,M$ and with a different choice of conformal structure. It is equipped with the ghost (or cohomological) grading $\mathcal{C}\ell(\Pi T^*\g)_\hbar=\bigoplus_{\bullet\in\Z}\mathcal{C}\ell^\bullet(\Pi T^*\g)_\hbar$.

We note that $\smash{\mathcal{C}\ell(\Pi T^*V)_\hbar/\hbar\mathcal{C}\ell(\Pi T^*V)_\hbar\cong\Lambda^\mathrm{vert}(\Pi T^*V)}$ as vertex Poisson algebras, where $\smash{\Lambda^\mathrm{vert}(\Pi T^*V)}$ is ``the arc space'' over the supervariety $\Pi T^*V$ with $\C[\Pi T^*V]=\Lambda(\Pi T^*V)$ the usual exterior algebra of the (odd) vector space $T^*V$. We say that $\smash{\mathcal{C}\ell(\Pi T^*V)_\hbar}$ quantises $\smash{\Lambda^\mathrm{vert}(\Pi T^*V)}$.


\subsection{Semi-Infinite BRST Reduction}\label{sec:sheafBRST}

First, even though we do not use it here, we recall the construction \cite{Kuw21} of a sheaf of $\hbar$-adic \voa{}s $\smash{\mathcal{W}^\hbar_{\delta,\Delta}}$ on a certain deformation $\smash{\tilde{Y}_\delta(\Delta)}$ of the hypertoric variety $Y_\delta(\Delta)$. Then, by modifying the \emph{anomaly cancellation} (to ensure that the BRST differential closes), we define a closely related sheaf of vertex operator \emph{super}algebras $\smash{\mathcal{V}^\hbar_{\delta,\Delta}}$ on the hypertoric variety $Y_\delta(\Delta)$ itself. While $\smash{\mathcal{W}^\hbar_{\delta,\Delta}}$ recovers the minimal hypertoric \voa{} $V_\mathrm{min}(\Delta)$ in the global sections \cite{Kuw21}, we shall show that $\smash{\mathcal{V}^\hbar_{\delta,\Delta}}$ recovers the boundary hypertoric \svoa{} $V(\Delta)$; see \autoref{sec:global} below.


\subsubsection{Over Poisson Deformation}

Recall from \autoref{sec:deformation} the universal family of Poisson deformations $\smash{\tilde{Y}_\delta(\Delta)}$ of the hypertoric variety $Y_\delta(\Delta)$. In \cite{Kuw21}, a sheaf of $\hbar$-adic \voa{}s $\smash{\mathcal{W}^\hbar_{\delta,\Delta}}$ on $\smash{\tilde{Y}_\delta(\Delta)}$ is constructed (called $\smash{\tilde{\mathcal{D}}_{X,\hbar}^{ch}}$ there) by chiralising the construction of $\smash{\tilde{Y}_\delta(\Delta)}$. We briefly recall this construction, with the slight modification that we take the \emph{relative} BRST cohomology. The last point is not essential, but it removes spurious states in positive cohomological degrees.

Consider the sheaf of $\hbar$-adic \voa{}s $\smash{\mathcal{D}^\mathrm{ch}_{T^*V,\hbar}\hatotimes\mathcal{H}^{\g^*}_\hbar}$ on $T^*V\times\g^*$. Here, in the construction of $\smash{\mathcal{H}^{\g^*}_\hbar}$ (or rather $\smash{\mathcal{H}^{T^\Delta}_\hbar}$, following notation of \autoref{sec:hypertoricsvoa}) we consider the vector space $\g^*$ equipped with the bilinear form $\Delta\Delta^t$ in the basis $\smash{\{\tau^\Delta_i\}_{i=1}^M}$, which encodes the (second order poles of the) operator-product expansion. By restriction, we obtain the sheaf
\begin{equation*}
\mathcal{M}_\mathrm{min,ss}^\hbar\coloneqq\mathcal{D}^\mathrm{ch}_{\mathfrak{X},\hbar}\hatotimes\mathcal{H}^{\g^*}_\hbar\quad\text{on}\quad\mathfrak{X}\times\g^*,
\end{equation*}
where $\mathfrak{X}=(T^*V)_\delta^\mathrm{ss}\subset T^*V$ is the semistable locus (see \autoref{sec:hypertoric}).

The chiral comoment map is the homomorphism of $\hbar$-adic vertex algebras
\begin{equation*}
\tilde\mu^*_\mathrm{ch}\colon V(\g)_\hbar\to\mathcal{M}^\hbar_\mathrm{min,ss},\quad\tilde\mu^*_\mathrm{ch}(a_i)=\sum_{j=1}^N\Delta_{ij}x_jy_j+\tau^\Delta_i
\end{equation*}
for $i=1,\dots,M$, where $V(\g)_\hbar$ is the (commutative) $\hbar$-adic universal affine \voa{} for $\smash{\g=\gl_1^M}$, equipped with the zero bilinear form.

To define the BRST complex, we tensor with the ghost Clifford \svoa{} $\mathcal{C}\ell(\Pi T^*\g)_\hbar$ to define $\smash{\tilde{\mathcal{C}}_\mathrm{min}^\hbar\coloneqq\mathcal{M}^\hbar_\mathrm{min,ss}\hatotimes\mathcal{C}\ell(\Pi T^*\g)_\hbar}$, a sheaf of $\hbar$-adic \svoa{}s on $\mathfrak{X}\times\g^*$. This sheaf has a $\Z$-grading by (ghost) degrees $\smash{\tilde{\mathcal{C}}_\mathrm{min}^\hbar=\prod_{\bullet\in\Z}\tilde{\mathcal{C}}_\mathrm{min}^{\bullet,\hbar}}$, where $\smash{\tilde{\mathcal{C}}_\mathrm{min}^{\bullet,\hbar}\coloneqq\mathcal{M}^\hbar_\mathrm{min,ss}\hatotimes\mathcal{C}\ell^\bullet(\Pi T^*\g)_\hbar}$.

The BRST differential is defined as $\smash{d\coloneqq\frac{1}{\hbar}Q_{(0)}}$, where $Q$ is the odd global section defined in equation~\eqref{eq:Q}. It has ghost degree~$1$. In contrast to \cite{Kuw21}, we consider the relative subsheaf $\smash{\mathcal{C}_\mathrm{min}^\hbar\subset\tilde{\mathcal{C}}_\mathrm{min}^\hbar}$ by taking the kernel under the zero modes of the global sections $b_i$ and $\smash{Q_{(0)}b_i=\bar\mu^*_\mathrm{ch}(a_i)}$ for all $i=1,\dots,M$. Then, for every open set $\tilde{\mathfrak{U}}\subset\mathfrak{X}\times\g^*$, $\smash{(\tilde{\mathcal{C}}_\mathrm{min}^\hbar(\tilde{\mathfrak{U}}),d)}$ is a cochain complex and $\smash{(\mathcal{C}_\mathrm{min}^\hbar(\tilde{\mathfrak{U}}),d)}$ is a subcomplex. The relative BRST cohomology is then defined by
\begin{equation*}
H_\mathrm{BRST}^{\infty/2+\bullet}(\g,\mathcal{M}_\mathrm{min,ss}^\hbar(\tilde{\mathfrak{U}}))\coloneqq H^\bullet(\mathcal{C}_\mathrm{min}^\hbar(\tilde{\mathfrak{U}}),d)
\end{equation*}
for $\tilde{\mathfrak{U}}\subset\mathfrak{X}\times\g^*$ open, a presheaf of $\hbar$-adic \voa{}s on $\mathfrak{X}\times\g^*$.

Then, one defines a cohomology sheaf on $\smash{\tilde{Y}_\delta(\Delta)}$ as follows. For an open subset $\smash{\tilde{U}\subset\tilde{Y}_\delta(\Delta)}$, let $\tilde{\mathfrak{U}}\subset\mathfrak{X}\times\g^*$ be open such that $\tilde{\mathfrak{U}}$ is closed under the $G$-action and $\smash{\tilde{p}^{-1}(\tilde{U})=\tilde{\mathfrak{U}}\cap\tilde{\mu}^{-1}(0)}$ with the projection map $\smash{\tilde{p}\colon\mathfrak{X}\times\g^*\to\tilde{Y}_\delta(\Delta)}$. Then one can show that the presheaf $\smash{\tilde{\mathfrak{U}}\mapsto H_\mathrm{BRST}^{\infty/2+\bullet}(\g,\mathcal{M}_\mathrm{min,ss}^\hbar(\tilde{\mathfrak{U}}))}$ on $\mathfrak{X}\times\g^*$ is supported on $\smash{\tilde{\mu}^{-1}(0)}$ and hence does not depend on the choice of $\tilde{\mathfrak{U}}$ for a given $\tilde{U}$. Finally, the relative cohomology sheaf
\begin{equation*}
H_\mathrm{BRST}^{\infty/2+\bullet}(\g,\mathcal{M}_\mathrm{min,ss}^\hbar)\quad\text{over}\quad\tilde{Y}_\delta(\Delta)
\end{equation*}
is defined as the sheafification of the presheaf $\smash{\tilde{U}\mapsto H_\mathrm{BRST}^{\infty/2+\bullet}(\g,\mathcal{M}_\mathrm{min,ss}^\hbar(\tilde{\mathfrak{U}}))}$ on $\smash{\tilde{Y}_\delta(\Delta)}$. The sheaf $\smash{\mathcal{W}^\hbar_{\delta,\Delta}}$ of $\hbar$-adic \voa{}s on $\smash{\tilde{Y}_\delta(\Delta)}$ is then defined as the zeroth cohomology
\begin{equation*}
\mathcal{W}^\hbar_{\delta,\Delta}\coloneqq H_\mathrm{BRST}^{\infty/2+0}(\g,\mathcal{M}_\mathrm{min,ss}^\hbar).
\end{equation*}
It is shown in Proposition~5.10 of \cite{Kuw21} that the negative cohomologies vanish. Because we are taking the relative cohomology, the positive cohomologies vanish as well.

Finally, we note that the $\hbar$-adic analogue of the conformal vector from \autoref{rem:confext} (see \cite{Kuw21} for details) descends to a global section on $\mathcal{W}^\hbar_{\delta,\Delta}$, which then becomes a sheaf of $\hbar$-adic \voa{}s of central charge $c=-N-M$ on $\smash{\tilde{Y}_\delta(\Delta)}$.

\medskip

Based on the local trivialisation and the affine open covering $\smash{\tilde{Y}_\delta(\Delta)=\bigcup_J\tilde{U}_J}$ with $\tilde{U}_J\cong T^*\C^{N-M}\times\g^*$ in \autoref{sec:localtrivial}, one can show that
\begin{equation}\label{eq:free-field}
\mathcal{W}^\hbar_{\delta,\Delta}(\tilde{U}_J)=H_\mathrm{BRST}^{\infty/2+0}(\g,\mathcal{M}_\mathrm{min,ss}^\hbar(\mathfrak{U}_J\times\g^*))\cong\mathcal{D}^\mathrm{ch}(T^*\C^{N-M})_\hbar\hatotimes\pi^\g_\hbar
\end{equation}
(again, $\smash{\pi^\g_\hbar}$ is called $\smash{\pi^{T^\Delta}_\hbar}$ in the notation of \autoref{sec:hypertoricsvoa}), which as a $\C[[\hbar]]$-module is $\smash{\mathcal{O}_{J_\infty\tilde{Y}_\delta(\Delta)}(\tilde{U}_J)[[\hbar]]}$. However, this isomorphism depends on the local trivialisation and does not induce an isomorphism of sheaves. But, we obtain an isomorphism of sheaves of vertex Poisson algebras
\begin{equation}\label{eq:minimal_quant}
\mathcal{W}^\hbar_{\delta,\Delta}/\hbar\mathcal{W}^\hbar_{\delta,\Delta}\cong\mathcal{O}_{J_\infty\tilde{Y}_\delta(\Delta)}.
\end{equation}
That is, the sheaf $\mathcal{W}^\hbar_{\delta,\Delta}$ of $\hbar$-adic \voa{}s quantises the sheaf of vertex Poisson algebras $\mathcal{O}_{J_\infty\tilde{Y}_\delta(\Delta)}$.

\medskip

The $\hbar$-adic \voa{} of global sections $\mathcal{W}^\hbar_{\delta,\Delta}(\tilde{Y}_\delta(\Delta))$ is of particular interest. It will turn out that, after setting $\hbar=1$ in a certain sense by using a conical torus action (cf.\ the notion of conical symplectic resolution; see \autoref{sec:conical} and \cite{BLPW16a,BLPW16b}), it coincides with the minimal hypertoric \voa{} $V_\mathrm{min}(\Delta)$.

There is an action of a one-dimensional torus $\mathbb{S}=\C^\times$ on $\mathfrak{X}\times\g^*$, such that the corresponding action on the structure sheaf $\smash{\mathcal{O}_{\mathfrak{X}\times\g^*}=\mathcal{O}_\mathfrak{X}\otimes_\C\mathcal{O}_{\g^*}}$ has weights
\begin{equation*}
\wt_\mathbb{S}(x_j)=\wt_\mathbb{S}(y_j)=1/2\quad\text{and}\quad\wt_\mathbb{S}(\tau_i^\Delta)=1
\end{equation*}
for $j=1,\dots,N$ and $i=1,\dots,M$, where we recall that $\{\tau_1^\Delta,\dots,\tau_M^\Delta\}$ is the standard basis of $\g$. Under this action, the Poisson bracket on $\mathcal{O}_{\mathfrak{X}\times\g^*}$ is homogeneous of weight $-1$. Since the $\mathbb{S}$-action commutes with the $G$-action, there is an induced $\mathbb{S}$-action on $\tilde{Y}_\delta(\Delta)$.

Similarly, there is an equivariant $\mathbb{S}$-action on the sheaf $\smash{\mathcal{M}_\mathrm{min,ss}^\hbar=\mathcal{D}^\mathrm{ch}_{\mathfrak{X},\hbar}\hatotimes\mathcal{H}^{\g^*}_\hbar}$ over $\C$ such that the weights of the generating fields are verbatim those above and $\wt_\mathbb{S}(\partial)=0$ and $\wt_\mathbb{S}(\hbar)=1$. The operator product expansions are homogeneous with respect to this $\mathbb{S}$-action. We extend this action to the BRST complex $\smash{\mathcal{C}_\mathrm{min}^\hbar\subset\tilde{\mathcal{C}}_\mathrm{min}^\hbar}$ by $\wt_\mathbb{S}(c_i)=0$ and $\wt_\mathbb{S}(b_i)=1$. Hence, the BRST differential $\smash{d=\frac{1}{\hbar}Q_{(0)}}$ is homogeneous of weight~$0$, so the cohomology sheaf $\smash{H_\mathrm{BRST}^{\infty/2+\bullet}(\g,\mathcal{M}_\mathrm{min,ss}^\hbar)}$ is also equipped with the induced equivariant $\mathbb{S}$-action over $\tilde{Y}_\delta(\Delta)$.

By looking at the affine open covering $\smash{\tilde{Y}_\delta(\Delta)=\bigcup_J\tilde{U}_J}$, one can show that the global sections $\smash{\mathcal{W}^\hbar_{\delta,\Delta}(\tilde{Y}_\delta(\Delta))}$ of the zeroth cohomology $\smash{\mathcal{W}^\hbar_{\delta,\Delta}=H_\mathrm{BRST}^{\infty/2+0}(\g,\mathcal{M}_\mathrm{min,ss}^\hbar)}$ are a direct product of weight spaces for $\mathbb{S}$ (cf.\ \autoref{lem:lowertruncated} below):
\begin{equation*}
\mathcal{W}^\hbar_{\delta,\Delta}(\tilde{Y}_\delta(\Delta))=\prod_{m\in(1/2)\N}\mathcal{W}^\hbar_{\delta,\Delta}(\tilde{Y}_\delta(\Delta))_m
\end{equation*}
with $\mathcal{W}^\hbar_{\delta,\Delta}(\tilde{Y}_\delta(\Delta))_0=\C\vac$, in which we consider the direct sum
\begin{equation*}
\mathcal{W}^\hbar_{\delta,\Delta}(\tilde{Y}_\delta(\Delta))_\mathrm{fin}\coloneqq\bigoplus_{m\in(1/2)\N}\mathcal{W}^\hbar_{\delta,\Delta}(\tilde{Y}_\delta(\Delta))_m.
\end{equation*}
This subspace is a $\C[\hbar]$-module since the weights are nonnegative and $\wt_\mathbb{S}(\hbar)=1$, and it is preserved by operator product expansions. Finally, we set
\begin{equation*}
\tilde{V}^\delta_\mathrm{min}(\Delta)\coloneqq\bigl[\mathcal{W}^\hbar_{\delta,\Delta}(\tilde{Y}_\delta(\Delta))\bigr]^\mathbb{S}\coloneqq\mathcal{W}^\hbar_{\delta,\Delta}(\tilde{Y}_\delta(\Delta))_\mathrm{fin}\big/(\hbar-1),
\end{equation*}
the quotient space by the ideal generated by $\hbar-1$, which is a (non-$\hbar$-adic) \voa{}.

\medskip

We can also consider the (nonlocalised) $\hbar$-adic \voa{}
\begin{equation*}
H_\mathrm{BRST}^{\infty/2+0}(\g,\mathcal{M}_\mathrm{min,ss}^\hbar(\mathfrak{X}\times\g^*))
\end{equation*}
and reduce it in the same way to a \voa{}
\begin{align*}
&\bigl[H_\mathrm{BRST}^{\infty/2+0}(\g,\mathcal{M}_\mathrm{min,ss}^\hbar(\mathfrak{X}\times\g^*))\bigr]^\mathbb{S}=\bigl[H^0(\mathcal{C}_\mathrm{min}^\hbar(\mathfrak{X}\times\g^*),d)\bigr]^\mathbb{S}\\
&=H^0(C_\mathrm{min},d)=H_\mathrm{BRST}^{\infty/2+0}(\g,M_\mathrm{min})=V_\mathrm{min}(\Delta),
\end{align*}
which gives the minimal hypertoric \voa{} constructed in \autoref{sec:hypertoricsvoa}.

Now, there is a homomorphism of $\hbar$-adic \voa{}s
\begin{equation*}
H_\mathrm{BRST}^{\infty/2+0}(\g,\mathcal{M}_\mathrm{min,ss}^\hbar(\mathfrak{X}\times\g^*))\to\mathcal{W}^\hbar_{\delta,\Delta}(\tilde{Y}_\delta(\Delta))=H_\mathrm{BRST}^{\infty/2+0}(\g,\mathcal{M}_\mathrm{min,ss}^\hbar)(\tilde{Y}_\delta(\Delta)).
\end{equation*}
This descends to a homomorphism of \voa{}s
\begin{equation}\label{eq:minimal_iso}
V_\mathrm{min}(\Delta)\to\tilde{V}^\delta_\mathrm{min}(\Delta)=\bigl[\mathcal{W}^\hbar_{\delta,\Delta}(\tilde{Y}_\delta(\Delta))\bigr]^\mathbb{S}.
\end{equation}
One of the main results of \cite{Kuw21} is that both these maps are isomorphisms. But note that the injectivity follows immediately from the simplicity of $V_\mathrm{min}(\Delta)$. In \autoref{thm:glob-sec}, we show the analogous statement for our fermionic analogue of the construction in \cite{Kuw21}.

\medskip

We end the discussion by noting, in view of \autoref{sec:assvar} below, that it was shown in \cite{Kuw21} that there is an injective homomorphism of Poisson algebras
\begin{equation*}
R_{V_\mathrm{min}(\Delta)}\hookrightarrow\mathcal{O}_{\tilde{Y}_\delta(\Delta)}(\tilde{Y}_\delta(\Delta))=\C[\tilde{Y}_\delta(\Delta)];
\end{equation*}
see \autoref{prop:toshiro-subalg}. Here, $R_{V_\mathrm{min}(\Delta)}$ denotes the $C_2$-algebra of $V_\mathrm{min}(\Delta)$, which we introduce below. The argument uses properties of the sheaf $\smash{\mathcal{W}^\hbar_{\delta,\Delta}}$ and the isomorphism~\eqref{eq:minimal_iso}.


\subsubsection{Over Supervariety}\label{sec:oursheaf}

In the following, we shall present a slightly different construction of a sheaf $\smash{\mathcal{V}^\hbar_{\delta,\Delta}}$ of $\hbar$-adic vertex operator \emph{super}algebras on the hypertoric variety $\smash{Y_\delta(\Delta)}$ itself, in analogy to \cite{AKM23}.

We define the sheaf $\smash{\mathcal{V}^\hbar_{\delta,\Delta}}$ of $\hbar$-adic \svoa{}s on $Y_\delta(\Delta)$ by chiralising the construction of the sheaf of superalgebras $\smash{\bar{\mathcal{O}}_{Y_\delta(\Delta)}}$ in \autoref{sec:super}, following the construction in \cite{AKM23}.

Consider the sheaf of $\hbar$-adic \svoa{}s $\smash{\mathcal{D}^\mathrm{ch}_{T^*V,\hbar}\hatotimes\mathcal{C}\ell(\Pi T^*V)_\hbar}$ on $T^*V$. By restriction, we obtain the sheaf
\begin{equation*}
\mathcal{M}_\mathrm{ss}^\hbar\coloneqq\mathcal{D}^\mathrm{ch}_{\mathfrak{X},\hbar}\hatotimes\mathcal{C}\ell(\Pi T^*V)_\hbar\quad\text{on}\quad\mathfrak{X}.
\end{equation*}

The chiral comoment map is the homomorphism of $\hbar$-adic vertex superalgebras
\begin{equation*}
\bar\mu^*_\mathrm{ch}\colon V(\g)_\hbar\to\mathcal{M}^\hbar_\mathrm{ss},\quad\bar\mu^*_\mathrm{ch}(a_i)=\sum_{j=1}^N\Delta_{ij}(x_jy_j+\psi_j\phi_j)
\end{equation*}
for $i=1,\dots,M$, where $\smash{V(\g)_\hbar}$ is the (commutative) $\hbar$-adic universal affine \voa{} for $\smash{\g=\gl_1^M}$, equipped with the zero bilinear form.

To define the BRST complex, we tensor with the ghost Clifford \svoa{} $\smash{\mathcal{C}\ell(\Pi T^*\g)_\hbar}$ to define $\smash{\tilde{\mathcal{C}}^\hbar\coloneqq\mathcal{M}^\hbar_\mathrm{ss}\hatotimes\mathcal{C}\ell(\Pi T^*\g)_\hbar}$, a sheaf of $\hbar$-adic \svoa{}s on $\mathfrak{X}$. This sheaf has a $\Z$-grading by (ghost) degrees $\smash{\tilde{\mathcal{C}}^\hbar=\prod_{\bullet\in\Z}\tilde{\mathcal{C}}^{\bullet,\hbar}}$, where $\smash{\tilde{\mathcal{C}}^{\bullet,\hbar}\coloneqq\mathcal{M}^\hbar_\mathrm{ss}\hatotimes\mathcal{C}\ell^\bullet(\Pi T^*\g)_\hbar}$.

The BRST differential (of ghost degree~$1$) is defined as $\smash{d\coloneqq\frac{1}{\hbar}Q_{(0)}}$, where $Q$ is the odd global section defined in equation~\eqref{eq:Q}. It is the same BRST differential as in the minimal case under the inclusion $\smash{\pi^{T^\Delta}_\hbar\subset\mathcal{C}\ell(\Pi T^*V)_\hbar}$ given by $\smash{\tau^\Delta_i=\sum_{j=1}^N\Delta_{ij}\psi_j\phi_j}$. We consider the subsheaf $\smash{\mathcal{C}^\hbar\subset\tilde{\mathcal{C}}^\hbar}$ by taking the kernel under the zero modes of the global sections $b_i$ and $\smash{Q_{(0)}b_i=\bar\mu^*_\mathrm{ch}(a_i)}$ for all $i=1,\dots,M$.

For every open set $\mathfrak{U}\subset\mathfrak{X}$, $(\tilde{\mathcal{C}}^\hbar(\mathfrak{U}),d)$ is a cochain complex and $(\mathcal{C}^\hbar(\mathfrak{U}),d)$ is a subcomplex. The relative BRST cohomology is then defined by
\begin{equation*}
H_\mathrm{BRST}^{\infty/2+\bullet}(\g,\mathcal{M}_\mathrm{ss}^\hbar(\mathfrak{U}))\coloneqq H^\bullet(\mathcal{C}^\hbar(\mathfrak{U}),d)
\end{equation*}
for $\smash{\mathfrak{U}\subset\mathfrak{X}}$ open, a presheaf of $\hbar$-adic \svoa{}s on $\mathfrak{X}$.

Then, one defines a cohomology sheaf on $Y_\delta(\Delta)$ as follows. For an open subset $U\subset Y_\delta(\Delta)$, let $\mathfrak{U}\subset\mathfrak{X}$ be open such that $\mathfrak{U}$ is closed under the $G$-action and $p^{-1}(U)=\mathfrak{U}\cap\mu^{-1}(0)$ with the projection map $p\colon\mu^{-1}(0)\cap\mathfrak{X}\to Y_\delta(\Delta)$. Hence, we obtain the presheaf $\smash{\mathfrak{U}\mapsto H_\mathrm{BRST}^{\infty/2+\bullet}(\g,\mathcal{M}_\mathrm{ss}^\hbar(\mathfrak{U}))}$ on $\mathfrak{X}$. Then, the relative cohomology sheaf
\begin{equation*}
H_\mathrm{BRST}^{\infty/2+\bullet}(\g,\mathcal{M}_\mathrm{ss}^\hbar)\quad\text{over}\quad Y_\delta(\Delta)
\end{equation*}
is defined as the sheafification of the presheaf $\smash{U\mapsto H_\mathrm{BRST}^{\infty/2+\bullet}(\g,\mathcal{M}_\mathrm{ss}^\hbar(\mathfrak{U}))}$ on $Y_\delta(\Delta)$. The sheaf $\smash{\mathcal{V}^\hbar_{\delta,\Delta}}$ of $\hbar$-adic \svoa{}s on $Y_\delta(\Delta)$ is then defined as the zeroth cohomology
\begin{equation*}
\mathcal{V}^\hbar_{\delta,\Delta}\coloneqq H_\mathrm{BRST}^{\infty/2+0}(\g,\mathcal{M}_\mathrm{ss}^\hbar).
\end{equation*}
In fact, the global sections of the cohomology sheaf $\smash{H_\mathrm{BRST}^{\infty/2+i}(\g,\mathcal{M}_\mathrm{ss}^\hbar)}$ vanish for $i\neq0$; see \autoref{rem:vanish}.

Finally, we note that the $\hbar$-adic analogue of the conformal vector from \autoref{cor:conformalvector} descends to a global section on $\smash{\mathcal{V}^\hbar_{\delta,\Delta}}$, which then becomes a sheaf of $\hbar$-adic \svoa{}s of central charge $c=-2M$ on $Y_\delta(\Delta)$.

\medskip

Based on the local trivialisation and the affine open covering $\smash{Y_\delta(\Delta)=\bigcup_JU_J}$ with $U_J\cong T^*\C^{N-M}$ in \autoref{sec:localtrivial}, it follows in analogy to \eqref{eq:free-field} from \cite{Kuw21} that $\smash{\mathcal{V}^\hbar_{\delta,\Delta}(U_J)=H_\mathrm{BRST}^{\infty/2+0}(\g,\mathcal{M}_\mathrm{ss}^\hbar(U_J))}$ contains $\mathcal{D}^\mathrm{ch}(T^*\C^{N-M})_\hbar$ generated by the same fields as the corresponding local sections in \eqref{eq:free-field}, complemented by $N$ pairs of free fermions. There is an isomorphism of sheaves of vertex Poisson superalgebras $\mathcal{V}^\hbar_{\delta,\Delta}/\hbar\mathcal{V}^\hbar_{\delta,\Delta}\cong\bar{\mathcal{O}}_{J_\infty Y_\delta(\Delta)}$. That is, the sheaf $\smash{\mathcal{V}^\hbar_{\delta,\Delta}}$ of $\hbar$-adic \svoa{}s quantises the sheaf of vertex Poisson superalgebras $\smash{\bar{\mathcal{O}}_{J_\infty\tilde{Y}_\delta(\Delta)}}$.

\medskip

We turn our attention to the global sections $\mathcal{V}^\hbar_{\delta,\Delta}(Y_\delta(\Delta))$. Ultimately, we want to show that they, after setting $\hbar=1$ in a certain sense by using a conical torus action, coincide with the boundary hypertoric \svoa{} $V(\Delta)$, in parallel to the discussion for $V_\mathrm{min}(\Delta)$. However, our proof in \autoref{sec:global} will follow a different strategy from the one employed in \cite{Kuw21}, namely it will use the faithfulness theorem from \cite{ADS26} (together with the result from \cite{Kuw21}).

There is an action of a one-dimensional torus $\mathbb{S}=\C^\times$ on $\mathfrak{X}$, such that the corresponding action on the structure sheaf $\smash{\mathcal{O}_\mathfrak{X}}$ has weights
\begin{equation*}
\wt_\mathbb{S}(x_j)=\wt_\mathbb{S}(y_j)=1/2
\end{equation*}
for $j=1,\dots,N$. Under this action, the Poisson bracket on $\mathcal{O}_\mathfrak{X}$ is homogeneous of weight $-1$. Since the $\mathbb{S}$-action commutes with the $G$-action, there is an induced $\mathbb{S}$-action on $Y_\delta(\Delta)$. We can extend this $\mathbb{S}$-action to $\bar{\mathcal{O}}_{\mathfrak{X}}\coloneqq\mathcal{O}_{\mathfrak{X}}\otimes\C[\Pi T^*V]$ via
\begin{equation*}
\wt_\mathbb{S}(\psi_j)=\wt_\mathbb{S}(\phi_j)=1/2
\end{equation*}
for $j=1,\dots,N$, which then descends to an $\mathbb{S}$-action on the sheaf $\bar{\mathcal{O}}_{Y_\delta(\Delta)}$ of  superalgebras on $Y_\delta(\Delta)$.

Following Section~5.1 in \cite{AKM23}, there is an equivariant $\mathbb{S}$-action on the sheaf $\smash{\mathcal{M}_\mathrm{ss}^\hbar=\mathcal{D}^\mathrm{ch}_{\mathfrak{X},\hbar}\hatotimes\mathcal{C}\ell(\Pi T^*V)_\hbar}$ over $\C$ such that the weights of the generating fields are verbatim those above and $\wt_\mathbb{S}(\partial)=0$ and $\wt_\mathbb{S}(\hbar)=1$. The operator product expansions are homogeneous with respect to the $\mathbb{S}$-action. We extend this action to the BRST complex $\smash{\mathcal{C}^\hbar\subset\tilde{\mathcal{C}}^\hbar}$ by setting $\wt_\mathbb{S}(c_i)=0$ and $\wt_\mathbb{S}(b_i)=1$. Hence, the BRST differential $\smash{d=\frac{1}{\hbar}Q_{(0)}}$ is homogeneous of weight~$0$, and so the cohomology sheaf $\smash{H_\mathrm{BRST}^{\infty/2+\bullet}(\g,\mathcal{M}_\mathrm{ss}^\hbar)}$ is also equipped with the induced equivariant $\mathbb{S}$-action over $Y_\delta(\Delta)$.

Now, similarly to \cite{Kuw21} (see the discussion above) and \cite{AKM23}, by using the affine open covering $\smash{Y_\delta(\Delta)=\bigcup_JU_J}$, it follows that the global sections $\smash{\mathcal{V}^\hbar_{\delta,\Delta}(Y_\delta(\Delta))}$ of the zeroth cohomology $\smash{\mathcal{V}^\hbar_{\delta,\Delta}=H_\mathrm{BRST}^{\infty/2+0}(\g,\mathcal{M}_\mathrm{ss}^\hbar)}$ are a direct product of weight spaces for $\mathbb{S}$:
\begin{equation*}
\mathcal{V}^\hbar_{\delta,\Delta}(Y_\delta(\Delta))=\prod_{m\in(1/2)\N}\mathcal{V}^\hbar_{\delta,\Delta}(Y_\delta(\Delta))_m
\end{equation*}
with $\mathcal{V}^\hbar_{\delta,\Delta}(Y_\delta(\Delta))_0=\C\vac$, in which we consider the direct sum
\begin{equation*}
\mathcal{V}^\hbar_{\delta,\Delta}(Y_\delta(\Delta))_\mathrm{fin}\coloneqq\bigoplus_{m\in(1/2)\N}\mathcal{V}^\hbar_{\delta,\Delta}(Y_\delta(\Delta))_m.
\end{equation*}
This subspace is a $\C[\hbar]$-module since the weights are nonnegative and $\wt_\mathbb{S}(\hbar)=1$, and it is preserved by operator product expansions. Finally, we set
\begin{equation*}
\tilde{V}^\delta(\Delta)\coloneqq\bigl[\mathcal{V}^\hbar_{\delta,\Delta}(Y_\delta(\Delta))\bigr]^\mathbb{S}\coloneqq\mathcal{V}^\hbar_{\delta,\Delta}(Y_\delta(\Delta))_\mathrm{fin}\big/(\hbar-1),
\end{equation*}
the quotient space by the ideal generated by $\hbar-1$, which is a \svoa{}.

\medskip

We can also consider the (nonlocalised) $\hbar$-adic \svoa{}
\begin{equation*}
H_\mathrm{BRST}^{\infty/2+0}(\g,\mathcal{M}_\mathrm{ss}^\hbar(\mathfrak{X}))
\end{equation*}
and reduce it in the same way to a \svoa{}
\begin{align*}
&\bigl[H_\mathrm{BRST}^{\infty/2+0}(\g,\mathcal{M}_\mathrm{ss}^\hbar(\mathfrak{X}))\bigr]^\mathbb{S}=\bigl[H^0(\mathcal{C}^\hbar(\mathfrak{X}),d)\bigr]^\mathbb{S}\\
&=H^0(C,d)=H_\mathrm{BRST}^{\infty/2+0}(\g,M)=V(\Delta),
\end{align*}
which gives the boundary hypertoric \svoa{} constructed in \autoref{sec:hypertoricsvoa}.

Now, there is a homomorphism of $\hbar$-adic \svoa{}s
\begin{equation*}
H_\mathrm{BRST}^{\infty/2+0}(\g,\mathcal{M}_\mathrm{ss}^\hbar(\mathfrak{X}))\to\mathcal{V}^\hbar_{\delta,\Delta}(Y_\delta(\Delta))=H_\mathrm{BRST}^{\infty/2+0}(\g,\mathcal{M}_\mathrm{ss}^\hbar)(Y_\delta(\Delta)).
\end{equation*}
This descends to a homomorphism of \svoa{}s
\begin{equation}\label{eq:boundary_iso}
V(\Delta)\hookrightarrow\tilde{V}^\delta(\Delta)=\bigl[\mathcal{V}^\hbar_{\delta,\Delta}(Y_\delta(\Delta))\bigr]^\mathbb{S}.
\end{equation}
This map must be injective because $V(\Delta)$ is simple by \autoref{prop:symplicity}. In \autoref{sec:global}, we shall show that this map is, in fact, an isomorphism of \svoa{}s.

\medskip

As one of the main results of this paper, in \autoref{sec:assvar} we determine the associated variety of the the boundary hypertoric \svoa{} $\smash{V(\Delta)\cong\tilde{V}^\delta(\Delta)}$. That is, we show that $(R_{V(\Delta)})_\mathrm{red}\cong\C[Y_0(\Delta)]$. We shall base the computation on the corresponding result for $V_\mathrm{min}(\Delta)$ established in \cite{Kuw21}. At this point, we only establish the existence of a natural map $R_{V(\Delta)}\to\C[Y_0(\Delta)]$.

For a sheaf of $\hbar$-adic \voa{}s $\mathcal{V}_\hbar$, define the presheaf of commutative Poisson $\C[[\hbar]]$-algebras $\smash{U\mapsto R_{\mathcal{V}_\hbar(U)}}$ where $\smash{R_{\mathcal{V}_\hbar(U)}=\mathcal{V}_\hbar(U)/\mathcal{V}_\hbar(U)_{(-2)}\mathcal{V}_\hbar(U)}$ is the $C_2$-algebra of $\mathcal{V}_\hbar(U)$. (We shall describe the $C_2$-algebra in more detail in \autoref{sec:assvar} below, but we note that here, in the $\hbar$-adic setting, the Poisson bracket is $\{a,b\}=\hbar^{-1}a_{(0)}b$.) We let $\smash{\mathcal{R}_{\mathcal{V}_\hbar}}$ be the sheafification of this presheaf. In other words, this is the quotient sheaf $\smash{\mathcal{R}_{\mathcal{V}_\hbar}=\mathcal{V}_\hbar/\mathcal{V}_{\hbar\,{(-2)}}\mathcal{V}_\hbar}$.

Now, analogously to the minimal case in \cite{Kuw21}, there is an isomorphism of sheaves of $\C[[\hbar]]$-algebras
\begin{equation*}
\mathcal{R}_{\mathcal{V}^\hbar_{\delta,\Delta}}\cong\bar{\mathcal{O}}_{Y_\delta(\Delta)}[[\hbar]],
\end{equation*}
which gives a map
\begin{equation*}
R_{\mathcal{V}^\hbar_{\delta,\Delta}(Y_\delta(\Delta))}\to\mathcal{R}_{\mathcal{V}^\hbar_{\delta,\Delta}}(Y_\delta(\Delta))\cong\bar{\mathcal{O}}_{Y_\delta(\Delta)}(Y_\delta(\Delta))[[\hbar]],
\end{equation*}
which then descends to
\begin{equation*}
R_{V(\Delta)}\to\bar{\mathcal{O}}_{Y_\delta(\Delta)}(Y_\delta(\Delta))
\end{equation*}
and finally to
\begin{equation*}
(R_{V(\Delta)})_\mathrm{red}\to\mathcal{O}_{Y_\delta(\Delta)}(Y_\delta(\Delta))\cong\C[Y_0(\Delta)].
\end{equation*}


\subsection{Global Sections}\label{sec:global}

Above, we established the natural map \eqref{eq:boundary_iso},
\begin{equation*}
\phi\colon V(\Delta)\hookrightarrow\tilde{V}^\delta(\Delta)=\bigl[\mathcal{V}^\hbar_{\delta,\Delta}(Y_\delta(\Delta))\bigr]^\mathbb{S}.
\end{equation*}
Since $V(\Delta)$ is simple, the map is injective. We view $\smash{\tilde{V}^\delta(\Delta)}$ as a module for $V(\Delta)$. In the following, we show that the map is an isomorphism using the faithfulness theorem in \cite{ADS26}.
\begin{thm}[Global Sections]\label{thm:glob-sec}
As \svoa{}s,
\begin{equation*}
V(\Delta)\cong\tilde{V}^\delta(\Delta).
\end{equation*}
\end{thm}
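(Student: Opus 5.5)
Since $\phi$ is already injective by \autoref{prop:symplicity}, it remains to prove surjectivity. The plan is to reduce this to a statement about principal open subsets, in the spirit of a faithfulness theorem. By \autoref{prop:specialelements} we fix $b\in\Z^{N-M}\setminus\{0\}$ with $f\coloneqq W^b$ and $g\coloneqq W^{-b}$ in $I(\mathfrak{Z})$. After replacing them by suitable powers, the principal open subsets $\mathfrak{D}_f,\mathfrak{D}_g\subset T^*V$ are then contained in the semistable locus $\mathfrak{X}$. Both $f$ and $g$ are $G$-invariant and lie in $C\cap\ker(d)$, so they define elements of $V(\Delta)$. We would like their images to behave like invertible functions after localisation.

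The argument then proceeds in three steps.

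\emph{Step 1: a square of maps.} Localising the input $M$ at $f$ and at $g$, that is, inverting the corresponding fields, gives $\hbar$-adic vertex superalgebras $\mathcal{M}^\hbar_\mathrm{ss}(\mathfrak{D}_f)$ and $\mathcal{M}^\hbar_\mathrm{ss}(\mathfrak{D}_g)$. Since $\mathfrak{D}_f,\mathfrak{D}_g\subset\mathfrak{X}$ are $G$-stable, restriction of global sections yields a commutative square
\begin{equation*}
V(\Delta)\hookrightarrow\tilde V^\delta(\Delta)\to\bigl[H^{\infty/2+0}_\mathrm{BRST}(\g,\mathcal{M}^\hbar_\mathrm{ss}(\mathfrak{D}_f))\bigr]^{\mathbb{S}}\times\bigl[H^{\infty/2+0}_\mathrm{BRST}(\g,\mathcal{M}^\hbar_\mathrm{ss}(\mathfrak{D}_g))\bigr]^{\mathbb{S}}.
\end{equation*}
The second map is injective, because $\tilde V^\delta(\Delta)$ consists of sections of a sheaf and the images of $\mathfrak{D}_f$ and $\mathfrak{D}_g$ meet every component; this follows from an $\mathbb{S}$-weight argument together with irreducibility of $Y_\delta(\Delta)$.

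\emph{Step 2: the localised cohomologies.} The BRST cohomology of a localisation should be the localisation of $V(\Delta)$ at the image of $f$ (respectively $g$), in the sense of \cite{ADS26}. To see this, apply the free-field description of \autoref{prop:vanish_main2} and \autoref{prop:vanish}: localising at a $G$-invariant monomial enlarges the lattice, and the vanishing theorem still applies. We thereby obtain $V(\Delta)_f$ and $V(\Delta)_g$, which are $V(\Delta)$-modules containing $V(\Delta)$.

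\emph{Step 3: the intersection.} It then remains to show that $V(\Delta)_f\cap V(\Delta)_g=V(\Delta)$ inside the common localisation $V(\Delta)_{fg}$. This is the vertex-algebraic analogue of $\C[Y_0]_f\cap\C[Y_0]_g=\C[Y_0]$ for the normal variety $Y_0(\Delta)$, which works here because $W^bW^{-b}$ is a product of the $x_jy_j$.

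Step 3 is where the faithfulness theorem of \cite{ADS26} enters. Its hypothesis is a lower-truncated grading: we need a conformal structure for which $V(\Delta)$ is $\R_{\ge0}$-graded and $f$ has weight $0$, so that inverting $f$ does not create negative weights. For this we use the auxiliary conformal vectors $\omega^1$ and $\omega^0$ of \autoref{rem:other-conf-str}, choosing $I$ as the set of indices $j$ with $(Eb)_j>0$. Then $f$ has weight $0$ for $\omega^1$ and $g$ has weight $0$ for $\omega^0$, and by \autoref{prop:conformalkernel} both conformal vectors descend to the cohomology. Faithfulness then says that an element of $V(\Delta)_f$ which is regular along $\mathfrak{D}_g$ already lies in $V(\Delta)$. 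Concretely, one compares the weight filtrations coming from $\omega^1$ and $\omega^0$ and uses that $f$-denominators lower the $\omega^0$-weight.

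Combining the three steps, any global section restricts into $V(\Delta)_f\cap V(\Delta)_g=V(\Delta)$, which gives surjectivity. I expect the main obstacle to be Step 3, namely verifying the lower-truncation and weight hypotheses of \cite{ADS26}. This is exactly the role of the minimality issue raised in \autoref{rem:specialelements}: one may need to replace $f$ and $g$ by powers of the minimal monomials $m_C$.
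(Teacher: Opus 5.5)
Your plan is viable. It uses the same two inputs as the paper: the identification $[\mathcal{V}^\hbar_{\delta,\Delta}(\pi^{-1}(D_f))]^{\mathbb{S}}\cong V(\Delta)_f$ (and likewise for $g$), which holds because localisation commutes with BRST reduction \cite{ADS26}, and the faithfulness theorem for $f=W^b$, $g=W^{-b}$ with $\omega^1,\omega^0$. Where you differ is in how you deploy faithfulness.

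The paper applies \autoref{prop:faithful}(2) directly to $\phi\colon V(\Delta)\to\tilde V^\delta(\Delta)$. That forces it to do two extra things. It must prove separately that $\tilde V^\delta(\Delta)$ is lower-truncated for both conformal vectors (\autoref{lem:lowertruncated}, via the charts $U_C,U_{C'}$ and the minimality condition of \autoref{rem:specialelements}). It must also identify the localised global sections $\tilde V^\delta(\Delta)_f$ with the sections over $\pi^{-1}(D_f)$.

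Your gluing approach instead uses injectivity of the restrictions $\tilde V^\delta(\Delta)\hookrightarrow V(\Delta)_f$ and $\tilde V^\delta(\Delta)\hookrightarrow V(\Delta)_g$. This comes from torsion-freeness of the sheaf, the same property the paper uses for the $U_J$, not from an $\mathbb{S}$-weight argument. Once Step~3 is reformulated as below, both of the paper's extra inputs become automatic:
\begin{itemize}
\item $\tilde V^\delta(\Delta)$ is lower-bounded for $\omega^1$ because it sits inside $V(\Delta)_f$ and $f$ has $\omega^1$-weight~$0$; similarly for $\omega^0$ via $V(\Delta)_g$.
\item $V(\Delta)_f\subseteq\tilde V^\delta(\Delta)_f\subseteq V(\Delta)_f$, by exactness of localisation.
\end{itemize}
So your closing worry is misplaced: no minimality is needed on your route. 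In any case, powers of the $m_C$ could not be used, since $m_C$ has nonzero $G$-weight and so does not define an element of $V(\Delta)$.

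Step~3 as written, however, does not make sense: there is no ``common localisation $V(\Delta)_{fg}$''. Since every $(Eb)_j\neq0$, the element $f_{(-1)}g$ has weight $\wt f+\wt g=\sum_j|(Eb)_j|=p>0$ for every $\omega^{a,b}$. So it is never a weight-zero element at which one can localise in the sense of \cite{ADS26}. Moreover, $\mathfrak{D}_{fg}$ inverts both $x_j$ and $y_j$ for every $j$, which needs genuine microlocalisation. The faithfulness theorem also does not literally say that elements of $V(\Delta)_f$ ``regular along $\mathfrak{D}_g$'' lie in $V(\Delta)$.

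The correct formulation avoids any ambient algebra. Take $P=\tilde V^\delta(\Delta)$, or the $\mathbb{S}$-invariant sections over $\pi^{-1}(D_f)\cup\pi^{-1}(D_g)$. Then $P/V(\Delta)$ is lower-truncated for both $\omega^1$ and $\omega^0$, and $(P/V(\Delta))_f=0=(P/V(\Delta))_g$. Now apply \autoref{prop:faithful}(1).

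For this you must verify all hypotheses (a)--(d) of the theorem. In particular you never check (d):
\[
(f^n)_{(np-1)}g^n=\prod_i(-1)^{np_i}(np_i)!\,\vac\neq0 .
\]
This is where the pairing of each $x_j$ in $f$ with $y_j$ in $g$ actually enters (\autoref{lem:faithful1}); your remark about $W^bW^{-b}$ only gestures at it.

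Finally, the ``enlarging the lattice'' justification in Step~2 is inaccurate. $M_f$ inverts $y_j$ for $j\notin I$, which is not an extension of the given bosonisation. The correct reason is exactness of the localisation functor of \cite{ADS26}, which is what the paper cites.
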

The rest of the section is dedicated to the proof of this theorem.

\medskip

First, we recall the following truncation condition on the $L_0$-weight grading for modules of a conformal vertex algebra $V$. A $V$-module $M$ is called \emph{lower-truncated} if for any $m\in M$ there exists an integer $k\in\Ns$ such that for any homogeneous elements $v^1,\dots,v^s$ and integers $i_1,\dots,i_s$ satisfying $\sum_{j=1}^s(-\wt(v^j)+1+i_j)>k$, it follows that $\smash{v^1_{(i_1)}\dots v^s_{(i_s)}=0}$. This is satisfied in particular when the $L_0$-grading on $M$ is bounded from below.

We then recall the notions of (lower-truncated) \emph{vertex $A$-superalgebras} and \emph{$A$-loop modules} from \cite{Bor01b} (see also \cite{ADS26}). They allow us to localise vertex superalgebras and their modules by weight-$0$ elements, in analogy to the localisation of rings and their modules.

Let $V$ be a $\smash{\Q_{\geq0}}$-graded (by $L_0$-weights) conformal vertex superalgebra. Then the even weight-zero space $\smash{R\coloneqq V_0\cap V^{\bar0}}$ forms a unital, commutative, associative $\C$-algebra under the $(-1)$-st mode. Let $A$ be a subring of $R$. Then $V$ can be viewed as an $A$-loop module and hence as vertex $A$-superalgebra. In that situation, given a multiplicative system $S\subset A$, we may define the localisation $V_S$ of $V$ by $S$, which is then a vertex $A_S$-superalgebra.

Similarly, in the above situation, if $M$ is a (lower-truncated) module over the vertex superalgebra $V$, then $M$ is naturally a (lower-truncated) $A$-loop module. If $M$ is lower-truncated, then for a multiplicative system $S\subset A$ we may again define the localisation $M_S$ of $M$ by $S$, which is a lower-truncated $A_S$-loop module.

\medskip

The main tool in this section will be the following result from \cite{ADS26}.
\begin{prop}[Faithfulness Theorem]\label{prop:faithful}
Let $V$ be a vertex (super)algebra equipped with two conformal structures determined by conformal elements $\omega^1$ and $\omega^0$. Suppose that $V$ is $\Q_{\geq0}$-graded with respect to both conformal elements. Let $f,g\in V$ be two even elements such that
\begin{enumerate}[label=(\alph*)]
\item\label{item:a} $L^1_0f=0$ and $L^0_0g=0$,
\item\label{item:b} $L^0_nf=0$ for all $n\in\Ns$,
\item\label{item:c} $L^0_0f=\lambda f$ for some $\lambda\in\Q_{>0}$, which we write as $\lambda=p/q$ with $p,q\in\Ns$ and $(p,q)=1$,
\item\label{item:d} $(f^{nq})_{(np-1)}g^{nq}\vac=c_n\vac$ for some constant $c_n\neq0$ for all sufficiently large $n\in\N$.
\end{enumerate}
In particular, we may consider the localisation $V_f$ of $V$ by $f$ (which is in $\smash{V_0\cap V^{\bar0}}$ with respect to $\omega^1$) or $V_g$ by $g$ (which is in $\smash{V_0\cap V^{\bar0}}$ with respect to $\omega^0$). Similarly, given a lower-truncated $V$-module $M$ (for both $\omega^1$ and $\omega^0$), we may consider the localisations $M_f$ and $M_g$. Then, the following assertions hold:
\begin{enumerate}
\item Let $M$ be a lower-truncated $V$-module with respect to both $\omega^1$ and $\omega^0$. Suppose that $M_f=\{0\}$ and $M_g=\{0\}$. Then already $M=\{0\}$.
\item Let $M'$ be another such lower-truncated $V$-module and let $\phi\colon M\to M'$ be a homomorphism such that the induced maps $M_f\to M'_f$ and $M_g\to M'_g$ are injective or surjective. Then $\phi$ itself is injective or surjective, respectively.
\item If $V_f$ and $V_g$ are simple, then $V$ itself is simple.
\end{enumerate}
\end{prop}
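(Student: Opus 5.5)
The plan is to prove assertion (1) first and then deduce (2) and (3) from it by standard arguments. For (2), localisation by a weight-zero even element is exact on lower-truncated loop modules, so $(\ker\phi)_f=\ker(\phi_f)$ and similarly for cokernels and for $g$. If $\phi_f$ and $\phi_g$ are injective, then $\ker\phi$ is a lower-truncated $V$-submodule whose localisations by $f$ and by $g$ vanish, hence $\ker\phi=\{0\}$ by (1). The surjective case is the same argument applied to $\operatorname{coker}\phi$. For (3), let $I\subset V$ be a nonzero proper ideal. Then $I_f\subset V_f$ and $I_g\subset V_g$ are ideals, so by simplicity each is $\{0\}$ or everything. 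If both were $\{0\}$, (1) would force $I=\{0\}$. So at least one, say $I_f$, equals $V_f$. Applying (1) to the quotient module $V/I$ then forces $V/I=\{0\}$, once we know $(V/I)_g=\{0\}$ as well; to get this I would use that $\vac$ lies in $I_f$, and that condition (d) can be used to pass from the $f$-side to the $g$-side by the same identity as in (1) below.

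For (1), fix $m\in M$. We want to show $m=0$. The key identity is condition (d). First I would check that $(f^{nq})_{(np-1)}$ has $L^0_0$-degree $nq\lambda-np=0$, which is exactly what makes (d) possible. Then
\begin{equation*}
m=c_n^{-1}\bigl((f^{nq})_{(np-1)}g^{nq}\bigr)_{(-1)}m .
\end{equation*}
Expanding the right-hand side with the Borcherds (associativity) identity gives
\begin{equation*}
\sum_{j\geq0}(-1)^j\binom{np-1}{j}\Bigl((f^{nq})_{(np-1-j)}(g^{nq})_{(j-1)}m-(-1)^{np-1}(g^{nq})_{(np-2-j)}(f^{nq})_{(j)}m\Bigr).
\end{equation*}
Lower truncation of $M$ with respect to both gradings makes this sum finite, with a bound on $j$ that depends on $m$ but not badly on $n$.

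The remaining step is to show that, for $n$ large, each surviving term vanishes. For this I would translate $M_g=\{0\}$ and $M_f=\{0\}$ into local nilpotence. With respect to $\omega^0$, $g$ has weight $0$, so $M_g=\{0\}$ means that $(g^{k})_{(-1)}$ kills any given vector for $k\gg0$. With respect to $\omega^1$, $f$ has weight $0$, so likewise $(f^{k})_{(-1)}$ kills any given vector for $k\gg0$. The \emph{main obstacle} is to upgrade this nilpotence of the $(-1)$-mode to the vanishing of all the nonnegative modes and low negative modes that actually appear in the finite sum. Here I would use condition (b), that $f$ is $L^0$-primary in the sense $L^0_nf=0$ for $n>0$, to control the modes $(f^{k})_{(j)}$ for $j\geq0$. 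Lower truncation bounds how many such modes can act nontrivially on the finite-dimensional span generated from $m$ by the relevant operators. On that span, powers of $f$ behave like a commuting family of nilpotent operators, and a pigeonhole argument on the exponents $nq$ then kills every term once $n$ is large. The analogous argument for $g$ uses the $\omega^0$-grading.

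With all terms vanishing we get $m=0$, which proves (1), and (2) and (3) follow as described in the first paragraph.
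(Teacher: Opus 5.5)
The paper contains no proof of this statement; it is quoted from \cite{ADS26}, so your proposal has to stand on its own. The architecture is right. Writing $m=c_n^{-1}(c_n\vac)_{(-1)}m$ and expanding $\bigl((f^{nq})_{(np-1)}g^{nq}\bigr)_{(-1)}m$ by the Borcherds identity is a sound skeleton for (1). You are also right that only boundedly many $j$ survive, uniformly in $n$, since $f^{nq}$ has $\omega^1$-weight $0$ and $g^{nq}$ has $\omega^0$-weight $0$. The deduction of (2) from (1) via exactness of localisation is fine.

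The gap is the step you call the main obstacle, which carries all the content and is not carried out. You need, for $n\gg0$, that $(f^{nq})_{(j)}m=0$ for all $j\geq0$ and $(g^{nq})_{(i)}m=0$ for all $i\geq-1$. Condition (b) cannot deliver this. It is a statement about $\omega^0$, whereas the modes $(f^{nq})_{(j)}$, $j\geq0$, must be controlled in the $\omega^1$-grading, where $f$ has weight $0$ and these modes are lowering operators; (b) says nothing about them. Also, $M_g=\{0\}$ does not by itself say that $(g^k)_{(-1)}$ kills $m$. What it gives directly is local nilpotence of $g_{(-1)}$, and $(g^k)_{(-1)}\neq(g_{(-1)})^k$ in general, because the normally ordered power involves the lowering modes $g_{(i)}$, $i\geq0$. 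So even the $(-1)$-mode needs an argument.

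The missing idea is that the modes of a weight-zero element commute. Since $V$ is $\Q_{\geq0}$-graded for $\omega^1$, the element $f_{(i)}f$ has weight $-i-1<0$ for $i\geq0$ and so vanishes. Hence all $f_{(a)}$ commute on $M$ and $Y_M(f^K,z)=f(z)^K$. Split $f(z)=f_{(-1)}+f_-(z)+f_+(z)$ into lowering modes ($i\geq0$) and raising modes ($i\leq-2$).

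Let $k$ be the $\omega^1$-truncation bound of $m$. Words in lowering modes of total depth $>k$ kill $m$, so $f_-(z)^bm=0$ for $b>k$, and all coefficients of the $f_-(z)^bm$ lie in a finite-dimensional space $F$. Since $M_f=\{0\}$, there is $E$ with $f_{(-1)}^EF=0$. Expand $f(z)^Km$ into terms $f_+(z)^cf_{(-1)}^af_-(z)^bm$ with $a+b+c=K$. Only terms with $a<E$ and $b\leq k$ survive, so $c\geq K-E-k+1$ and their $z$-degree is at least $K-E-2k+1$. Hence $f(z)^Km\in zM[[z]]$ for $K\geq E+2k$, i.e.\ $(f^K)_{(j)}m=0$ for all $j\geq-1$. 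The same argument in the $\omega^0$-grading handles $g$. Then every term of your expansion vanishes and $c_nm=0$.

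For (3), your detour through $(V/I)_g$ is not justified as stated, and it is also unnecessary. If $I_f=V_f$, then $f^{k}\in I$ for some $k$, so $f^{nq}\in I$ for $nq\geq k$. A submodule of $V$ is stable under $L_{-1}=\omega_{(0)}$, hence is a two-sided ideal, so (d) gives $c_n\vac\in I$ and therefore $I=V$. The case $I_g=V_g$ is the same. This leaves only $I_f=I_g=\{0\}$, which (1) rules out.
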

In this work, we shall only need to use the surjectivity in part~(2) of the theorem.

\medskip

In the following, we shall show that the faithfulness theorem is applicable to the vertex superalgebra $V(\Delta)$ and the module $\smash{\tilde{V}^\delta(\Delta)}$.

First, we define certain elements $f,g\in V(\Delta)$. The key point is that they descend via the BRST reduction from elements in $M=\mathcal{D}^\mathrm{ch}(T^*V)\otimes\mathcal{C}\ell(\Pi T^*V)$ corresponding to elements $f,g\in I(\mathfrak{Z})\subset\C[T^*V]$, where $I(\mathfrak{Z})$ is the defining ideal of the unstable locus $\mathfrak{Z}=T^*V\setminus\mathfrak{X}$.

We assert in \autoref{lem:gen-in-svoa} below, for $a\in\Z^{N-M}$, that the fields
\begin{equation*}
W^a=\prod_{\substack{j=1\\(Ea)_j>0}}^Nx_j^{(Ea)_j}\prod_{\substack{j=1\\(Ea)_j<0}}^Ny_j^{-(Ea)_j}
\end{equation*}
in $\mathcal{D}^\mathrm{ch}(T^*V)$ descend to fields (of the same name, by abuse of notation) in the BRST cohomology $V(\Delta)$.

Consider a pair of elements $f\coloneqq W^a$ and $g\coloneqq W^{-a}$, for now for some arbitrary $a\in\Z^{N-M}\setminus\{0\}$. In the following lemmata, we show that they satisfy the assumptions of the faithfulness theorem.
\begin{lem}\label{lem:faithful1}
For $a\in\Z^{N-M}\setminus\{0\}$, the elements $f=W^a$ and $g=W^{-a}$ in $V(\Delta)$ satisfy condition~\ref{item:d} in \autoref{prop:faithful}.
\end{lem}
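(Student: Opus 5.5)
The plan is to verify condition~\ref{item:d} by an explicit free-field computation in $M=\mathcal{D}^\mathrm{ch}(T^*V)\otimes\mathcal{C}\ell(\Pi T^*V)$, and then transport the identity to $V(\Delta)$. Here $f=W^a$ and $g=W^{-a}$ are represented by $d$-closed elements of $C$. This first requires fixing the auxiliary conformal structures $\omega^1,\omega^0$ of \autoref{rem:other-conf-str} compatibly with $a$. Writing $e\coloneqq Ea\in\Z^N$, I take $I\coloneqq\{j\mid e_j>0\}$. With respect to $\omega^1$, the generators $x_j$ ($j\in I$) and $y_j$ ($j\notin I$) have weight $0$, so $L^1_0f=0$. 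With respect to $\omega^0$, the roles are swapped, so $L^0_0g=0$ and $L^0_0 f=D f$ with $D\coloneqq\sum_j|e_j|\in\Ns$ (nonzero since $a\neq0$ and $E$ is injective). Hence $\lambda=D$, i.e.\ $p=D$ and $q=1$, and condition~\ref{item:d} reads $(f^n)_{(nD-1)}g^n\vac=c_n\vac$ with $c_n\neq0$.

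The first step is a weight argument showing that the left-hand side is a multiple of $\vac$. Since $f^n$ has $\omega^0$-weight $nD$ and $g^n$ has $\omega^0$-weight $0$, the element $(f^n)_{(nD-1)}g^n$ has $\omega^0$-weight $nD-(nD-1)-1+0=0$. Likewise, since $f^n$ has $\omega^1$-weight $0$ and $g^n$ has $\omega^1$-weight $nD$, it has $\omega^1$-weight $0-(nD-1)-1+nD=0$. Now $\omega^0+\omega^1$ assigns weight $1$ to every matter generator, and the ghosts do not enter since $f,g$ lie in the matter part. Moreover all weights are $\geq0$ by \autoref{rem:other-conf-str}. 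Therefore the only vectors of $M$ with vanishing weight for both structures are the scalar multiples of $\vac$. So already in $M$ we have $(f^n)_{(nD-1)}g^n=c_n\vac$.

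The second step computes $c_n$ by Wick's theorem. The fields $f^n=\prod_{j\in I}x_j^{ne_j}\prod_{j\notin I}y_j^{n|e_j|}$ and $g^n=\prod_{j\in I}y_j^{ne_j}\prod_{j\notin I}x_j^{n|e_j|}$ are normally ordered monomials in commuting free bosons, and fields with distinct indices $j$ have trivial OPE. The most singular term of $f^n(z)g^n(w)$ is the full contraction of every $x_j$ against every $y_j$, which equals
\begin{equation*}
\prod_{j=1}^N(\pm1)^{n|e_j|}\,(n|e_j|)!\;(z-w)^{-nD}\vac.
\end{equation*}
All partial contractions have lower pole order, and by the weight argument above they cannot contribute to the mode $(nD-1)$ anyway. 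Hence $c_n=\pm\prod_j(n|e_j|)!\neq0$ for all $n$.

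Finally, the fields $f^n,g^n,\vac$ all lie in $C\cap\ker(d)$, and modes of closed elements represent the corresponding modes in cohomology. So the identity descends to $V(\Delta)$, where the class of $\vac$ is nonzero because $V(\Delta)$ is of CFT-type (\autoref{prop:properties}). The main point needing care is the weight bookkeeping: that $\vac$ is the only joint weight-zero vector, and that the chosen $\omega^{a,a}$ indeed lie in $\ker(d)$, which holds by \autoref{prop:conformalkernel} since their parameters satisfy $\Delta(a-b)=0$. The Wick computation itself is routine.
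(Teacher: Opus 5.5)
Your proposal is correct and follows essentially the same route as the paper: an explicit free-field computation in $\mathcal{D}^\mathrm{ch}(T^*V)$ giving $(f^n)_{(nD-1)}g^n=\pm\prod_j(n|e_j|)!\,\vac$, followed by descent to $V(\Delta)=H^\bullet(C,d)$ using that $\vac$ is not a coboundary. Your extra weight argument with $\omega^0$ and $\omega^1$ is valid but redundant, because the Wick computation already shows that only the full contraction contributes to that mode (the paper instead factorises over the index $j$).
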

\begin{proof}
By construction, for each pair $\{x_i,y_i\}$, $x_i$ (respectively, $y_i$) appears in $W^a$ if and only if $y_i$ (respectively, $x_i$) appears in $W^{-a}$ (but some $\{x_i,y_i\}$ may not appear at all). Without loss of generality, in the following argument, we assume that $W^a$ contains only $x_i$ and $W^{-a}$ only $y_i$. Hence,
\begin{equation*}
f=W^a=\prod_{i=1}^Nx_i^{p_i}\quad\text{and}\quad g=W^{-a}=\prod_{i=1}^Ny_i^{p_i}
\end{equation*}
for some $p_i\in\N$ for all $i=1,\dots,N$ with $p\coloneqq\sum_{i=1}^N p_i>0$ (since $a\neq0$).

First, we consider the fields $f$ and $g$ in $\mathcal{D}^\mathrm{ch}(T^*V)\subset M\subset\tilde{C}$. We note that $\smash{f^n=\prod_{i=1}^Nx_i^{np_i}}$ and $\smash{g^n=\prod_{i=1}^Ny_i^{np_i}}$. Like in \autoref{lem:gen-in-svoa}, this simple formula is true because in this particular case the bracketing in the normally ordered products $f^n$ and $g^n$ does not matter. Then,
\begin{align*}
(f^n)_{(np-1)}(g^n)=\prod_{i=1}^N(x_i^{np_i})_{(np_i-1)}(y_i^{np_i})=\prod_{i=1}^N(-1)^{np_i}(np_i)!\vac,
\end{align*}
which is a nonzero multiple of the vacuum $\vac$ for all $n\in\N$. These identities hold in $\smash{\ker(d)\cap C\subset\tilde{C}}$. But then, quotienting by $\im(d)$, they must also hold for the images of the fields $f$ and $g$ in $V(\Delta)=H^\bullet(C,d)$, because the vacuum vector cannot be zero up to a coboundary. Finally note that $p\in\Ns$ is precisely the weight $\lambda$ in condition~\ref{item:c} in \autoref{prop:faithful} if one considers the auxiliary conformal structure $\omega^0$ below.
\end{proof}

\smallskip

To establish the remaining assumptions of the faithfulness theorem, we need to define two auxiliary conformal structures on $V(\Delta)$. Recall from \autoref{rem:other-conf-str}, for a given choice of subset $I\subset\{1,\dots,N\}$, the two conformal structures $\omega^1$ and $\omega^0$ on $\tilde{C}$ that descend to well-defined conformal structures on $C$ and on the relative cohomology $V(\Delta)=H^\bullet(C,d)$. As explained in \autoref{rem:other-conf-str}, $V(\Delta)$ is evidently $\Q_{\geq0}$-graded (in fact, $\N$-graded) with respect to both conformal structures.

Now, given the two elements $f=W^a$ and $g=W^{-a}$ for some $a\in\Z^{N-M}\setminus\{0\}$, we define the subset $I\coloneqq\bigl\{j\in\{1,\dots,N\}\bigm|x_j\mid W^a\bigr\}=\bigl\{j\in\{1,\dots,N\}\bigm|(Ea)_j>0\bigr\}$ of $\{1,\dots,N\}$. Then:
\begin{lem}\label{lem:faithful2}
For $a\in\Z^{N-M}\setminus\{0\}$, the elements $f=W^a$ and $g=W^{-a}$ together with the conformal vectors $\omega^1$ and $\omega^0$ of $V(\Delta)$ for the subset $I\subset\{1,\dots,N\}$ satisfy conditions \ref{item:a}, \ref{item:b} and \ref{item:c} in \autoref{prop:faithful}.
\end{lem}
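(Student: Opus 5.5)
The plan is to verify the three conditions directly on representatives in $C\cap\ker(d)\subset\tilde{C}$. The two conformal vectors $\omega^1=\omega^{a,a}$ and $\omega^0=\omega^{a',a'}$ are built from the indicator vectors of $I$ and of its complement, and they lie in $C\cap\ker(d)$ by \autoref{prop:conformalkernel}. Hence their modes $L^1_n$ and $L^0_n$ act on cohomology classes through their action on representatives. It therefore suffices to compute the action of $L^1_n$ and $L^0_n$ on the normally ordered monomials
\begin{equation*}
f=\prod_{j\in I}x_j^{(Ea)_j}\quad\text{and}\quad g=\prod_{j\in I}y_j^{(Ea)_j}
\end{equation*}
in $\mathcal{D}^\mathrm{ch}(T^*V)$. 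Here $g$ involves exactly the $y_j$ with $j\in I$, because $(E(-a))_j<0$ if and only if $(Ea)_j>0$. As in the proof of \autoref{lem:faithful1}, we may assume without loss of generality that $W^a$ contains only $x$'s and $W^{-a}$ only $y$'s. Both are even, since they have no fermions.

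For condition~\ref{item:a}, recall from \autoref{rem:other-conf-str} that with respect to $\omega^1$ we have $\wt(x_j)=0$ for $j\in I$. Since $L^1_0$ is a derivation of the normally ordered product and $f$ only involves $x_j$ with $j\in I$, this gives $L^1_0f=0$. With respect to $\omega^0$, the roles of $x$ and $y$ are swapped, so $\wt(y_j)=0$ for $j\in I$ and hence $L^0_0g=0$.

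For condition~\ref{item:c}, the same weights give $\wt(x_j)=1$ for $j\in I$ with respect to $\omega^0$. Therefore
\begin{equation*}
L^0_0f=\Bigl(\sum_{j\in I}(Ea)_j\Bigr)f=pf,
\end{equation*}
where $p>0$ because $a\neq0$ and $E$ is injective, so that $Ea\neq0$. We may also replace $a$ by $-a$ if necessary, which is harmless since the roles of $f$ and $g$ are symmetric. Then $\lambda=p\in\Ns$ and $q=1$, consistent with the remark at the end of \autoref{lem:faithful1}.

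The main step is condition~\ref{item:b}: $L^0_nf=0$ for $n\geq1$, i.e.\ $f$ is primary for $\omega^0$. The plan is to compute the OPE of the $\beta\gamma$ part of $\omega^0$, namely $\sum_j\bigl(a'_j\partial x_j\,y_j-(1-a'_j)x_j\partial y_j\bigr)$, with products of the $x_j$. The fermionic and ghost parts of $\omega^0$ act trivially on $f$. For $j\in I$ we have $a'_j=0$, so the relevant term is $-x_j\partial y_j$. The higher poles $L_n$, $n\geq1$, acting on $x_j^k$ can only arise from double contractions of $x_j\partial y_j$ with two distinct $x_j$'s. Since $x_j\partial y_j$ contains only one $y$, at most one contraction is possible. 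The single contraction then yields only a first-order pole from $x_j$ (giving $L_{-1}$) and a second-order pole from $\partial y_j$ (giving $L_0$ with weight $1$). Hence $L_n x_j^k=0$ for $n\geq1$, and by the Borcherds identity/derivation property the same holds for the product over $j\in I$. I expect the main care to be needed in verifying this cleanly with normal ordering, possibly by noting that the $x_j$ for $j\in I$ commute and generate a commutative subalgebra on which $L_n$ for $n\geq1$ acts by derivation-like formulae determined on generators, with $L_nx_j=0$ for $n\geq1$ checked directly. Finally, passing to cohomology preserves all three identities.
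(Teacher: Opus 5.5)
Your proposal is correct and is the same direct verification the paper intends: the paper just asserts that $I$, $\omega^1$ and $\omega^0$ are chosen so that (a)--(c) hold, whereas you spell out the weights and check via Wick contractions that $f$ is primary for $\omega^0$. The aside about replacing $a$ by $-a$ is unnecessary, since after your without-loss-of-generality reduction $Ea\geq 0$ componentwise and is nonzero, so $p>0$ automatically.
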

\begin{proof}
The set $I$ and the conformal elements $\omega^1$ and $\omega^0$ are defined precisely such that these properties are satisfied. (Note that in the special case considered in the proof of \autoref{lem:faithful1}, the constant $\lambda$ is given by $p\in\Ns$.)
\end{proof}

Overall, we have thus shown that for any $a\in\Z^{N-M}\setminus\{0\}$, the vertex superalgebra $V(\Delta)$ with elements $f,g,\omega^1,\omega^0\in V(\Delta)$ satisfies the conditions of the faithfulness theorem, \autoref{prop:faithful}. But it remains to establish the lower-truncation of the $V(\Delta)$-module $\smash{\tilde{V}^\delta(\Delta)}$, for which we need to specialise the choice of $a\in\Z^{N-M}\setminus\{0\}$.

\medskip

Recall the description of the unstable locus $\mathfrak{Z}=T^*V\setminus\mathfrak{X}$ in \autoref{sec:unstable}. In \autoref{prop:specialelements} we identified vectors $b\in\Z^{N-M}\setminus\{0\}$ such that $f=W^b$ and $g=W^{-b}$ are both in $I(\mathfrak{Z})\subset\C[T^*V]$, the defining ideal in of the unstable locus. In other words, the principal open sets in $T^*V$ are contained in the semistable locus, $\mathfrak{D}_f\subset\mathfrak{X}$ and $\mathfrak{D}_g\subset\mathfrak{X}$.

With the choice of $f=W^b$ and $g=W^{-b}$ and the corresponding choice of conformal structures $\omega^1$ and $\omega^0$, as described above, we can show that $\smash{\tilde{V}^\delta(\Delta)}$ is lower-truncated.

\begin{lem}\label{lem:lowertruncated}
The vertex superalgebra $\smash{\tilde{V}^\delta(\Delta)}$ has lower-bounded $L_0$-grading (and is in particular a lower-truncated $V(\Delta)$-module) with respect to both conformal structures $\omega^1$ and $\omega^0$ for the subset $\smash{I\coloneqq\bigl\{i\in\{1,\dots,N\}\bigm|x_i\mid W^b\bigr\}}$.
\end{lem}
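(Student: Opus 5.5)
The $L_0$-weights of the auxiliary structures are controlled locally on $Y_\delta(\Delta)$. On each chart of the covering $Y_\delta(\Delta)=\bigcup_J U_J$ from \autoref{sec:localtrivial}, the weights of the local generators can be bounded below. Since $\tilde{V}^\delta(\Delta)$ is built from global sections, restricting to charts will bound the weights of every global section.

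Concretely, the conformal vectors $\omega^1$ and $\omega^0$ lie in $C\cap\ker(d)$ by \autoref{prop:conformalkernel}. After $\hbar$-adic adjustment they therefore give global sections of $\mathcal{V}^\hbar_{\delta,\Delta}$, and their $L_0$-operators act on every $\mathcal{V}^\hbar_{\delta,\Delta}(U)$ compatibly with restriction. Because $Y_\delta(\Delta)$ is covered by finitely many $U_J$ and the sheaf is separated, the global sections embed into $\prod_J\mathcal{V}^\hbar_{\delta,\Delta}(U_J)$, and this embedding commutes with $L_0$ and with the $\mathbb{S}$-action. It is therefore enough to show two things: on each $U_J$, the $L_0$-weights of $\mathbb{S}$-homogeneous sections are bounded below by a constant $-c_J$; and passing to the $\mathbb{S}$-finite part and setting $\hbar=1$ keeps the bound. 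The second point should follow because $L_0$ and the $\mathbb{S}$-grading are independent gradings and $\hbar$ has $L_0$-weight $0$.

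The main obstacle is the lower bound on each chart. For $\omega^1$, the coordinates $x_i$ with $i\in I$ have weight $0$ and $y_i$ with $i\in I$ have weight $1$, while for $i\notin I$ the roles are swapped; $\omega^0$ is the reverse. On $\mathfrak{U}_J$, some of the coordinates that would have to be inverted get negative weight, namely $x_i^{-1}$ with $i\notin I$ in the $\omega^1$ structure, so the localised sections have weights unbounded below a priori. The local trivialisation identifies $\mathcal{V}^\hbar_{\delta,\Delta}(U_J)$ with $\mathcal{D}^\mathrm{ch}(T^*\C^{N-M})_\hbar$ tensored with fermions. Only the $T^*\C^{N-M}$ generators, which are polynomial, and certain $G$-invariant monomials in the inverted variables survive. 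I would first try to use that a $G$-invariant Laurent monomial has weight equal to a linear functional of its exponent vector. The relevant exponent vectors lie in the cone determined by $\mathfrak{X}$, and the key point is that $W^{\pm b}\in I(\mathfrak{Z})$ (\autoref{prop:specialelements}).

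I would argue roughly as follows. Since $W^b$ and $W^{-b}$ lie in $I(\mathfrak{Z})$, the principal opens $\mathfrak{D}_{W^b}$ and $\mathfrak{D}_{W^{-b}}$ lie in $\mathfrak{X}$. The functional $L_0^1$ is positive (linear in the exponent, measured against $b$) on the inverted directions permitted by $\delta$-semistability, up to a bounded error coming from the minimal solutions $m_C$ of \autoref{rem:Zideal}. This is exactly where \autoref{rem:specialelements} indicates one would like $d^\pm$ to be minimal. I expect to need some version of that stronger statement, or else a direct check that for the chosen $b$ every invertible monomial on every $U_J$ has $\omega^1$-weight, and likewise $\omega^0$-weight, bounded below. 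Granting this, lower-boundedness of the $L_0$-grading follows, and lower-truncation is then immediate from the remark after its definition.
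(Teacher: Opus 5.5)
\textbf{There is a genuine gap.} It lies in your reduction step: you embed the global sections into $\prod_J\mathcal{V}^\hbar_{\delta,\Delta}(U_J)$ and then aim to bound the $L_0$-weights from below on \emph{every} chart $U_J$. That intermediate claim is false for both auxiliary structures.

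\textbf{Counterexample.} Take $N=2$, $M=1$, $\Delta=(1,1)$, $E=(1,-1)^t$ and $\delta>0$, so that $Y_\delta(\Delta)=T^*\mathbb{P}^1$ and $\mathfrak{Z}=\{x_1=x_2=0\}$. Take $b=1$, so $W^b=x_1y_2$, $W^{-b}=x_2y_1$ and $I=\{1\}$. Under $\omega^1$ we have $\wt(x_1)=\wt(y_2)=0$ and $\wt(y_1)=\wt(x_2)=1$. On the chart $U_{\{2\}}=\{x_2\neq0\}$ we have $T=x_2$, so the local generator $a_1^*=x_1x_2^{-1}$ has $\omega^1$-weight $-1$. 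Its normally ordered powers are nonzero local sections of weight $-k$ for every $k$. They all have $\mathbb{S}$-weight $0$, so restricting to $\mathbb{S}$-homogeneous sections does not help. Symmetrically, $\omega^0$ is unbounded below on $U_{\{1\}}$ via $x_2x_1^{-1}$. Consequently no positivity argument from $W^{\pm b}\in I(\mathfrak{Z})$ can give a bound on all charts, and the ``direct check'' you fall back on would fail.

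\textbf{The missing idea.} One good chart per conformal structure suffices. The restriction map from global sections to a \emph{single} chart, $\mathcal{V}^\hbar_{\delta,\Delta}(Y_\delta(\Delta))\hookrightarrow\mathcal{V}^\hbar_{\delta,\Delta}(U_J)$, is already injective. So a lower bound on that one chart bounds every global section; the injection into the product does not give you this. This is how the paper argues.

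\textbf{Choosing the good chart.} The chart is read off from the minimal solutions you already pointed to. Take $J=C$ with $m_C\mid W^b$. Then $C_1\subset\{j\mid(Eb)_j>0\}$ and $C_2\subset\{j\mid(Eb)_j<0\}$. Hence each $T_i=\prod_{j\in C_1}x_j^{\lambda_{ij}}\prod_{j\in C_2}y_j^{-\lambda_{ij}}$ is built only from fields of $\omega^1$-weight $0$, so $\wt(T_i)=0$. It follows that $\wt(a_j^*)=\wt(x_j)\in\{0,1\}$ and $\wt(a_j)=\wt(y_j)\in\{0,1\}$ for $j\notin C$. The fermionic local sections do not spoil lower-boundedness, so $\mathcal{V}^\hbar_{\delta,\Delta}(U_C)$ has lower-bounded $\omega^1$-grading. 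The same argument on $U_{C'}$ with $m_{C'}\mid W^{-b}$ handles $\omega^0$.

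In the example these good charts are $U_{\{1\}}$ for $\omega^1$ and $U_{\{2\}}$ for $\omega^0$, precisely the complements of the bad charts above. The minimality assumption you flagged is used only to guarantee that such a single good chart exists for each structure.
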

\begin{proof}
Recall from the proof of \autoref{prop:specialelements} that the element $b\in\Z^{N-M}\setminus\{0\}$ was chosen such that there are monomials $m_{d^+}\mid W^b$ and $m_{d^-}\mid W^{-b}$ for some $d^+,d^-\in\Q^N$ with $\Delta d^\pm=\delta$.

Without loss of generality, we assume that these solutions are minimal in the sense of \autoref{rem:specialelements}. That is, we assume that there are subsets $C,C'\subset\{1,\dots,N\}$ of cardinality $M$ such that $\Delta_C$ and $\Delta_{C'}$ have determinant $\pm1$ and such that the monomials $m_C\mid W^b$ and $m_{C'}\mid W^{-b}$. If this stronger condition is not satisfied, the following argument still works, but instead of considering just one open set for each conformal structure $\omega^1$ and $\omega^0$, we may need to consider several ones to establish the lower-boundedness of the corresponding $L_0$-gradings.

\smallskip

We shall show that for $\omega^1$ and $\omega^0$, there are open sets $U_J$ and $U_{J'}$ from the affine open covering $\smash{Y_\delta(\Delta)=\bigcup_JU_J}$ with $U_J\cong T^*\C^{N-M}$ described in \autoref{sec:localtrivial} such that the local sections $\smash{\mathcal{V}_{\delta,\Delta}^{\hbar}(U_J)}$ and $\smash{\mathcal{V}_{\delta,\Delta}^{\hbar}(U_{J'})}$ have lower-bounded $L_0$-grading for $\omega^1$ and $\omega^0$, respectively. Then the sheaf restriction maps
\begin{equation*}
\mathcal{V}_{\delta,\Delta}^{\hbar}(Y_\delta(\Delta))\hookrightarrow\mathcal{V}_{\delta,\Delta}^{\hbar}(U_J),\quad\mathcal{V}_{\delta,\Delta}^{\hbar}(Y_\delta(\Delta))\hookrightarrow\mathcal{V}_{\delta,\Delta}^{\hbar}(U_{J'})
\end{equation*}
imply that the global sections have lower-bounded $L_0$-grading for both $\omega^1$ and $\omega^0$. The same is then true for $\smash{\tilde{V}^\delta(\Delta)=\bigl[\mathcal{V}^\hbar_{\delta,\Delta}(Y_\delta(\Delta))\bigr]^\mathbb{S}}$.

For the minimal hypertoric sheaf $\smash{\mathcal{W}^\hbar_{\delta,\Delta}}$ from \cite{Kuw21} over the deformation $\tilde{Y}_{\delta}(\Delta)$, the local sections for the corresponding affine open covering $\smash{\tilde{Y}_\delta(\Delta)=\bigcup_J\tilde{U}_J}$ with $\tilde{U}_J\cong T^*\C^{N-M}\times\g^*$ are described very explicitly. Recall that $J=\{j_1,\dots,j_M\}$ is a subset of $\{1,\dots,N\}$ such that the corresponding $(M\times M)$-minor $\Delta_J$ of $\Delta$ has determinant $\pm1$. The local sections on $\tilde{U}_J$ are generated by the $\hbar$-adic Weyl vertex algebra in the $2(N-M)$ fields $a_j^*$ and $a_j$ for $j\notin J$, tensored with $M$ $\hbar$-adic Heisenberg fields. The Heisenberg generators always have $L_0$-weight~$1$, so to study the properties of the conformal grading it suffices to consider the fields $a_j^*$ and $a_j$.

Now, for the local sections of our fermionic sheaf $\smash{\mathcal{V}_{\delta,\Delta}^{\hbar}}$, the same fields $\smash{a_j^*}$ and $\smash{a_j}$ (which only contain the fields $x_i$ and $y_i$) are evidently still local sections, but they are complemented by free fermionic fields rather than by Heisenberg fields (see \autoref{sec:localtrivial}). Regardless of the choice of conformal structure, the latter will always have a lower-bounded (and in particular lower-truncated) conformal grading, so that it again suffices to study the fields $a_j^*$ and $a_j$. We describe these fields in the following.

Recall from \cite{Kuw21} that, given $J\subset\{1,\dots,N\}$ as above, we define the $(M\times M)$-matrix $\smash{\Delta_J=(\Delta_{j_1},\dots\Delta_{j_M})}$ from the columns of $\Delta$, which has determinant $\pm1$ and is hence invertible over $\Z$. Let $\lambda^t\coloneqq\Delta_J^{-1}$. Moreover, consider the disjoint union $J=J_1\sqcup J_2$ where $\smash{J_1=\{j\in J\mid(\Delta_J^{-1}\delta)_j>0\}}$ and $\smash{J_2=\{j\in J\mid(\Delta_J^{-1}\delta)_j<0\}}$. Note that $(\Delta_J^{-1}\delta)_j$ is never zero because $\delta$ is generic. Then the above mentioned fields are of the form
\begin{equation*}
a_j^*=x_jT_1^{-\lambda_{1j}}\dots T_M^{-\lambda_{Mj}},\qquad a_j=y_jT_1^{\lambda_{1j}}\dots T_M^{\lambda_{Mj}}
\end{equation*}
for $j\in\{1,\dots,N\}\setminus J$ where
\begin{equation*}
T_i=\prod_{j\in J_1}x_j^{\lambda_{ij}}\prod_{j\in J_2}y_j^{-\lambda_{ij}}
\end{equation*}
for $i=1,\dots,M$ are invertible local sections.

\smallskip

Now, by the above assumption, there are subsets $C,C'\subset\{1,\dots,N\}$ of cardinality $M$ such that $\Delta_C$ and $\Delta_{C'}$ have determinant $\pm1$ and such that the monomials $m_C\mid W^b$ and $m_{C'}\mid W^{-b}$. These monomials were defined as
\begin{equation*}
m_C\coloneqq\prod_{j\in C_1}x_j\prod_{j\in C_2}y_j\quad\text{and}\quad m_{C'}\coloneqq\prod_{j\in C'_1}x_j\prod_{j\in C'_2}y_j
\end{equation*}
with $C=C_1\sqcup C_2$ and $C'=C'_1\sqcup C'_2$ as above. In other words, recalling the definition of $W^b$ and $W^{-b}$, the divisibility statement is equivalent to
\begin{align*}
C_1,C'_2&\subset\bigl\{j\in\{1,\dots,N\}\bigm|(Eb)_j>0\bigr\},\\
C_2,C'_1&\subset\bigl\{j\in\{1,\dots,N\}\bigm|(Eb)_j<0\bigr\}.
\end{align*}

Now, the auxiliary conformal structure $\omega^1$ is chosen such that $x_j$ has $L_0$-weight~$0$ for $(Eb)_j>0$ (in particular for $j\in C_1$) and $y_j$ has $L_0$-weight~$0$ for $(Eb)_j<0$ (in particular for $j\in C_2$); see \autoref{rem:other-conf-str}. Then, looking at the definition of the $T_i$ on the open set $U_C$ (i.e.\ for $J=C$), we see that $\wt(T_i)=0$ with respect to $\omega^1$ for all $i=1,\dots,M$. Hence,
\begin{equation*}
\wt(a_j^*)=\wt(x_j)\in\{0,1\}\quad\text{and}\quad\wt(a_j)=\wt(y_j)\in\{1,0\}
\end{equation*}
for $j\in\{1,\dots,N\}\setminus C$ with respect to $\omega^1$. This shows that on $U_C$, the local sections $\smash{\mathcal{V}_{\delta,\Delta}^{\hbar}(U_C)}$ have lower-bounded $L_0$-grading, even when accounting for the free fermions, which we omitted from this discussion.

Repeating the same argument for the conformal structure $\omega^0$, which assigns $L_0$-weight~$0$ to $x_j$ for $(Eb)_j<0$ (in particular for $j\in C'_1$) and $L_0$-weight~$0$ to $y_j$ for $(Eb)_j>0$ (in particular for $j\in C'_2$), we see that $\smash{\mathcal{V}_{\delta,\Delta}^{\hbar}(U_{C'})}$ has lower-bounded $L_0$-grading with regard to $\omega_0$.
\end{proof}
We have thus established that for $b\in\Z^{N-M}$ from \autoref{prop:specialelements}, the vertex superalgebra $V(\Delta)$ with elements $f,g,\omega^1,\omega^0\in V(\Delta)$ and the $V(\Delta)$-module $\smash{\tilde{V}^\delta(\Delta)}$ satisfy the conditions of the faithfulness theorem, \autoref{prop:faithful}.

\medskip

We are now in a position to prove \autoref{thm:glob-sec}. For the argument, it is important that there are the invariant polynomials $f,g\in\C[T^*V]^G$, so that they descend to elements of $\C[Y_0(\Delta)]$, that also lie in the ideal $I(\mathfrak{Z})\subset\C[T^*V]$ of the stable locus and moreover are compatible with our auxiliary conformal structures.

\begin{proof}[Proof of \autoref{thm:glob-sec}]
We study the map $\smash{\phi\colon V(\Delta)\hookrightarrow\tilde{V}^\delta(\Delta)}$ and view $\smash{\tilde{V}^\delta(\Delta)}$ as a module for $V(\Delta)$. As stated above, $V(\Delta)$ is a $\N$-graded \svoa{} for the conformal structures $\omega^1$ and $\omega^0$, and $\smash{\tilde{V}^\delta(\Delta)}$ is a lower-truncated $V(\Delta)$-module.

With the choices of the fields $f=W^b$ and $g=W^{-b}$ in $V(\Delta)$, all the assumptions in the faithfulness theorem, \autoref{prop:faithful} are satisfied by \autoref{lem:faithful1} and \autoref{lem:faithful2}. Hence, if we can show that the induced maps $\smash{\phi_f\colon V(\Delta)_f\hookrightarrow\tilde{V}^\delta(\Delta)_f}$ and $\smash{\phi_g\colon V(\Delta)_g\hookrightarrow\tilde{V}^\delta(\Delta)_g}$ are both surjective, then the assertion follows. (The same holds for injectivity, but we do not need to prove it as $V(\Delta)$ is simple.)

Consider the symplectic resolution $\pi\colon Y_\delta(\Delta)\to Y_0(\Delta)$. Let $Y_0(\Delta)_\mathrm{sm}$ denote the smooth locus of $Y_0(\Delta)$. The restriction $\pi\colon\pi^{-1}(Y_0(\Delta)_\mathrm{sm})\to Y_0(\Delta)_\mathrm{sm}$ is an isomorphism.

Let $f=W^b$ and $g=W^{-b}$ in $\C[T^*V]^G$ be the invariant polynomials corresponding to the above choices of fields (of the same names). They also lie in the ideal $I(\mathfrak{Z})\subset\C[T^*V]$. As mentioned in \autoref{sec:coordringgens}, we also identify them with their images in the quotient $\smash{\C[Y_0(\Delta)]=\C[T^*V]^G/(\{\sum_{j=1}^N\Delta_{ij}x_jy_j\}_{i=1}^M)}$.

For the remainder of the proof, we restrict our discussion to $f$, noting that all statements hold analogously for $g$. Viewing $f\in\C[Y_0(\Delta)]$, we consider the corresponding principal open subset $D_f\subset Y_0(\Delta)_\mathrm{sm}\subset Y_0(\Delta)$. Consider the preimage $U\coloneqq\pi^{-1}(D_f)\subset Y_\delta(\Delta)$. Then $U\cong D_f$ is an affine open subset of $Y_\delta(\Delta)$. Then, we consider an open set $\mathfrak{U}\subset\mathfrak{X}\subset T^*V$, closed under the $G$-action, such that $p^{-1}(U)=\mathfrak{U}\cap\mu^{-1}(0)$ with the projection map $p\colon\mu^{-1}(0)\cap\mathfrak{X}\to Y_\delta(\Delta)$. In fact, we can take $\mathfrak{U}=\mathfrak{D}_f\subset\mathfrak{X}$, now viewing $f\in I(\mathfrak{Z})\subset\C[T^*V]$.

The sheaf $\smash{\mathcal{V}^\hbar_{\delta,\Delta}}$ was defined as the cohomology sheaf $\smash{\mathcal{V}^\hbar_{\delta,\Delta}=H_\mathrm{BRST}^{\infty/2+0}(\g,\mathcal{M}_\mathrm{ss}^\hbar)}$ over $Y_\delta(\Delta)$, where $\smash{\mathcal{M}_\mathrm{ss}^\hbar}$ is the free-field sheaf restricted to $\mathfrak{X}$. Then, because $U\subset Y_\delta(\Delta)$ is affine, it follows that
\begin{equation*}
\mathcal{V}^\hbar_{\delta,\Delta}(U)=H_\mathrm{BRST}^{\infty/2+0}(\g,\mathcal{M}_\mathrm{ss}^\hbar)(U)=H_\mathrm{BRST}^{\infty/2+0}(\g,\mathcal{M}_\mathrm{ss}^\hbar(\mathfrak{U})).
\end{equation*}
Further, because $\mathfrak{U}=\mathfrak{D}_f\subset\mathfrak{X}$ is principal open and $\mathcal{M}^\hbar$ is a sheaf of free-field \svoa{}s on $T^*V$ in the sense of \cite{MSV99},
\begin{equation*}
\mathcal{M}_\mathrm{ss}^\hbar(\mathfrak{U})=\mathcal{M}^\hbar(\mathfrak{U})=\mathcal{M}^\hbar(T^*V)_f.
\end{equation*}
Here, we note that $f$ has weight~$0$ and $\mathcal{M}^\hbar(T^*V)$ is nonnegatively graded with respect to $\omega^1$, and the localisation by $f$ is in the sense of \cite{MSV99,Bor01}, and does not need microlocalisation, in principle. We also note that localisation by $f$ commutes with the BRST reduction, as proved in \cite{ADS26}, so that we obtain
\begin{equation*}
\mathcal{V}_{\delta,\Delta}^{\hbar}(Y_\delta(\Delta))_f=\mathcal{V}^\hbar_{\delta,\Delta}(U)=H_\mathrm{BRST}^{\infty/2+0}(\g,\mathcal{M}^\hbar(T^*V)_f)=H_\mathrm{BRST}^{\infty/2+0}(\g,\mathcal{M}^\hbar(T^*V)^\hbar)_f.
\end{equation*}
The \svoa{} on the right-hand side is simply the $\hbar$-adic version of the unlocalised vertex superalgebra $V(\Delta)=\smash{H_\mathrm{BRST}^{\infty/2+\bullet}(\g,M)}$, localised by $f$. Hence, considering the $\mathbb{S}$-invariant structure, we obtain
\begin{equation*}
\tilde{V}^\delta(\Delta)_f=[\mathcal{V}^\hbar_{\delta,\Delta}(U)]^\mathbb{S}=V(\Delta)_f.
\end{equation*}
Repeating the same argument for $g$ and $\omega^0$, the assertion follows.
\end{proof}

\begin{rem}\label{rem:vanish}
We can apply the same argument as in the proof of \autoref{thm:glob-sec} to the nonzero cohomologies $\smash{H_\mathrm{BRST}^{\infty/2+i}(\g,\mathcal{M}^\hbar)}$. Then the global sections coincide with $\smash{H_\mathrm{BRST}^{\infty/2+i}(\g,M)=\{0\}}$ for all $i\neq 0$. Hence, the vanishing theorem, \autoref{prop:vanish}, also holds for the global sections of the sheaf $\smash{H_\mathrm{BRST}^{\infty/2+i}(\g,\mathcal{M}^\hbar)}$.
\end{rem}


\section{Properties of Hypertoric \SVOA{}s}

In this section, we study the main properties of the hypertoric \svoa{}s $V(\Delta)$ constructed in \autoref{sec:hypertoricsvoa}. Most importantly, we show that the associated variety $X_{V(\Delta)}$ recovers the underlying affine hypertoric variety $Y_0(\Delta)$. The modularity of the characters will be studied in \autoref{sec:chars} below.


\subsection{Some Bosonic Fields}\label{sec:somefields}

In the following, we describe some even fields of the hypertoric \svoa{} $V(\Delta)$. We shall use them in \autoref{sec:assvar} to study the associated variety of $V(\Delta)$.

Recall the description of the coordinate ring $\C[Y_0(\Delta)]=\C[\mu^{-1}(0)]^G$. In particular, \autoref{prop:ring-gen} provides elements $J_i$ for $i=1,\dots,N-M$ and $W^a$ for $a\in\Z^{N-M}$, which generate $\C[Y_0(\Delta)]$. In fact, a finite subset of these elements suffices.

Now, by replacing the coordinates $x_j$, $y_j$ for $j=1,\dots,N$ by the fields of the same name and products with normally ordered products, we obtain in particular the following even fields in $\mathcal{D}^\mathrm{ch}(T^*V)\subset M\subset\tilde{C}$:
\begin{equation*}
J_i\coloneqq\sum_{j=1}^NE_{ji}x_jy_j,\qquad W^a\coloneqq\prod_{\substack{j=1\\(Ea)_j>0}}^Nx_j^{(Ea)_j}\prod_{\substack{j=1\\(Ea)_j<0}}^Ny_j^{-(Ea)_j}
\end{equation*}
for $i=1,\dots,N-M$ and $a\in\Z^{N-M}$, slightly abusing notation by denoting these fields by the same names as their counterparts in $\C[Y_0(\Delta)]$.

In principle, we need to care about the bracketing in the normally-ordered products appearing in the $W^a$, but since only one field from each pair $\{x_j,y_j\}$ appears in them, this bracketing does not matter here. Hence, the above expressions are well-defined. Similarly, the $J_i$ only contain two fields, so there is no bracketing to begin with.

\begin{prop}\label{lem:gen-in-svoa}
The fields $J_i$ for $i=1,\dots,N-M$ and $W^a$ for $a\in\Z^{N-M}$ in $\mathcal{D}^\mathrm{ch}(T^*V)$ also define fields in the hypertoric \svoa{} $V(\Delta)$.
\end{prop}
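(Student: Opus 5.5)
To prove \autoref{lem:gen-in-svoa}, I will show that each of the fields $J_i$ and $W^a$, viewed as an element of $\tilde{C}=M\otimes\mathcal{C}\ell(\Pi T^*\g)$ (tensored with the vacuum of the ghost factor), lies in the relative subcomplex $C$ and in $\ker(d)$, so that it defines a class in $H^0(C,d)=V(\Delta)$. Since these fields live in $\mathcal{D}^\mathrm{ch}(T^*V)\otimes\vac\otimes\vac$ and involve no ghosts, the condition $b_{i(0)}v=0$ is immediate. The remaining conditions, $(\bar\mu^*_\mathrm{ch}(a_i))_{(0)}v=0$ for the relative condition and $d v=Q_{(0)}v=\sum_i\bigl((\sigma^\Delta_i+\tau^\Delta_i)c_i\bigr)_{(0)}v=0$ for closedness, both reduce to a single statement: each field is annihilated by all modes $(\sigma^\Delta_i+\tau^\Delta_i)_{(n)}$ with $n\geq0$. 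Indeed, expanding the normally ordered product $Q=\sum_i \no(\sigma^\Delta_i+\tau^\Delta_i)c_i\no$, its zero mode acting on a ghost-free vector only produces terms of the form $c_{i(-n-1)}(\sigma^\Delta_i+\tau^\Delta_i)_{(n)}v$ with $n\geq0$, because $c_{i(m)}$ annihilates the ghost vacuum for $m\geq0$.

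The fermionic part $\tau^\Delta_i$ acts trivially, since the fields lie in the $\mathcal{D}^\mathrm{ch}$ factor with the fermionic vacuum. It therefore remains to compute the modes $\sigma_{j(n)}$ on these fields for $n\geq0$, using the OPEs $\sigma_j(z)x_k(w)\sim\delta_{jk}x_k(w)/(z-w)$ and $\sigma_j(z)y_k(w)\sim-\delta_{jk}y_k(w)/(z-w)$ (the signs are fixed by $x_i(z)y_j(w)\sim-\delta_{ij}/(z-w)$, and only the relative sign matters). The field $\sigma_j$ generates a Heisenberg algebra, and $x_j,y_j$ are primary with $\sigma_{j(0)}$-charges $\pm1$. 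Since each $W^a$ contains only one field from each pair $\{x_j,y_j\}$, it is a product of such charged fields without contractions among them. Hence $\sigma_{j(n)}W^a=0$ for $n\geq1$, while $\sigma_{j(0)}W^a=\pm(Ea)_j W^a$. Summing, we get $\sigma^\Delta_{i(0)}W^a=\pm(\Delta E a)_iW^a=0$ because $\Delta E=0$.

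For $J_i=\sum_jE_{ji}\sigma_j$, the modes $\sigma^\Delta_{k(n)}$ act through the Heisenberg OPE. The zero mode kills $J_i$, and for $n=1$ we obtain a multiple of $\langle\sigma^\Delta_k,\sigma^E_i\rangle\vac$, which is proportional to $(\Delta E)_{ki}=0$. This is the orthogonality of $S^\Delta$ and $S^E$ already noted in the paper. Higher modes vanish for weight reasons.

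The only point requiring some care is to verify that $\sigma_{j(n)}W^a=0$ for $n\geq1$, that is, that no double-contraction terms appear. This follows from the fact that $x_j(z)x_j(w)\sim0$, so the OPE of $\sigma_j$ with $x_j^{m}$ has only a simple pole. The well-definedness of the normally ordered monomials, which is independent of bracketing, was already observed in the text. With these computations in hand, the classes of $J_i$ and $W^a$ in $V(\Delta)$ are defined, and the proposition follows.
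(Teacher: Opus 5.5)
Your proposal is correct and is the ``direct computation'' the paper invokes. The paper only asserts that $J_i$ and $W^a$ lie in the relative subcomplex $C$ and are $d$-closed. You supply the details: both conditions reduce to $(\sigma^\Delta_i+\tau^\Delta_i)_{(n)}$, $n\geq0$, annihilating these ghost-free vectors; you then use $\Delta E=0$, and you handle the one delicate point correctly, namely that $W^a$ is Heisenberg-primary because no index $j$ contributes both $x_j$ and $y_j$, so only single contractions occur.
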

\begin{proof}
A direct computation shows that these fields are in the relative subcomplex $C\subseteq\tilde{C}$ and are BRST closed. Hence, they descend to even fields $J_i$ and $W^a$ in $V(\Delta)=H^\bullet(C,d)$.
\end{proof}


\subsection{Associated Variety}\label{sec:assvar}

We prove that the associated variety $X_{V(\Delta)}$ of the boundary hypertoric \svoa{} $V(\Delta)$ constructed in \autoref{sec:hypertoricsvoa} recovers the affine and singular hypertoric variety $Y_0(\Delta)$, $X_{V(\Delta)}\cong Y_0(\Delta)$. This also proves the 3d Higgs branch conjecture \cite{BF25} for a large class of boundary \svoa{}s. In other words, $V(\Delta)$ is a \emph{chiral quantisation} of the coordinate ring $\C[Y_0(\Delta)]$ of $Y_0(\delta)$. In particular, $V(\Delta)$ is quasi-lisse, in contrast to the minimal hypertoric \voa{} constructed in \cite{Kuw21}.

\medskip

We recall some relevant definitions. For any vertex superalgebra $V$, there is a decreasing filtration $V=F_0(V)\supset F_1(V)\supset\dots$, and we can consider the corresponding associated graded object $\operatorname{gr}^F(V)\coloneqq\bigoplus_{n=0}^\infty F_n(V)/F_{n+1}(V)$, which carries the structure of a vertex Poisson algebra \cite{Li05}. The lowest-degree component
\begin{equation*}
R_V\coloneqq F_0(V)/F_1(V)=V/F_1(V)=V/V_{(-2)}V
\end{equation*}
is the $C_2$-algebra \cite{Zhu96}, a Poisson superalgebra with the (super-commutative and associative) product induced from the $(-1)$-st vertex algebra product and the Poisson bracket from the $0$-th one. $R_V$ generates $\operatorname{gr}^F(V)$ as a differential algebra, i.e.\ there is a surjection $\phi\colon J_\infty R_V\twoheadrightarrow\operatorname{gr}^F(V)$ from the arc space $J_\infty R_V$ of $R_V$. Often, this induces an isomorphism $\phi_\mathrm{red}\colon J_\infty(R_V)_\mathrm{red}\cong\operatorname{gr}^F(V)_\mathrm{red}$.

Following \cite{Ara12}, one then defines the \emph{associated variety} $X_V$ of $V$ as the affine Poisson variety
\begin{equation*}
X_V\coloneqq\Specm(R_V)=\Spec((R_V)_\mathrm{red}),
\end{equation*}
where $(R_V)_\mathrm{red}=R_V/N$ is the reduced algebra obtained as the quotient of $R_V$ by its nilradical $N$. A vertex superalgebra $V$ is called \emph{chiral quantisation} of an (affine) Poisson variety $X$, if $X\cong X_V$. In other words (assuming that $\phi_\mathrm{red}$ is an isomorphism), $V$ can be viewed as a quantisation of the arc space $J_\infty X$.

If the associated Poisson variety $X_V$ is a symplectic variety (in the sense of having only finitely many symplectic leaves), then $V$ is called \emph{quasi-lisse} \cite{AK18}.

\medskip

In the context of the 3d/2d-correspondence \cite{Gai19,CG19,CCG19}, given a 3d $\mathcal{N}=4$ supersymmetric field theory $\mathcal{T}$, one can associate with it the $A$-twisted (or $H$-twisted) \emph{boundary \svoa{}} $V^A(\mathcal{T})$, which is thought to live on the boundary of the topological $A$-twist of $\mathcal{T}$. Further, one can extract from the theory $\mathcal{T}$ the \emph{Higgs branch} $\mathcal{M}_H(\mathcal{T})$, a certain symplectic variety appearing in the moduli space of vacuum expectation values of $\mathcal{T}$. Mathematically, the Higgs branch $\mathcal{M}_H(\mathcal{T})$ is defined as a holomorphic symplectic quotient \cite{HKLR87} by the gauge group $G$ (see, e.g., \cite{Kam22,WY23} for recent surveys). Often, this is a symplectic singularity in the sense of \cite{Bea00}. The (3d version of the) \emph{Higgs branch conjecture} \cite{BF25} states that in good cases, the associated variety of $V^A(\mathcal{T})$
\begin{equation}\label{eq:Higgs}
X_{V^A(\mathcal{T})}\cong\mathcal{M}_H(\mathcal{T})
\end{equation}
recovers the Higgs branch $\mathcal{M}_H(\mathcal{T})$ of $\mathcal{T}$. This also leads to the expectation that the \svoa{}s $V^A(\mathcal{T})$ are quasi-lisse.

Hence, one of the key properties to study when mathematically constructing the boundary \svoa{} $V^A(\mathcal{T})$ is its associated variety. In \autoref{thm:var}, we verify the Higgs branch conjecture for 3d $\mathcal{N}=4$ supersymmetric field theories with \emph{abelian gauge group} $G=(\C^\times)^M$, i.e.\ when $V^A(\mathcal{T})=V(\Delta)$ and $\mathcal{M}_H(\mathcal{T})=Y_0(\Delta)$. That is, we prove that $X_{V(\Delta)}\cong Y_0(\Delta)$ as Poisson varieties. In particular, $V(\Delta)$ is quasi-lisse.

We remark that a proof of the conjecture for the special case of $\Delta=(1,\dots,1)$ was already given in \cite{FS24}, where $V(\Delta)$ was identified (for $N\neq 2$) with the simple affine \svoa{} $L_1(\psl_{N|N})$. We shall return to this example in \autoref{sec:nilpotent_svoa}.

\medskip

In order to prove the Higgs branch conjecture for the boundary hypertoric \svoa{} $V(\Delta)$, we first recall the following related result for the associated variety $X_{V_\mathrm{min}(\Delta)}$ of the minimal hypertoric \voa{} $V_\mathrm{min}(\Delta)$, obtained in~\cite{Kuw21}:
\begin{prop}[\cite{Kuw21}, Proposition~9.4]\label{prop:toshiro-subalg}
There is an injective homomorphism of Poisson algebras
\begin{equation*}
R_{V_\mathrm{min}(\Delta)}\hookrightarrow\C[\tilde{Y}_{\delta}(\Delta)].
\end{equation*}
\end{prop}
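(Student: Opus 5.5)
The plan is to transport the sheaf-theoretic argument used for the fermionic sheaf back to Kuwabara's sheaf $\mathcal{W}^\hbar_{\delta,\Delta}$ on $\tilde{Y}_\delta(\Delta)$. There we have the isomorphism \eqref{eq:minimal_iso}, $V_\mathrm{min}(\Delta)\cong\tilde{V}^\delta_\mathrm{min}(\Delta)=\bigl[\mathcal{W}^\hbar_{\delta,\Delta}(\tilde{Y}_\delta(\Delta))\bigr]^\mathbb{S}$, and the quantisation statement \eqref{eq:minimal_quant}. From these I would first build a natural Poisson map
\begin{equation*}
R_{V_\mathrm{min}(\Delta)}\to\C[\tilde{Y}_\delta(\Delta)].
\end{equation*}

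\textbf{Step 1: construct the map.} The $C_2$-quotient gives a sheaf $\mathcal{R}_{\mathcal{W}^\hbar_{\delta,\Delta}}=\mathcal{W}^\hbar_{\delta,\Delta}/\mathcal{W}^\hbar_{\delta,\Delta\,(-2)}\mathcal{W}^\hbar_{\delta,\Delta}$. Using the local free-field description \eqref{eq:free-field} on each $\tilde{U}_J$, one checks that $\mathcal{R}_{\mathcal{W}^\hbar_{\delta,\Delta}}\cong\mathcal{O}_{\tilde{Y}_\delta(\Delta)}[[\hbar]]$. Locally the $C_2$-algebra of $\mathcal{D}^\mathrm{ch}(T^*\C^{N-M})_\hbar\hatotimes\pi^\g_\hbar$ is $\mathcal{O}(T^*\C^{N-M}\times\g^*)[[\hbar]]$, and these local isomorphisms glue because the $C_2$-quotient is intrinsic. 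Taking global sections gives
\begin{equation*}
R_{\mathcal{W}^\hbar_{\delta,\Delta}(\tilde{Y}_\delta(\Delta))}\to\C[\tilde{Y}_\delta(\Delta)][[\hbar]].
\end{equation*}
Passing to the finite $\mathbb{S}$-part and setting $\hbar=1$ then yields $R_{V_\mathrm{min}(\Delta)}\to\C[\tilde{Y}_\delta(\Delta)]$. This map is Poisson because the bracket $\hbar^{-1}a_{(0)}b$ is respected at every stage.

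\textbf{Step 2: injectivity.} This is the crux. An element $v\in V_\mathrm{min}(\Delta)$ lies in the kernel exactly when, in the $\hbar$-adic global sections, its local images lie in $\mathcal{W}_{(-2)}\mathcal{W}$ on each $\tilde{U}_J$. We must show that $v$ then lies in $V_\mathrm{min}(\Delta)_{(-2)}V_\mathrm{min}(\Delta)$ globally, i.e.\ that forming the $C_2$-quotient commutes with taking global sections.

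I would handle this through the filtered structure. Consider the Li filtration on the sheaf. Its associated graded is $\mathcal{O}_{J_\infty\tilde{Y}_\delta(\Delta)}$, by \eqref{eq:minimal_quant} together with the local computation. The global sections of this associated graded are $\C[J_\infty\tilde{Y}_0(\Delta)]$-like objects, computed from the affine Lawrence quotient. To make this work I need the higher cohomology of $\mathcal{O}_{J_\infty\tilde{Y}_\delta(\Delta)}$ restricted to each filtration piece to vanish. That vanishing should follow from a Grauert--Riemenschneider-type statement for the conical resolution together with the $\mathbb{S}$-grading, which makes every graded piece coherent on a space projective over an affine.

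With that vanishing in hand, the filtration on global sections has associated graded equal to the global sections of the associated graded. Comparing degree-$0$ pieces, the $C_2$-algebra of $V_\mathrm{min}(\Delta)$ embeds into $\Gamma(\tilde{Y}_\delta(\Delta),\mathcal{O})=\C[\tilde{Y}_\delta(\Delta)]$.

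\textbf{Main obstacle and fallback.} The hardest point is the cohomology vanishing on the infinite jet sheaf and the compatibility of the $\hbar$-adic completion with the $\mathbb{S}$-finite part. If that fails, my first alternative would be to use the free-field description of Proposition~\ref{prop:vanish_main1}. The strong generators of $V_\mathrm{min}(\Delta)$ are the $J_i$, the $W^a$ and the Heisenberg fields $\tau^\Delta$. I would compare the $C_2$-algebra with the invariant ring $\C[T^*V]^G\cong\C[\tilde{Y}_0(\Delta)]$ by computing graded dimensions through the character, and check that there are no extra relations beyond those already holding in $\C[T^*V]^G$.
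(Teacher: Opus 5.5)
Your Step~1 is fine. It agrees with what the paper indicates: the paper does not reprove the statement, but cites \cite{Kuw21} and notes that the argument uses $\mathcal{W}^\hbar_{\delta,\Delta}$ and \eqref{eq:minimal_iso}. You also correctly isolate the crux. Step~2, however, does not resolve it.

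Set up notation as follows.
\begin{itemize}
\item Let $\mathcal{G}_p$ be the filtration on $V=V_\mathrm{min}(\Delta)$ induced by $\mathbb{S}$-degree on the finite part $\Gamma_\mathrm{fin}$ of the global sections $\Gamma$, so that $\operatorname{gr}^{\mathcal{G}}V\cong\Gamma_\mathrm{fin}/\hbar\Gamma_\mathrm{fin}$.
\item Let $V_k$ denote the conformal weight-$k$ space.
\end{itemize}
On underived local sections the $\mathbb{S}$-degree equals the conformal weight, and $\partial$ raises only the latter. Hence $V_k=\mathcal{G}_kV_k$, and your map sends $v\in V_k$ to its degree-$k$ symbol. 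Anything in $\mathcal{G}_{k-1}V_k$ goes to $0$, since a section of $\mathbb{S}$-degree $p<k$ and weight $k$ would have to map to $\hbar^{p-k}$ times a function.

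By left exactness alone, $\Gamma/\hbar\Gamma\hookrightarrow\Gamma(\tilde Y_\delta(\Delta),\mathcal{O}_{J_\infty\tilde Y_\delta(\Delta)})$. So the kernel of $R_V\to\C[\tilde Y_\delta(\Delta)]$ is exactly $\bigoplus_k\mathcal{G}_{k-1}V_k/(V_{(-2)}V)_k$. Injectivity therefore means: every element whose $\mathbb{S}$-degree is below its conformal weight lies in $V_{(-2)}V$. This is an intrinsic comparison of Li's filtration on $V_\mathrm{min}(\Delta)$ with the $\hbar$-adic filtration. It needs structural input on $V_\mathrm{min}(\Delta)$, for instance showing that the $\mathbb{S}$-filtration is a standard filtration for suitable strong generators. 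No cohomology vanishing can supply it, because $H^1$ only controls the cokernel of $\Gamma/\hbar\Gamma\to\Gamma(\mathcal{O}_{J_\infty})$. Your sentence about the associated graded of global sections concerns the filtration $\Gamma(F^p\mathcal{W}^\hbar_{\delta,\Delta})$, and identifying its first piece with $\Gamma_{(-2)}\Gamma$ is precisely what was to be shown.

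Worse, the vanishing you want is false in general, because it would make the map surjective. Every $f\in\C[\tilde Y_\delta(\Delta)]$ would lift to $\Gamma$, and hence by \eqref{eq:minimal_iso} to $V_\mathrm{min}(\Delta)$. The paper notes right after the statement that $X_{V_\mathrm{min}(\Delta)}$ and $\tilde Y_\delta(\Delta)$ differ in general (\cite{Kuw21}, Lemma~10.10).

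Concretely, take $\Delta=(1,\dots,1)$ and $N\geq3$.
\begin{itemize}
\item $\C[\tilde Y_\delta(\Delta)]=\C[\{x_iy_j\}_{i,j}]$ has $N^2$ linearly independent functions of degree~$1$.
\item Since $V_\mathrm{min}(\Delta)\cong L_{-1}(\sl_N)$, we have $(R_{V_\mathrm{min}(\Delta)})_1=V_\mathrm{min}(\Delta)_1=\sl_N$, of dimension $N^2-1$.
\item The missing function is $\sum_jx_jy_j$, i.e.\ $\tau^\Delta$ on $\tilde\mu^{-1}(0)$.
\item $\tau^\Delta$ is closed only modulo $\hbar$: $d\tau^\Delta$ is $\hbar$ times a nonzero multiple of $\partial c$. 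This gives nonzero $\hbar$-torsion in $H^1(\tilde Y_\delta(\Delta),\mathcal{W}^\hbar_{\delta,\Delta})$.
\end{itemize}

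The same example defeats your fallback on three counts. First, $\tau^\Delta$ is not BRST-closed, so it is not a generator of $V_\mathrm{min}(\Delta)$. Second, characters give $\dim V_k$, not $\dim(R_V)_k$. Third, the identification $R_{V_\mathrm{min}(\Delta)}\cong\C[T^*V]^G$ you would be aiming for is false.
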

Here, we recall from \autoref{sec:deformation} that $\tilde{Y}_{\delta}(\Delta)$ is a universal family of Poisson deformations of $Y_{\delta}(\Delta)$ over $\g^*\cong\C^M$. We point out that because $\C[\tilde{Y}_{\delta}(\Delta)]$ is reduced, so is $R_{V_\mathrm{min}(\Delta)}$. The proposition implies that there is a dominant regular morphism
\begin{equation*}
\tilde{Y}_{\delta}(\Delta)\to X_{V_{\mathrm{min}}(\Delta)}.
\end{equation*}
However, note that these Poisson varieties are in general not the same (see \cite{Kuw21}, Lemma~10.10).

To prove the Higgs branch conjecture, \autoref{thm:var}, we first relate $R_{V(\Delta)}$ to $R_{V_\mathrm{min}(\Delta)}$ so that we can make use of \autoref{prop:toshiro-subalg}. More precisely, we shall show in \autoref{prop:vmin_surj} that in the conformal embedding $V_\mathrm{min}(\Delta)\otimes V_{J^\bot}\hookrightarrow V(\Delta)$, the images of the generators of $V_\mathrm{min}(\Delta)$ already generate the reduced $C_2$-algebra $(R_{V(\Delta)})_\mathrm{red}$, i.e.\ that there is a surjection
\begin{equation*}
R_{V_\mathrm{min}(\Delta)}=(R_{V_\mathrm{min}(\Delta)})_\mathrm{red}\twoheadrightarrow(R_{V(\Delta)})_\mathrm{red}.
\end{equation*}
This means that $X_{V(\Delta)}$ is a closed subvariety of $X_{V_\mathrm{min}(\Delta)}$.

\medskip

We show the existence of this surjection by means of a few lemmata:
\begin{lem}\label{lem:posnilp}
Let $(\h,\langle\cdot,\rangle)$ be a vector space equipped with a nondegenerate (possibly indefinite) symmetric bilinear form, and consider the corresponding Heisenberg \voa{} $\smash{\pi^\h}$. Let $h\in\h$ with $\langle h,h\rangle>0$, and suppose that $\smash{f\in\pi_h^\h}$ is in the corresponding Fock module. Then, the mode $f_{(-1)}$ of the intertwining operator $\smash{\mathcal{Y}_h(f,z)}$ of the Fock module $\smash{\pi_h^\h}$ acting on the direct sum of all Fock modules for $\smash{\pi^\h}$ is locally nilpotent, i.e.\ for each $w$ in some Fock module for $\smash{\pi^\h}$, there exists an $n\in\N$ such that $(f_{(-1)})^nw=0$.
\end{lem}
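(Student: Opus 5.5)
The plan is to use the grading by $L_0$-weight with respect to the standard conformal vector $\omega=\sum_i h_ih^i/2$ of $\pi^\h$, where $\{h^i\}$ is a dual basis. Under this grading each Fock module $\pi_\mu^\h$ has conformal weights in $\langle\mu,\mu\rangle/2+\N$. The mode $f_{(-1)}$ maps $\pi_\mu^\h$ into $\pi_{\mu+h}^\h$. If $f$ is homogeneous of weight $\wt f$, then $f_{(-1)}$ raises the $L_0$-weight by exactly $\wt f$ (the mode $f_{(n)}$ shifts weight by $\wt f-n-1$). By linearity it suffices to treat homogeneous $f$ and homogeneous $w$. For general $f$ one can write $f$ as a finite sum of homogeneous components; the modes of different components still commute up to the usual locality/commutator constraints, so the argument below applies to the total operator once the weight bound is set up for the sum. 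This is also why the explicit structure of intertwiners of Fock modules will be used.

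The key tool is the explicit form of the intertwining operator on Fock modules:
\begin{equation*}
\mathcal{Y}_h(e^h,z)=e^h z^{h_{(0)}}\exp\Bigl(-\sum_{n<0}\frac{h_{(n)}}{n}z^{-n}\Bigr)\exp\Bigl(-\sum_{n>0}\frac{h_{(n)}}{n}z^{-n}\Bigr),
\end{equation*}
up to a cocycle. General $f=p(h_{(-k)})e^h$ has an intertwiner given by normally ordered products of this vertex operator with Heisenberg fields. Acting on $w\in\pi_\mu^\h$, the power $z^{h_{(0)}}=z^{\langle h,\mu\rangle}$ appears. Applying $f_{(-1)}$ $n$ times lands in $\pi_{\mu+nh}^\h$, and the relevant coefficient is the total $z$-power obtained from the product $\mathcal{Y}(f,z_1)\cdots\mathcal{Y}(f,z_n)w$. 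This product equals $\prod_{i<j}(z_i-z_j)^{\langle h,h\rangle}\prod_i z_i^{\langle h,\mu\rangle}$ times a regular factor, i.e.\ a power series in nonnegative powers of the $z_i$ together with polynomial corrections from $f$ and $w$ of bounded degree.

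The weight count is the heart of the proof. The vector $(f_{(-1)})^nw$ is the coefficient of $\prod_i z_i^{0}$ (since $f_{(-1)}$ corresponds to $z^{-(-1)-1}=z^0$ in the convention $f(z)=\sum f_{(m)}z^{-m-1}$). Extracting this coefficient requires total $z$-degree $0$. The factor $\prod_{i<j}(z_i-z_j)^{\langle h,h\rangle}\prod_i z_i^{\langle h,\mu\rangle}$ has total degree $\langle h,h\rangle\binom{n}{2}+n\langle h,\mu\rangle$. The regular remainder contributes only nonnegative degree, except for a bounded shift $-c\,n$ coming from the negative powers in $f$ and $w$, with $c$ independent of $n$. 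Since $\langle h,h\rangle>0$, the quadratic term dominates for large $n$, so the minimal total degree exceeds $0$. The coefficient of $z^0$ therefore vanishes and $(f_{(-1)})^nw=0$. Equivalently, $\wt((f_{(-1)})^nw)=\wt w+n\wt f$ grows only linearly in $n$, whereas the lowest weight $\langle\mu+nh,\mu+nh\rangle/2$ in $\pi_{\mu+nh}^\h$ grows quadratically. This purely weight-theoretic form is the argument I would actually write, because it avoids the correlator computation entirely.

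For the weight argument I need the standard conformal vector to give a well-defined grading, with lowest weight exactly $\langle\lambda,\lambda\rangle/2$ on $\pi^\h_\lambda$. This holds regardless of the signature, provided the form is nondegenerate, so the dual basis exists. With the grading in place the proof is two lines: reduce to homogeneous $f$ and $w$, then compare the linear growth $\wt w+n\wt f$ with the quadratic growth $\tfrac{n^2}{2}\langle h,h\rangle+O(n)$. For non-homogeneous $f$, write $f=\sum_k f^k$ with $\wt f^k\le D$. Every term of $(f_{(-1)})^nw$ then lies in $\pi_{\mu+nh}^\h$ with weight at most $\wt w+nD$, so the same comparison works and no commutation is needed. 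The main obstacle I expect is only to make sure that $f_{(-1)}$ really shifts weight by $\wt f$ for intertwining operators among Fock modules with non-integral weights. This follows from the $L_{-1}$ and $L_0$ covariance of $\mathcal{Y}_h$.
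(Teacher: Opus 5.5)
Your proof is correct and is essentially the paper's argument: the $L_0$-weight $\wt w+n\wt f$ of $(f_{(-1)})^nw$ grows linearly in $n$, while the lowest weight $\langle\mu+nh,\mu+nh\rangle/2$ of $\pi^\h_{\mu+nh}$ grows quadratically because $\langle h,h\rangle>0$ (the paper writes this out only for $w=\vac$). The correlator computation is unnecessary, and your bound $\wt f^k\le D$ for non-homogeneous $f$ justifies the reduction more cleanly than the paper's bare ``without loss of generality''.
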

Note that the intertwining operator $\mathcal{Y}_h(\cdot,z)$ is unique up to a scaling by a nonzero scalar on each of the $1$-dimensional components of type $\smash{\binom{h+\alpha}{h\;\alpha}}$.
\begin{proof}
We shall only prove the assertion for $\smash{w=\vac\in\pi_0^\h=\pi^\h}$, as this is the case that we need later, but the proof easily generalises to arbitrary $w$. Without loss of generality, we may assume that $f$ is homogenous for the $L_0$-grading.

The $L_0$-weight of $\smash{(f_{(-1)})^n\vac\in\pi_{nh}^\h}$ is given by $\smash{n\wt(f)=n(m+\langle h,h\rangle/2)}$ for some $m\in\N$. On the other hand, the lowest $L_0$-weight of the Fock module $\smash{\pi_{nh}^\h}$ is $\smash{\langle nh,nh\rangle/2=n^2\langle h,h\rangle/2}$. It is clear that $\smash{n(m+\langle h,h\rangle/2)<n^2\langle h,h\rangle/2}$ for $n$ large enough, which means that $(f_{(-1)})^n\vac=0$.
\end{proof}

Recall from \autoref{sec:vanish} that the hypertoric \svoa{}
\begin{align*}
V(\Delta)&=\bigoplus_{\substack{k_1,\dots,k_{N-M}\in\Z\\l_1,\dots,l_N\in\Z}}M_{\sum_{j=1}^N\bigl(\sum_{i=1}^{N-M}E_{ji}k_i+l_j\bigr)p^\bot(\rho_j)}^{p^\bot(R)}\\
&\quad\otimes\pi_{\sum_{i=1}^{N-M}\bigl(k_i+\sum_{j=1}^N((E^tE)^{-1}E^t)_{ij}l_j\bigr)\sigma_i^E}^{S^E}\otimes\pi_{\sum_{i=1}^{N-M}\sum_{j=1}^N((E^tE)^{-1}E^t)_{ij}l_j\tau_i^E}^{T^E}
\end{align*}
is a subalgebra of the Heisenberg-lattice vertex superalgebra
{\allowdisplaybreaks
\begin{align*}
\HL_{L^\bot}^{\h^\bot}&=\bigoplus_{\substack{k_1,\dots,k_{N-M}\in\Z\\l_1,\dots,l_N\in\Z}}\pi_{\sum_{i=1}^{N-M}k_i(-\rho_i^E+\sigma_i^E)+\sum_{i=1}^Nl_i(-\rho_i+\sigma_i+\tau_i)}^{\h^\bot}\\
&=\bigoplus_{\substack{k_1,\dots,k_{N-M}\in\Z\\l_1,\dots,l_N\in\Z}}\pi_{-\sum_{j=1}^N\bigl(\sum_{i=1}^{N-M}E_{ji}k_i+l_j\bigr)p^\bot(\rho_j)}^{p^\bot(R)}\\
&\quad\otimes\pi_{\sum_{i=1}^{N-M}\bigl(k_i+\sum_{j=1}^N((E^tE)^{-1}E^t)_{ij}l_j\bigr)\sigma_i^E}^{S^E}\\
&\quad\otimes\pi_{\sum_{i=1}^{N-M}\sum_{j=1}^N((E^tE)^{-1}E^t)_{ij}l_j\tau_i^E}^{T^E}
\end{align*}
}%
under the application of certain screening kernels.
\begin{lem}\label{lem:pos}
In the above decomposition of $\smash{\HL_{L^\bot}^{\h^\bot}}$ into Fock modules for $\smash{\pi^{\h^\bot}}$, the momentum
\begin{equation*}
h\coloneqq\sum_{i=1}^{N-M}k_i(-\rho_i^E+\sigma_i^E)+\sum_{i=1}^Nl_i(-\rho_i+\sigma_i+\tau_i)
\end{equation*}
satisfies $\langle h,h\rangle>0$ if and only if $(l_1,\dots,l_N)\neq(0,\dots,0)$.
\end{lem}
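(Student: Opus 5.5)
The plan is to compute $\langle h,h\rangle$ directly in the original basis $\{\rho_i,\sigma_i,\tau_i\}_{i=1}^N$ of $\h$, using that $L^\bot\subset\h^\bot$ and the bilinear form on $\h^\bot$ is just the restriction of the form on $\h$. Expanding $-\rho_i^E+\sigma_i^E=\sum_{j}E_{ji}(-\rho_j+\sigma_j)$, we write
\begin{equation*}
h=\sum_{j=1}^N m_j(-\rho_j+\sigma_j)+\sum_{j=1}^N l_j\tau_j,\qquad m\coloneqq Ek+l\in\Z^N,
\end{equation*}
as in the alternative presentation of \autoref{rem:altrep}, where the condition reads $\Delta m=\Delta l$.

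The key step is then the norm computation. Since $\langle\rho_i,\rho_j\rangle=\delta_{ij}$ and $\langle\sigma_i,\sigma_j\rangle=-\delta_{ij}$, each vector $-\rho_j+\sigma_j$ is isotropic, and these vectors are mutually orthogonal. They are also orthogonal to all $\tau_i$. Hence the whole $K$-part of $h$ drops out, and
\begin{equation*}
\langle h,h\rangle=\Bigl\langle\sum_{j}l_j\tau_j,\sum_j l_j\tau_j\Bigr\rangle=\sum_{j=1}^N l_j^2.
\end{equation*}
This is a sum of squares of integers. It is therefore positive exactly when $(l_1,\dots,l_N)\neq 0$, and zero otherwise, which is precisely the claim of the lemma.

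I do not expect any genuine obstacle. The only point needing care is justifying that the pairing of the lattice vectors in $\h^\bot$ may be computed in $\h$. This follows because $\h^\bot$ is an honest orthogonal subspace of $\h$ (from the decomposition $\h=\h^\|\oplus\h^\bot$) and $L^\bot=L\cap\h^\bot$, rather than an abstract quotient. Equivalently, one could compute in the adapted basis $p^\bot(\rho_j),\sigma_i^E,\tau_i^E$. There the $p^\bot(R)$ and $S^E$ contributions cancel, giving $-\langle l,l\rangle+\langle p^El,p^El\rangle$ from $R\oplus S$ plus $\langle p^El,p^El\rangle$ from $T^E$, which is messier. So I would use the original basis.
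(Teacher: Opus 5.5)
Your argument is correct and is the same as the paper's, whose entire proof is the identity $\langle h,h\rangle=\sum_{i=1}^Nl_i^2$; you obtain it, as the paper implicitly does, from the isotropy and mutual orthogonality of the vectors $-\rho_j+\sigma_j$ and their orthogonality to the $\tau_j$. One small slip, in your side remark only (you do not use it): in the adapted basis the $p^\bot(R)\oplus S^E$ part contributes $\langle l,l\rangle-\langle p^El,p^El\rangle$, not its negative, and adding $\langle p^El,p^El\rangle$ from $T^E$ again gives $\sum_j l_j^2$.
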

\begin{proof}
The assertion follows from $\langle h,h\rangle=\sum_{i=1}^Nl_i^2$.
\end{proof}
\begin{lem}\label{lem:nilpotent}
Let $f\in V(\Delta)$ be in one of the direct summands of $V(\Delta)$, as presented above. If $(l_1,\dots,l_N)\neq(0,\dots,0)$, then $(f_{(-1)})^n\vac=0$ for some $n\in\N$.
\end{lem}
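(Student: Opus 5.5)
The idea is to transport the question into the ambient Heisenberg-lattice vertex superalgebra $\HL_{L^\bot}^{\h^\bot}$ and then apply \autoref{lem:posnilp} together with \autoref{lem:pos}. By \autoref{prop:vanish_main2} and \autoref{prop:commute1}, $V(\Delta)$ is the vertex subalgebra of $\HL_{L^\bot}^{\h^\bot}$ cut out by the screening kernels $\ker(e^{p^\bot(\rho_j)}_{(0)})$. The vertex operators of $V(\Delta)$ are therefore the restrictions of those of $\HL_{L^\bot}^{\h^\bot}$. The summand of $V(\Delta)$ labelled by $(k_1,\dots,k_{N-M},l_1,\dots,l_N)$ sits inside the Fock module $\pi_h^{\h^\bot}$ with momentum
\begin{equation*}
h=\sum_{i=1}^{N-M}k_i(-\rho_i^E+\sigma_i^E)+\sum_{i=1}^Nl_i(-\rho_i+\sigma_i+\tau_i).
\end{equation*}
So a given $f$ in that summand is an element of $\pi_h^{\h^\bot}$. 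Moreover, $f_{(-1)}$ computed in $V(\Delta)$ agrees with the $(-1)$-mode of the vertex operator of $\HL_{L^\bot}^{\h^\bot}$. On each Fock component, that mode is a nonzero multiple of the intertwining operator $\mathcal{Y}_h(f,z)$ of \autoref{lem:posnilp}. The multiple comes from the lattice cocycle and does not affect vanishing.

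The steps are as follows. First, justify this compatibility: the vertex algebra structure on $\HL_{L^\bot}^{\h^\bot}$ is built from Fock-module intertwining operators, up to the cocycle signs $\eps$, and the embedding $V(\Delta)\hookrightarrow\HL_{L^\bot}^{\h^\bot}$ is a vertex superalgebra homomorphism. Second, apply \autoref{lem:pos}. Since $(l_1,\dots,l_N)\neq0$, we get $\langle h,h\rangle=\sum_i l_i^2>0$. Third, invoke \autoref{lem:posnilp} with $w=\vac\in\pi_0^{\h^\bot}$. If $f$ is not $L_0$-homogeneous, decompose it into finitely many homogeneous components, all lying in the same $\pi_h^{\h^\bot}$.

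The weight argument from the proof of \autoref{lem:posnilp} then applies directly, and in fact it handles the inhomogeneous case too. The element $(f_{(-1)})^n\vac$ lies in $\pi_{nh}^{\h^\bot}$. Its $L_0$-weights are bounded by $n\max\wt(f)$, which grows linearly in $n$, whereas the lowest weight of $\pi_{nh}^{\h^\bot}$ is $n^2\langle h,h\rangle/2$, which grows quadratically. Hence $(f_{(-1)})^n\vac=0$ for $n$ large, and this vanishing holds in $V(\Delta)$ because $V(\Delta)\subset\HL_{L^\bot}^{\h^\bot}$.

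The main obstacle is the first step. One must make sure the ambient form on $\h^\bot$ restricted to the relevant momenta is the one used in \autoref{lem:pos}, and must verify the cross terms explicitly. The vectors $-\rho_i^E+\sigma_i^E$ are isotropic and orthogonal to each $-\rho_j+\sigma_j+\tau_j$, because $\langle-\rho_j+\sigma_j,-\rho_i^E+\sigma_i^E\rangle=0$ and $\tau_j$ is orthogonal to $R\oplus S$. One must also confirm that the lowest conformal weight of $\pi_\lambda^{\h^\bot}$ is $\langle\lambda,\lambda\rangle/2$ for the natural conformal vector. This is not automatic, since the natural conformal vector on $V(\Delta)$ might differ from the standard Heisenberg one by a derivative shift term. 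If there is such a shift, the lowest weight picks up a term linear in $n$, and the quadratic growth $n^2\langle h,h\rangle/2$ still dominates.
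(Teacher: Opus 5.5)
Your proposal is correct and is essentially the paper's argument. Like the paper, you compute in the ambient Heisenberg-lattice superalgebra $\HL_{L^\bot}^{\h^\bot}$, use \autoref{lem:pos} to get $\langle h,h\rangle=\sum_i l_i^2>0$, apply \autoref{lem:posnilp} with $w=\vac$, and then restrict to the subalgebra $V(\Delta)$. Your extra care about inhomogeneous $f$ and about a derivative shift in the conformal vector is sound; the shift genuinely occurs, e.g.\ $c=-2$ for $\mathsf{SF}$ versus $c=1$ for $V_\Z$ when $N=M=1$. This care only makes explicit what the paper leaves implicit, since nilpotency of $f_{(-1)}$ does not depend on which grading is used.
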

\begin{proof}
Combining \autoref{lem:posnilp} and \autoref{lem:pos}, it follows that a vector $\smash{f\in\HL_{L^\bot}^{\h^\bot}}$ in one of the direct summands satisfies $(f_{(-1)})^n\vac=0$ for some $n\in\N$ provided that $(l_1,\dots,l_N)\neq(0,\dots,0)$. Then, the same statement is true in the subalgebra $\smash{V(\Delta)\subset\HL_{L^\bot}^{\h^\bot}}$.
\end{proof}
So, we have shown that any element $f\in V(\Delta)$ that does not satisfy $(f_{(-1)})^n\vac=0$ for some $n\in\N$ must contain a nonzero contribution from
\begin{equation*}
V_\mathrm{min}(\Delta)\otimes\pi^{T^E}=\bigoplus_{k_1,\dots,k_{N-M}\in\Z}M_{\sum_{j=1}^N\sum_{i=1}^{N-M}E_{ji}k_ip^\bot(\rho_j)}^{p^\bot(R)}
\otimes\pi_{\sum_{i=1}^{N-M}k_i\sigma_i^E}^{S^E}\otimes\pi_0^{T^E},
\end{equation*}
which is the result of restricting the above sum in $V(\Delta)$ to $l_1=\dots=l_n=0$. Finally, this implies:
\begin{prop}\label{prop:vmin_surj}
The reduced $C_2$-algebra $(R_{V(\Delta)})_\mathrm{red}$ is generated by the image of $V_\mathrm{min}(\Delta)$, i.e.\ the induced map $(R_{V_\mathrm{min}(\Delta)})_\mathrm{red}\twoheadrightarrow(R_{V(\Delta)})_\mathrm{red}$ is surjective.
\end{prop}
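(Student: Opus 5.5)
The plan is to combine the nilpotency result \autoref{lem:nilpotent} with the simple-current decomposition of $V(\Delta)$ over $V_\mathrm{min}(\Delta)\otimes\pi^{T^E}$ from \autoref{rem:ext}, and then to dispose of the Heisenberg factor $\pi^{T^E}$ by a Sugawara-type argument.

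First, I will use that $V(\Delta)$ is strongly generated by finitely many homogeneous vectors, each lying in a single direct summand of the decomposition in \autoref{prop:vanish_main2}. Such generators exist since the lattice $L^\bot$ is finitely generated and the singlet--Heisenberg building blocks are strongly finitely generated. Their images generate $R_{V(\Delta)}$ as a commutative algebra. By \autoref{lem:nilpotent}, any generator with $(l_1,\dots,l_N)\neq 0$ satisfies $(f_{(-1)})^n\vac=0$. Hence its image in $R_{V(\Delta)}$ is nilpotent and vanishes in $(R_{V(\Delta)})_\mathrm{red}$. The reduced $C_2$-algebra is therefore generated by images of vectors in the $l=0$ part, which is $V_\mathrm{min}(\Delta)\otimes\pi^{T^E}$.

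It then remains to show that the images of the Heisenberg generators $\tau_i^E\in\pi^{T^E}$ are nilpotent in $(R_{V(\Delta)})_\mathrm{red}$. Note that $\tau_i^E$ itself lies in the $l=0$ sector, so \autoref{lem:nilpotent} does not apply to it. Here I intend to use the Sugawara form of \autoref{prop:confvect}. Modulo $C_2$, the conformal vector $\omega$ is expressed through the products $(x_i\phi_i)(y_i\psi_i)$ and $(\sigma_i^E+\tau_i^E)\tau_E^i$. The fields $x_i\phi_i$ and $y_i\psi_i$ carry nonzero $l$ (a single $\tau_i$ momentum), so they are nilpotent in $R_{V(\Delta)}$. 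At the same time, the image of $\omega$ in the $C_2$-algebra of a positively graded, $\frac12\N$-graded vertex superalgebra is nilpotent, since $L_{-2}\vac\in C_2$ up to derivative terms; more precisely, $\omega=L_{-2}\vac=\omega_{(-1)}\vac$ lies in $V_{(-2)}V$ because $\vac_{(-2)}$ acts as $\partial$ and $\omega$ equals $\frac12\partial$ of nothing. I expect to settle this point by the standard argument that $\omega$'s image is nilpotent (cf.\ \autoref{lem:nilpotent-stress}).

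The decomposition $\omega=\omega_\mathrm{min}+\omega_{T^E}$ then gives a quadratic relation in the $\tau_i^E$. I will rely on the fact that $\omega_{T^E}=\sum\tau_i^E\tau^i_E/2$ is a positive-definite quadratic form in $\tau^E$, combined with the relations coming from $\omega_\mathrm{min}$. Using a grading by the $\pi^{T^E}$-momentum, which is preserved by the $C_2$-algebra structure, I will aim to deduce that $\sum_i\tau_i^E\tau_E^i$ is nilpotent, and then that each $\tau_i^E$ is nilpotent by a reducedness and positivity argument. A simpler alternative I would try first is to note that $(\sigma_i^E+\tau_i^E)\in V_\mathrm{min}$-type generators and that $\tau_i^E$ is expressible via $W$-type fields modulo nilpotents.

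Once these steps are in place, every generator of $(R_{V(\Delta)})_\mathrm{red}$ lies in the image of $R_{V_\mathrm{min}(\Delta)}$, which gives the asserted surjection. I expect the main obstacle to be exactly the elimination of the $\pi^{T^E}$ contributions, because \autoref{lem:nilpotent} says nothing about the $l=0$ sector.
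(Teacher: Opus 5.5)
Your first step matches the paper's. By \autoref{lem:nilpotent}, every single-summand component with $(l_1,\dots,l_N)\neq0$ is nilpotent in $R_{V(\Delta)}$, so $(R_{V(\Delta)})_\mathrm{red}$ is generated by the image of $V_\mathrm{min}(\Delta)\otimes\pi^{T^E}$. You do not need finite strong generation for this: any spanning set of single-summand vectors works.

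The genuine gap is in the step you flag yourself, the elimination of $\pi^{T^E}$. The Sugawara route you propose does not work, for three reasons.
\begin{enumerate}
\item Your claim that the image of $\omega$ in $R_V$ is nilpotent for every positively graded vertex superalgebra is false. For the Heisenberg \voa{} one has $R_V=\C[h]$ and $\omega\mapsto h^2/2$. Your justification is also wrong, since $a_{(-2)}\vac=\partial a$ only gives $\partial\omega\in C_2(V)$.
\item Nilpotency of $\omega$ is a special feature of $V(\Delta)$. In \autoref{lem:nilpotent-stress} it is \emph{deduced from} the nilpotency of the $\tau_i^E$, which is exactly what you are trying to prove, so routing through $\omega$ is circular.
\item Even granting that $\omega$, and hence $\sum_i(\sigma_i^E+\tau_i^E)\tau_E^i$, is nilpotent, you cannot extract nilpotency of the individual $\tau_i^E$. ``Positivity'' has no force in a $\C$-algebra: $\C[a,b]/(a^2+b^2)$ is reduced, yet $a$ is not nilpotent. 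The relation also involves the $\sigma_i^E$. The $\pi^{T^E}$-momentum grading cannot separate the $\tau_i^E$, since they all have momentum $0$. Your fallback of expressing $\tau_i^E$ through $W$-type fields modulo nilpotents is not substantiated.
\end{enumerate}

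The missing idea is that $\pi^{T^E}$ sits inside the lattice \svoa{} $V_{J^\bot}\subset V(\Delta)$ of \autoref{rem:intermediate}. Here $J^\bot$ is integral and positive-definite, with Gram matrix $E^tE$. Such a lattice \svoa{} is $C_2$-cofinite, so $R_{V_{J^\bot}}$ is finite-dimensional and positively graded with degree-zero part $\C$. Hence the images of the $\tau_i^E$ are nilpotent in $R_{V_{J^\bot}}$, and therefore also in $R_{V(\Delta)}$, because $C_2(V_{J^\bot})\subset C_2(V(\Delta))$.

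Concretely, take $0\neq\alpha\in J^\bot$ and set $m=\langle\alpha,\alpha\rangle$. Then $e^\alpha_{(-2)}e^{-\alpha}\in C_2(V(\Delta))$ equals, up to a nonzero scalar, $\alpha_{(-1)}^{m+1}\vac/(m+1)!$ plus terms containing some $\alpha_{(-k)}$ with $k\geq2$. Those extra terms already lie in $C_2$, so $\alpha^{m+1}=0$ in $R_{V(\Delta)}$.

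This is exactly the paper's argument: it enlarges $V_\mathrm{min}(\Delta)\otimes\pi^{T^E}$ to $V_\mathrm{min}(\Delta)\otimes V_{J^\bot}$ and uses $(R_{V_\mathrm{min}(\Delta)\otimes V_{J^\bot}})_\mathrm{red}=(R_{V_\mathrm{min}(\Delta)})_\mathrm{red}$. No Sugawara argument is needed.
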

\begin{proof}
By \autoref{lem:nilpotent}, it follows that any contribution to an element of the $C_2$-algebra $R_{V(\Delta)}=V(\Delta)/C_2(V(\Delta))$ not in the image of $\smash{V_\mathrm{min}(\Delta)\otimes\pi^{T^E}}$ is nilpotent. Hence, the reduced $C_2$-algebra $(R_{V(\Delta)})_\mathrm{red}$ of $V(\Delta)$ is generated by the image of $\smash{V_\mathrm{min}(\Delta)\otimes\pi^{T^E}}$. In particular, it is generated by the image of the larger \voa{} $\smash{V_\mathrm{min}(\Delta)\otimes V_{J^\bot}\supseteq V_\mathrm{min}(\Delta)\otimes\pi^{T^E}}$. Hence, $(R_{V(\Delta)})_\mathrm{red}$ is a quotient of $\smash{(R_{V_\mathrm{min}(\Delta)\otimes V_{J^\bot}})_\mathrm{red}=(R_{V_\mathrm{min}(\Delta)})_\mathrm{red}}$, recalling that $\smash{(R_{V_{J^\bot}})_\mathrm{red}}$ is trivial as the lattice \svoa{} $V_{J^\bot}$ is $C_2$-cofinite.
\end{proof}
As an immediate consequence, we obtain:
\begin{cor}
The associated variety $X_{V(\Delta)}$ is a closed subvariety of $X_{V_\mathrm{min}(\Delta)}$.
\end{cor}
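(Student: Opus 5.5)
The corollary should follow formally from \autoref{prop:vmin_surj}, and the plan is to translate that surjection of reduced $C_2$-algebras into a closed embedding of spectra. The key point is that the surjection is induced by a homomorphism of Poisson (super)algebras. This homomorphism comes from the vertex algebra embedding $V_\mathrm{min}(\Delta)\otimes V_{J^\bot}\hookrightarrow V(\Delta)$ of \autoref{rem:intermediate}. Any homomorphism of vertex superalgebras $A\to B$ maps $A_{(-2)}A$ into $B_{(-2)}B$, so it induces a map $R_A\to R_B$. This map respects the product coming from the $(-1)$-st product and the Poisson bracket coming from the $0$-th product. It also maps nilradicals into nilradicals, so it descends to a Poisson homomorphism $(R_A)_\mathrm{red}\to(R_B)_\mathrm{red}$.

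Next I will identify the source. By Künneth for $C_2$-algebras, $R_{V_\mathrm{min}(\Delta)\otimes V_{J^\bot}}\cong R_{V_\mathrm{min}(\Delta)}\otimes R_{V_{J^\bot}}$. The lattice \svoa{} $V_{J^\bot}$ is $C_2$-cofinite, so $R_{V_{J^\bot}}$ is finite-dimensional and local, and its reduction is $\C$. Since $R_{V_\mathrm{min}(\Delta)}$ is already reduced (it embeds into $\C[\tilde{Y}_\delta(\Delta)]$ by \autoref{prop:toshiro-subalg}), the reduced source is $R_{V_\mathrm{min}(\Delta)}$. Taking the even part, or simply passing to the reduced algebra, which kills the odd generators since those are nilpotent, we obtain a surjective Poisson homomorphism of commutative finitely generated $\C$-algebras
\begin{equation*}
R_{V_\mathrm{min}(\Delta)}\twoheadrightarrow(R_{V(\Delta)})_\mathrm{red}.
\end{equation*}

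Applying $\Spec$, a surjection of rings gives a closed immersion. So $X_{V(\Delta)}=\Spec((R_{V(\Delta)})_\mathrm{red})$ is identified with the closed subscheme of $X_{V_\mathrm{min}(\Delta)}=\Spec(R_{V_\mathrm{min}(\Delta)})$ cut out by the kernel. This kernel is a Poisson ideal because the map is Poisson, so the embedding is one of Poisson varieties. The only step needing care is that the map in \autoref{prop:vmin_surj} really is the one induced by the algebra embedding and is compatible with reductions; everything else is formal.
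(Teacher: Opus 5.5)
Your proposal is correct and follows the paper's route: the paper states the corollary as an immediate consequence of \autoref{prop:vmin_surj}, whose proof likewise passes through the embedding $V_\mathrm{min}(\Delta)\otimes V_{J^\bot}\hookrightarrow V(\Delta)$ and the triviality of $(R_{V_{J^\bot}})_\mathrm{red}$; applying $\Spec$ to the surjection $(R_{V_\mathrm{min}(\Delta)})_\mathrm{red}\twoheadrightarrow(R_{V(\Delta)})_\mathrm{red}$ then gives the closed (Poisson) embedding. Your additional details (functoriality of the $C_2$-algebra, compatibility with nilradicals, and the Künneth identification) make explicit what the paper leaves implicit.
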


\smallskip

We further show the existence of yet another surjection of Poisson algebras:
\begin{lem} There is a surjective homomorphism of Poisson algebras
\begin{equation*}\label{prop:var-first-incl}
(R_{V(\Delta)})_{\mathrm{red}}\twoheadrightarrow\C[Y_0(\Delta)].
\end{equation*}
\end{lem}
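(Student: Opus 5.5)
The map will be the one built at the end of \autoref{sec:oursheaf},
\begin{equation*}
(R_{V(\Delta)})_\mathrm{red}\to\mathcal{O}_{Y_\delta(\Delta)}(Y_\delta(\Delta))\cong\C[Y_0(\Delta)],
\end{equation*}
now combined with \autoref{thm:glob-sec}. The homomorphism $R_{V(\Delta)}\to\bar{\mathcal{O}}_{Y_\delta(\Delta)}(Y_\delta(\Delta))$ comes from the natural map $R_{\mathcal{V}^\hbar_{\delta,\Delta}(Y_\delta(\Delta))}\to\mathcal{R}_{\mathcal{V}^\hbar_{\delta,\Delta}}(Y_\delta(\Delta))\cong\bar{\mathcal{O}}_{Y_\delta(\Delta)}(Y_\delta(\Delta))[[\hbar]]$, after passing to $\mathbb{S}$-finite parts and setting $\hbar=1$. \autoref{thm:glob-sec} identifies $V(\Delta)$ with $\tilde{V}^\delta(\Delta)$, so we may start directly from $R_{V(\Delta)}$. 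This map is compatible with the $(-1)$-product and with the bracket $\hbar^{-1}a_{(0)}b$, so it is a homomorphism of Poisson superalgebras.

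Next we pass to reduced quotients. We compose with the projection $\bar{\mathcal{O}}_{Y_\delta(\Delta)}\to\mathcal{O}_{Y_\delta(\Delta)}$ that kills the odd generators $\psi_j,\phi_j$; its kernel is nilpotent because it is generated by odd elements. Since $\mathcal{O}_{Y_\delta(\Delta)}(Y_\delta(\Delta))=\C[Y_0(\Delta)]$ is reduced, the nilradical of $R_{V(\Delta)}$ maps to zero, and the map factors through $(R_{V(\Delta)})_\mathrm{red}$. The factored map is still Poisson, because the nilradical is a Poisson ideal (standard in characteristic $0$), or more directly because the target is a Poisson algebra and the map is Poisson on $R_{V(\Delta)}$.

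It remains to prove surjectivity, using generators. By \autoref{prop:ring-gen}, $\C[Y_0(\Delta)]$ is generated by the $J_i$ and finitely many $W^a$. By \autoref{lem:gen-in-svoa}, the fields with the same names lie in $V(\Delta)$. We check that the image of the class of each such field is the corresponding function: on a chart $U_J$, the local section of $\mathcal{V}^\hbar_{\delta,\Delta}$ represented by the normally ordered monomial $W^a$ (or by $J_i=\sum_j E_{ji}x_jy_j$) has symbol modulo $\hbar$ and $C_2$ equal to the classical monomial restricted to $\mu^{-1}(0)\cap\mathfrak{U}_J$ and descended to $U_J$. Since the map is an algebra homomorphism, its image contains all generators, and so it is onto.

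The main technical point is this compatibility of symbols: the isomorphism $\mathcal{R}_{\mathcal{V}^\hbar_{\delta,\Delta}}\cong\bar{\mathcal{O}}_{Y_\delta(\Delta)}[[\hbar]]$ has to send the class of a BRST-closed normally ordered polynomial in $x_j,y_j$ to its classical reduction. I would verify this at the level of complexes, where $\mathcal{C}^\hbar/(\hbar,C_2)$ is the classical BRST/Koszul complex computing $\bar{\mathcal{O}}_{Y_\delta(\Delta)}$, and then check that this is compatible with sheafification on the $U_J$. The free-field description of the local sections in \autoref{sec:localtrivial} should make this direct.
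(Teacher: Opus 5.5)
Your surjectivity argument is the paper's own. The paper's proof consists only of lifting the generators $J_i$, $W^a$ of \autoref{prop:ring-gen} to $V(\Delta)$ via \autoref{lem:gen-in-svoa} and observing that they hit the classical generators. It takes the map of \autoref{sec:oursheaf}, including the symbol compatibility you single out, for granted.

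The gap is in the justification you add for the Poisson property, which fails in the super setting. You claim two things: that the nilradical $N$ of $R_{V(\Delta)}$ is a Poisson ideal, and that the composite $R_{V(\Delta)}\to\bar{\mathcal{O}}_{Y_\delta(\Delta)}(Y_\delta(\Delta))\to\C[Y_0(\Delta)]$ is Poisson on all of $R_{V(\Delta)}$. Both are false here; the characteristic-zero fact you cite holds for purely even commutative Poisson algebras only.

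Concretely, $x_i\phi_i$ and $y_i\psi_i$ lie in $V(\Delta)$ (\autoref{prop:confvect}). They are odd, hence nilpotent in $R_{V(\Delta)}$ and sent to $0$ in $\C[Y_0(\Delta)]$. But a Wick computation gives $(x_i\phi_i)_{(0)}(y_i\psi_i)=\sigma_i+\tau_i$, whose image in $\C[Y_0(\Delta)]$ is $x_iy_i$. This is nonzero whenever the $i$-th row of $E$ is nonzero, since otherwise $x_iy_i$ would be a linear combination of the quadrics $\mu^*(a_k)$. By \autoref{rem:zerocolumnrow} we may assume this for every $i$. So the projection $\bar{\mathcal{O}}_{Y_\delta(\Delta)}\to\mathcal{O}_{Y_\delta(\Delta)}$ that kills $\psi_j,\phi_j$ is not Poisson. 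In fact the bracket of $R_{V(\Delta)}$ does not even descend to $R_{V(\Delta)}/N$.

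The repair is to work with even parts. Since odd elements square to zero, $(R_{V(\Delta)})_\mathrm{red}=(R_{V(\Delta)})_{\bar0}/\sqrt{0}$ as algebras. Moreover $(R_{V(\Delta)})_{\bar0}$ is an honest commutative Poisson algebra whose nilradical is Poisson, because a derivation of a commutative $\Q$-algebra preserves the nilradical. This is the Poisson structure to put on $(R_{V(\Delta)})_\mathrm{red}$.

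On the geometric side, write $\Lambda=\C[\Pi T^*V]$, graded by fermion degree. In $\mathcal{O}_\mathfrak{X}\otimes\Lambda$, the bracket of elements of fermion degrees $a$ and $b$ has components only in degrees $a+b$ and, if $a,b\geq1$, $a+b-2$. Hence $\{\mathcal{O}_\mathfrak{X}\otimes\Lambda^{\geq2}_{\bar0},\mathcal{O}_\mathfrak{X}\otimes\Lambda_{\bar0}\}\subseteq\mathcal{O}_\mathfrak{X}\otimes\Lambda^{\geq2}_{\bar0}$, so killing $\psi_j,\phi_j$ is Poisson on even parts. This projection is $\g$-equivariant and sends $\bar\mu^*(a_i)$ to $\mu^*(a_i)$. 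It therefore descends to a Poisson map $(\bar{\mathcal{O}}_{Y_\delta(\Delta)})_{\bar0}\to\mathcal{O}_{Y_\delta(\Delta)}$.

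Composing this with the even part of $R_{V(\Delta)}\to\bar{\mathcal{O}}_{Y_\delta(\Delta)}(Y_\delta(\Delta))$ gives the required Poisson homomorphism. Your generator argument then gives surjectivity, since the $J_i$ and $W^a$ are even. Invoking \autoref{thm:glob-sec} is unnecessary: the natural map $V(\Delta)\to\tilde{V}^\delta(\Delta)$ already induces the map on $C_2$-algebras.
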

\begin{proof}
In \autoref{sec:oursheaf}, we introduced the natural map $(R_{V(\Delta)})_{\mathrm{red}}\to\C[Y_0(\Delta)]$. In the following, we show that this map is indeed a surjection. Recall that \autoref{prop:ring-gen} provides a list of generators $J_i$ and $W^a$ (monomials in $x_j$ and $y_j$) for the ring $\C[Y_0(\Delta)]$. In \autoref{lem:gen-in-svoa} we promoted these expressions to normally-ordered products of the fields $x_j(z)$ and $y_j(z)$. Considering the images of these fields in $R_{V(\Delta)}$ and then in $(R_{V(\Delta)})_{\mathrm{red}}$ provides a preimage for each generator of $\C[Y_0(\Delta)]$. This shows that the above map is surjective.
\end{proof}

In order to prove \autoref{thm:var}, we need a final result on the image of the conformal vector $\omega$ in $R_{V(\Delta)}$. We recall from \autoref{sec:conf-vect} that $\omega = \omega_{\mathrm{min}}+\omega_{T^E}$, with
\begin{align*}
\omega_\mathrm{min}&=\sum_{i=1}^{N}\bigl((x_i\phi_i)(y_i\psi_i)-(y_i\psi_i)(x_i\phi_i)\bigr)/2+\sum_{i=1}^{N-M}(\sigma_i^E+\tau_i^E)^E_i\tau_E^i-\sum_{i=1}^{N-M}\tau_i^E\tau^i_E/2,\\
\omega_{T^E}&=\sum_{i=1}^{N-M}\tau_i^E\tau^i_E/2.
\end{align*}

\begin{lem}\label{lem:nilpotent-stress}
The conformal vector $\omega$ of $V(\Delta)$, and its summands $\omega_\mathrm{min}$ and $\omega_{T^E}$, are mapped to nilpotent elements in the $C_2$-algebra $R_{V(\Delta)}=V(\Delta)/C_2(V(\Delta))$.
\end{lem}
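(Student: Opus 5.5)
We use the Sugawara-like expression of \autoref{prop:confvect} and treat the two pieces of $\omega=\omega_\mathrm{min}+\omega_{T^E}$ separately. Nilpotent elements of the supercommutative algebra $R_{V(\Delta)}$ form an ideal (its nilradical). So it suffices to show that each term in the expressions for $\omega_\mathrm{min}$ and $\omega_{T^E}$ is nilpotent, or is a product with a nilpotent factor. Throughout we write $\bar v$ for the image of $v$ in $R_{V(\Delta)}$. Since $R_{V(\Delta)}$ is supercommutative and the products are induced by $(-1)$-st products, the images of all normally ordered products reduce to ordinary products of images.

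\textbf{First term.} The summand $\bigl((x_i\phi_i)(y_i\psi_i)-(y_i\psi_i)(x_i\phi_i)\bigr)/2$ has image $\overline{x_i\phi_i}\,\overline{y_i\psi_i}$, up to sign conventions. The factors $x_i\phi_i$ and $y_i\psi_i$ are \emph{odd} elements of $V(\Delta)$, so their images square to zero in the supercommutative algebra $R_{V(\Delta)}$. A product of odd nilpotents is again nilpotent. Alternatively, in the decomposition of \autoref{prop:vanish_main2}, $x_i\phi_i$ lies in a summand with $l\neq0$, since its $\tau$-momentum is $-\tau_i$, so \autoref{lem:nilpotent} also applies. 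Either way, every first-term summand maps into the nilradical.

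\textbf{$\omega_{T^E}$.} By \autoref{rem:intermediate}, the elements $\tau_i^E$ lie in the lattice subalgebra $V_{J^\bot}\subset V(\Delta)$. Here $J^\bot$ is positive-definite, so $V_{J^\bot}$ is $C_2$-cofinite, and the image of its weight-one fields in $R_{V_{J^\bot}}$ is nilpotent. We verify this directly by \autoref{lem:posnilp}: for any nonzero $\alpha\in J^\bot$, the field $e^\alpha$ satisfies $(e^\alpha_{(-1)})^n\vac=0$ for large $n$. In $V_{J^\bot}$ the Heisenberg field $\alpha$ is expressible, modulo $C_2$, through products of $e^{\pm\alpha}$; concretely, $e^{\alpha}_{(-\langle\alpha,\alpha\rangle-1)}e^{-\alpha}$ contains $\alpha$ plus $C_2$-terms. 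So $\bar\alpha$ lies in the ideal generated by $\overline{e^\alpha}$, which is nilpotent. Hence each $\overline{\tau_i^E}$ is nilpotent, and therefore so are $\overline{\omega_{T^E}}$ and the last term of $\omega_\mathrm{min}$. This substep is the main obstacle, since the relation must hold inside $R_{V(\Delta)}$ and not merely inside $R_{V_{J^\bot}}$. We handle it by pushing the relation forward along the inclusion $V_{J^\bot}\hookrightarrow V(\Delta)$, which induces an algebra map $R_{V_{J^\bot}}\to R_{V(\Delta)}$ and therefore preserves nilpotency.

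\textbf{Middle term.} The middle summand of $\omega_\mathrm{min}$ is a product of $\overline{\sigma_i^E+\tau_i^E}$ with $\overline{\tau_E^i}$, where $\tau_E^i$ is a rational combination of the $\tau_j^E$. Since $\overline{\tau_E^i}$ is nilpotent by the previous step, this product lies in the nilradical.

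Summing up, $\overline{\omega_\mathrm{min}}$, $\overline{\omega_{T^E}}$ and hence $\bar\omega$ are nilpotent.
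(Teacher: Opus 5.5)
Your skeleton is the paper's own. The first summand maps to a product of two odd elements. The nilpotency of $\overline{\tau_i^E}$ (hence of $\overline{\tau_E^i}$, $\overline{\omega_{T^E}}$ and the middle summand) is obtained in $R_{V_{J^\bot}}$ and pushed forward along the algebra map $R_{V_{J^\bot}}\to R_{V(\Delta)}$.

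The step that fails is your ``direct verification'' that $\bar\alpha$ is nilpotent for $\alpha\in J^\bot$. Put $k=\langle\alpha,\alpha\rangle$. Then $Y(e^\alpha,z)e^{-\alpha}=\epsilon\, z^{-k}\exp\bigl(\sum_{m>0}\alpha_{(-m)}z^m/m\bigr)\vac$ with $\epsilon\neq0$. So $e^\alpha_{(-k-1)}e^{-\alpha}$ has weight $2k$ and does not contain $\alpha$; the vector $\alpha$ occurs as $e^\alpha_{(k-2)}e^{-\alpha}$. For $k\geq2$ this is a nonnegative mode, not the product of $R_V$. For $k=2$ it is the Poisson bracket $\{\overline{e^\alpha},\overline{e^{-\alpha}}\}$; for $k\geq3$ it does not even descend to $R_V$.

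In fact, for $k\geq2$, $\bar\alpha$ is not in the ideal generated by $\overline{e^{\pm\alpha}}$ at all. $R_{V_{J^\bot}}$ is graded by $J^\bot$-charge and $L_0$-weight, and every charge-zero element of that ideal has weight at least $k\geq2$. But $\bar\alpha\neq0$ has weight $1$, since $C_2(V_{J^\bot})$ has no weight-one part. This is exactly the case of interest: the $\tau_i^E$ are roots of $A_{N-1}$ (norm $2$) in the minimal-orbit example, and $\tau_1^E$ has norm $N$ in the Kleinian example. So the nilpotency of $\overline{e^\alpha}$ from \autoref{lem:posnilp} tells you nothing about $\bar\alpha$.

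The repair is short, and your element is the right one once read correctly. Since $-k-1\leq-2$, the vector $e^\alpha_{(-k-1)}e^{-\alpha}$ lies in $C_2(V_{J^\bot})$. It equals $\epsilon$ times the coefficient of $z^{2k}$ in $\exp\bigl(\sum_{m>0}\alpha_{(-m)}z^m/m\bigr)\vac$. That coefficient is $\alpha_{(-1)}^{2k}\vac/(2k)!$ plus monomials containing some $\alpha_{(-m)}$ with $m\geq2$. Those monomials lie in $C_2$, because the negative Heisenberg modes commute and such a mode can be moved to the front. Hence $\bar\alpha^{2k}=0$; using $e^\alpha_{(-2)}e^{-\alpha}$ instead even gives $\bar\alpha^{k+1}=0$.

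Alternatively, you can use the grading, which is all the paper's appeal to $C_2$-cofiniteness (via Lemma~4.11 of \cite{AM24}) needs. $R_{V_{J^\bot}}$ is finite-dimensional and $\frac12\N$-graded with degree-zero part $\C$, so every element of positive degree is nilpotent. With either replacement your argument goes through and agrees with the paper's proof.
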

\begin{proof}
The lattice \svoa{} $\smash{V_{J^\bot}}$ associated with the integral, positive-definite lattice $J^\bot\subset T^E$ is $C_2$-cofinite. Hence, the image of $\smash{\omega_{T^E}}$ in $\smash{R_{V_{J^\bot}}}$ is nilpotent (see Lemma~4.11 in \cite{AM24}). In particular, the image of $\omega_{T^E}$ in $R_{V(\Delta)}$ is nilpotent.

In fact, by the same argument, we see that any $\tau_i^E$ and $\tau^i_E$ for $i=1,\dots,N-M$ is mapped to a nilpotent element in $\smash{R_{V(\Delta)}}$. Moreover, note that the images of $(x_i\phi_i)$ and $(y_i\psi_i)$ in the supercommutative algebra $\smash{R_{V(\Delta)}}$ are odd and hence nilpotent. But then $(x_i\phi_i)(y_i\psi_i)$ is mapped to the product of these odd elements, which is also nilpotent. Hence, all summands in the images of $\omega_\mathrm{min}$ and $\omega$ in $R_{V(\Delta)}$ are nilpotent.
\end{proof}

\smallskip

We now prove the main result of this section. In particular, it shows that the boundary hypertoric \svoa{} $V(\Delta)$ recovers the Higgs branch of the corresponding 3d $\mathcal{N}=4$ supersymmetric field theory.
\begin{thm}[Higgs Branch Conjecture]\label{thm:var}
As Poisson varieties,
\begin{equation*}
X_{V(\Delta)}\cong Y_0(\Delta).
\end{equation*}
\end{thm}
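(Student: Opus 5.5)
The plan is to squeeze $(R_{V(\Delta)})_\mathrm{red}$ between two surjections and identify the composite with the Poisson-centre quotient of \autoref{prop:poiss-centre}. By \autoref{prop:vmin_surj} there is a surjection $R_{V_\mathrm{min}(\Delta)}\twoheadrightarrow(R_{V(\Delta)})_\mathrm{red}$. By the lemma following it there is a surjection $(R_{V(\Delta)})_\mathrm{red}\twoheadrightarrow\C[Y_0(\Delta)]$. Both maps are Poisson. By \autoref{prop:toshiro-subalg}, $R_{V_\mathrm{min}(\Delta)}$ embeds into $\C[\tilde{Y}_\delta(\Delta)]$. The composite $R_{V_\mathrm{min}(\Delta)}\to\C[Y_0(\Delta)]$ should be compatible with the restriction $\C[\tilde{Y}_\delta(\Delta)]\to\C[\tilde{Y}_\delta(\Delta)]/J=\C[Y_0(\Delta)]$, where $J$ is generated by $\tau_i^\Delta$. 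This compatibility holds because both maps are induced by the classical limits of the sheaves, sending $x_j,y_j$ to the coordinate functions. So it suffices to show that the kernel of $R_{V_\mathrm{min}(\Delta)}\to(R_{V(\Delta)})_\mathrm{red}$ contains $R_{V_\mathrm{min}(\Delta)}\cap J$. Then $(R_{V(\Delta)})_\mathrm{red}$ is a quotient of $R_{V_\mathrm{min}(\Delta)}/(R_{V_\mathrm{min}(\Delta)}\cap J)$, which injects into $\C[Y_0(\Delta)]$, and the second surjection becomes an isomorphism.

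The key step is to show that the images of the $\tau_i^\Delta$, or rather the parts of $R_{V_\mathrm{min}(\Delta)}$ lying in $J$, become nilpotent in $R_{V(\Delta)}$. I would work with Poisson centres, as in \autoref{prop:poiss-centre}. The elements $\tau^\Delta_i$, which in $R_{V_\mathrm{min}(\Delta)}$ are identified with $\sum_j\Delta_{ij}x_jy_j$, generate the Poisson centre of $\C[\tilde{Y}_\delta(\Delta)]$. I would show that their images in $(R_{V(\Delta)})_\mathrm{red}$ vanish. In $V(\Delta)$ one has $\sigma^\Delta_i+\tau^\Delta_i\in\operatorname{im}(d)$ up to the relative complex, so $\sigma_i^\Delta\equiv-\tau_i^\Delta$. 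Here $\tau_i^\Delta$ is a combination of the fermion bilinears $\tau_j=\psi_j\phi_j$, and these lie in a lattice-type part of $V(\Delta)$. I expect this to be the main obstacle, because $\tau_j$ individually is not in $V(\Delta)$.

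I would attack that obstacle through \autoref{lem:nilpotent-stress}. The conformal vector $\omega$ is nilpotent in $R_{V(\Delta)}$, and by \autoref{prop:confvect} it is a Sugawara-type expression. Taking the Poisson bracket (the $0$-th product) of $\omega$ with the generators $W^a$ and $J_i$ gives $L_{-1}$, which vanishes in $R_V$, so this route gives little directly. The alternative I would pursue is this. The products $(x_j\phi_j)(y_j\psi_j)$ are products of odd elements and hence nilpotent. Their Poisson brackets, or suitable normally ordered combinations, reproduce $x_jy_j+\psi_j\phi_j=\sigma_j+\tau_j$ up to nilpotents. Hence each $\sigma_j$ is congruent to $-\tau_j$ modulo nilradical-generated terms, wherever such combinations lie in $V(\Delta)$. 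Combined with \autoref{lem:nilpotent}, this shows that the nonzero-lattice pieces are nilpotent, so the image of the Poisson centre of $R_{V_\mathrm{min}(\Delta)}$ is nilpotent.

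Granting this, $(R_{V(\Delta)})_\mathrm{red}$ is a quotient of $\C[\tilde{Y}_\delta(\Delta)]/I_{Z(\Delta)}$, restricted to the image of $R_{V_\mathrm{min}(\Delta)}$. By \autoref{prop:poiss-centre} this quotient is $\C[Y_0(\Delta)]$, and it surjects onto $\C[Y_0(\Delta)]$ by the lemma. Since $\C[Y_0(\Delta)]$ is a domain of finite type, a composite of surjections $\C[Y_0(\Delta)]\supseteq\mathrm{image}\twoheadrightarrow(R_{V(\Delta)})_\mathrm{red}\twoheadrightarrow\C[Y_0(\Delta)]$ compatible with the generators $J_i,W^a$ forces both maps to be isomorphisms. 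This yields $X_{V(\Delta)}\cong Y_0(\Delta)$ as Poisson varieties. Quasi-lisseness then follows from finiteness of symplectic leaves.
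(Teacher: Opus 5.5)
Your skeleton matches the paper's proof. It uses the injection $i\colon R_{V_\mathrm{min}(\Delta)}\hookrightarrow\C[\tilde Y_\delta(\Delta)]$ and the surjections $s_1\colon R_{V_\mathrm{min}(\Delta)}\twoheadrightarrow(R_{V(\Delta)})_\mathrm{red}$ and $s_2\colon(R_{V(\Delta)})_\mathrm{red}\twoheadrightarrow\C[Y_0(\Delta)]$. It then reduces everything to showing that whatever lies over the Poisson centre of $\C[\tilde Y_\delta(\Delta)]$ dies under $s_1$. You close via compatibility with $\C[\tilde Y_\delta(\Delta)]\to\C[\tilde Y_\delta(\Delta)]/J$, while the paper uses a graded dimension count; either works.

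The gap is the key step, which you rightly flag and then argue incorrectly. The nilradical of a Poisson \emph{super}algebra is not closed under brackets of odd elements; already $\{\psi_j,\phi_j\}=1$. So nilpotency of $x_j\phi_j$ and $y_j\psi_j$ says nothing about their bracket. Concretely, $\{x_j\phi_j,y_j\psi_j\}=\sigma_j+\tau_j$ (up to sign). The map $s_2$ sends this to $x_jy_j$, which is nonzero on $Y_0(\Delta)$ as soon as the $j$-th row of $E$ is nonzero. Hence $\sigma_j+\tau_j$ is \emph{not} nilpotent in $R_{V(\Delta)}$. Since $\tau_j$ is nilpotent, the congruence $\sigma_j\equiv-\tau_j$ modulo nilpotents would force $x_jy_j=0$ on $Y_0(\Delta)$, so it is false.

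Besides, $\sigma^\Delta_i$ and $\tau^\Delta_i$ do not lie in $V(\Delta)$ or $V_\mathrm{min}(\Delta)$. The actual Poisson-central elements of $R_{V_\mathrm{min}(\Delta)}$ are higher-degree, Casimir-type elements; for $L_{-1}(\sl_N)$, for example, $\operatorname{tr}(Z^2)\propto(\sum_kx_ky_k)^2$. These lie in the $l=0$ part $V_\mathrm{min}(\Delta)\otimes\pi^{T^E}$, where \autoref{lem:nilpotent} gives no information.

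The missing idea is the route you set aside. Since $\omega_{(0)}=L_{-1}$ maps $V$ into $C_2(V)$, the image $\bar\omega$ is always Poisson central in $R_V$. \autoref{lem:nilpotent-stress} shows that it is moreover nilpotent in $R_{V(\Delta)}$: modulo $\operatorname{im}(d)$, \autoref{prop:confvect} writes $\omega$ as products of odd elements plus terms containing $\tau^i_E$ from the lisse $V_{J^\bot}$. Theorem~4.5 of \cite{AM24} converts nilpotency of $\omega$ into nilpotency of the positive-degree Poisson centre. Hence $(R_{V(\Delta)})_\mathrm{red}$ has no nonconstant Poisson-central elements. Because $s_1$ is a surjective Poisson homomorphism, it maps Poisson-central elements of $R_{V_\mathrm{min}(\Delta)}$ to Poisson-central elements of $(R_{V(\Delta)})_\mathrm{red}$. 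Positive-degree central elements therefore go to $0$, with no element-by-element computation needed.

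One caveat for your formulation with $R_{V_\mathrm{min}(\Delta)}\cap J$. This argument kills only the positive-degree centre $Z_+$ of $R_{V_\mathrm{min}(\Delta)}$. You also need $R_{V_\mathrm{min}(\Delta)}\cap J$ to lie in the radical of the ideal generated by $Z_+$. For instance, with $t=\sum_kx_ky_k$, the element $t\,x_iy_j\in R_{L_{-1}(\sl_N)}\cap J$ is not central, but its square lies in $(t^2)$. The paper treats this inclusion as evident.
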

\begin{proof}
\autoref{prop:toshiro-subalg}, \autoref{prop:vmin_surj} and \autoref{prop:var-first-incl} yield the diagram
\begin{center}
\begin{tikzcd}
(R_{V_{\mathrm{min}}(\Delta)})_{\mathrm{red}}\arrow[r,two heads,"s_1"]\arrow[d,hook,"i"]&(R_{V(\Delta)})_{\mathrm{red}}\arrow[r,two heads,"s_2"]&\C[Y_0(\Delta)]\\
\C[\tilde{Y}_\delta (\Delta)].
\end{tikzcd}
\end{center}
Also recall from \autoref{prop:poiss-centre} the Poisson centre $Z(\C[\tilde{Y}_\delta(\Delta)])$ of $\C[\tilde{Y}_\delta(\Delta)]$, its augmentation ideal $I_{Z(\Delta)}$ and that $\C[\tilde{Y}_\delta(\Delta)]/I_{Z(\Delta)}\cong\C[Y_0(\Delta)]$.

The ideal $i^{-1}(I_{Z(\Delta)})$ is evidently in the augmentation ideal of the Poisson centre of $(R_{V_{\mathrm{min}}(\Delta)})_{\mathrm{red}}$. Consider its image
\begin{equation*}
s_1(i^{-1}(I_{Z(\Delta)}))\subset(R_{V(\Delta)})_{\mathrm{red}}.
\end{equation*}
Since $s_1$ is a surjective Poisson homomorphism, this image must be contained in the augmentation ideal generated by the Poisson centre $\smash{Z((R_{V(\Delta)})_{\mathrm{red}})}$ of $\smash{(R_{V(\Delta)})_{\mathrm{red}}}$. However, we know from \autoref{lem:nilpotent-stress} that $\omega$ is nilpotent in $\smash{R_{V(\Delta)}}$. It then follows from Theorem~4.5 in \cite{AM24} that the augmentation ideal of the Poisson centre $\smash{Z(R_{V(\Delta)})}$ is contained in the nilradical of $\smash{R_{V(\Delta)}}$, which implies that the augmentation ideal of the Poisson centre of $\smash{(R_{V(\Delta)})_{\mathrm{red}}}$ is trivial. Therefore, $i^{-1}(I_{Z(\Delta)})$ is in the kernel of $s_1$, so that $s_1$ factors through the quotient.

Hence, taking the quotient by $I_{Z(\Delta)}$ and $i^{-1}(I_{Z(\Delta)})$ on the left-hand side, we arrive at the diagram
\begin{center}
\begin{tikzcd}
(R_{V_{\mathrm{min}}(\Delta)})_{\mathrm{red}}/i^{-1}(I_{Z(\Delta)})\arrow[r,two heads,"s_1"]\arrow[d,hook,"i"]&(R_{V(\Delta)})_{\mathrm{red}}\arrow[r,two heads,"s_2"]& \C[Y_0(\Delta)]\\
\C[Y_0(\Delta)].
\end{tikzcd}
\end{center}
Since $Y_0(\Delta)$ is conical, $\C[Y_0(\Delta)]$ is a direct sum of finite-dimensional homogeneous components, and all maps above respect the grading. Hence, we conclude from the injectivity of $i$ and the surjectivity of $s_2\circ s_1$ that all the maps above are isomorphisms. In particular, $s_2\colon(R_{V(\Delta)})_{\mathrm{red}}\to\C[Y_0(\Delta)]$ is an isomorphism.
\end{proof}
As the hypertoric varieties considered here are symplectic singularities in the sense of Beauville \cite{Bea00}, it follows from the work of Kaledin \cite{Kal06} that they have finitely many symplectic leaves. Thus, by definition, we can conclude:
\begin{cor}\label{cor:quasilisse}
$V(\Delta)$ is quasi-lisse.
\end{cor}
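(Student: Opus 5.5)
The statement to be proved is \autoref{cor:quasilisse}: $V(\Delta)$ is quasi-lisse. By definition, this means that the associated variety $X_{V(\Delta)}$ has only finitely many symplectic leaves, so the plan is to transport this property from $Y_0(\Delta)$ to $X_{V(\Delta)}$.

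The first step invokes \autoref{thm:var}. This gives an isomorphism of Poisson varieties $X_{V(\Delta)}\cong Y_0(\Delta)$; concretely, $s_2\colon(R_{V(\Delta)})_\mathrm{red}\to\C[Y_0(\Delta)]$ is an isomorphism of Poisson algebras. Symplectic leaves are determined entirely by the Poisson structure on the reduced variety. Hence a Poisson isomorphism induces a bijection between the symplectic leaves of the two varieties, and it suffices to show that $Y_0(\Delta)$ has finitely many leaves.

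The second step establishes this finiteness. By \autoref{conv:main}, $\Delta$ is unimodular and $\delta$ is generic, so $\pi\colon Y_\delta(\Delta)\to Y_0(\Delta)$ is a conical symplectic resolution with $Y_0(\Delta)$ normal, as recalled in \autoref{sec:hypertoric}. A normal Poisson variety admitting a symplectic resolution is a symplectic singularity in the sense of \cite{Bea00}: the symplectic form on the smooth locus pulls back to the regular form on $Y_\delta(\Delta)$. Kaledin's theorem \cite{Kal06} then states that a symplectic singularity has a finite stratification by symplectic leaves. Therefore $Y_0(\Delta)$, and hence $X_{V(\Delta)}$, has finitely many symplectic leaves, so $V(\Delta)$ is quasi-lisse in the sense of \cite{AK18}.

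All the substantive work sits in \autoref{thm:var}, so the corollary itself has no real obstacle. The only point deserving a sentence is that the Poisson structure on $X_{V(\Delta)}$, which comes from the $0$-th product on $R_{V(\Delta)}$, matches the one on $Y_0(\Delta)$ obtained by Hamiltonian reduction. This holds because \autoref{thm:var} asserts an isomorphism of Poisson varieties, not merely of varieties.
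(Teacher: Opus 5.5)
Your proposal is correct and follows the paper's own argument: the Poisson isomorphism $X_{V(\Delta)}\cong Y_0(\Delta)$ from \autoref{thm:var}, the fact that $Y_0(\Delta)$ is a symplectic singularity, and Kaledin's finiteness of symplectic leaves \cite{Kal06} together give quasi-lisseness by definition. Your added justification that a normal variety with a symplectic resolution is a symplectic singularity elaborates a step the paper simply asserts; it does not change the route.
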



\subsection{Automorphisms and Symplectic Duality}\label{sec:auto}

Recall from \autoref{sec:sym-dual} that given a hypertoric variety $Y_0(\Delta)$ associated with a unimodular matrix $\Delta$, one can always obtain a symplectic (or Gale) dual variety $Y_0(\Delta\!^!)$. This implies that we can consider pairs of \svoa{}s, $V(\Delta)$ and $V(\Delta\!^!)$, such as the two examples studied in \autoref{sec:examples}. By \autoref{thm:var}, these \svoa{}s are chiral quantisations of $Y_0(\Delta)$ and $Y_0(\Delta\!^!)$, respectively. Thus, these \svoa{}s can be thought of as symplectic dual pairs of \svoa{}s.\footnote{Based on our comments on 3d mirror symmetry in \autoref{sec:sym-dual}, they are also the $A$-twisted boundary vertex algebras of two 3d mirror dual supersymmetric quantum field theories.}

We discussed in \autoref{sec:sym-dual} that $Y_0(\Delta)$ and $Y_0(\Delta\!^!)$ are proved to have certain properties swapped. For instance, the Lie algebra of (infinitesimal) torus symplectic symmetries of one is in correspondence with the Lie algebra of stability parameters of the other. This fact is reflected in automorphisms of $V(\Delta)$ and $V(\Delta\!^!)$.

\begin{prop}[\cite{BF25,BCDN23}]
The weight-$1$ fields $\tau_i^E$ for $i=1,\dots,N-M$ descend to inner automorphisms $\smash{e^{(\tau_i^E)_{(0)}}}$ of $V(\Delta)$. The zero modes of the fields $\smash{\tau^\Delta_i}$ for $i=1,\dots,M$ define noninner automorphisms on $V(\Delta)$.
\end{prop}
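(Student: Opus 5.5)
We first check that $\tau_i^E$ and the $\tau_i^\Delta$ are BRST cocycles in the relative complex $C$. Both are weight-$1$ fields in $\pi^T\subset\mathcal{C}\ell(\Pi T^*V)$, so they commute with the ghosts $b_j$. Their OPE with $Q=\sum_j(\sigma_j^\Delta+\tau_j^\Delta)c_j$ is governed by $\langle\tau,\sigma_j^\Delta+\tau_j^\Delta\rangle$. For $\tau_i^E$ this is $(\Delta E)_{ji}=0$. Hence $d\,\tau_i^E=0$, and $(\tau_i^E)_{(0)}$ commutes with the zero modes of $b_j$ and of $\bar\mu^*_\mathrm{ch}(a_j)$. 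So $\tau_i^E\in C\cap\ker(d)$, and it defines a weight-$1$ field of $V(\Delta)$. Its zero mode is a derivation of $V(\Delta)$. It acts semisimply with integer eigenvalues: in the decomposition of \autoref{prop:vanish_main2}, $(\tau_i^E)_{(0)}$ acts on each summand by the scalar $\langle\tau_i^E,\text{momentum}\rangle$, which lies in $\Z$ because $J$ and $L$ are integral. Therefore $e^{2\pi\mathrm{i}t(\tau_i^E)_{(0)}}$, or more generally $e^{t(\tau_i^E)_{(0)}}$ for $t\in\C$, is a well-defined automorphism. It is inner in the standard sense, since it is the exponential of the zero mode of a weight-$1$ element of $V(\Delta)$ itself.

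For $\tau_i^\Delta$ we have $\langle\tau_i^\Delta,\sigma_j^\Delta+\tau_j^\Delta\rangle=(\Delta\Delta^t)_{ij}\neq0$ in general. So $\tau_i^\Delta$ is not $d$-closed, and it does not give an element of $V(\Delta)$. Its zero mode is nonetheless a derivation of $\tilde C$, and we claim it commutes with $d$. The reason is that $[(\tau_i^\Delta)_{(0)},Q_{(0)}]=((\tau_i^\Delta)_{(0)}Q)_{(0)}$, and $(\tau_i^\Delta)_{(0)}$ annihilates $Q$, since $Q$ has zero $T$-momentum and a Heisenberg zero mode acts on the vacuum module by momentum. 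It also commutes with the zero modes of $b_j$ and of $\bar\mu^*_\mathrm{ch}(a_j)$, so it preserves $C$ and descends to a derivation of $V(\Delta)=H^0(C,d)$. On the summands of \autoref{prop:vanish_main2}, written in the alternative presentation of \autoref{rem:altrep}, it acts by $\langle\tau_i^\Delta,\sum_j n_j\tau_j\rangle=(\Delta n)_i=l_i$. This is the integer grading by the $\Z^M$ of \autoref{rem:intermediatesce}. Exponentiating therefore gives a $(\C^\times)^M$ of automorphisms.

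The remaining step, which is the main obstacle, is to show that these automorphisms are noninner. That is, no weight-$1$ field $v\in V(\Delta)$ has $v_{(0)}$ equal to a nonzero combination $\sum_i c_i(\tau_i^\Delta)_{(0)}$. We would argue through the free-field realisation $V(\Delta)\subset\HL_{L^\bot}^{\h^\bot}$. A field whose zero mode acts diagonally by a character of the momentum lattice $L^\bot$ must come from the Heisenberg part $\h^\bot$. In the cohomology, $\tau_i^\Delta\equiv-\sigma_i^\Delta$ modulo $\mathfrak{i}$, so the required vector would be $\sum_ic_i\tau_i^\Delta$ projected to $\h^\bot$. This projection lies in $\h^\|$ up to $\mathfrak{i}$, so it is zero in $\h^\bot$, while its pairing with $L^\bot$, namely $\sum_i c_il_i$, is nonzero. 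This is a contradiction.

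To make this rigorous, we would check two things. First, the weight-$1$ space of $V(\Delta)$ is spanned by the $\sigma_i^E+\tau_i^E$, the $\tau_i^E$ (or $\sigma_i^E$), the fermionic bilinears $x_i\phi_i$, $y_i\psi_i$, and possibly singlet and lattice currents. Second, none of these has a zero mode reproducing the $l$-grading. We would do this by comparing the $\h^\bot$-momentum eigenvalues, using \autoref{lem:latbasis}.
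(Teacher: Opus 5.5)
Your first two paragraphs are correct and are essentially the paper's proof, with more detail. The paper observes that $\tau_i^E\in V_{J^\bot}\subset V(\Delta)$. It also observes that $\tau_i^\Delta$ is not BRST-closed while its zero mode commutes with $d$. It then simply takes ``$\tau_i^\Delta\notin V(\Delta)$'' to mean ``noninner''.

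You are right that this leaves open whether some other even $v\in V(\Delta)_1$ has $v_{(0)}=D\coloneqq\sum_ic_i(\tau_i^\Delta)_{(0)}$ on $V(\Delta)$. However, your argument for that step fails, because the projection of $\tau_i^\Delta$ to $\h^\bot$ is not zero. One has $p^\|(\tau_i^\Delta)=\rho_i^\Delta-\sigma_i^\Delta$, hence
\[
p^\bot(\tau_i^\Delta)=-\rho_i^\Delta+\sigma_i^\Delta+\tau_i^\Delta .
\]
This is one of the basis vectors of $\h^\bot$, namely the vector $\tilde\tau_i^\Delta$ of \autoref{sec:auto}. Your ``contradiction'' is only an inconsistency between two of your own claims. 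Since $L^\bot\subset\h^\bot$, you have $\langle p^\bot(\tau_i^\Delta),\lambda\rangle=\langle\tau_i^\Delta,\lambda\rangle=l_i$ for $\lambda\in L^\bot$. So a vanishing projection could never pair to $\sum_ic_il_i\neq0$. In fact the Heisenberg field $\sum_ic_i\tilde\tau_i^\Delta$ of $\HL_{L^\bot}^{\h^\bot}$ is BRST-closed and its zero mode induces exactly $D$. So nothing at the level of $\h^\bot$ obstructs innerness.

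What does obstruct it is the screening condition, and with it the step can be completed.
\begin{itemize}
\item $V(\Delta)$ is graded by $L^\bot$-momentum, since the screenings are homogeneous, and $D$ preserves this grading. So you may replace $v$ by its momentum-zero component. At weight~$1$ this is $h_{(-1)}\vac$ for some $h\in\h^\bot$. No enumeration of $V(\Delta)_1$ is needed; in general it is much larger than your list, e.g.\ all of $\psl_{N|N}$ in \autoref{sec:nilpotent_svoa}.
\item All of $L^\bot$ occurs as momenta in \autoref{prop:vanish_main2}, and $K^\bot\otimes\C$ is the radical of $L^\bot\otimes\C$ inside $\h^\bot$. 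Hence $h_{(0)}=D$ forces $h=\sum_ic_i\tilde\tau_i^\Delta+\sum_ka_k(-\rho_k^E+\sigma_k^E)$.
\item Then $e^{p^\bot(\rho_j)}_{(0)}h=-\langle\rho_j,h\rangle e^{p^\bot(\rho_j)}=(\Delta^tc+Ea)_j\,e^{p^\bot(\rho_j)}$. So $h\in V(\Delta)$ requires $\Delta^tc+Ea=0$.
\item Applying $\Delta$ gives $\Delta\Delta^tc=0$, hence $c=0$.
\end{itemize}
This is the same mechanism the paper invokes later for the $\tilde J^\|$-fields, which are BRST-closed but not annihilated by the screening operators.
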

The proof is elementary. The weight-$1$ fields $\tau_i^E$, $i=1,\dots,N-M$, are elements of $V_{J^\bot}\subset  V(\Delta)$ and therefore define inner automorphisms. On the other hand, the fields $\tau_i^\Delta$, $i=1,\dots,M$, define inner automorphisms of $\mathcal{C}\ell(\Pi T^*V)\subset M$ but do not belong to $V(\Delta)$ as they are not annihilated by the BRST operator. Their zero modes, however, are annihilated and therefore preserve its kernel. Hence, they descend to noninner automorphisms of $V(\Delta)$. 

By applying this proposition to $V(\Delta\!^!)$, we conclude that $V(\Delta)$ and $V(\Delta\!^!)$ have indeed these inner and noninner torus automorphisms swapped. These tori are precisely in correspondence with those discussed in \autoref{sec:sym-dual}. 

\medskip

It was further shown in \cite{BF25}, Proposition~2, that whenever a rank-$(\ell-1)$ subtorus of these infinitesimal symplectic symmetries of $Y_0(\Delta\!^!)$, for $\ell>1$, is part of $\sl_\ell$ symplectic symmetries (as investigated for instance in \cite{GW09}), then the corresponding noninner automorphisms of $V(\Delta)$ are also part of $\sl_\ell$ noninner automorphisms. We now briefly recall how this works, and prove a symplectic dual statement.

To do so, we first recall some definitions from \autoref{sec:vanish2}, and introduce some new ones. The \svoa{} $V(\Delta)$ is constructed as the kernel of the screening operators
\begin{equation*}
\ker(e^{p^\bot(\rho_1)}_{(0)})\cap\dots\cap\ker(e^{p^\bot(\rho_N)}_{(0)})
\end{equation*}
acting on the Heisenberg-lattice \svoa{} $\HL_{L^\bot}^{\h^\bot}$. Here,
\begin{equation*}
L^\bot=\langle\{-\rho_i^E+\sigma_i^E\}_{i=1}^{N-M}\cup\{-\rho_j+\sigma_j+\tau_j\}_{j=1}^N\rangle_\Z\qquad\text{and}\qquad\h^\bot = L^\bot \otimes \C.
\end{equation*}
Moreover, an explicit expression for the screening operators can be obtained from
\begin{equation*}
p^\bot(\rho_i)=\rho_i-\sigma_i-\tau_i +\sum_{j=1}^{N-M}((E^tE)^{-1}E^t)_{ji}(\sigma_j^E+\tau_j^E).
\end{equation*}
Recall further the definitions
\begin{align*}
J&=\langle\{\tau_j\}_{j=1}^N\rangle_\Z\cong\Z^N,\\
J^\bot &=J\cap\h^\bot=\langle\{\tau_i^E\}_{i=1}^{N-M}\rangle.
\intertext{Similarly, we define}
J^\parallel&\coloneqq J\cap\h^\parallel=\langle\{\tau_i^\Delta\}_{i=1}^M\rangle.
\end{align*}
For $i=1,\dots,N$, we also define
\begin{equation*}
\tilde{\tau}_i\coloneqq-\rho_i+\sigma_i+\tau_i.
\end{equation*}
Notice that since the $-\rho_i+\sigma_i$ are isotropic,
\begin{equation*}
\langle \tilde{\tau}_i , \tilde{\tau}_j \rangle =     \langle \tau_i , \tau_j \rangle = \delta_{ij}.
\end{equation*}
We then define
\begin{equation*}
\tilde{J}\coloneqq \langle\{\tilde{\tau}_i\}_{i=1}^N\rangle_\Z \cong \Z^N,\qquad\tilde{J}^\bot \coloneqq\langle\{\tilde{\tau}_i^E\}_{i=1}^{N-M}\rangle,\qquad\tilde{J}^\parallel \coloneqq\langle\{\tilde{\tau}_i^\Delta\}_{i=1}^M\rangle
\end{equation*}
where for $i=1,\dots , M$
\begin{equation*}
\tilde{\tau}^\Delta_i \coloneqq \sum_{j=1}^N \Delta_{ij}(-\rho_i + \sigma_i +\tau_i) 
\end{equation*}
and for $i=1,\dots , N-M$
\begin{equation*}
\tilde{\tau}^E_i \coloneqq \sum_{j=1}^N E_{ji}(-\rho_i + \sigma_i +\tau_i) .
\end{equation*}
Notice that by construction $\tilde{J}=\tilde{J}\cap\h^\bot$, and so in particular $\tilde{J}^\|\cap\h^\bot=\tilde{J}^\|$.

The lattice \svoa{}s $V_J$ and $V_{\tilde{J}}$ constructed from these lattices are both isomorphic to $V_{\Z^N}$. The whole affine vertex algebra generated by the weight-$1$ space of $V_{\Z^N}$ is $L_1(\mathfrak{so}_{2N})=L_1(D_N)$, with special cases for $N<4$ (see, e.g., \cite{HM23}). Here, we restrict to a slightly smaller affine subalgebra $L_1(\gl_N)$. In the lattice picture, focusing on $V_J$ for notational simplicity, it is isomorphic to the Heisenberg-lattice vertex algebra
\begin{equation*}
\HL_{A_{N-1}}^T\cong\HL_{A_{N-1}}\otimes\pi^{\C(\tau_1+\dots+\tau_N)}
\end{equation*}
where $T=\langle\tau_1,\dots,\tau_N\rangle=J\otimes_\Z\C$ and $A_{N-1}\subset J\subset T$ is the standard realisation (spanned by the vectors $\tau_i-\tau_j$) of the root lattice $A_{N-1}$. This expression, which is evidently isomorphic to $L_1(\gl_N)$, holds for all $N\in\Ns$ if we consider $A_0$ to be the empty root system. Here, the $N$ weight-$1$ fields $\tau_i$ span the Heisenberg \voa{} associated with $T$, and the further $N^2-N$ weight-$1$ fields correspond to the root spaces associated with the roots $\smash{\tau_i-\tau_j\in A_{N-1}}$. Denoting by $\smash{\tilde{A}_{N-1}}$ the respective $A_{N-1}$ root system in the lattice $\smash{\tilde{J}}$, we obtain:
\begin{lem}
The \svoa{}s $V_J$ and $V_{\tilde{J}}$ both have an affine $L_1(\sl_N)$ subalgebra (corresponding to inner automorphisms), isomorphic to lattice \voa{}s $\smash{V_{A_{N-1}}}$ and $\smash{V_{\tilde{A}_{N-1}}}$.
\end{lem}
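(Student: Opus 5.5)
The plan is to treat $J$ first and then transport the result to $\tilde{J}$ by an isometry. Both are the odd standard lattice $\Z^N$ with orthonormal bases $\{\tau_i\}$ and $\{\tilde\tau_i\}$, the latter because $-\rho_i+\sigma_i$ is isotropic and orthogonal to $T$. So I would take the sublattice $A_{N-1}=\langle\{\tau_i-\tau_j\}_{i\neq j}\rangle_\Z\subset J$ and its counterpart $\tilde A_{N-1}=\langle\{\tilde\tau_i-\tilde\tau_j\}_{i\neq j}\rangle_\Z\subset\tilde J$. Each is an even, positive-definite lattice isometric to the $A_{N-1}$ root lattice. The differences $\tilde\tau_i-\tilde\tau_j$ have norm $2$, and their Gram matrix agrees with that of $\tau_i-\tau_j$.

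Next I would use the general fact that for a sublattice $K'\subset K$ of an integral lattice, $V_{K'}=\bigoplus_{\lambda\in K'}\pi^{K'\otimes\C}_\lambda$ embeds as a vertex subalgebra of $V_K$. This is a standard sub-sum of Fock modules, with the cocycle restricted; on the even sublattice the cocycle can be chosen bimultiplicative and compatible. This gives embeddings $V_{A_{N-1}}\hookrightarrow V_J$ and $V_{\tilde A_{N-1}}\hookrightarrow V_{\tilde J}$. Because $A_{N-1}$ is even, the image lies in the even part.

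Finally I would invoke the Frenkel--Kac construction for the simply-laced root lattice $A_{N-1}$, which gives $V_{A_{N-1}}\cong L_1(\sl_N)$. Concretely, the weight-$1$ space consists of the $N-1$ Heisenberg fields $\tau_i-\tau_j$ together with the $N^2-N$ root vectors $e^{\pm(\tau_i-\tau_j)}$. These generate $\sl_N$ at level $1$, and the simple quotient is attained because lattice vertex algebras are simple. Since these weight-$1$ fields lie in the algebra, their zero modes exponentiate to inner automorphisms, which justifies the parenthetical remark. For $N=1$ the statement is vacuous with $A_0$ empty.

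The only delicate point is the cocycle and sign bookkeeping when restricting from the odd lattice $\Z^N$, which carries the chosen $\eps$ with $\eps(\tau_i,\tau_i)=1$, to the even sublattice. I would handle it by noting that any $2$-cocycle on an even lattice with the correct commutator map gives an isomorphic lattice vertex algebra, so the restricted cocycle is fine.
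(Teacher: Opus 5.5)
Your proposal is correct and follows essentially the same route as the paper. The paper likewise identifies $V_J\cong V_{\tilde J}\cong V_{\Z^N}$ through the orthonormal bases $\{\tau_i\}$ and $\{\tilde\tau_i\}$, takes the standard root sublattice spanned by the $\tau_i-\tau_j$ (packaged there inside $L_1(\gl_N)\cong V^T_{A_{N-1}}\cong V_{A_{N-1}}\otimes\pi^{\C(\tau_1+\dots+\tau_N)}$), and uses the Frenkel--Kac identification $V_{A_{N-1}}\cong L_1(\sl_N)$; your remarks on restricting the cocycle to the even sublattice and on the zero modes giving inner automorphisms make explicit what the paper leaves implicit.
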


By considering the conformal embeddings
\begin{equation*}
V_{J^{\| }}\otimes V_{J^\bot} \subset V_J \quad\text{and}\quad V_{\tilde{J}^{\|}}\otimes V_{\tilde{J}^\bot} \subset V_{\tilde{J}},
\end{equation*}
we can further show the following:
\begin{lem}\label{lem:root-sub}
The only root subsystems in $J^\|\cap A_{N-1}$, $J^\bot\cap A_{N-1}$ are of the form
\begin{align*}
A_{\ell^\|-1} \subset J^\| \cap A_{N-1}\quad\text{and}\quad A_{\ell^\bot-1} \subset J^\bot \cap A_{N-1}
\end{align*}
for some $1\leq\ell^\|,\ell^\perp < N$, where by $A_0$ we mean the trivial (empty) root system. These define affine vertex subalgebras
\begin{equation*}
L_1(\sl_{\ell^\bot})\otimes L_1(\sl_{\ell^\|}) \subset V_{J^\|} \otimes V_{J^\perp}\subset V_J\subset M.
\end{equation*}
The same holds for the root subsystems $\tilde{J}^\|\cap \tilde{A}_{N-1}$, $\tilde{J}^\bot\cap \tilde{A}_{N-1}$, which similarly define affine subalgebras of $V_{\tilde{J}}$.
\end{lem}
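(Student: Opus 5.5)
The plan is to compute both intersections with the roots $\tau_i-\tau_j$ explicitly, reduce membership to a combinatorial condition on the matrices $\Delta$ and $E$, and then read off the root subsystems. The affine subalgebras then come from the standard lattice-VOA construction. Throughout, a root subsystem of $J^\|\cap A_{N-1}$ or $J^\bot\cap A_{N-1}$ means an irreducible (connected) root subsystem; this is how I read the statement. The intersection as a whole will in general be an orthogonal sum of such pieces.

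\emph{Step 1 (membership criterion).} Since $J^\bot=J\cap T^E$ and $J^\|=J\cap T^\Delta$ with $T=T^\Delta\oplus T^E$ orthogonal, a root $\tau_i-\tau_j$ ($i\neq j$) lies in $J^\bot$ if and only if $e_i-e_j\in\ker\Delta=E\Q^{N-M}$, i.e.\ iff $\Delta_i=\Delta_j$, the $i$-th and $j$-th columns of $\Delta$ coincide. It lies in $J^\|$ if and only if $e_i-e_j$ is in the row space of $\Delta$, i.e.\ orthogonal to all columns of $E$, i.e.\ iff rows $i$ and $j$ of $E$ coincide. Integrality is automatic here: $e_i-e_j$ is integral, and \autoref{lem:latbasis} (unimodularity of $E$, resp.\ surjectivity of $\Delta$ from \eqref{eq:SES}) gives $J\cap T^E=J^\bot$ and $J\cap T^\Delta=J^\|$. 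The tilde versions in $\tilde{J}$ are identical, because $\tilde\tau_i\mapsto\tau_i$ is an isometry carrying $\tilde{J}^\bot,\tilde{J}^\|$ onto $J^\bot,J^\|$.

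\emph{Step 2 (shape of the subsystems).} In both cases the condition is an equivalence relation on $\{1,\dots,N\}$: equality of columns of $\Delta$, resp.\ of rows of $E$. The roots $\tau_i-\tau_j$ with $i\sim j$ in one class $S$ form exactly the standard $A_{|S|-1}$ on the coordinates indexed by $S$. Distinct classes give mutually orthogonal pieces, so there are no roots $\tau_i-\tau_j$ across classes. Hence every connected root subsystem is of the form $A_{\ell-1}$ with $\ell=|S|$, including $\ell=1$ (the empty system $A_0$). The bound $\ell<N$ comes from excluding a single class. For $J^\|$: if all rows of $E$ were equal, $E$ would have rank $\le1$ and, by \autoref{rem:zerocolumnrow}, no zero row; this would contradict the exactness of \eqref{eq:SES} unless $N-M\le1$. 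For $J^\bot$: if all columns of $\Delta$ were equal, then $M=1$, and this is exactly the degenerate situation of \autoref{sec:var1}. The main obstacle I expect is precisely this boundary case, in which the $A_{N-1}$ in $J^\bot$ is all of $A_{N-1}$. I would first try to show it is excluded by the standing assumptions on $(\Delta,E)$ (surjectivity, unimodularity, \autoref{rem:zerocolumnrow}). If that fails, I would flag it as the single extremal case in which $\ell^\bot=N$.

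\emph{Step 3 (affine subalgebras).} Each $A_{\ell-1}\subset J^\bot$ (resp.\ $J^\|$) is a sublattice of a positive-definite integral lattice spanned by norm-$2$ vectors. The lattice vertex algebra $V_{A_{\ell-1}}\subset V_{J^\bot}$ (resp.\ $V_{J^\|}$) is therefore generated by its weight-$1$ space, which is the Frenkel--Kac realisation of $L_1(\sl_\ell)$. Its root vectors are even, since the roots have norm $2$ (cf.\ \autoref{rem:parity}). Combining this with the conformal embedding $V_{J^\|}\otimes V_{J^\bot}\subset V_J\subset\mathcal{C}\ell(\Pi T^*V)\subset M$ yields $L_1(\sl_{\ell^\bot})\otimes L_1(\sl_{\ell^\|})\subset V_{J^\|}\otimes V_{J^\bot}\subset V_J\subset M$, and similarly inside $V_{\tilde J}$.
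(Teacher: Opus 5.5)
Your proof is correct and more explicit than the paper's. The paper's proof is two one-line remarks: root subsystems of an $A$-type root system are again of type $A$, and affine subalgebras of $L_1(\sl_N)$ are simple quotients.

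You replace the first remark with a concrete membership criterion. A root $\tau_i-\tau_j$ lies in $J^\bot$ iff columns $i,j$ of $\Delta$ agree, and in $J^\|$ iff rows $i,j$ of $E$ agree. This shows the intersections are orthogonal sums $\bigoplus_S A_{|S|-1}$ over the equivalence classes $S$, and it identifies the possible $\ell$ as class sizes. That is the precise form of the paper's claim, since a subsystem of $A_{N-1}$ is in general reducible.

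You replace the second remark with Frenkel--Kac, $V_{A_{\ell-1}}\cong L_1(\sl_\ell)$, which gives simplicity directly. The paper's argument is shorter but says nothing about which $\ell$ occur; yours computes them.

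Your reading $J^\|=J\cap T^\Delta$ is also the right one. Taken literally, $J\cap\h^\|=\{0\}$ because $\h^\|\cap T=\{0\}$. The lattice the paper actually writes down, $\langle\{\tau_i^\Delta\}_{i=1}^M\rangle_\Z$, equals $J\cap T^\Delta$ by your saturation argument.

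On the extremal case, resolve your hedge the other way: $\ell=N$ cannot be excluded by the standing assumptions, so do not try.
\begin{itemize}
\item By your own criterion, $\ell^\bot=N$ iff all columns of $\Delta$ coincide. Surjectivity then forces $M=1$ and $\Delta=\pm(1,\dots,1)$. This is the minimal nilpotent orbit example, which satisfies \autoref{conv:main} and \autoref{rem:zerocolumnrow}. In \autoref{sec:nilpotent_svoa} the paper itself uses $J^\bot=A_{N-1}$ and $V_{J^\bot}\cong L_1(\sl_N)$.
\item Dually, $\ell^\|=N$ iff all rows of $E$ coincide, i.e.\ $N-M=1$ and $E=\pm(1,\dots,1)^t$. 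This is the Kleinian case of \autoref{sec:kleinian_svoa}, where $J^\|\cong A_{N-1}$.
\end{itemize}
So the strict bound $\ell<N$ in the statement is wrong as written. The correct conclusion is $1\leq\ell\leq N$, with equality exactly in these two Gale-dual cases, which is what your fallback already says.
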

\begin{proof}
The only root subsystems of an $A$-type root system are again $A$-type root systems. Moreover, the affine subalgebras of $L_1(\sl_N)$ must be again simple quotients.
\end{proof}

It follows from the above presentation of $V(\Delta)$, the fact that $\Delta\!^! = E^t$ and the fact that the screening operators commute with $V_J$ that
\begin{equation*}
L_1(\sl_{\ell^\parallel}) \subset V_{J^\|}  \subset V(\Delta\!^!) \quad \text{and}\quad  L_1(\sl_{\ell^\bot}) \subset V_{J^\bot} \subset  V(\Delta).
\end{equation*}
We therefore get the following proposition.
\begin{prop}\label{prop:inner}
Let $A_{\ell^\|-1}$ and $A_{\ell^\bot-1}$ be as in \autoref{lem:root-sub}. Then $V(\Delta)$ has an $L_1(\sl_{\ell^\|})$ affine subalgebra coming from $L_1(\gl_N)\subset V_J$. Dually, $V(\Delta\!^!)$ has an $L_1(\sl_{\ell^{\bot}})$ affine subalgebra.
\end{prop}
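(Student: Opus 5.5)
The plan is to prove one general statement and then apply it twice, once to $\Delta$ and once to its Gale dual $\Delta\!^!$.

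\emph{General statement.} For any admissible $\Delta$, every $A_{\ell-1}$ root subsystem of $J^\bot\cap A_{N-1}$ yields an $L_1(\sl_\ell)$ subalgebra of $V(\Delta)$ sitting inside $L_1(\gl_N)\subset V_J\subset M$.

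To prove it, let $\alpha=\tau_i-\tau_j$ be such a root, so $\alpha\in J^\bot\subset T^E$, and consider the lattice vertex operator $e^\alpha\in V_J\subset\mathcal{C}\ell(\Pi T^*V)\subset M$. I would check three things.
\begin{enumerate}
\item \emph{$e^\alpha$ lies in the relative subcomplex $C$.} Only the $\tau$-parts of the zero modes of $\sigma_k^\Delta+\tau_k^\Delta$ act on $e^\alpha$, and they act by $\langle\tau_k^\Delta,\alpha\rangle=(\Delta\alpha)_k=0$, since $\alpha$ lies in the column space of $E$. The $b$-ghost zero modes also vanish on $e^\alpha$.
\item \emph{$e^\alpha$ is BRST closed.} We have $Q_{(0)}e^\alpha=\sum_k c_k\,(\tau_k^\Delta)_{(0)}e^\alpha$ up to terms involving only regular OPEs. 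This vanishes by the same orthogonality.
\item \emph{$e^\alpha$ lies in the screening kernels.} Using the explicit formula for $p^\bot(\rho_k)$, only the $-\tau_k+\sum_j((E^tE)^{-1}E^t)_{jk}\tau_j^E$ part pairs with $\alpha$. That pairing equals $-\alpha_k+(p^E\alpha)_k=0$ because $p^E\alpha=\alpha$. Hence $e^{p^\bot(\rho_k)}_{(0)}e^\alpha=0$.
\end{enumerate}
Consequently the images of $e^{\pm\alpha}$ and of the Cartan fields $\alpha\in T^E$ lie in $V_{J^\bot}\subset V(\Delta)$, consistent with \autoref{rem:intermediate}. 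They are not coboundaries, by simplicity of $V(\Delta)$ and because their OPEs with each other are nonzero. These fields close under OPE into the level-$1$ affine algebra of the root lattice $A_{\ell-1}$. By \autoref{lem:root-sub}, the vertex algebra they generate is the simple quotient $L_1(\sl_\ell)\cong V_{A_{\ell-1}}$, since it sits inside the lattice \svoa{} $V_{J^\bot}$.

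\emph{Gale duality step.} Next I identify the lattices on the two sides. Since $\Delta\!^!=E^t$ and $E^!=\Delta^t$, the span of the $\tau_i^\Delta$, namely $J^\|(\Delta)=\langle\Delta^t e_k\rangle$, equals $\langle E^!e_k\rangle=J^\bot(\Delta\!^!)$. In the same way $J^\bot(\Delta)=J^\|(\Delta\!^!)$, and both sit inside the same $A_{N-1}\subset J\cong\Z^N$. Hence the root subsystem $A_{\ell^\|-1}\subset J^\|\cap A_{N-1}$ of \autoref{lem:root-sub} is a root subsystem of $J^\bot(\Delta\!^!)$, and $A_{\ell^\bot-1}$ is one of $J^\|(\Delta\!^!)$. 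Applying the general statement to both $\Delta$ and $\Delta\!^!$ produces the two affine subalgebras, each coming from $L_1(\gl_N)\subset V_J$. This matches the display immediately preceding the proposition. The proposition's assignment of $L_1(\sl_{\ell^\|})$ to $V(\Delta)$ and $L_1(\sl_{\ell^\bot})$ to $V(\Delta\!^!)$ is the same statement after interchanging $\Delta\leftrightarrow\Delta\!^!$, under which $J^\|\leftrightarrow J^\bot$. With this reading the claim follows for both algebras.

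\emph{Main obstacle.} I expect the main obstacle to be the last point: reconciling which of $J^\|$ and $J^\bot$ is taken with respect to which matrix. The other route, using the tilde lattice $\tilde{J}^\|\subset L^\bot$ directly inside $V(\Delta)$, appears to fail. There the pairings $\langle p^\bot(\rho_k),\tilde\tau_i-\tilde\tau_j\rangle=-\delta_{ki}+\delta_{kj}$ take the value $-1$, so $e^{\tilde\alpha}$ is not annihilated by the screening operators. I would check this first. If it is confirmed, the proposition has to be read through the Gale-duality identification above, and that is how I would present the proof.
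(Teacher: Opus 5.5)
Your core argument is the paper's own. The paper reads off $V_{J^\bot}\subset V(\Delta)$ from the decomposition in \autoref{rem:intermediate}. It then uses $\Delta\!^!=E^t$, $E^!=\Delta^t$ to identify $J^\|(\Delta)=J^\bot(\Delta\!^!)$, and concludes, in the display just before the proposition, that $L_1(\sl_{\ell^\bot})\subset V_{J^\bot}\subset V(\Delta)$ and $L_1(\sl_{\ell^\|})\subset V_{J^\|}\subset V(\Delta\!^!)$. Your checks (1)--(2) verify the first inclusion directly. Check (3) is not needed: a closed element of $C$ already defines a class in $V(\Delta)=H^0(C,d)$, and \autoref{prop:commute1} then places it in the screening kernels. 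Your observation that $e^{\tilde\tau_i-\tilde\tau_j}$ is not screened is correct and matches the paper's remark after the proposition; those fields only give the noninner automorphisms of \autoref{prop:outer}.

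The step that fails is your final reconciliation. The swap $\Delta\leftrightarrow\Delta\!^!$ exchanges $V(\Delta)\leftrightarrow V(\Delta\!^!)$ and $\ell^\|\leftrightarrow\ell^\bot$ at the same time. It therefore maps the pair of assertions you proved to itself, and cannot produce the printed assignment $L_1(\sl_{\ell^\|})\subset V(\Delta)$, $L_1(\sl_{\ell^\bot})\subset V(\Delta\!^!)$.

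No argument can produce it, because the printed version is false. Take $\Delta=(1,1,1)$. Then $J^\|\cap A_2=\{0\}$ and $J^\bot=A_2$, so $\ell^\|=1$ while $\ell^\bot=2$ is allowed. The printed statement would then put an $L_1(\sl_2)$ inside the Kleinian $A_2$ algebra $V(\Delta\!^!)$ of \autoref{sec:kleinian_svoa}. By \autoref{prop:kleinianchar} and \autoref{prop:boundary-schar} with $N=3$, $M=2$, one has $\ch_{V(\Delta\!^!)}(q)=q^{1/6}(1+8q+\dots)$ and $\operatorname{sch}_{V(\Delta\!^!)}(q)=\eta(q)^4=q^{1/6}(1-4q+\dots)$. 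So its even weight-one space is only two-dimensional, spanned by the commuting classes of $\sum_j x_jy_j$ and $\sum_j\psi_j\phi_j$, and cannot hold the three currents of an $L_1(\sl_2)$.

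So do not present a relabelling argument. State plainly that the proposition as printed has $\|$ and $\bot$ interchanged; it contradicts the display right before it, \autoref{prop:outer}, and the introduction. What you, and the paper, actually prove is $L_1(\sl_{\ell^\bot})\subset V(\Delta)$ and $L_1(\sl_{\ell^\|})\subset V(\Delta\!^!)$.
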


\smallskip

Like for the Heisenberg fields $\tau^\Delta_i$, $i=1,\dots,M$, no generating field in $\smash{V_{J^\|}}$ and $\smash{V_{J^\bot}}$ is an element of $V(\Delta)$ and $V(\Delta\!^!)$, respectively, and therefore none of the affine subalgebras they may generate. In fact, they are not BRST closed by construction. By contrast, the fields generating $\smash{V_{\tilde{J}^\|}}$ and $\smash{V_{\tilde{J}^\bot}}$ are BRST closed because they are in $\smash{V_{L^\bot}^{\h^\bot}}$. However, they are not annihilated by the screening operators for $e^{\smash{p^\bot(\rho_i)}}_{(0)}$, $i=1,\dots, N$. Therefore, they also do not descend to inner automorphisms of $\smash{V(\Delta)}$ and $\smash{V(\Delta\!^!)}$, respectively. In \cite{BF25} it was shown that their zero modes map the kernel of the screening operators to itself. They hence define noninner automorphisms of $V(\Delta)$ and $V(\Delta\!^!)$, respectively.
\begin{prop}[\cite{BF25}]\label{prop:outer}
Let $\smash{\tilde{A}_{\ell^\|-1}}$ and $\smash{\tilde{A}_{\ell^\bot-1}}$ be as in \autoref{lem:root-sub}. Then $V(\Delta)$ has an infinitesimal $\sl_{\ell^\|}$ noninner automorphism symmetry. Dually, $V(\Delta\!^!)$ has an infinitesimal $\sl_{\ell^{\bot}}$ noninner automorphism symmetry.
\end{prop}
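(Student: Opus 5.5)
The plan is to follow the strategy of \cite{BF25}, Proposition~2, and transport it to the tilde lattices. First I would identify the relevant root vectors concretely. By \autoref{lem:root-sub}, an $A_{\ell^\|-1}$ root subsystem of $\tilde{J}^\|\cap\tilde{A}_{N-1}$ consists of roots $\tilde{\tau}_i-\tilde{\tau}_j$ for $i,j$ in some index set $S$ of size $\ell^\|$. For such a root $\alpha$ the fields $e^{\pm\alpha}\in V_{\tilde{J}}$ have weight~$1$. Their zero modes, together with the Cartan zero modes $(\tilde{\tau}_i-\tilde{\tau}_j)_{(0)}$, close into $\sl_{\ell^\|}$ under commutators. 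This is the standard lattice computation in $V_{\tilde{A}_{N-1}}\cong L_1(\sl_N)$, using $\langle\tilde{\tau}_i,\tilde{\tau}_j\rangle=\delta_{ij}$.

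Second, I would check that these operators act on $\HL_{L^\bot}^{\h^\bot}$. Since $\tilde{J}^\|\subset\h^\bot$ and $\tilde{J}\subset L^\bot$ (each $\tilde{\tau}_i=-\rho_i+\sigma_i+\tau_i$ is a basis vector of $L^\bot$ by \autoref{lem:latbasis}), the vertex operators $e^{\alpha}$ for $\alpha\in\tilde{J}^\|$ are elements of the Heisenberg-lattice superalgebra $\HL_{L^\bot}^{\h^\bot}$ itself. Their zero modes are therefore derivations of it. Because $\alpha$ is integral against $L^\bot$, no branch cuts arise.

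Third, and this is the key step, I would show that these zero modes preserve $V(\Delta)=\bigcap_k\ker(e^{p^\bot(\rho_k)}_{(0)})$. It suffices to show that $[e^{\alpha}_{(0)},e^{p^\bot(\rho_k)}_{(0)}]=(e^{\alpha}_{(0)}e^{p^\bot(\rho_k)})_{(0)}$ is either zero or a multiple of a screening zero mode. To this end I would compute $\langle\alpha,p^\bot(\rho_k)\rangle$ with the explicit formula for $p^\bot(\rho_k)$. Since $\alpha\in\h^\bot$, we have $\langle\alpha,p^\bot(\rho_k)\rangle=\langle\alpha,\rho_k\rangle$, and for $\alpha=\tilde{\tau}_i-\tilde{\tau}_j$ this equals $-\delta_{ik}+\delta_{jk}$.
\begin{itemize}
\item If $k\notin\{i,j\}$, the pairing is $0$. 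Then $e^{\alpha}_{(n)}e^{p^\bot(\rho_k)}$ vanishes for $n\geq0$, so the commutator vanishes.
\item If $k\in\{i,j\}$, the pairing is $\pm1$. Then $e^{\alpha}_{(0)}e^{p^\bot(\rho_k)}$ is either zero or proportional to $e^{\alpha+p^\bot(\rho_k)}$.
\end{itemize}
The condition $\alpha\in\tilde{J}^\|$ is what must make the second case harmless. Here I would use that $\tilde{\tau}^\Delta$-directions are orthogonal to $\tilde{\tau}^E$. I expect that $\alpha+p^\bot(\rho_i)=p^\bot(\rho_j)$ modulo the $\sigma^E,\tau^E$-directions, i.e.\ the commutator maps one screening to another (up to sign). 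Granting this, the intersection of kernels is preserved.

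I expect this third step to be the main obstacle: verifying exactly which combinations $\alpha+p^\bot(\rho_k)$ appear and that they reduce to screenings (or vanish) precisely when $\alpha\in\tilde{J}^\|$. If the direct pairing computation does not close, the fallback is to quote the explicit argument of \cite{BF25} and only check that our $\tilde{J}$ matches their lattice.

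Finally, the automorphisms are noninner because the fields $e^{\pm\alpha}$ are not annihilated by the screenings, so they do not lie in $V(\Delta)$. The dual statement for $V(\Delta\!^!)$ follows by applying the same argument with $\Delta$ and $E$ swapped, using $\Delta\!^!=E^t$, which exchanges $\tilde{J}^\|$ and $\tilde{J}^\bot$.
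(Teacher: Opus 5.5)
Your route is the paper's: for a root $\alpha=\tilde\tau_i-\tilde\tau_j\in\tilde J^\|$, show that $[e^{\alpha}_{(0)},e^{p^\bot(\rho_k)}_{(0)}]$ either vanishes or is, up to sign, another screening. Your pairing $\langle\alpha,p^\bot(\rho_k)\rangle=-\delta_{ik}+\delta_{jk}$ is correct. For $k=j$ the commutator vanishes outright, since $e^{\alpha}_{(n)}e^{\beta}=0$ for $n\geq-1$ when $\langle\alpha,\beta\rangle=1$.

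The gap is the decisive step. You leave it as ``granting this'', and, more importantly, you state it in a form that cannot do the job. From $p^\bot(\rho_k)=-\tilde\tau_k+\sum_{l}((E^tE)^{-1}E^t)_{lk}(\sigma_l^E+\tau_l^E)$ one gets
\[
\alpha+p^\bot(\rho_i)-p^\bot(\rho_j)=\sum_{l=1}^{N-M}\bigl(((E^tE)^{-1}E^t)_{li}-((E^tE)^{-1}E^t)_{lj}\bigr)(\sigma_l^E+\tau_l^E),
\]
which lies in $S^E\oplus T^E$ for every pair $i\neq j$.

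So ``$\alpha+p^\bot(\rho_i)=p^\bot(\rho_j)$ modulo the $\sigma^E,\tau^E$-directions'' holds automatically and never uses $\alpha\in\tilde J^\|$. It is also insufficient. The directions $S^E\oplus T^E$ are not collapsed in $\h^\bot$, and a zero mode $e^{\gamma}_{(0)}$ depends on the full momentum $\gamma$. If the difference above were nonzero, $e^{\alpha}_{(0)}e^{p^\bot(\rho_i)}$ would be a multiple of $e^{\gamma}$ with $\gamma$ not a screening momentum. Then nothing would force $\bigcap_k\ker(e^{p^\bot(\rho_k)}_{(0)})$ to be preserved.

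What you need is exact equality, and that is exactly where the hypothesis enters; this is the content of the paper's displayed identity. If $\alpha\in\tilde J^\|$, then $e_i-e_j=\Delta^tc$ for some $c$, so $E^t(e_i-e_j)=(\Delta E)^tc=0$. Equivalently, $\langle\alpha,\tilde\tau^E_l\rangle=E_{il}-E_{jl}=0$ for all $l$, which is the orthogonality you mention.

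Thus rows $i$ and $j$ of $E$ coincide, and the difference above vanishes identically. Hence $\alpha+p^\bot(\rho_i)=p^\bot(\rho_j)$ on the nose, and $[e^{\alpha}_{(0)},e^{p^\bot(\rho_i)}_{(0)}]=\pm e^{p^\bot(\rho_j)}_{(0)}$. Swapping $i$ and $j$ handles $e^{-\alpha}$, and the Cartan zero modes merely rescale the screenings.

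With this filled in, the rest of your proposal agrees with the paper: the closure into $\sl_{\ell^\|}$, the noninner argument, and the dual statement via $\Delta\!^!=E^t$.
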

\begin{proof}
We prove the assertion for $V(\Delta)$. Suppose that $\tilde{\tau}_i - \tilde{\tau}_j \in \tilde{A}_{\ell^\|}$. Then it follows from orthogonality between $\Delta$ and $E^t$ that
\begin{equation*}
\sum_{l=1}^{N-M}((E^tE)^{-1}E^t)_{li}(\sigma_l^E+\tau_l^E) = \sum_{l=1}^{N-M}((E^tE)^{-1}E^t)_{lj}(\sigma_l^E+\tau_l^E).
\end{equation*}
Thus,
\begin{equation*}
e_{(0)}^{p^\bot (\rho_i)} e^{\tilde{\tau}_i - \tilde{\tau}_j }_{(0)} = e^{\tilde{\tau}_i - \tilde{\tau}_j }_{(0)} e_{(0)}^{p^\bot (\rho_i)}+e_{(0)}^{p^\bot (\rho_j)}.
\end{equation*}
Moreover, $\smash{e_{(0)}^{p^\bot (\rho_i)}}$ and $\smash{e_{(0)}^{\tilde{\tau}_i - \tilde{\tau}_j}}$ commute for $k\neq i,j$. Therefore, the kernel of the screening operators is preserved by the action of these zero modes, which by the discussion above is what remained to be shown.
\end{proof}

\smallskip

To summarise, it follows from the above results that the symplectic dual \svoa{}s $V(\Delta)$ and $V(\Delta\!^!)$ have certain (infinitesimal) inner and noninner automorphisms of $A$-type swapped. Moreover, the Cartan elements of these Lie algebras reflect the isomorphism between the Lie algebra of infinitesimal torus isometries and the Lie algebra of stability parameters of the two symplectic dual varieties $Y_0(\Delta)$ and $Y_0(\Delta\!^!)$.

Thus, these inner and noninner automorphisms can be thought of as a refinement, at the level of chiral quantisations, of this basic symplectic duality statement.

\begin{rem}
Not all $\sl_\ell$-type noninner automorphisms arise in this way. As an example, consider $\Delta=(1)$, which gives for $V(\Delta)$ the symplectic fermion \svoa{} (see \autoref{sec:smallexample}). Although it does not satisfy the conditions of the above results, it has an $\sl_2$ noninner automorphism group.
\end{rem}
Examples will be discussed in \autoref{sec:examples}.


\subsection{A Remark on Free-Field Realisations}\label{sec:freefieldrealisation}

Recall that the free-field realisation of $V(\Delta)$ obtained in \autoref{sec:freefield2} relies on a \svoa{} embedding
\begin{equation*}
\mathcal{D}^{\mathrm{ch}}(T^*V) \otimes \mathcal{C}\ell (\Pi T^* V)\hookrightarrow \mathcal{D}^{\mathrm{ch}}(T^*(\C^\times)^N) \otimes \mathcal{C}\ell (\Pi T^* V), 
\end{equation*}
which is a chiralisation of the embedding $T^*(\C^\times)^N \hookrightarrow T^*V$. If we perform the torus symplectic reduction on $T^*(\C^\times)^N$ instead of $T^*V$, with (abusing notation) $\mu^{-1}(0)\subset T^*(\C^\times)^N$ the inverse image of the moment map, we obtain
\begin{equation*}
U \coloneqq \mu^{-1}(0)\qquot G \cong T^*(\C^\times)^{N-M}.
\end{equation*}
It was shown in \cite{BF25} that under our assumptions on $\Delta$ there exists a generic effective stability parameter $\delta_\Delta \in \Q^M$ such that $U$ can be identified with an openly embedded subset
\begin{equation*}
U \hookrightarrow Y_{\delta_\Delta} (\Delta).
\end{equation*}
It should follow from our sheaf construction that the free-field realisation obtained in \autoref{sec:freefield2} can equally be obtained by localising our sheaf of $\hbar$-adic \svoa{}s to $U\subset Y_{\delta_\Delta} (\Delta)$, and taking global sections; we leave this to future work.

Also, note that there are two ways to obtain an embedding $T^*\C^\times\hookrightarrow T^* \C$, by removing either the zero element in the basis or the fibre. It is then in principle possible to consider $2^N$ embeddings $T^*(\C^\times)^N\hookrightarrow T^*V$. It was further shown in \cite{BF25} that some of the resulting free-field realisations are better suited to manifest the noninner automorphisms studied in \autoref{sec:auto}. Others are better suited to find free-field realisations based on localisations of the form
\begin{equation*}
T^*\C^{N-M-L}\times T^*(\C^{\times})^L\hookrightarrow T^*V
\end{equation*}
for $1 \leq L \leq N-M$.


\section{Examples}\label{sec:examples}

In this section, we discuss two families of examples of (quiver) boundary hypertoric \svoa{}s $V(\Delta)$, those associated with the minimal nilpotent orbit closures for $\sl_N$ (see \autoref{sec:var1}) and those associated with the Kleinian singularities of type $A_{N-1}$ (see \autoref{sec:var2}). The minimal hypertoric \voa{}s $V_\mathrm{min}(\Delta)$ were already described in Section~10 of \cite{Kuw21}. The extension to the boundary \svoa{} in the first example was first treated in \cite{FS24} (relying on results of \cite{AM21}), where the associated variety was computed using alternative methods. Some further partial results on \svoa{}s were obtained in \cite{Yos23,Sas25}. Here, we focus in particular on the fermionic extension picture developed in \autoref{sec:hypertoricsvoa}.

Recall from \autoref{prop:properties} that the vertex operator (super)algebras $V_\mathrm{min}(\Delta)$ and $V(\Delta)$ are simple, self-contragredient and of CFT-type and from \autoref{cor:quasilisse} that the $V(\Delta)$ are quasi-lisse. The characters will be described in \autoref{sec:chars}.  We also mention that the \svoa{}s $V(\Delta)$ are $(1/2)\N$-graded by $L_0$-eigenvalues, but there is in general no relation between the parity and the integrality of these eigenvalues.


\subsection{Minimal Nilpotent Orbit Closure for \texorpdfstring{$\sl_N$}{sl\_N}}\label{sec:nilpotent_svoa}

We fix $N\in\Ns$, $M=1$, the $(1\times N)$-matrix $\Delta=(1,\dots,1)$ and the corresponding $(N\times(N-1))$-matrix $E$ as in \autoref{sec:var1}. See \autoref{sec:quiver}, and in particular \autoref{fig:ex1}, for the corresponding quiver realisation.

We summarise the properties of the hypertoric vertex operator (super)algebras and their identifications with affine vertex operator (super)algebras given in \cite{Kuw21,FS24} (see also \cite{AP14,AM17b}).
\begin{prop}\label{prop:ex1}
For all $N\in\Ns$, the minimal hypertoric \voa{} $V_\mathrm{min}(\Delta)$ is simple, self-contragredient, of CFT-type and central charge $c=-N-1$. For $N\geq3$, it is isomorphic to $V_\mathrm{min}(\Delta)\cong L_{-1}(\sl_N)$, the simple quotient of the affine \voa{} for $\sl_N$ at level $-1$. For $N\geq 4$, the associated variety of $V_\mathrm{min}(\Delta)$ is given by $\overline{\mathbb{S}_\mathrm{min}(\sl_N)}$, the closure of the unique minimal Dixmier sheet containing the minimal nilpotent orbit $\mathbb{O}_\mathrm{min}(\sl_N)$.

For $N\in\Ns$, the boundary hypertoric \svoa{} $V(\Delta)$ is simple, self-contragredient, of CFT-type (and $\N$-graded) and central charge $c=-2$.  It is quasi-lisse and its associated variety is given by $\smash{X_{V(\Delta)}\cong Y_0(\Delta)\cong\overline{\mathbb{O}_\mathrm{min}(\sl_N)}}$, the minimal nilpotent orbit closure of $\sl_N$. Moreover, for $N=1$ and $N\geq3$, it is isomorphic to $V(\Delta)\cong L_1(\psl_{N|N})\cong L_{-1}(\psl_{N|N})$, the simple quotient of the affine \svoa{} for $\psl_{N|N}$ at level $\pm1$.
\end{prop}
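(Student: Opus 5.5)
Most of the statement follows from general results already proved, so the plan is to collect those first and then handle the identifications, which are example-specific. For all $N$, simplicity, self-contragredience and CFT-type of both $V_\mathrm{min}(\Delta)$ and $V(\Delta)$ are \autoref{prop:properties}. The central charges are $c=-N-M=-N-1$ and $c=-2M=-2$ from \autoref{sec:hypertoricsvoa} and \autoref{cor:conformalvector}, since here $M=1$. Quasi-lisseness is \autoref{cor:quasilisse}. The associated variety $X_{V(\Delta)}\cong Y_0(\Delta)$ is \autoref{thm:var}, combined with the identification $Y_0(\Delta)\cong\overline{\mathbb{O}_\mathrm{min}(\sl_N)}$ from \autoref{sec:var1}. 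The $\N$-grading of $V(\Delta)$ I would read off from \autoref{prop:vanish_main2}. With $\Delta=(1,\dots,1)$, the condition $\Delta m=\Delta n$ in \autoref{rem:altrep} forces $\sum m_i=\sum n_i$. The conformal weight of a summand is then the Fock/singlet weight plus the norm of its lattice vector, and I expect the half-integral contributions coming from the $x,y$ and $\psi,\phi$ parts to pair up into integers. This is a routine check on the lowest weights.

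For the identifications, the key input is the weight-one space. By \autoref{lem:gen-in-svoa}, $V_\mathrm{min}(\Delta)$ contains the fields $x_iy_j$ for $i\neq j$ together with the $J_i$. These satisfy the $\gl_N$ operator product expansions at level $-1$, and the trace is quotiented out by the BRST relation $\sum_k x_ky_k=-\tau^\Delta$. This yields a homomorphism from the universal affine algebra $V^{-1}(\sl_N)\to V_\mathrm{min}(\Delta)$. I would then compare characters. The character of $V_\mathrm{min}(\Delta)$ is computable from \autoref{prop:vanish_main1} via the Euler--Poincaré principle, and I would match it with the known character of $L_{-1}(\sl_N)$ for $N\geq3$ from \cite{AP14,AM17b}. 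Since $V_\mathrm{min}(\Delta)$ is simple, it suffices to show that it is generated by the weight-one space. Generation I would obtain from \cite{Kuw21}, Section~10, where this case is treated, and I would also take the Dixmier-sheet description of $X_{V_\mathrm{min}(\Delta)}$ for $N\geq4$ from there.

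For $V(\Delta)$, I would use the decomposition \eqref{eq:V-dec} as a $\Z^M=\Z$ simple-current extension of $V_\mathrm{min}(\Delta)\otimes V_{J^\bot}$. Here $J^\bot$ has Gram matrix $E^tE$, the Cartan matrix of $A_{N-1}$, so $V_{J^\bot}\cong V_{A_{N-1}}\cong L_1(\sl_N)$. Together with the odd fields $x_i\phi_j$ and $y_i\psi_j$ from \autoref{prop:confvect}, the weight-one space then spans $\psl_{N|N}$. I would verify this by computing the operator product expansions, which should give a homomorphism from the universal affine superalgebra at level $1$. This is the step I expect to be the main obstacle: showing that the $\gl_N$ traces and the central direction cancel correctly to give $\psl$ rather than $\gl$ or $\mathfrak{sl}$, and that the weight-one fields generate all of $V(\Delta)$. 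For generation, I would follow \cite{FS24}, arguing that the image is a conformal extension of $L_{-1}(\sl_N)\otimes L_1(\sl_N)$ containing a generator of each simple current in \eqref{eq:V-dec}. Simplicity of $V(\Delta)$ then identifies it with $L_1(\psl_{N|N})$, and $L_1\cong L_{-1}$ follows from the outer automorphism of $\psl_{N|N}$ that negates the form. For $N=1$, the answer is the symplectic fermions from \autoref{sec:smallexample}, matching the degenerate case $\psl_{1|1}$. The case $N=2$ is excluded because of the extra isomorphisms at small rank.
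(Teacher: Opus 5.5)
Your proposal is correct and follows the paper's route. The paper reads off simplicity, self-contragredience, CFT-type, the central charges, quasi-lisseness and $X_{V(\Delta)}\cong Y_0(\Delta)\cong\overline{\mathbb{O}_\mathrm{min}(\sl_N)}$ from \autoref{prop:properties}, \autoref{cor:quasilisse}, \autoref{thm:var} and \autoref{sec:var1}, and it imports the identifications $V_\mathrm{min}(\Delta)\cong L_{-1}(\sl_N)$, the Dixmier-sheet description and $V(\Delta)\cong L_{\pm1}(\psl_{N|N})$ from \cite{Kuw21,FS24} (resting on \cite{AP14,AM17b}) without further argument, exactly as you do.

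The step you flag as the main obstacle is in fact immediate. For a charge-zero bilinear $X$ in the $\gl_{N|N}$ spanned by $x_iy_j,\psi_i\phi_j,x_i\phi_j,y_i\psi_j$, one has $dX=\bigl(\bar\mu^*_\mathrm{ch}(a)_{(1)}X\bigr)\,\partial c$, where $\bar\mu^*_\mathrm{ch}(a)$ plays the role of the identity matrix. So closedness is exactly $\operatorname{str}(X)=0$, while $\bar\mu^*_\mathrm{ch}(a)=d\,b$ is exact, which gives $\sl_{N|N}/\C\,\mathrm{id}=\psl_{N|N}$ in weight one.
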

The associated variety statement for $V(\Delta)$ holds for all $N\geq 1$, provided one identifies the minimal nilpotent orbit closure with a point for $N=1$.

We describe a few special cases for small $N$. For $N=1$, as described in \autoref{sec:smallexample}, $V_\mathrm{min}(\Delta)$ is isomorphic to the singlet \voa{} $\mathcal{M}(2)$ \cite{Wan98a}, and the associated variety is the affine line $\C$ with trivial Poisson structure. On the other hand, $V(\Delta)\cong\mathsf{SF}$ is isomorphic to the symplectic fermion \svoa{}, of central charge $c=-2$, which can be identified (somewhat trivially) with $L_{\pm1}(\psl_{1|1})$.

For $N=2$, $V(\Delta)$ is isomorphic to the simple rectangular $\mathcal{W}$-algebra of $\sl_4$ at level $-5/4$ \cite{CKLR19}, which is a conformal extension of $L_{-1}(\sl_2)$.

\medskip

In the following, for $N\geq3$, we describe how $\smash{V(\Delta)\cong L_1(\psl_{N|N})}$ can be understood as a simple-current extension of $\smash{V_\mathrm{min}(\Delta)\otimes\pi^{T^E}\cong L_{-1}(\sl_N)\otimes\pi^{\otimes(N-1)}}$, as explained in \autoref{sec:hypertoricsvoa} in the general case. Some aspects of this were also treated in \cite{AM21} for this particular example.

In \autoref{fig:ex1_voa}, we show the extension steps summarised in \autoref{sec:summary}. Like in \autoref{rem:intermediate}, we also consider the dual pair $\smash{\overline{V_\mathrm{min}(\Delta)}\otimes V_{J^\bot}}$ in $V(\Delta)$, with the double commutant $\smash{\overline{V_\mathrm{min}(\Delta)}\coloneqq\Com_{V(\Delta)}(\Com_{V(\Delta)}(V_\mathrm{min}(\Delta)))}$. In this particular case, this corresponds nicely to the vertex operator (super)algebra extension $\smash{\mathcal{U}=\overline{V_\mathrm{min}(\Delta)}}$ of $L_{-1}(\sl_N)$ defined in \cite{AM21}. (In the last row of \autoref{fig:ex1_voa}, $\alpha=\alpha^\Delta+\alpha^E$ corresponds to the orthogonal decomposition $T=T^\Delta\oplus T^E$.)
\begin{figure}[ht]
\adjustbox{scale=0.9,center}{%
\begin{tikzcd}[column sep=small]
\begin{array}{l}M_\mathrm{min}\otimes\pi^{T^E}\\[+1mm]
=\bigl(\mathcal{D}^\mathrm{ch}(T^*V)\otimes\pi^{T^\Delta}\bigr)\otimes\pi^{T^E}\end{array} \arrow[r,"\text{BRST}"]\arrow[d,hook,"A_{N-1}"]
&\begin{array}{l}V_\text{min}(\Delta)\otimes\pi^{T^E}\\[+1mm]
=L_{-1}(\sl_N)\otimes\pi^{T^E}\end{array}
\arrow[d,hook,"A_{N-1}"]\\
\begin{array}{l}M_\mathrm{min}\otimes V_{J^\bot}\\[+1mm]
=\bigl(\mathcal{D}^\mathrm{ch}(T^*V)\otimes\pi^{T^\Delta}\bigr)\otimes V_{A_{N-1}}\\[+1mm]
=\bigoplus_{\alpha\in A_{N-1}}\bigl(\mathcal{D}^\mathrm{ch}(T^*V)\otimes\pi^{T^\Delta}\bigr)\otimes\pi_\alpha^{T^E}\end{array}
\arrow[r,"\text{BRST}"]\arrow[d,hook,"\Z(N)"]
&\begin{array}{l}V_\mathrm{min}(\Delta)\otimes V_{J^\bot}\\[+1mm]
=L_{-1}(\sl_N)\otimes V_{A_{N-1}}\end{array}
\arrow[d,hook,"\Z(N)"]\\
\begin{array}{l}\bigl(\mathcal{D}^\mathrm{ch}(T^*V)\otimes V_{\Z(N)}\bigr)\otimes V_{A_{N-1}}\\[+1mm]
=\bigoplus_{(\alpha,\alpha')\in\Z(N)\oplus A_{N-1}}\bigl(\mathcal{D}^\mathrm{ch}(T^*V)\otimes\pi_\alpha^{T^\Delta}\bigr)\otimes\pi_{\alpha'}^{T^E}\end{array}
\arrow[r,"\text{BRST}"]\arrow[d,hook,"\Z/N\Z"]
&\begin{array}{l}\overline{V_\mathrm{min}(\Delta)}\otimes V_{J^\bot}\\[+1mm]
=\mathcal{U}\otimes V_{A_{N-1}}\\[+1mm]
=\bigoplus_{s\in N\Z}V_s\otimes V_{A_{N-1}}\end{array}
\arrow[d,hook,"\Z/N\Z"]\\
\begin{array}{l}M=\mathcal{D}^\mathrm{ch}(T^*V)\otimes\mathcal{C}\ell(\Pi T^*V)\\[+1mm]
=\bigl(\mathcal{D}^\mathrm{ch}(T^*V)\otimes V_\Z\bigr)\otimes V_{\Z^{N-1}}\\[+1mm]
=\bigoplus_{i\in\Z/N\Z}\bigl(\mathcal{D}^\mathrm{ch}(T^*V)\otimes V_{i+\Z(N)}\bigr)\otimes V_{\tau(i)+A_{N-1}}\\[+1mm]
=\bigoplus_{\alpha\in\Z^N}\bigl(\mathcal{D}^\mathrm{ch}(T^*V)\otimes\pi_{\alpha^\Delta}^{T^\Delta}\bigr)\otimes\pi_{\alpha^E}^{T^E}\end{array}
\arrow[r,"\text{BRST}"]
&\begin{array}{l}V(\Delta)=L_1(\psl_{N|N})\\[+1mm]
=\bigoplus_{i\in\Z/N\Z}\mathcal{U}_i\otimes V_{\tau(i)+A_{N-1}}\end{array}
\end{tikzcd}}
\caption{Vertex operator (super)algebras for the minimal nilpotent orbit closure for $\sl_N$ for $N\geq3$.}
\label{fig:ex1_voa}
\end{figure}

Here, we have identified $\smash{\mathcal{C}\ell(\Pi T^*V)=V_J\cong V_{\Z^N}=\bigoplus_{\alpha\in\Z^N}\pi^T_\alpha}$ with the lattice \svoa{} for the odd standard lattice $\smash{J=\langle\{\tau_i\}_{i=1}^N\rangle_\Z\cong\Z^N}$. The diagram in \autoref{fig:ex1_voa} then corresponds to the stepwise lattice extension
\begin{equation*}
\{0\}\oplus\{(0,\dots,0)^t\}\overset{A_{N-1}}{\hooklongrightarrow}\{0\}\oplus A_{N-1}\overset{\Z(N)}{\hooklongrightarrow}\Z(N)\oplus A_{N-1}\overset{\Z/N\Z}{\hooklongrightarrow}\Z^N.
\end{equation*}
The lattice $\Z(N)$ is given by $\langle\tau_1^\Delta\rangle_\Z$ with basis vector $\tau_1^\Delta=\tau_1+\dots+\tau_N$. Its inner product matrix is $\Delta\Delta^t=(N)$. Hence, it is indeed isomorphic to $\Z(N)$, the odd standard lattice with quadratic form scaled by $N$. This is an integral lattice, which is even if and only if $N$ is. On the other hand, the root lattice $A_{N-1}$ is concretely given by $\smash{J^\bot=\langle\{\tau_i^E\}_{i=1}^{N-1}\rangle_\Z}$ with basis vectors $\smash{\tau_1^E=\tau_1-\tau_2,\dots,\tau_{N-1}^E=\tau_{N-1}-\tau_N}$ and inner product matrix $E^tE$. This is the standard realisation of the even lattice $A_{N-1}\subset\R^N$.

The last extension is of finite index, along $\Z^N/(\Z(N)\oplus A_{N-1})\cong\Z/N\Z$. It can also be understood as the lattice gluing
\begin{equation*}
\Z^N=\bigcup_{i\in\Z(N)'/\Z(N)}(i+\Z(N))\oplus(\tau(i)+A_{N-1}),
\end{equation*}
where $\tau$ is an isomorphism between the finite abelian groups $\Z(N)'/\Z(N)\cong\Z/N\Z$ and $(A_{N-1})'/A_{N-1}\cong\Z/N\Z$. In fact, $\tau$ is an odd analogue of an anti-isometry between discriminant forms (which would describe the gluing to a unimodular lattice if all the involved lattices were even).

The right-hand side of the diagram shows how these extensions behave under the relative BRST cohomology. The resulting extension picture coincides nicely with the realisation of $L_{-1}(\sl_N)$ inside $L_1(\psl_{N|N})$ described in \cite{AM21} (see also \cite{FS24}). Indeed, we recover the conformal embedding
\begin{equation*}
L_1(\psl_{N|N})\supseteq L_{-1}(\sl_N)\otimes L_1(\sl_N),
\end{equation*}
owing to the fact that the even part of the Lie algebra $\psl_{N|N}$ is $\sl_N\oplus\sl_N$. Recall that $\smash{L_1(\sl_N)\cong V_{A_{N-1}}}$ is the lattice \voa{} associated with the root lattice $A_{N-1}$ of $\sl_N$.

Some aspects of the representation theory of $L_{-1}(\sl_N)$ are described in \cite{AM21}, as well as the decomposition of $L_1(\psl_{N|N})$ into modules for $L_{-1}(\sl_N)\otimes V_{A_{N-1}}$. This can be matched perfectly with our lattice extension picture before BRST reduction. Following \cite{AM21}, we denote by $V_s$, $s\in\Z$, the complete list of irreducible ordinary modules of $L_{-1}(\sl_N)=V_0$, which have group-like fusion parametrised by $\Z$ and lowest $L_0$-weights $s^2/6+|s|/2$. We can read off from the above diagram that the $V_s$ for $s\in\Z$ are exactly the relative BRST cohomologies of the $\smash{\mathcal{D}^\mathrm{ch}(T^*V)\otimes\pi_\alpha^{T^\Delta}}$ for $\alpha\in\Z(N)'=(1/N)\Z(N)$, identifying $s=N\alpha$.

Then, we consider the simple-current extension $\smash{\mathcal{U}=\bigoplus_{s\in N\Z}V_s}$, which is the BRST reduction of $\mathcal{D}^\mathrm{ch}(T^*V)\otimes V_{\Z(N)}$. This is now the commutant of $V_{A_{N-1}}$ inside $\smash{L_1(\psl_{N|N})}$, i.e.\ $\smash{\mathcal{U}=\overline{V_\mathrm{min}(\Delta)}}$. In fact, $\mathcal{U}$ and $V_{A_{N-1}}$ form a dual pair in $\smash{L_1(\psl_{N|N})}$. $\mathcal{U}$ is a simple \svoa{} and (completely parallel to $V_{\Z(N)}$ before BRST reduction) a \voa{} if $N$ is even. The $N$ many irreducible ordinary modules of $\smash{\mathcal{U}=\mathcal{U}_0}$ are given by $\smash{\smash{\mathcal{U}_i=\bigoplus_{s\in N\Z}V_{s+i}}}$ for $i\in\Z/N\Z$ with group-like fusion rules parametrised by $\Z/N\Z$.

Finally, we see that the \svoa{} $\smash{V(\Delta)=L_1(\psl_{N|N})}$ is a mirror extension (of index $\Z/N\Z$) of the dual pair $\smash{\overline{V_\mathrm{min}(\Delta)}\otimes V_{J^\bot}=\mathcal{U}\otimes V_{A_{N-1}}}$, as indicated in the above diagram. It is described by the super analogue $\tau$ of a braiding-reversing equivalence, which is induced from the corresponding lattice gluing map $\tau$ before BRST reduction.


\subsection{Kleinian Singularity of Type \texorpdfstring{$A_{N-1}$}{A\_(N-1)}}\label{sec:kleinian_svoa}

We then consider the symplectic dual example to the previous one. That is, we fix $N\in\Ns$, $M=N-1$, the $((N-1)\times N)$-matrix $\Delta$ and the $(N\times 1)$-matrix $\smash{E=(1,\dots,1)^t}$ as in \autoref{sec:var2}. Again, see \autoref{sec:quiver}, and in particular \autoref{fig:ex2}, for the corresponding quiver realisation.

We summarise the properties of the hypertoric vertex operator (super)algebras and, in the minimal case, their identifications with certain $\mathcal{W}$-algebras \cite{FS04} given in \cite{Kuw21}. See, \autoref{rem:namikawa} and \cite{Nag21,Kuw21} for details regarding the (finite) Namikawa-Weyl group $\mathbb{W}$.
\begin{prop}\label{prop:ex2}
For $N\geq2$, the minimal hypertoric \voa{} $V_\mathrm{min}(\Delta)$ is simple, self-contragredient, of CFT-type and central charge $c=-2N+1$. After taking invariants under the Namikawa-Weyl group $\mathbb{W}$, it becomes isomorphic to $V_\mathrm{min}(\Delta)^\mathbb{W}\cong\mathcal{W}_{-N+1}(\sl_N,f_\mathrm{sub})$, the simple quotient of the subregular $\mathcal{W}$-algebra for $\sl_N$ at level $-N+1$. The associated variety of $V_\mathrm{min}(\Delta)^\mathbb{W}$ is given by the Slodowy slice $\mathcal{S}_{f_\mathrm{sub}}\subset\sl_N^*$.

For $N\geq2$, the boundary hypertoric \svoa{} $V(\Delta)$ is simple, self-contragredient, of CFT-type (and $\frac{1}{2}\N$-graded) and central charge $c=-2N+2$. It is quasi-lisse and its associated variety is given by $\smash{X_{V(\Delta)}\cong Y_0(\Delta)\cong X_{A_{N-1}}}$, the Kleinian singularity of type $A_{N-1}$, which is isomorphic to $\mathcal{S}_{f_\mathrm{sub}}\cap\mathcal{N}$, the Slodowy slice intersected with the nilpotent cone of $\sl_N$
\end{prop}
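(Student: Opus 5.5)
The plan is to treat the two halves of \autoref{prop:ex2} separately, since the minimal half is essentially a citation while the boundary half is a specialisation of the general results.

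\emph{Minimal case.} Simplicity, self-contragredience and the CFT-type property are already given by \autoref{prop:properties}. For the central charge, I would specialise $c=-N-M$ to $M=N-1$, which gives $-2N+1$. The identification $V_\mathrm{min}(\Delta)^\mathbb{W}\cong\mathcal{W}_{-N+1}(\sl_N,f_\mathrm{sub})$ and the statement that its associated variety is the Slodowy slice $\mathcal{S}_{f_\mathrm{sub}}$ I would quote directly from Section~10 of \cite{Kuw21}, where this example is treated. As a consistency check I would compare the rank-one free-field description from \autoref{prop:vanish_main1}: here $N-M=1$ and $E=(1,\dots,1)^t$, so one obtains a $\Z$-simple-current extension of $\mathcal{M}(2)^{\otimes N}$ tensored with a rank-$2$ Heisenberg algebra. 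This should match the Feigin--Semikhatov realisation \cite{FS04}.

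\emph{Boundary case.} Simplicity, self-contragredience and CFT type again come from \autoref{prop:properties}. The central charge $c=-2M=-2N+2$ follows from \autoref{cor:conformalvector}. For the $\frac12\N$-grading, the generators of $M$ have weight $1/2$, and the field $W^{1}=x_1\cdots x_N$ already has weight $N/2$, which is half-integral when $N$ is odd; the grading therefore cannot be improved to $\N$ in general.

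Quasi-lisseness and $X_{V(\Delta)}\cong Y_0(\Delta)$ follow from \autoref{thm:var} and \autoref{cor:quasilisse}. From \autoref{sec:var2} we have $Y_0(\Delta)\cong X_{A_{N-1}}$, via $e\mapsto W^1$, $f\mapsto W^{-1}$, $h\mapsto x_1y_1$. It remains to identify $X_{A_{N-1}}$ with $\mathcal{S}_{f_\mathrm{sub}}\cap\mathcal{N}$, for which I would invoke the classical theorem of Brieskorn--Slodowy. If a more self-contained argument is wanted, I would use the universal Poisson deformation $\tilde{Y}_0(\Delta)\to\g^*/\mathbb{W}$ from \autoref{rem:namikawa}, whose total space for $A_{N-1}$ is $\mathcal{S}_{f_\mathrm{sub}}$, and then apply \autoref{prop:poiss-centre}: quotienting by the augmentation ideal of the Poisson centre yields the zero fibre. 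The Poisson centre of $\C[\mathcal{S}_{f_\mathrm{sub}}]$ is $\C[\sl_N]^{\sl_N}$, so this zero fibre is precisely the intersection with $\mathcal{N}$.

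The main obstacle I expect is in the minimal half: the precise matching of Namikawa--Weyl invariants with the simple subregular $\mathcal{W}$-algebra, including the check that the level is $-N+1$. Since this is established in \cite{Kuw21}, I would cite it rather than reprove it. Everything else is specialisation of earlier results.
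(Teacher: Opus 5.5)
Your proposal is correct and follows the paper. The paper gives no separate argument: the statement simply assembles \autoref{prop:properties}, the central charges $c=-N-M$ and $c=-2M$ at $M=N-1$, \autoref{thm:var}, \autoref{cor:quasilisse} and $Y_0(\Delta)\cong X_{A_{N-1}}$ from \autoref{sec:var2}, cites \cite{Kuw21} for the $\mathcal{W}$-algebra identification and its associated variety, and treats $X_{A_{N-1}}\cong\mathcal{S}_{f_\mathrm{sub}}\cap\mathcal{N}$ as the classical Brieskorn--Slodowy fact.

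You should drop your optional ``self-contained'' route to that last isomorphism, because it is not independent. Asserting that the total space of the universal Poisson deformation of $X_{A_{N-1}}$ over $\g^*/\mathbb{W}$ is $\mathcal{S}_{f_\mathrm{sub}}$, compatibly with the adjoint quotient, already is Slodowy's theorem. Moreover, \autoref{prop:poiss-centre} is stated for $\tilde{Y}_\delta(\Delta)$ over $\g^*$, not for the $\mathbb{W}$-quotient family.
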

The case $N=1$ could be considered as well, but is is somewhat trivial as $M=0$, so no reduction is performed at all (cf.\ \autoref{rem:zerocolumnrow}). Correspondingly, $V_\mathrm{min}(\Delta)\cong\mathcal{D}^\mathrm{ch}(T^*\C)$ is the Weyl vertex algebra of central charge $c=-1$, and $V(\Delta)\cong\mathcal{D}^\mathrm{ch}(T^*\C)\otimes\mathcal{C}\ell(\Pi T^*\C)$, tensored with a Clifford \svoa{}, has central charge $c=0$. For both, the associated variety is $Y_0(\Delta)=T^*\C$, which is not singular.

We explain how the \voa{} $V_\mathrm{min}(\Delta)$ from \cite{Kuw21}, satisfying $V_\mathrm{min}(\Delta)^\mathbb{W}\cong\mathcal{W}_{-N+1}(\sl_N,f_\mathrm{sub})$, extends to the nicer (and in particular quasi-lisse) \svoa{} $V(\Delta)$ considered in this work. Again, as an intermediate step, we take the double commutant $\smash{\overline{V_\mathrm{min}(\Delta)}\coloneqq\Com_{V(\Delta)}(\Com_{V(\Delta)}(V_\mathrm{min}(\Delta)))}$ so that $\smash{\overline{V_\mathrm{min}(\Delta)}\otimes V_{J^\bot}}$ forms a dual pair in $V(\Delta)$. This is summarised in the diagram in \autoref{fig:ex2_voa} (cf.\ \autoref{sec:summary}), where in the last row, $\alpha=\alpha^\Delta+\alpha^E$ corresponds to the orthogonal decomposition $T=T^\Delta\oplus T^E$.
\begin{figure}[ht]
\adjustbox{scale=0.9,center}{%
\begin{tikzcd}[column sep=small]
\begin{array}{l}M_\mathrm{min}\otimes\pi^{T^E}\\[+1mm]
=\bigl(\mathcal{D}^\mathrm{ch}(T^*V)\otimes\pi^{T^\Delta}\bigr)\otimes\pi^{T^E}\end{array} \arrow[r,"\text{BRST}"]\arrow[d,hook,"\Z(N)"]
&V_\text{min}(\Delta)\otimes\pi^{T^E}
\arrow[d,hook,"\Z(N)"]\\
\begin{array}{l}M_\text{min}\otimes V_{J^\bot}\\[+1mm]
=\bigl(\mathcal{D}^\mathrm{ch}(T^*V)\otimes\pi^{T^\Delta}\bigr)\otimes V_{\Z(N)}\\[+1mm]
=\bigoplus_{\alpha\in\Z(N)}\bigl(\mathcal{D}^\mathrm{ch}(T^*V)\otimes\pi^{T^\Delta}\bigr)\otimes\pi_\alpha^{T^E}\end{array}
\arrow[r,"\text{BRST}"]\arrow[d,hook,"A_{N-1}"]
&\begin{array}{l}V_\text{min}(\Delta)\otimes V_{J^\bot}\\[+1mm]
=V_\text{min}(\Delta)\otimes V_{\Z(N)}\end{array}
\arrow[d,hook,"A_{N-1}"]\\
\begin{array}{l}\bigl(\mathcal{D}^\mathrm{ch}(T^*V)\otimes V_{A_{N-1}}\bigr)\otimes V_{\Z(N)}\\[+1mm]
=\bigoplus_{(\alpha,\alpha')\in A_{N-1}\oplus\Z(N)}\bigl(\mathcal{D}^\mathrm{ch}(T^*V)\otimes\pi_\alpha^{T^\Delta}\bigr)\otimes\pi_{\alpha'}^{T^E}\end{array}
\arrow[r,"\text{BRST}"]\arrow[d,hook,"\Z/N\Z"]
&\begin{array}{l}\overline{V_\mathrm{min}(\Delta)}\otimes V_{J^\bot}\\[+1mm]
=\overline{V_\mathrm{min}(\Delta)}\otimes V_{\Z(N)}\\[+1mm]
=\bigoplus_{\alpha\in A_{N-1}}\mathcal{W}_\alpha\otimes V_{\Z(N)}\end{array}
\arrow[d,hook,"\Z/N\Z"]\\
\begin{array}{l}
M=\mathcal{D}^\mathrm{ch}(T^*V)\otimes\mathcal{C}\ell(\Pi T^*V)\\[+1mm]
=\bigl(\mathcal{D}^\mathrm{ch}(T^*V)\otimes V_{\Z^{N-1}}\bigr)\otimes V_\Z\\[+1mm]
=\bigoplus_{i\in\Z/N\Z}\bigl(\mathcal{D}^\mathrm{ch}(T^*V)\otimes V_{i+A_{N-1}}\bigr)\otimes V_{\tau(i)+\Z(N)}\\[+1mm]
=\bigoplus_{\alpha\in\Z^N}\bigl(\mathcal{D}^\mathrm{ch}(T^*V)\otimes\pi_{\alpha^\Delta}^{T^\Delta}\bigr)\otimes\pi_{\alpha^E}^{T^E}\end{array}
\arrow[r,"\text{BRST}"]
&\begin{array}{l}V(\Delta)\\[+1mm]
=\bigoplus_{i\in\Z/N\Z}\overline{V_\mathrm{min}(\Delta)}_i\otimes V_{\tau(i)+\Z(N)}\end{array}
\end{tikzcd}}
\caption{Vertex operator (super)algebras for the Kleinian singularity of type $A_{N-1}$.}
\label{fig:ex2_voa}
\end{figure}

Again, we have identified $\smash{\mathcal{C}\ell(\Pi T^*V)=V_J\cong V_{\Z^N}=\bigoplus_{\alpha\in\Z^N}\pi^T_\alpha}$ with the lattice \svoa{} for the odd standard lattice $\smash{J=\langle\{\tau_i\}_{i=1}^N\rangle_\Z\cong\Z^N}$. The left column of the diagram is based on the stepwise lattice extension
\begin{equation*}
\{(0,\dots,0)^t\}\oplus\{0\}\overset{\Z(N)}{\hooklongrightarrow}\{0\}\oplus\Z(N)\overset{A_{N-1}}{\hooklongrightarrow}A_{N-1}\oplus \Z(N)\overset{\Z/N\Z}{\hooklongrightarrow}\Z^N
\end{equation*}
with the (even) type-$\smash{A_{N-1}}$ root lattice $\smash{A_{N-1}\cong\langle\{\tau_i^\Delta\}_{i=1}^{N-1}\rangle_\Z}$ with basis vectors $\smash{\tau_1^\Delta=\tau_1-\tau_2,\dots,\tau_{N-1}^\Delta=\tau_{N-1}-\tau_N}$ and inner product matrix $\Delta\Delta^t$ and with the integral lattice $\Z(N)\cong J^\bot=\langle\tau_1^E\rangle_\Z$ with basis vector $\tau_1^E=\tau_1+\dots+\tau_N$ and inner product matrix $E^tE=(N)$, which yields an even lattice whenever $N$ is even.

The last extension step is of finite index $\Z^N/(A_{N-1}\oplus\Z(N))\cong\Z/N\Z$ and can be described by the lattice gluing
\begin{equation*}
\Z^N=\bigcup_{i\in(A_{N-1})'/A_{N-1}}(i+A_{N-1})\oplus(\tau(i)+\Z(N)),
\end{equation*}
where $\tau$ is an odd analogue of an anti-isometry between the finite abelian groups $(A_{N-1})'/A_{N-1}\cong\Z/N\Z$ and $\Z(N)'/\Z(N)\cong\Z/N\Z$.

These extensions are transferred to the right column of the diagram by applying the relative BRST cohomology. We denote by $\mathcal{W}_\alpha$ for $\alpha\in(A_{N-1})'$ the irreducible modules for $\smash{V_\mathrm{min}(\Delta)=\mathcal{W}_0}$ obtained by BRST reduction of $\smash{\mathcal{D}^\mathrm{ch}(T^*V)\otimes\pi_\alpha^{T^\Delta}}\!$. Then we take the simple-current extension $\smash{\overline{V_\mathrm{min}(\Delta)}=\bigoplus_{\alpha\in A_{N-1}}\mathcal{W}_\alpha}$ of $\smash{V_\mathrm{min}(\Delta)=\mathcal{W}_0}$. It is a \voa{} obtained as BRST reduction of $\smash{\mathcal{D}^\mathrm{ch}(T^*V)\otimes V_{A_{N-1}}}$, and together with the vertex operator (super)algebra $\smash{V_{J^\bot}=V_{\Z(N)}}$ it forms a dual pair in $V(\Delta)$. $\smash{\overline{V_\mathrm{min}(\Delta)}}$ has the irreducible modules $\smash{\overline{V_\mathrm{min}(\Delta)}_i=\bigoplus_{\alpha\in A_{N-1}}\mathcal{W}_{\alpha+i}}$ indexed by $i\in\Z/N\Z\cong(A_{N-1})'/A_{N-1}$.

Finally, the \svoa{} $\smash{V(\Delta)}$ is a mirror extension (of index $\Z/N\Z$) of the dual pair $\smash{\overline{V_\mathrm{min}(\Delta)}\otimes V_{J^\bot}=\overline{V_\mathrm{min}(\Delta)}\otimes V_{\Z(N)}}$, as indicated in the above diagram. It is described by the super analogue $\tau$ of a braiding-reversing equivalence, which is induced from the corresponding lattice gluing map $\tau$ before BRST reduction.

\medskip

The associated varieties of the minimal and hypertoric vertex operator (super)algebras in the two examples fit nicely into the the diagram in \autoref{fig:assvar} (assuming $N\geq4$ and $N\geq2$, respectively), where going to the quasi-lisse fermionic extension corresponds to intersecting with the nilpotent cone $\mathcal{N}\subset\sl_N^*$. We expect a similar behaviour to be true in general. As we explained in \autoref{sec:sym-dual}, the hypertoric varieties on the bottom are symplectic duals. In both cases, the resolution $\pi\colon Y_\delta(\Delta)\to Y_0(\Delta)$ is essentially the Springer resolution.
\begin{figure}[ht]
\begin{tikzcd}
V_\mathrm{min}(\Delta)\cong L_{-1}(\sl_N)\arrow[d,hook]&\overline{\mathbb{S}_\mathrm{min}(\sl_N)}\subset\sl_N^*\\
V(\Delta)\cong L_{\pm1}(\psl_{N|N})&\overline{\mathbb{O}_\mathrm{min}(\sl_N)}=\overline{\mathbb{S}_\mathrm{min}(\sl_N)}\cap\mathcal{N}\subset\mathcal{N}\arrow[u,hook]\\
V_\mathrm{min}(\Delta)^\mathbb{W}=\mathcal{W}_{-N+1}(\sl_N,f_\mathrm{sub})\arrow[d,hook]&\mathcal{S}_{f_\mathrm{sub}}\subset\sl_N^*\\
V(\Delta)&X_{A_{N-1}}\cong\mathcal{S}_{f_\mathrm{sub}}\cap\mathcal{N}\subset\mathcal{N}\arrow[u,hook]
\end{tikzcd}
\caption{Symplectic dual \svoa{}s and their associated varieties.}
\label{fig:assvar}
\end{figure}


\section{Characters and Modularity}\label{sec:chars}

In this section, using the Euler-Poincaré principle, we compute the characters and supercharacters of the minimal $V_\mathrm{min}(\Delta)$ and boundary hypertoric vertex operator (super)algebras $V(\Delta)$ constructed in \autoref{sec:hypertoricsvoa} and study their modular properties. By \autoref{thm:glob-sec}, these are also the characters of the global sections of the corresponding sheaves constructed in \autoref{sec:sheaves} over the smooth hypertoric varieties. We will see that in the boundary case the supercharacters (and conjecturally also the characters) are quasimodular. One the other hand, in the minimal case, they are only partial (or false) theta functions.


\subsection{General Expression}

We recall from \autoref{sec:hypertoricSVOA} the cohomological definition of the hypertoric vertex operator (super)algebras
\begin{equation*}
V_\mathrm{min}(\Delta)=H^\bullet(C_\mathrm{min},d)\quad\text{and}\quad V(\Delta)=H^\bullet(C,d).
\end{equation*}
With this, by the Euler-Poincaré principle, the characters and supercharacters are given by
\begin{align*}
\ch_{V_\mathrm{min}(\Delta)}(q)&=\tr_{V_\mathrm{min}(\Delta)}q^{L_0-c/24}=\tr_{C_\mathrm{min}}q^{L_0-c/24}(-1)^{P_\mathrm{gh}},\\
\operatorname{(s)ch}_{V(\Delta)}(q)&=\tr_{V(\Delta)}q^{L_0-c/24}(\pm1)^P=\tr_Cq^{L_0-c/24}(\pm1)^{P_\mathrm{m}}(-1)^{P_\mathrm{gh}}.
\end{align*}
Here, $P$ denotes the parity operator. On $C_\mathrm{min}\subset\tilde{C}_\mathrm{min}\coloneqq M_\mathrm{min}\otimes\mathcal{C}\ell(\Pi T^*\g)$ and $C\subset\tilde{C}=M\otimes\mathcal{C}\ell(\Pi T^*\g)$ we split the parity operator as $P=P_\mathrm{m}+P_\mathrm{gh}$, where $P_\mathrm{m}$ only acts on $M_\mathrm{min}$ or $M$ and $P_\mathrm{gh}$ only on $\mathcal{C}\ell(\Pi T^*\g)$.

Recall further that $C$ and $C_\mathrm{min}$ are given by the kernels in $\tilde{C}$ and $\tilde{C}_\mathrm{min}$, respectively, under the zero modes of $b_i$ and of $d\, b_i=Q_{(0)}b_i=\bar\mu^*_\mathrm{ch}(a_i)$ for all $i=1,\dots,M$. The former reduce the ghost free fermion \svoa{} $\mathcal{C}\ell(\Pi T^*\g)$ to the ghost symplectic fermion \svoa{} strongly generated by $\partial c_i$ and $b_i$ for $i=1,\dots,M$. Hence, $C_\mathrm{min}$ is given by the trivial subrepresentation of $\smash{\g=(\gl_1)^M}$ acting on $\bar{C}_\text{min}$ via $\smash{\mu^*_\mathrm{ch}(a_i)_{(0)}}$, and similarly $C$ the trivial subrepresentation in $\bar{C}$. The weights of the action are precisely given by the underlying torus action
\begin{align*}
\mu^*_\mathrm{ch}(a_i)_{(0)}x_j&=\Delta_{ij}x_j,&
\mu^*_\mathrm{ch}(a_i)_{(0)}y_j&=-\Delta_{ij}y_j,\\
\mu^*_\mathrm{ch}(a_i)_{(0)}\psi_j&=\Delta_{ij}\psi_j,&
\mu^*_\mathrm{ch}(a_i)_{(0)}\phi_j&=-\Delta_{ij}\phi_j
\end{align*}
for $i=1,\dots,M$ and $j=1,\dots,N$ and
\begin{equation*}
\mu^*_\mathrm{ch}(\tau_i)_{(0)}c_j=0
\end{equation*}
for $i,j=1,\dots,M$ in the minimal case, extended trivially to the ghosts.

Introducing the formal variables $z_1,\dots,z_M$ recording the weights of the torus action of $\smash{\g=(\gl_1)^M}$ acting on the free-field vertex operator (super)algebras $\bar{C}_\text{min}$ and $\bar{C}$, we can give the following closed expressions for the characters:
\begin{prop}\label{prop:char_general}
The character of the minimal hypertoric \voa{} $V_\mathrm{min}(\Delta)$ is
\begin{align*}
&\ch_{V_\mathrm{min}(\Delta)}(q)=\tr_{C_\mathrm{min}}q^{L_0-c/24}(-1)^{P_\mathrm{gh}}\\
&=\operatorname{coeff}_{z_1^0\dots z_M^0}\bigl(\tr_{\bar{C}_\mathrm{min}}q^{L_0-c/24}(-1)^{P_\mathrm{gh}}z_1^{\mu^*_\mathrm{ch}(a_1)_{(0)}}\dots z_M^{\mu^*_\mathrm{ch}(a_M)_{(0)}}\bigr)\\[-1mm]
&=q^{N/24}\eta(q)^M\operatorname{coeff}_{z_1^0\dots z_M^0}\Bigl(\prod_{j=1}^N(q^{1/2}z_1^{\Delta_{1j}}\dots z_M^{\Delta_{Mj}};q)_\infty^{-1}(q^{1/2}z_1^{\Delta_{1j}}\dots z_M^{\Delta_{Mj}};q)_\infty^{-1}\Bigr).
\end{align*}
The (super)character of the boundary hypertoric \svoa{} $V(\Delta)$ is given by
\begin{align*}
&\operatorname{(s)ch}_{V(\Delta)}(q)=\tr_{C}q^{L_0-c/24}(\pm1)^{P_\mathrm{m}}(-1)^{P_\mathrm{gh}}\\
&=\operatorname{coeff}_{z_1^0\dots z_M^0}\bigl(\tr_{\bar{C}}q^{L_0-c/24}(\pm1)^{P_\mathrm{m}}(-1)^{P_\mathrm{gh}}z_1^{\mu^*_\mathrm{ch}(a_1)_{(0)}}\dots z_M^{\mu^*_\mathrm{ch}(a_M)_{(0)}}\bigr)\\[-1mm]
&=\eta(q)^{2M}\operatorname{coeff}_{z_1^0\dots z_M^0}\biggl(\prod_{j=1}^N\frac{(\mp q^{1/2}z_1^{\Delta_{1j}}\dots z_M^{\Delta_{Mj}};q)_\infty(\mp q^{1/2}z_1^{-\Delta_{1j}}\dots z_M^{-\Delta_{Mj}};q)_\infty}{(q^{1/2}z_1^{\Delta_{1j}}\dots z_M^{\Delta_{Mj}};q)_\infty(q^{1/2}z_1^{-\Delta_{1j}}\dots z_M^{-\Delta_{Mj}};q)_\infty}\biggr).
\end{align*}
\end{prop}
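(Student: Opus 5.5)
The plan is to apply the Euler--Poincaré principle to the relative complexes and then evaluate the resulting graded traces on the free-field building blocks. By \autoref{prop:vanish}, only the ghost-degree-zero cohomology survives. Moreover, $d$ commutes with $L_0$, with the parity operators and with the zero modes $\bar\mu^*_\mathrm{ch}(a_i)_{(0)}$, and the weight spaces of $C$ and $C_\mathrm{min}$ are finite-dimensional once the torus weights are fixed. Hence the (super)trace of $q^{L_0-c/24}$ over the cohomology equals the alternating trace over the complex, with sign $(-1)^{P_\mathrm{gh}}$. Since ghost degree and ghost parity agree modulo $2$, this gives the first equality in each formula.

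Next I would pass from $C$ to $\bar C=\bigcap_i\ker(b_{i(0)})$. I would note that $\bar\mu^*_\mathrm{ch}(a_i)_{(0)}$ acts semisimply on $\bar C$, with integer eigenvalues given by the torus weights listed just before the statement. So imposing $\ker(\bar\mu^*_\mathrm{ch}(a_i)_{(0)})$ amounts to extracting the $z_1^0\cdots z_M^0$ coefficient of the refined trace. Here semisimplicity is clear: the zero modes act diagonally on the PBW monomial basis in $x_{j(-n)},y_{j(-n)},\psi_{j(-n)},\phi_{j(-n)}$, because $\sigma_j$ and $\tau_j$ act by fermion/boson number. A point to check is that each $q$-graded piece contributes finitely many $z$-monomials, or at least that the coefficient extraction is well defined as formal series in $q^{1/2}$ with Laurent polynomial coefficients. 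I expect this to hold because every generator has $L_0$-weight $1/2$ or more, except $c_i$, which is removed in $\bar C$.

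Finally, I would compute the trace as a product over tensor factors. For $\mathcal{D}^\mathrm{ch}(T^*V)$, the modes $x_{j(-n-1/2)}$ and $y_{j(-n-1/2)}$, with $n\geq0$, contribute the bosonic factors $(q^{1/2}z^{\pm\Delta_j};q)_\infty^{-1}$, where $z^{\Delta_j}=z_1^{\Delta_{1j}}\cdots z_M^{\Delta_{Mj}}$. For $\mathcal{C}\ell(\Pi T^*V)$, the modes of $\psi_j$ and $\phi_j$ give $(\mp q^{1/2}z^{\pm\Delta_j};q)_\infty$, with the sign $\mp$ coming from $(\pm1)^{P_\mathrm{m}}$. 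For the ghost symplectic fermions, the modes $\partial c_{i(-n)}$ and $b_{i(-n)}$, with $n\geq1$, are odd and counted with $-1$, giving $(q;q)_\infty^{2M}$. In the minimal case, the factor $\pi^{T^\Delta}$ contributes $(q;q)_\infty^{-M}$, leaving $(q;q)_\infty^{M}$ overall.

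It then remains to collect the powers $q^{-c/24}$. For $V(\Delta)$ we have $c=-2M$, so $q^{2M/24}(q;q)_\infty^{2M}=\eta(q)^{2M}$. For $V_\mathrm{min}(\Delta)$ we have $c=-N-M$, which gives $q^{(N+M)/24}(q;q)_\infty^M=q^{N/24}\eta(q)^M$. The main obstacle is not the computation itself but justifying the Euler--Poincaré step rigorously. This requires the finiteness of the joint $(L_0,z)$-weight spaces of $\bar C$, so that alternating sums make sense. I would handle it by the weight-counting argument above.

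Two points need care when matching the exact form of the displayed product. In the minimal character the second factor should read $z^{-\Delta_j}$; I would treat this as a typo in the statement. I would also check the sign convention for $(-1)^{P_\mathrm{gh}}$ so that the ghost bosonic-weight modes indeed give $(q;q)_\infty^{2M}$ rather than its inverse.
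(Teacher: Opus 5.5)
Your proposal is correct and follows the paper's route: apply the Euler--Poincaré principle to the relative complex, identify $C$ (resp.\ $C_\mathrm{min}$) as the zero-torus-weight part of the symplectic-fermion complex $\bar C$ (resp.\ $\bar C_\mathrm{min}$), hence as a $z_1^0\cdots z_M^0$-coefficient, and then multiply the free-field traces. Your reading of the second factor in the minimal formula as $(q^{1/2}z_1^{-\Delta_{1j}}\cdots z_M^{-\Delta_{Mj}};q)_\infty^{-1}$ is right; it is what the paper later uses via $\xi(z,q)$. The finiteness concern is harmless, since every $L_0$-weight space of $\tilde C$ is already finite-dimensional: the matter generators have weight $1/2$ and the ghost weight-zero part is $2^M$-dimensional.
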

Here, we understand all expressions as expanded as a formal power series in $q^{1/2}$, times $q^{-c/24}$ with $c=-N-M$ and $c=-2M$, respectively. The coefficients will then be Laurent polynomials in $z$ with integer coefficients.

In the boundary case, this is already stated in equation~(B.8) in \cite{BCDN23}. These characters are the half-indices of the 3d $\mathcal{N}=4$ supersymmetric field theories (for abelian gauge groups) whose boundary \voa{}s we have constructed.

In the following, we shall apply $\smash{\operatorname{coeff}_{z_1^0\dots z_M^0}}$ (or equivalently $M$ contour integrations) to obtain more explicit expressions for these characters. This will also allow us to prove their modularity in some cases.


\subsection{Boundary Supercharacter}\label{sec:boundaryschar}

The case of the supercharacter of the boundary \svoa{} is the easiest. Here, the denominator and numerator of the fraction in \autoref{prop:char_general} cancel and we simply obtain:
\begin{prop}\label{prop:boundary-schar}
The supercharacter of the boundary hypertoric \svoa{} $V(\Delta)$ is
\begin{equation*}
\operatorname{sch}_{V(\Delta)}(q)=\eta(q)^{2M},
\end{equation*}
which only depends on the rank $M$ of the torus $G$.
\end{prop}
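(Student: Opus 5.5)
We start from the closed expression for the supercharacter in \autoref{prop:char_general}, i.e.\ we take the lower choice of sign, $(\pm1)^{P_\mathrm{m}}=(-1)^{P_\mathrm{m}}$, so that $\mp$ becomes $+$. The product over $j$ then reads
\begin{equation*}
\prod_{j=1}^N\frac{(q^{1/2}z^{\Delta_j};q)_\infty(q^{1/2}z^{-\Delta_j};q)_\infty}{(q^{1/2}z^{\Delta_j};q)_\infty(q^{1/2}z^{-\Delta_j};q)_\infty}=1,
\end{equation*}
with $z^{\Delta_j}=z_1^{\Delta_{1j}}\cdots z_M^{\Delta_{Mj}}$. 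Numerator and denominator cancel factor by factor, which is the formal statement that the bosonic $\beta\gamma$ and fermionic $bc$ contributions of each pair $(x_j,y_j)$, $(\psi_j,\phi_j)$ carry identical torus weights and $L_0$-weights but opposite parity.

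The only points to check are that this cancellation is legitimate as an identity of formal power series in $q^{1/2}$ with Laurent polynomial coefficients in $z$, and that the prefactor is correct. For the first point, both the numerator and the inverse of the denominator are well-defined expansions in this ring. For the second, I would re-derive the prefactor directly. The matter trace of $M$ with $(-1)^{P_\mathrm{m}}$ gives the cancelled product times $q^{-c_M/24}$, where $c_M=0$. The ghost part $\bar{C}$ consists of $M$ copies of the symplectic fermions generated by $\partial c_i$ and $b_i$. With $(-1)^{P_\mathrm{gh}}$, each copy contributes $(q;q)_\infty^2$ and central charge $-2$, hence $q^{2/24}(q;q)_\infty^2=\eta(q)^2$.

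Taking $\operatorname{coeff}_{z_1^0\cdots z_M^0}$ of the now $z$-independent expression $\eta(q)^{2M}$ returns $\eta(q)^{2M}$. Since the relative cohomology is concentrated in degree $0$ by \autoref{prop:vanish}, the Euler–Poincaré trace equals the supercharacter of $V(\Delta)$.

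I expect the only mild obstacle to be the bookkeeping of the $q^{-c/24}$ shifts and of the parity conventions for the ghosts. This is handled by the central-charge count $c=-N+N-2M$ from \autoref{cor:conformalvector}, which confirms that the total prefactor is exactly $q^{2M/24}(q;q)_\infty^{2M}$.
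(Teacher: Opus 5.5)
Your proposal is correct and follows the paper's own argument: in the Euler--Poincaré expression of \autoref{prop:char_general} with the supercharacter sign, the fermionic numerator cancels the bosonic denominator factor by factor, leaving the $z$-independent ghost prefactor $\eta(q)^{2M}$. Your additional checks of that prefactor (matter central charge $0$, and $q^{2/24}(q;q)_\infty^2=\eta(q)^2$ per ghost pair) and of the validity of the cancellation in $\Z[z^{\pm1}][[q^{1/2}]]$ are sound; the paper simply leaves them implicit.
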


\begin{ex}\label{ex:SFchar}
For $N=M=1$ and $\Delta=(1)$, we have seen in \autoref{sec:smallexample} that $V(\Delta)=\mathsf{SF}$ is the symplectic fermion \svoa{} (with two strong generators of $L_0$-weight $1$), whose supercharacter is $\operatorname{sch}_{V(\Delta)}(q)=\operatorname{sch}_{\mathsf{SF}}(q)=\eta(q)^2$.

More generally, the supercharacters for $N\geq3$, $M=1$ and $\Delta=(1,\dots,1)$, corresponding to the minimal nilpotent orbit closures for $\sl_N$, in which case the boundary hypertoric \voa{} is $V(\Delta)\cong L_1(\psl_{N|N})$ \cite{FS24} (see also \autoref{sec:nilpotent_svoa}), were already stated in \cite{AM21}.
\end{ex}

As we showed in \autoref{cor:quasilisse} that $V(\Delta)$ is quasi-lisse, it follows from \cite{Li23} (cf.\ \cite{AK18}) that the supercharacter of $V(\Delta)$ satisfies a certain modular linear differential equation. It is expected that many quasi-lisse vertex operator(super) algebras have (super)characters that belong to vector-valued modular forms of weight~$0$ for $\SLZ$ that may have $\log(q)$-terms in other components. Often, this means that they are weakly holomorphic (quasi)modular forms of mixed nonnegative weights for some congruence subgroup and possibly a character (see, e.g., \cite{AKM23}, for another example).

Indeed, the supercharacter $\smash{\operatorname{sch}_{V(\Delta)}(q)=\eta^{2M}(q)}$ is modular of weight $M$ for $\SLZ$ and some character of order dividing $12$.

For $M=1$, i.e.\ for the symplectic fermion \svoa{} (which is actually lisse and not just quasi-lisse.), the modular transformation properties are explained, for example, in \cite{FGR22}.

We shall see in \autoref{sec:boundarychar} that the expression for the character $\ch_{V(\Delta)}(q)$ of the boundary \svoa{} is quite a bit more complicated than the supercharacter (and in particular depends not only on $M$), but still satisfies good modularity properties, at least in those examples where we are able to bring the character into a nice form.

\begin{rem}\label{rem:boundaryschar}
In analogy to \autoref{prop:boundarychar} below, we can also write the boundary supercharacter as
{\allowdisplaybreaks
\begin{align*}
\operatorname{sch}_{V(\Delta)}(q)&=\frac{q^{N/8}}{\eta(q)^{3N-2M}}\operatorname{coeff}_{z_1^0\dots z_M^0}\biggl(\prod_{j=1}^N\vartheta(-z_1^{\Delta_{1j}}\dots z_M^{\Delta_{Mj}},q)\xi(z_1^{\Delta_{1j}}\dots z_M^{\Delta_{Mj}},q)\biggr)\\
&=\frac{q^{N/8}}{\eta(q)^{3N-2M}}\sum_{\substack{m_1,\dots,m_N\in\Z\\n_1,\dots,n_N\in\Z\\\Delta m=\Delta n}}(-1)^{n_1+\dots+n_N}q^{(n_1^2+\dots+n_N^2)/2}\\
&\quad\times q^{(|m_1|+\dots+|m_n|)/2}\prod_{j=1}^N\psi(-q^{|m_j|},q),
\end{align*}
}%
which we know equals $\eta(q)^{2M}$. See below, for the definitions of the functions $\vartheta(z,q)$, $\xi(z,q)$ and $\psi(z,q)$. Writing the supercharacter in this way highlights which contributions to $V(\Delta)$ have even and odd parity; cf. \autoref{rem:parity} and the alternative presentation of $V(\Delta)$ in \autoref{rem:altrep}.
\end{rem}


\subsection{Minimal Character}\label{sec:minimalchar}

We then come to the characters of the minimal hypertoric \voa{}s $V_\mathrm{min}(\Delta)$. As these \voa{}s are in general not quasi-lisse, there is no expectation that their characters should be quasimodular. Indeed, we shall see the appearance of partial (or false) theta functions, which are not modular, but are related to mock and quantum modular forms (see, e.g., \cite{BFR12,FOR13}).

\medskip

To compute the minimal character, we expand the infinite product
\begin{equation*}
\xi(z,q)\coloneqq\frac{(q;q)^2_\infty}{(q^{1/2}z;q)_\infty (q^{1/2}z^{-1};q)_\infty}
\end{equation*}
as a formal power series in $q^{1/2}$. This computation, based on a classical partial fractions decomposition, can be found, e.g., in \cite{And84,AB05} and yields the bilateral Lambert series
{\allowdisplaybreaks
\begin{align*}
\xi(z,q)&=\sum_{m=0}^\infty\frac{(-1)^mq^{m(m+1)/2}}{1-z^{-1}q^{m+1/2}}-zq^{-1/2}\sum_{m=1}^\infty\frac{(-1)^mq^{m(m+1)/2}}{1-zq^{m-1/2}}\\
&=\sum_{m\in\Z}\sum_{r=|m|}^\infty(-1)^{r+m}z^mq^{(r^2-m^2)/2+r/2}\\
&=\sum_{m\in\Z}\sum_{n=0}^\infty(-1)^nz^mq^{|m|(n+1/2)+n(n+1)/2}.
\end{align*}
}%
Introducing Ramanujan's partial theta function \cite{AB09}
\begin{equation*}
\psi(z,q)\coloneqq\sum_{n=0}^\infty z^nq^{n(n+1)/2},
\end{equation*}
we can rewrite this as
\begin{equation*}
\xi(z,q)=\sum_{m\in\Z}z^mq^{|m|/2}\psi(-q^{|m|},q).
\end{equation*}
This expression is also given in \cite{CSVY17} in a different, though not completely unrelated, context.

\medskip

With these preparations, it follows:
\begin{prop}\label{prop:min-char}
The character of the minimal hypertoric \voa{} $V_\mathrm{min}(\Delta)$ is
{\allowdisplaybreaks
\begin{align*}
&\ch_{V_\mathrm{min}(\Delta)}(q)\\
&=\frac{q^{N/8}}{\eta(q)^{2N-M}}\operatorname{coeff}_{z_1^0\dots z_M^0}\biggl(\prod_{j=1}^N\xi(z_1^{\Delta_{1j}}\dots z_M^{\Delta_{Mj}},q)\biggr)\\
&=\frac{q^{N/8}}{\eta(q)^{2N-M}}\operatorname{coeff}_{z_1^0\dots z_M^0}\biggl(\prod_{j=1}^N\sum_{m_j\in\Z}(z_1^{\Delta_{1j}}\dots z_M^{\Delta_{Mj}})^m_jq^{|m_j|/2}\psi(-q^{|m_j|},q)\biggr)\\[-1mm]
&=\frac{1}{\eta(q)^{2N-M}}\sum_{\substack{m_1,\dots,m_N\in\Z\\\Delta m=0}}q^{|m_1+\dots+m_N|/2}q^{N/8}\prod_{j=1}^N\psi(-q^{|m_j|},q).
\end{align*}
}%
\end{prop}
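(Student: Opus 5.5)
The plan is to start from the general formula of \autoref{prop:char_general} for the minimal character and to rewrite each factor of the infinite product as a bilateral Lambert series via $\xi(z,q)$. For each $j$, write $u_j\coloneqq z_1^{\Delta_{1j}}\dots z_M^{\Delta_{Mj}}$. The $j$-th factor is $(q^{1/2}u_j;q)_\infty^{-1}(q^{1/2}u_j^{-1};q)_\infty^{-1}$, and the second Pochhammer should be read with $u_j^{-1}$, as the bosonic pair $x_j,y_j$ has opposite weights. By definition of $\xi$ this factor equals $\xi(u_j,q)/(q;q)_\infty^2$.

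Collecting the prefactors, $q^{N/24}\eta(q)^M(q;q)_\infty^{-2N}$ becomes $q^{N/24}q^{N/12}\eta(q)^{M-2N}=q^{N/8}\eta(q)^{-(2N-M)}$, using $(q;q)_\infty=q^{-1/24}\eta(q)$. This gives the first displayed line of the statement.

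Next I substitute the expansion $\xi(z,q)=\sum_{m\in\Z}z^mq^{|m|/2}\psi(-q^{|m|},q)$ derived just before the proposition. This expansion comes from the classical partial-fraction identity in \cite{And84,AB05}. The product over $j$ then becomes a sum over $m\in\Z^N$ of $\prod_j u_j^{m_j}q^{|m_j|/2}\psi(-q^{|m_j|},q)$. The $z$-monomial here is $\prod_i z_i^{(\Delta m)_i}$, so extracting the constant term in $z_1,\dots,z_M$ restricts the sum to $\Delta m=0$. This yields the final expression, with the $q$-power being $q^{\sum_j|m_j|/2}$; the displayed $|m_1+\dots+m_N|$ should be read as $|m_1|+\dots+|m_N|$.

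The main obstacle is making the coefficient extraction legitimate as an identity of formal power series. One must check that, for each fixed power of $q^{1/2}$, only finitely many $m$ contribute and each coefficient is a Laurent polynomial in $z$. This follows because the $m$-th term has $q$-order at least $\sum_j|m_j|/2$, since $\psi(-q^{|m|},q)=1+O(q^{|m|+1})$. Hence the interchange of the product, the infinite sum and $\operatorname{coeff}_{z^0}$ is justified termwise in the $q^{1/2}$-adic topology. One should also check that the Lambert-series expansion of $\xi$ is taken in the same expansion region as in \autoref{prop:char_general}, namely as a power series in $q^{1/2}$ with Laurent-polynomial coefficients in $z$, which is the region in which that identity holds.
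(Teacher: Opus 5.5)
Your proposal is correct and follows the paper's route exactly: you start from \autoref{prop:char_general}, rewrite each bosonic factor as $\xi(u_j,q)/(q;q)_\infty^2$ to get the prefactor $q^{N/8}\eta(q)^{-(2N-M)}$, insert the bilateral Lambert expansion of $\xi$, and extract the constant term in $z$ to impose $\Delta m=0$. Your readings of the misprints are right: the second Pochhammer should carry $u_j^{-1}$, and the final exponent should be $(|m_1|+\dots+|m_N|)/2$. Your finiteness check via $\psi(-q^{|m|},q)=1+O(q^{|m|+1})$ supplies the formal-series justification that the paper leaves implicit.
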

This is an infinite sum, over $\Z^{N-M}$, of rank-$N$ partial theta functions. The expression may seem unwieldy, but it nicely mirrors the decomposition of $V_\mathrm{min}(\Delta)$ in terms of singlet-Heisenberg modules, in the alternative presentation given in \autoref{rem:altrep}.

\begin{ex}
For $N=M=1$ and $\Delta=(1)$ (see \autoref{sec:smallexample}), we obtain
\begin{align*}
\ch_{V_\mathrm{min}(\Delta)}(q)&=\frac{1}{\eta(q)}q^{1/8}\psi(-1,q)=\frac{1}{\eta(q)}\sum_{n=0}^\infty(-1)^nq^{n(n+1)/2+1/8}\\
&=\frac{1}{\eta(q)}\sum_{n\in\Z}\operatorname{sgn}(n+1/4)q^{(4n+1)^2/8},
\end{align*}
which is the character of the singlet \voa{} $V_\mathrm{min}(\Delta)\cong\mathcal{M}(2)$ \cite{Flo96}. We note that $\psi(-1,q)$ is also called Rogers' false theta function \cite{Rog17}.

On the other hand, for $N=1$ and $M=0$ (no BRST reduction at all, though this technically does not belong to the class of examples that we consider, cf.\ \autoref{rem:zerocolumnrow}), the character is just
\begin{align*}
\ch_{V_\mathrm{min}(\Delta)}(q)&=\frac{1}{\eta(q)^2}\sum_{m\in\Z}q^{|m|/2}q^{1/8}\psi(-q^{|m|},q)=\frac{q^{1/24}}{(q^{1/2};q)_\infty (q^{1/2};q)_\infty}\\
&=\frac{\eta(q)^2}{\eta(q^{1/2})^2},
\end{align*}
the character of the $\beta\gamma$-system $V_\mathrm{min}(\Delta)\cong\mathcal{D}^\mathrm{ch}(T^*\C)$ of central charge $c=-1$. This is now modular of weight~$0$ for some congruence subgroup of $\SLZ$ of level $2$ and a character. The summation over $m\in\Z$ has recovered the modularity.
\end{ex}
In general, we expect the character of $V_\mathrm{min}(\Delta)$ to be some kind of partial or false theta function of rank $M$, times something modular.

(Higher-rank) partial or false theta functions themselves are not modular in the usual sense. However, they are closely related to mock theta functions and quantum modular forms (see, e.g., \cite{BFR12,FOR13}). As we are mainly interested in the boundary hypertoric \voa{}s in this paper, we forego a more detailed investigation of the false theta functions appearing in this work.


\subsection{Boundary Character}\label{sec:boundarychar}

Comparing the minimal character in \autoref{sec:minimalchar} to the boundary supercharacter in \autoref{sec:boundaryschar}, we saw that the partial theta functions were summed to $\eta(q)^{2M}$, which is modular of weight $M$ (though it may be more useful to interpret it as participating in a vector-valued modular form of weight~$0$ with $\log(q)$-terms in other components). While the boundary supercharacters take a very simple form, we expect the boundary characters to be more interesting, while still being (quasi)modular. In the following, we determine these characters as modular forms in some classes of examples and make a conjecture about the general behaviour.

\medskip

For the boundary character, we need to take the coefficient of $z_1^0\dots z_M^0$ of
\begin{align*}
&\eta(q)^{2M}\prod_{j=1}^N\frac{(-q^{1/2}z_1^{\Delta_{1j}}\dots z_M^{\Delta_{Mj}};q)_\infty(-q^{1/2}z_1^{-\Delta_{1j}}\dots z_M^{-\Delta_{Mj}};q)_\infty}{(q^{1/2}z_1^{\Delta_{1j}}\dots z_M^{\Delta_{Mj}};q)_\infty(q^{1/2}z_1^{-\Delta_{1j}}\dots z_M^{-\Delta_{Mj}};q)_\infty}\\
&=\frac{q^{N/8}}{\eta(q)^{3N-2M}}\prod_{j=1}^N\vartheta(z_1^{\Delta_{1j}}\dots z_M^{\Delta_{Mj}},q)\xi(z_1^{\Delta_{1j}}\dots z_M^{\Delta_{Mj}},q)
\end{align*}
with
\begin{equation*}
\xi(z,q)=\frac{(q;q)^2_\infty}{(q^{1/2}z;q)_\infty(q^{1/2}z^{-1};q)_\infty}=\sum_{m\in\Z}z^mq^{|m|/2}\psi(-q^{|m|},q)
\end{equation*}
as before and the Jacobi theta function $\vartheta(z,q)$, which obeys the Jacobi triple-product identity
\begin{equation*}
\vartheta(z,q)=(q;q)_\infty(-q^{1/2}z;q)_\infty(-q^{1/2}z^{-1};q)_\infty=\sum_{n\in\Z}z^nq^{n^2/2}.
\end{equation*}
Then it follows:
\begin{prop}\label{prop:boundary-char}
The character of the boundary hypertoric \voa{} $V(\Delta)$ can be written as
\begin{align*}
\ch_{V(\Delta)}(q)&=\frac{q^{N/8}}{\eta(q)^{3N-2M}}\operatorname{coeff}_{z_1^0\dots z_M^0}\biggl(\prod_{j=1}^N\vartheta(z_1^{\Delta_{1j}}\dots z_M^{\Delta_{Mj}},q)\xi(z_1^{\Delta_{1j}}\dots z_M^{\Delta_{Mj}},q)\biggr)\\
&=\frac{q^{N/8}}{\eta(q)^{3N-2M}}\!\!\sum_{\substack{m_1,\dots,m_N\in\Z\\n_1,\dots,n_N\in\Z\\\Delta m=\Delta n}}\!\!q^{(n_1^2+\dots+n_N^2)/2}q^{(|m_1|+\dots+|m_n|)/2}\prod_{j=1}^N\psi(-q^{|m_j|},q).
\end{align*}
\end{prop}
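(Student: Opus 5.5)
The plan is to start from the closed expression in \autoref{prop:char_general} with the $+$ sign, i.e.\ from $\ch_{V(\Delta)}(q)=\eta(q)^{2M}\operatorname{coeff}_{z_1^0\dots z_M^0}$ of the product over $j$. The first step is purely a rewriting of each factor. For a single variable $z$ I write
\begin{equation*}
\frac{(-q^{1/2}z;q)_\infty(-q^{1/2}z^{-1};q)_\infty}{(q^{1/2}z;q)_\infty(q^{1/2}z^{-1};q)_\infty}=\frac{\vartheta(z,q)}{(q;q)_\infty}\cdot\frac{\xi(z,q)}{(q;q)_\infty^2}=\frac{\vartheta(z,q)\xi(z,q)}{(q;q)_\infty^3}.
\end{equation*}
This uses the Jacobi triple product for the numerator and the definition of $\xi$ for the denominator. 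Substituting $z=z_1^{\Delta_{1j}}\cdots z_M^{\Delta_{Mj}}$ and taking the product over $j=1,\dots,N$ produces $(q;q)_\infty^{-3N}$. Then $(q;q)_\infty=q^{-1/24}\eta(q)$ turns the prefactor $\eta(q)^{2M}(q;q)_\infty^{-3N}$ into $q^{3N/24}\eta(q)^{-(3N-2M)}=q^{N/8}\eta(q)^{-(3N-2M)}$. This gives the first line of the statement, and it is exactly the display preceding the proposition.

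For the second line I would insert the two series expansions already established: $\vartheta(z,q)=\sum_{n\in\Z}z^nq^{n^2/2}$ and the bilateral Lambert series $\xi(z,q)=\sum_{m\in\Z}z^mq^{|m|/2}\psi(-q^{|m|},q)$ from \autoref{sec:minimalchar}. Expanding the product over $j$ gives a sum over $m,n\in\Z^N$. In that sum the monomial in the $z_i$ is $\prod_i z_i^{(\Delta n)_i+(\Delta m)_i}$. Taking the constant term therefore imposes $\Delta(m+n)=0$. Replacing $m$ by $-m$ turns this into $\Delta m=\Delta n$ and leaves the summand invariant, since only $|m_j|$ occurs. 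This reproduces the stated sum and matches the lattice $L^\bot$ in \autoref{rem:altrep}.

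The only real obstacle is justifying the interchange of the coefficient extraction with the infinite sums. I would handle it by working with formal power series in $q^{1/2}$ whose coefficients are Laurent polynomials in $z$. For fixed $q$-order $k$, only finitely many pairs $(m,n)$ contribute: each summand has $q$-exponent at least $\sum_j(n_j^2+|m_j|)/2$, because $\psi(-q^{|m|},q)$ has nonnegative $q$-powers and constant term $1$. So every coefficient is a finite sum, and extracting the $z$-constant term commutes with the summation. The same finiteness also justifies expanding $\xi$ termwise, as in the earlier bilateral series computation.
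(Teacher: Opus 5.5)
Your proposal is correct and follows the same route as the paper. You rewrite each factor of \autoref{prop:char_general} as $\vartheta\xi/(q;q)_\infty^3$ using the Jacobi triple product and the definition of $\xi$, then insert the series for $\vartheta$ and $\xi$ and read off the constant-term condition $\Delta(m+n)=0$, which becomes $\Delta m=\Delta n$ after $m\mapsto -m$. Your finiteness argument for commuting the constant-term extraction with the infinite sums is a small justification that the paper leaves implicit.
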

We can view this as an infinite sum, over $\Z^{N-M}$, of rank-$N$ theta functions for the (odd) standard lattice $\Z^N$ and rank-$N$ partial theta functions. Again, like for the minimal case in \autoref{sec:minimalchar}, the above expression mirrors the decomposition of $V(\Delta)$ in terms of singlet-Heisenberg modules, in the alternative presentation given in \autoref{rem:altrep}.

While the above result allows us to easily compute the $q$-expansion of $\ch_{V(\Delta)}$ up to a certain $q$-power, it is less suitable for proving modularity properties.

\medskip

There is another useful presentation of the character in the boundary case. To this end, note that we can expand the following quotient directly
\begin{align*}
\frac{1}{(q;q)_\infty^3}\vartheta(z,q)\xi(z,q)&=\frac{(-q^{1/2}z;q)_\infty(-q^{1/2}z^{-1};q)_\infty}{(q^{1/2}z;q)_\infty(q^{1/2}z^{-1};q)_\infty}\\
&=\frac{(-q;q)_\infty^2}{(q;q)^2_\infty}\biggl(1+2\sum_{n=1}^\infty\frac{(q^{1/2}z)^n+(q^{1/2}z^{-1})^n}{1+q^n}\biggr)
\end{align*}
as a formal power series in $q^{1/2}$, which is an instance of an identity due to Kronecker (see, e.g., \cite{Wei76}) and follows from Ramanujan’s ${}_1\psi_1$-summation formula, which first appeared, without proof, in Ramanujan’s second notebook.

With this expansion, the boundary character can be written as:
\begin{prop}\label{prop:boundarychar}
The character of the boundary hypertoric \voa{} $V(\Delta)$ is given by
{\allowdisplaybreaks
\begin{align*}
\ch_{V(\Delta)}(q)&=\eta(q)^{2M}\frac{(-q;q)_\infty^{2N}}{(q;q)^{2N}_\infty}\operatorname{coeff}_{z_1^0\dots z_M^0}\\
&\quad\biggl(\prod_{j=1}^N\Bigl(1+2\sum_{n=1}^\infty\frac{(q^{1/2}z_1^{\Delta_{1j}}\dots z_M^{\Delta_{Mj}})^n+(q^{1/2}z_1^{-\Delta_{1j}}\dots z_M^{\Delta_{-Mj}})^n}{1+q^n}\Bigr)\!\biggr)\\
&=\frac{\eta(q^2)^{2N}}{\eta(q)^{4N-2M}}\operatorname{coeff}_{z_1^0\dots z_M^0}\\
&\quad\biggl(\prod_{j=1}^N\Bigl(1+2\sum_{n=1}^\infty\frac{(q^{1/2}z_1^{\Delta_{1j}}\dots z_M^{\Delta_{Mj}})^n+(q^{1/2}z_1^{-\Delta_{1j}}\dots z_M^{\Delta_{-Mj}})^n}{1+q^n}\Bigr)\!\biggr).
\end{align*}
}%
\end{prop}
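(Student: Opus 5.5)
The plan is to start from the product formula of \autoref{prop:char_general} with the plus sign (the character rather than the supercharacter), i.e.
\begin{equation*}
\ch_{V(\Delta)}(q)=\eta(q)^{2M}\operatorname{coeff}_{z_1^0\dots z_M^0}\biggl(\prod_{j=1}^N\frac{(-q^{1/2}u_j;q)_\infty(-q^{1/2}u_j^{-1};q)_\infty}{(q^{1/2}u_j;q)_\infty(q^{1/2}u_j^{-1};q)_\infty}\biggr),\qquad u_j\coloneqq z_1^{\Delta_{1j}}\dots z_M^{\Delta_{Mj}}.
\end{equation*}
Each factor in the product is exactly the left-hand side of the Kronecker-type expansion stated just before the proposition, evaluated at $z=u_j$. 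Substituting it factorwise turns each factor into
\begin{equation*}
\frac{(-q;q)_\infty^2}{(q;q)_\infty^2}\biggl(1+2\sum_{n=1}^\infty\frac{(q^{1/2}u_j)^n+(q^{1/2}u_j^{-1})^n}{1+q^n}\biggr).
\end{equation*}
The scalar prefactor $(-q;q)_\infty^{2N}/(q;q)_\infty^{2N}$ does not depend on the $z_i$, so it can be pulled out of $\operatorname{coeff}_{z^0}$. This gives the first displayed line of the proposition.

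For the second line we use $(-q;q)_\infty=(q^2;q^2)_\infty/(q;q)_\infty$ together with $\eta(q)=q^{1/24}(q;q)_\infty$ and $\eta(q^2)=q^{1/12}(q^2;q^2)_\infty$. These give
\begin{equation*}
\frac{(-q;q)_\infty^{2N}}{(q;q)_\infty^{2N}}=\frac{(q^2;q^2)_\infty^{2N}}{(q;q)_\infty^{4N}}=\frac{\eta(q^2)^{2N}}{\eta(q)^{4N}}\,q^{-2N/12+4N/24}=\frac{\eta(q^2)^{2N}}{\eta(q)^{4N}},
\end{equation*}
since the $q$-powers cancel. Multiplying by $\eta(q)^{2M}$ then produces the prefactor $\eta(q^2)^{2N}/\eta(q)^{4N-2M}$.

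The main point needing care is that these manipulations are legitimate for formal series. All expressions are to be read as formal power series in $q^{1/2}$ whose coefficients are Laurent polynomials in $z$, and the Kronecker expansion must be valid in exactly this ring. That holds: expanding $1/(1+q^n)$ geometrically, each $q$-power receives only finitely many contributions, with bounded $z$-degree. The finite product over $j$ and the extraction $\operatorname{coeff}_{z^0}$ are therefore well defined and commute with pulling out $z$-independent factors. If one prefers, the identity can instead be checked as an identity of analytic functions on the annulus $|q|^{1/2}<|z|<|q|^{-1/2}$ and then transferred to the formal setting by uniqueness of Laurent expansions.
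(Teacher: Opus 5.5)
Your proposal is correct and follows the paper's route exactly: substitute the Kronecker ${}_1\psi_1$-type expansion factorwise (at $z=u_j$) into the product formula of \autoref{prop:char_general}, pull the $z$-independent factor $(-q;q)_\infty^{2N}/(q;q)_\infty^{2N}$ out of the constant-term extraction, and rewrite it as $\eta(q^2)^{2N}/\eta(q)^{4N}$ via $(-q;q)_\infty=(q^2;q^2)_\infty/(q;q)_\infty$. Your remarks on formal well-definedness, and your use of $u_j^{-1}$ (which silently corrects the misprinted exponent $\Delta_{-Mj}$ to $-\Delta_{Mj}$), are harmless additions to what the paper leaves implicit.
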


In the following, we explore the Lambert-like series term $\smash{\operatorname{coeff}_{z_1^0\dots z_M^0}(\dots)}$, with the goal of studying the modularity of the character $\ch_{V(\Delta)}(q)$. We conjecture that it is (quasi)modular of mixed weight for some congruence subgroup of $\SLZ$ (see \autoref{conj:quasimodular}), which we prove in some examples.

\begin{ex}\label{ex:minimal_nilp_char}
The characters for $N\geq1$, $M=1$ and $\Delta=(1,\dots,1)$, corresponding to the minimal nilpotent orbit closures for $\sl_N$, in which case the boundary hypertoric \svoa{} is $V(\Delta)\cong L_1(\psl_{N|N})$ \cite{FS24} (see also \autoref{sec:nilpotent_svoa}), were already studied in \cite{AM21}. They conjecture that $\ch_{V(\Delta)}(q)$ satisfies a certain modular linear differential equation of weight~$0$. In the following, we prove the modularity for some values of $N$, the only new case being $N=5$. We thank Matthias Storzer for providing the formula in this case.

For $N=M=1$ and $\Delta=(1)$, $V(\Delta)\cong\mathsf{SF}$ is just the symplectic fermion \svoa{} of central charge $c=-2$ (see \autoref{sec:smallexample}). We already discussed this case in \autoref{ex:SFchar}. For the character, by the above formula, we obtain
\begin{equation*}
\ch_{V(\Delta)}(q)=\frac{\eta(q^2)^{2}}{\eta(q)^2}\cdot 1,
\end{equation*}
which is modular of weight~$0$ for a congruence subgroup of $\SLZ$ of level $2$ with character. Of course, this expression is well-known to be the character of the symplectic fermion.

For $N=2$, using some well-known Lambert series identities, the character can be written as 
\begin{align*}
\ch_{V(\Delta)}(q)&=\frac{\eta(q^2)^{4}}{\eta(q)^6}\cdot\Bigl(1+8\sum_{n=1}^\infty\frac{q^n}{(1+q^n)^2}\Bigl)=\frac{\eta(q^2)^{4}}{\eta(q)^6}\cdot\Bigl(1-8\sum_{n=1}^\infty\frac{n(-q)^n}{1-q^n}\Bigl)\\
&=\frac{\eta(q^2)^{4}}{\eta(q)^6}\cdot\bigl(-E_2(q)/3+4E_2(q^2)/3\bigl),
\end{align*}
which is quasimodular of weight~$1$ for a congruence subgroup of $\SLZ$ of level $2$ with character. Here $E_2(q)$ is the usual (holomorphic, but not modular) weight-$2$ Eisenstein series $\smash{E_2(q)=1-24\sum_{n=1}^\infty\frac{nq^n}{1-q^n}}$.

Similarly, for $N=3$, the character is
\begin{equation*}
\ch_{V(\Delta)}(q)=\dots=\frac{\eta(q^2)^{6}}{\eta(q)^{10}}\cdot\Bigl(1+24\sum_{n=1}^\infty\frac{nq^n}{1+q^n}\Bigl)=\frac{\eta(q^2)^{6}}{\eta(q)^{10}}\cdot E_{2,2}(q),
\end{equation*}
where $E_{2,2}(q)=\vartheta_{D_4}(q)$ is a modular form of weight~$2$ for $\Gamma_0(2)$, and the theta series of the root lattice $D_4$. So, overall, $\ch_{V(\Delta)}(q)$ is modular of weight~$0$ for a congruence subgroup of $\SLZ$ of level $2$ with character.

For $N=5$, one can show that
\begin{equation*}
\ch_{V(\Delta)}(q)=\frac{\eta(q^2)^{10}}{\eta(q)^{18}}(5E_{2,2}(q)^2/6+E_4(q)/6),
\end{equation*}
which is modular of weight~$0$ for a congruence subgroup of $\SLZ$ of level $2$ with character. Here, $E_4(q)=1+240\smash{\sum_{n=1}^\infty\frac{n^3q^n}{1-q^n}}$ is the weight-$4$ Eisenstein series, which coincides with the theta series $\vartheta_{E_8}(q)$ of the $E_8$ root lattice. This also verifies Conjecture~7.7 in \cite{AM21}.
\end{ex}

In general, we conjecture:
\begin{conj}
For the minimal nilpotent orbit closure for $\sl_N$, $N\geq1$, the character of the boundary hypertoric \svoa{} $V(\Delta)$ is, without the leading $\eta(q^2)^{2N}/\eta(q)^{4N-2}$, (quasi)modular of weight $2\lfloor N/2\rfloor$, so that overall $\ch_{V(\Delta)}(q)$ is (quasi)modular of weight~$0$ if $N$ is odd, and of weight~$1$ if $n$ is even. 
\end{conj}

\smallskip

For the Kleinian singularities we are able to obtain nice expressions for the boundary characters for all $N$, as a direct consequence of \autoref{prop:boundarychar}:
\begin{prop}\label{prop:kleinianchar}
The character of the boundary hypertoric \svoa{} $V(\Delta)$ for the Kleinian singularity of type $A_{N-1}$ with $N\geq2$ and $M=N-1$ is 
\begin{equation*}
\ch_{V(\Delta)}(q)=\frac{\eta(q^2)^{2N}}{\eta(q)^{2N+2}}\cdot\Bigl(1+2^{N+1}\sum_{n=1}^\infty\frac{q^{Nn/2}}{(1+q^n)^N}\Bigr).
\end{equation*}
\end{prop}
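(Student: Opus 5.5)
We specialise \autoref{prop:boundarychar} to the Kleinian data of \autoref{sec:var2}: $M=N-1$ and $E=(1,\dots,1)^t$, so $\Delta$ has kernel spanned by $(1,\dots,1)^t$. The prefactor becomes $\eta(q^2)^{2N}/\eta(q)^{4N-2(N-1)}=\eta(q^2)^{2N}/\eta(q)^{2N+2}$, which is already the claimed prefactor. It then suffices to show that
\begin{equation*}
\operatorname{coeff}_{z_1^0\dots z_{M}^0}\prod_{j=1}^N F(w_j)=1+2^{N+1}\sum_{n=1}^\infty\frac{q^{Nn/2}}{(1+q^n)^N},
\end{equation*}
where $w_j=z_1^{\Delta_{1j}}\cdots z_M^{\Delta_{Mj}}$ and $F(w)=\sum_{k\in\Z}c_k w^k$ with $c_0=1$ and $c_k=2q^{|k|/2}/(1+q^{|k|})$ for $k\neq0$.

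\textbf{Extracting the constant term.} Expanding the product gives $\sum_{k\in\Z^N}\prod_j c_{k_j}\, z^{\Delta k}$. Taking the coefficient of $z^0$ keeps exactly the terms with $\Delta k=0$, i.e.\ $k\in\ker\Delta\cap\Z^N=E\Z$. By exactness of \eqref{eq:SES} (or directly, since $\Delta_{\ell i}=\delta_{\ell i}-\delta_{\ell,i-1}$ forces $k_1=\dots=k_N$), these are the vectors $k=(n,\dots,n)$ with $n\in\Z$. The $n=0$ term contributes $c_0^N=1$. For $n\neq0$ the term is $c_n^N=2^N q^{N|n|/2}/(1+q^{|n|})^N$, and pairing $n$ with $-n$ gives the factor $2\cdot 2^N=2^{N+1}$ multiplying $\sum_{n\geq1}q^{Nn/2}/(1+q^n)^N$. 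This is the claimed series.

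\textbf{Justification.} The only point needing care is that the coefficient extraction commutes with the infinite expansion. Everything is a formal power series in $q^{1/2}$ whose coefficients are Laurent polynomials in $z$. A term with multi-index $k$ carries $q$-order at least $\sum_j|k_j|/2$, so each $q$-power receives only finitely many contributions, and the rearrangement is legitimate. I do not expect a real obstacle here; the main thing to double-check is the kernel computation, including that the sign conventions of $\Delta$ (the reversed edge $e_N$) still give kernel $\Z(1,\dots,1)^t$ rather than a vector with alternating signs. This holds because every row of $\Delta$ sums to zero.
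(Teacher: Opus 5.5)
Your proposal is correct and is the paper's argument. The paper states this proposition as a direct consequence of \autoref{prop:boundarychar}: set $M=N-1$, and note that the $z^0$-coefficient only sees multi-indices $k\in\ker\Delta\cap\Z^N=\Z(1,\dots,1)^t$. Your remarks on the $q^{1/2}$-adic bookkeeping and on the sign conventions of $\Delta$ spell out what the paper leaves implicit.
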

These characters have integer $q$-exponents when $N$ is even and half-integer $q$-exponents when $N$ is odd.

In the following, we investigate the modularity of these boundary characters by explicitly writing them in terms of known modular forms, starting with the case of even $N$.
\begin{prop}\label{prop:kleinian_char}
For $N\geq2$ even, the character $\ch_{V(\Delta)}(q)$ of the boundary hypertoric \voa{} $V(\Delta)$ for the Kleinian singularity of type $A_{N-1}$ is quasimodular of mixed weights $1,3,5,\dots,N-1$ for a congruence subgroup of $\SLZ$ of level $2$ with a character. 
\end{prop}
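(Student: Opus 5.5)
We start from the closed formula of \autoref{prop:kleinianchar}. For even $N=2k$ the prefactor $\eta(q^2)^{2N}/\eta(q)^{2N+2}$ is an eta quotient of weight $-1$. By the standard criteria for eta quotients it is modular on a congruence subgroup of level~$2$, or $\Gamma_0(4)$ at worst, with a character. It therefore suffices to show that
\begin{equation*}
F_N(q)\coloneqq 1+2^{N+1}\sum_{n=1}^\infty\frac{q^{kn}}{(1+q^n)^{2k}}
\end{equation*}
is quasimodular of mixed weights $2,4,\dots,N$ for $\Gamma_0(2)$, or $\Gamma_0(4)$. Multiplying by the weight $-1$ prefactor then gives the weights $1,3,\dots,N-1$.

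\textbf{Step 1: rewrite the summand.} Observe that
\begin{equation*}
\frac{q^{kn}}{(1+q^n)^{2k}}=\Bigl(\frac{q^{n/2}}{1+q^n}\Bigr)^{2k}=\bigl(q^{n/2}+q^{-n/2}\bigr)^{-2k}.
\end{equation*}
This is an even rational function of $x=q^n$ that is invariant under $x\mapsto x^{-1}$. We expand it in $x$ as
\begin{equation*}
\frac{x^k}{(1+x)^{2k}}=\sum_{m\geq1}(-1)^{m+k}P_k(m)\,x^m,\qquad P_k(m)=\binom{m+k-1}{2k-1},
\end{equation*}
where $P_k$ is a polynomial in $m$ of degree $2k-1$. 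Under $m\mapsto -m$ it picks up the sign $(-1)^{2k-1}=-1$. We have $(-1)^{m+k}P_k(m)=(-1)^k(-1)^mP_k(m)$, and since $P_k$ is odd its expansion contains only odd powers: $P_k(m)=\sum_{j=1}^{k}c_{k,j}m^{2j-1}$.

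\textbf{Step 2: Lambert series.} Summing over $n$ turns the double series into
\begin{equation*}
\sum_{n\geq1}\sum_{m\geq1}(-1)^m m^{2j-1}q^{nm}=\sum_{m\ge1}\frac{(-1)^m m^{2j-1}q^m}{1-q^m}.
\end{equation*}
This is a linear combination of $\sum_m\sigma_{2j-1}(m)q^m$ and the same series evaluated at $q^2$, by splitting $m$ into even and odd parts. Consequently it is an affine combination of $E_{2j}(q)$ and $E_{2j}(q^2)$, with constant terms collected separately. For $j\ge2$ these are modular forms of weight $2j$ on $\Gamma_0(2)$. For $j=1$ we get $E_2(q)$ and $E_2(q^2)$, which are quasimodular of weight~$2$; the $N=2$ computation in \autoref{ex:minimal_nilp_char} shows exactly this pattern. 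Hence $F_N$ is a sum of quasimodular forms of weights $0,2,4,\dots,N$.

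\textbf{Step 3: the weight-zero constant.} The constant term of $F_N$ is $1$. The constants coming from the Eisenstein series should cancel this, so that no weight~$0$ piece survives and the weights after multiplying are exactly $1,3,\dots,N-1$. If the constant does not cancel, the statement simply allows a weight $-1$ summand, and we would check the claim against the stated list.

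The main obstacle is the bookkeeping in Steps 1 and 2: the odd polynomial identity for $P_k$ and the sign $(-1)^m$. The sign forces the appearance of $q^2$, so the level is $2$ rather than $1$, and it fixes the exact constant terms. For odd $N$ the half-integer power $q^{N/2}$ would break this argument, which is why we restrict to even $N$.
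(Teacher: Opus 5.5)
Your Steps 1 and 2 are exactly the paper's argument. You use the same coefficient $\binom{m+k-1}{2k-1}=\binom{m+N/2-1}{m-N/2}$ and its oddness in $m$. You split $(-1)^m=-1+2\cdot\mathbf{1}_{2\mid m}$ to produce $E_{2j}(q)$ and $E_{2j}(q^2)$, and then multiply by the weight $-1$ eta quotient. (Minor slip: $x^k/(1+x)^{2k}$ is not an even function of $x$. It is invariant under $x\mapsto x^{-1}$, and you never actually use evenness.)

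The gap is Step 3. You only say the constant ``should'' cancel, and your fallback is false. A surviving constant $C\neq0$ in $F_N$ would contribute the summand $C\,\eta(q^2)^{2N}/\eta(q)^{2N+2}$ of weight $-1$. That weight is not among the claimed weights $1,3,\dots,N-1$, and it is not even nonnegative. So the cancellation is part of what must be proved.

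The paper records the cancellation only as ``a direct computation'', but it follows cleanly in your setup. From $E_{2j}(q)=1-\frac{4j}{B_{2j}}\sum_{m\ge1}m^{2j-1}\frac{q^m}{1-q^m}$ you get
\begin{equation*}
\sum_{m\ge1}(-1)^m m^{2j-1}\frac{q^m}{1-q^m}=\frac{B_{2j}}{4j}\bigl(E_{2j}(q)-2^{2j}E_{2j}(q^2)\bigr)+\frac{(2^{2j}-1)B_{2j}}{4j}.
\end{equation*}
The extra constant $\frac{(2^{2j}-1)B_{2j}}{4j}$ equals $-\frac12\lim_{x\to1^-}\sum_{m\ge1}(-1)^m m^{2j-1}x^m$. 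This follows from the classical Abel values $\lim_{x\to1^-}\sum_{m\ge1}(-1)^{m-1}m^{2j-1}x^m=(2^{2j}-1)B_{2j}/(2j)$, which is $1/4$ for $j=1$.

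Summing with the coefficients of $P_k$, the leftover constant in $F_N$ is $1-2^N(-1)^k\lim_{x\to1^-}\sum_{m\ge1}(-1)^mP_k(m)x^m$. By your own Step 1, this power series is $(-1)^k x^k/(1+x)^{2k}$, which tends to $(-1)^k2^{-N}$. Hence the constant is $1-1=0$.

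Conceptually, $F_N=2^N\sum_{n\in\mathbb{Z}}q^{kn}/(1+q^n)^{2k}$, and the $1$ is just the $n=0$ term of this symmetric bilateral sum. As a check, for $N=2$ and $N=4$ this yields exactly the Eisenstein combinations in the paper's examples. With this step supplied, your proof is complete.
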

\begin{proof}
Let $N$ be even. We consider the expansion of the Lambert-like series
{\allowdisplaybreaks
\begin{align*}
\sum_{n=1}^\infty\frac{q^{Nn/2}}{(1+q^n)^N}&=\sum_{n=1}^\infty q^{Nn/2}\sum_{k=0}^\infty(-1)^k\binom{k+N-1}{k}q^{nk}\\
&=\sum_{n=1}^\infty\sum_{k=N/2}^\infty(-1)^{k-N/2}\binom{k+N/2-1}{k-N/2}q^{nk}\\
&=(-1)^{N/2}\sum_{k=0}^\infty(-1)^k\binom{k+N/2-1}{k-N/2}\sum_{n=1}^\infty q^{nk}\\
&=(-1)^{N/2}\sum_{k=0}^\infty(-1)^k\binom{k+N/2-1}{k-N/2}\frac{q^k}{1-q^k}.
\end{align*}
}%
Now, the binomial coefficient can be written as
\begin{equation*}
\binom{k+N/2-1}{k-N/2}=\frac{(k-d)(k-d+1)\cdot\dots\cdot(k+d)}{(N-1)!}
\end{equation*}
with $d=N/2-1$, and hence it is an odd polynomial in $k$ of degree $N-1$, which we write as $a_1k+a_3k^3+\dots+a_{N-1}k^{N-1}$, where the coefficients $a_1,a_3,\dots,a_{N-1}\in\Q$ only depend on $N$. Hence, we obtain
\begin{align*}
\sum_{n=1}^\infty\frac{q^{Nn/2}}{(1+q^n)^N}&=(-1)^{N/2}\sum_{k=0}^\infty(-1)^k(a_1k+a_3k^3+\dots+a_{N-1}k^{N-1})\frac{q^k}{1-q^k}\\
&=-(-1)^{N/2}\sum_{k=0}^\infty(a_1k+a_3k^3+\dots+a_{N-1}k^{N-1})\frac{q^k}{1-q^k}\\
&\quad+2(-1)^{N/2}\sum_{k=0}^\infty(a_12k+a_32^3k^3+\dots+a_{N-1}2^{N-1}k^{N-1})\frac{(q^2)^k}{1-(q^2)^k}.
\end{align*}
By their definition, it is now apparent that
\begin{equation*}
1+2^{N+1}\sum_{n=1}^\infty\frac{q^{Nn/2}}{(1+q^n)^N}
\end{equation*}
is a linear combination of the Eisenstein series $E_2(q),E_4(q),\dots,E_N(q)$ and their scaled versions $E_2(q^2),E_4(q^2),\dots,E_N(q^2)$. These are all modular, except for $E_2(q)$ and $E_2(q^2)$, which are quasimodular, of weights $2,4,\dots,N$, respectively. (We mention that while a certain linear combination of $E_2(q)$ and $E_2(q^2)$ is actually modular, this cancellation does not happen here.)

In principle, there could also be a constant term (not coming from the constant terms in the Eisenstein series), but a direct computation shows that it cancels.

Overall, after multiplying with $\eta(q^2)^{2N}/\eta(q)^{2N+2}$, which is modular of weight~$-1$, we have shown that the boundary character $\ch_{V(\Delta)}$ is quasimodular of mixed weights $1,3,5,\dots,N-1$.
\end{proof}
For comparison, the supercharacter for the Kleinian singularity was $\eta(q)^{2(N-1)}$, which is modular of weight $N-1$.

\begin{ex}
We discuss a few examples for small values of $N$. For $N=2$, the hypertoric \voa{} for $X_{A_{N-1}}$ is isomorphic to the one for $\overline{\mathbb{O}_\mathrm{min}(\sl_N)}$, which we treated above. The character is
\begin{equation*}
\ch_{V(\Delta)}(q)=\frac{\eta(q^2)^{4}}{\eta(q)^6}\cdot\bigl(-E_2(q)/3+4E_2(q^2)/3\bigl),
\end{equation*}
which agrees with the prediction of \autoref{prop:kleinian_char}.

Similarly, for $N=4$, we obtain
\begin{equation*}
\ch_{V(\Delta)}(q)=\frac{\eta(q^2)^{8}}{\eta(q)^{10}}\cdot\bigl(-2E_2(q)/9+8E_2(q^2)/9-E_4(q)/45+16E_4(q^2)/45\bigl).
\end{equation*}
\end{ex}

For $N$ odd, we also expect the character $\ch_{V(\Delta)}$ to be (quasi)modular. First, we rewrite the sum expression in the character given in \autoref{prop:kleinianchar} as follows, similarly to the case of $N$ even,
\begin{align*}
\sum_{n=1}^\infty\frac{q^{Nn/2}}{(1+q^n)^N}&=\sum_{n=1}^\infty q^{Nn/2}\sum_{k=0}^\infty(-1)^k\binom{k+N-1}{k}q^{nk}\\
&=\sum_{n=1}^\infty\sum_{k=N/2+\N}(-1)^{k-N/2}\binom{k+N/2-1}{k-N/2}(q^{1/2})^{2nk}\\
&=(-1)^{(N+1)/2}\sum_{k=1}^\infty(-1)^{k}\binom{k+N/2-3/2}{k-N/2-1/2}\sum_{n=1}^\infty(q^{1/2})^{n(2k-1)}\\
&=(-1)^{(N+1)/2}\sum_{k=1}^\infty(-1)^{k}\binom{k+N/2-3/2}{k-N/2-1/2}\frac{(q^{1/2})^{2k-1}}{1-(q^{1/2})^{2k-1}}.
\end{align*}
The binomial coefficient is a polynomial in $k$ of degree $N-1$. For example, for $N=3$, it is $-k/2+k^2/2$ and for $N=5$, $k/12-k^2/24-k^3/12+k^4/24$. It will also be instructive to consider the character expression for $N=1$, where the binomial coefficient is just $1$.

In order to prove the modularity of this expression, we need to study the modularity of expressions of the form
\begin{equation*}
\sum_{k=1}^\infty(-1)^{k}k^d\frac{q^{2k-1}}{1-q^{2k-1}}
\end{equation*}
for $d\in\N$. We study some special cases.

\begin{ex}
For $N=1$ (which does not correspond to an actual hypertoric \svoa{}),
\begin{align*}
&1+4\sum_{n=1}^\infty\frac{q^{Nn/2}}{(1+q^n)^N}=1-4\sum_{k=1}^\infty(-1)^{k}\frac{(q^{1/2})^{2k-1}}{1-(q^{1/2})^{2k-1}}\\
&=\sum_{n=0}^\infty r_2(n)(q^{1/2})^n=\vartheta_3(q^{1/2})^2=\frac{\eta(q)^{10}}{\eta(q^{1/2})^4\eta(q^2)^4},
\end{align*}
a well-known identity for the generating series of the sum-of-two-squares function $r_2(n)$, with the Jacobi theta function $\vartheta_3(q)=\sum_{n\in\Z}q^{n^2}=\eta(q^2)^5/\eta(q)^2/\eta(q^4)^2$, which is modular of weight $1/2$.

Similarly, for $N=3$, the following identity due to Jacobi holds
{\allowdisplaybreaks
\begin{align*}
\vartheta_3(q)^6=&\sum_{n=0}^\infty r_6(n)q^n=1+16\eta(q^2)^6\eta(q)^{-4}
\eta(q^4)^4\\
&\quad+4\sum_{k=1}^\infty(-1)^k(2k^2-1)^2\frac{q^{2k-1}}{1-q^{2k-1}}\\
=&\sum_{n=0}^\infty r_6(n)q^n=1+16\eta(q^2)^6\eta(q)^{-4}
\eta(q^4)^4\\
&\quad+4\sum_{k=1}^\infty(-1)^k(4k^2-4k)\frac{q^{2k-1}}{1-q^{2k-1}}+1-\vartheta_3(q)^2,
\end{align*}
}%
with the sum-of-six-squares function $r_6(n)$ (see, e.g., \cite{ALL01,KW94b}), which we rearrange to get
\begin{equation*}
1+16\sum_{k=1}^\infty(-1)^k(k^2/2-k/2)\frac{q^{2k-1}}{1-q^{2k-1}}=\vartheta_3(q)^6/2+\vartheta_3(q)^2/2-8\eta(q^2)^6\eta(q)^{-4}
\eta(q^4)^4.
\end{equation*}
With this, the expression for the character from \autoref{prop:kleinianchar} without the eta functions is
\begin{align*}
1+16\sum_{n=1}^\infty\frac{q^{3n/2}}{(1+q^n)^3}&=1+16\sum_{k=1}^\infty(-1)^{k}(-k/2+k^2/2)\frac{(q^{1/2})^{2k-1}}{1-(q^{1/2})^{2k-1}}\\
&=\vartheta_3(q^{1/2})^6/2+\vartheta_3(q^{1/2})^2/2-8\frac{\eta(q)^6\eta(q^2)^4}{\eta(q^{1/2})^4},\\
&=\frac{1}{2}\frac{\eta(q)^{30}}{\eta(q^{1/2})^{12}\eta(q^2)^{12}}+\frac{1}{2}\frac{\eta(q)^{10}}{\eta(q^{1/2})^4\eta(q^2)^4}-8\frac{\eta(q)^6\eta(q^2)^4}{\eta(q^{1/2})^4},
\end{align*}
which is modular of mixed weights $1$ and $3$. Hence, we can write the boundary character for $N=3$ as
\begin{align*}
\ch_{V(\Delta)}(q)&=\frac{\eta(q^2)^6}{\eta(q)^8}\cdot\Bigl(\vartheta_3(q^{1/2})^6/2+\vartheta_3(q^{1/2})^2/2-8\frac{\eta(q)^6\eta(q^2)^4}{\eta(q^{1/2})^4}\Bigr)\\
&=\frac{1}{2}\frac{\eta(q)^{22}}{\eta(q^{1/2})^{12}\eta(q^2)^6}+\frac{1}{2}\frac{\eta(q)^2\eta(q^2)^2}{\eta(q^{1/2})^4}-8\frac{\eta(q^2)^{10}}{\eta(q)^2\eta(q^{1/2})^4},
\end{align*}
which is modular of weights $0$ and $2$.
\end{ex}

In general, based on the $q$-expansion of
\begin{equation*}
\vartheta_3(q)^{2N}=\sum_{n=0}^\infty r_{2N}(n)q^n,
\end{equation*}
it should be possible to prove also for odd $N$:
\begin{conj}
For the Kleinian singularity of type $A_{N-1}$ for $N\geq2$, the character $\ch_{V(\Delta)}(q)$ of the boundary hypertoric \svoa{} $V(\Delta)$ is quasimodular of weights $N-1,N-3,\dots,0\text{ or }1$ for a congruence subgroup of $\SLZ$ of level $2$ with a character.
\end{conj}
In \autoref{prop:kleinian_char} we established this for all even $N$.

\medskip

Based on the study of the (symplectic dual) examples of the minimal nilpotent orbit closures and Kleinian singularities of type $A_{N-1}$, and given that the supercharacters $\operatorname{sch}_{V(\Delta)}(q)$ are manifestly modular, we conjecture:
\begin{conj}\label{conj:quasimodular}
The characters of the boundary hypertoric \svoa{}s $V(\Delta)$ are quasimodular, possibly of mixed nonnegative weights for some congruence subgroup of $\SLZ$ of level~$2$ with character.
\end{conj}
See, e.g., \cite{AKM23}, for a similar result in the context of nonabelian 3d $\mathcal{N}=4$ gauge theories, namely associated with the Jordan quiver for $\GL(N)$.


\subsection{Comparison of Minimal and Boundary Characters}

We saw in \autoref{sec:minimalchar} and \autoref{sec:boundarychar} that the (super)characters of the minimal and boundary hypertoric vertex (operator)superalgebras $V_\mathrm{min}(\Delta)$ and $V(\Delta)$ have quite different modular behaviour. This is expected, as the latter are quasi-lisse and the former in general are not.

In the following, we explain how the partial theta series appearing in the minimal characters $\ch_{V_\mathrm{min}(\Delta)}(q)$ are paired with usual theta series (which are modular) and then summed up to a quasimodular form.

To this end, we rewrite the sum expression for the (super)character from \autoref{rem:boundaryschar} and \autoref{prop:boundarychar} as
{\allowdisplaybreaks
\begin{align*}
\operatorname{(s)ch}_{V(\Delta)}(q)&=\!\!\!\sum_{l_1,\dots,l_M\in\Z^M}\!\Biggl(\frac{q^{N/8}}{\eta(q)^{2N-M}}\!\!\sum_{\substack{m_1,\dots,m_N\in\Z\\\Delta m=l}}\!\!q^{(|m_1|+\dots+|m_n|)/2}\prod_{j=1}^N\psi(-q^{|m_j|},q)\Biggr)\\
&\quad\times\Biggl(\frac{1}{\eta(q)^{N-M}}\sum_{\substack{n_1,\dots,n_N\in\Z\\\Delta n=l}}(\pm1)^{n_1+\dots+n_N}q^{(n_1^2+\dots+n_N^2)/2}\Biggr).
\end{align*}
}%
The sum in the second row is simply a rank-$(N-M)$ theta series (with a sign factor in the case of the supercharacter) over a coset of the positive-definite, integral lattice $\smash{J^\bot=\{\textstyle\sum_{n_1,\dots,n_N\in\Z}n_i\tau_i\mid\Delta n=0\}}$. The expression in brackets in the first row is the character of an irreducible module of the minimal hypertoric \voa{} $V_\mathrm{\Delta}$. We argued in \autoref{sec:minimalchar} that it is essentially a rank-$M$ partial theta function times something modular (or, more precisely, a rank-$N$ partial theta function summed over a rank-$M$ lattice coset).

The above character decomposition is simply a consequence of \autoref{rem:intermediatesce}, where we describe $V(\Delta)$ as a $\smash{\Z^M}$-simple-current extension of the intermediate \svoa{} $\smash{V_\mathrm{min}(\Delta)\otimes V_{J^\bot}}$.


\subsection{Characters and Free-Field Realisations}

In this section, so far, we have computed the characters of the minimal and boundary hypertoric vertex operator (super)algebras $V_\mathrm{min}(\Delta)$ and $V(\Delta)$ by applying the Euler-Poincaré principle to the defining BRST reduction, i.e.\ to the upper row in the diagrams in \autoref{prop:commute1} and \autoref{prop:commute2}.

On the other hand, one can also study the characters by looking at the right column in these diagrams, i.e.\ based on the free-field realisation described by the screening kernels. This amounts to writing down Felder-style complexes and computing the characters by applying the Euler-Poincaré principle to those.

We demonstrate here how this works in the case of the (super)character of the smallest example discussed in \autoref{sec:smallexample}, i.e.\ for $N=M=1$ and $\Delta=(1)$, which already manifests the essential features. Recall that in this case $V(\Delta)=\mathsf{SF}$ is the symplectic fermion \svoa{}, whereas $V_\mathrm{min}(\Delta)=\mathcal{M}(2)$ is the $p=2$ singlet \voa{}. Thus,
\begin{equation*}
\operatorname{sch}_{V(\Delta)}(q)=\eta(q)^2=q^{1/12}(q;q)_\infty^2.
\end{equation*}
We can further introduce a fugacity for the Cartan subalgebra of the $\sl_2$ symmetry,
\begin{equation*}
\operatorname{sch}_{V(\Delta)}(y,q) = q^{1/12}(y^{-1}q;q)_\infty (yq;q)_\infty.
\end{equation*}
The free-field realisation considered in this paper embeds $\mathsf{SF}$ into the $bc$-ghost \svoa{} $V_{bc}$ with strong generators of $L_0$-weights $1$ and $0$ (see the diagram in \autoref{sec:smallexample}). Its supercharacter is
\begin{equation*}
\operatorname{sch}_{V_{bc}}(y,q)=q^{1/12}(y^{-1};q)_\infty(yq;q)_{\infty},
\end{equation*}
and so
\begin{equation*}
\operatorname{sch}_{V(\Delta)}(q)=(1-y^{-1})^{-1} \operatorname{sch}_{V_{bc}}(q).
\end{equation*}
Using the Jacobi triple-product identity,
\begin{equation*}
(q;q)_\infty(y^{-1};q)_\infty(yq;q)_{\infty}=\sum_{n\in \Z}q^{\frac{n(n+1)}{2}}y^n(-1)^n,
\end{equation*}
which can be understood in terms of the boson-fermion correspondence, we get
\begin{equation*}
\operatorname{sch}_{V_{bc}}(y,q)=\frac{q^{1/12}}{(q;q)_{\infty}}\sum_{n\in\Z}q^{\frac{n(n+1)}{2}}y^n(-1)^n.
\end{equation*}
Thus, we can conclude that
\begin{equation}\label{eq:sf-char-exp}
\operatorname{sch}_{V(\Delta)}(y,q)=\frac{q^{1/12}}{(q;q)_\infty}\left(\sum_{l=0}^\infty\sum_{n\in\Z}y^{n}(-1)^{n+l}q^{\frac{(n+l)(n+l-1)}{2}}\right).
\end{equation}
Setting $l=0$ in~\eqref{eq:sf-char-exp}, one retrieves the supercharacter of $V_{bc}$, and the terms for $l>0$ compute corrections to this character arising from the Felder complex for the screening operators.

We can finally obtain the character for $V_{\mathrm{min}}(\Delta)$ by taking the coefficient of $y^0$,
\begin{equation*}\label{eq:singlet-mod}
\ch_{V_{\mathrm{min}}(\Delta)}(q)= \frac{q^{1/12}}{(q;q)_\infty}\sum_{l=0}^\infty(-1)^{l}q^{\frac{l(l-1)}{2}}.
\end{equation*}
This is manifestly a partial or false theta function. The Felder complex interpretation can be restricted to this term: the summand for $l=0$ is a Heisenberg \voa{} character and the higher summands arise from applying the Euler-Poincaré principle to the Felder complex. For other $n\neq 0$ in~\eqref{eq:sf-char-exp}, we similarly get $p=2$ singlet module supercharacters expressed in terms of a Felder complex.


\appendix

\section{Quiver Varieties}\label{sec:quiver}

In this section, we discuss a special case of hypertoric varieties, namely those coming from quivers (see, e.g., \cite{HS02} and \cite{Nag21}). They are called \emph{hypertoric quiver varieties} and are precisely the Nakajima quiver varieties coming from quivers with $1$-dimensional gauge nodes \cite{Nak94,Nak98}.

Let $Q=(V,E)$ be a finite, directed graph with $|V|=M+1$ nodes
\begin{equation*}
V=\{n_0,n_1,\dots,n_M\}
\end{equation*}
and $|E|=N$ edges
\begin{equation*}
E=\{e^k_{ij}\coloneqq(i,j)\mid k=1,\dots,m_{ij}\},
\end{equation*}
where $m_{ij}\in\Ns$ is the multiplicity of the edge $(i,j)$. We assume that $Q$ is (weakly) connected and has no self-loops. In particular, $N\geq M$. The choice of orientation of $Q$ will not affect the resulting hypertoric variety up to isomorphism of Poisson varieties (see Remark~2.22 in \cite{Nag21}).

Then, consider the free $\Z$-modules $\Z^{M+1}$ with basis the nodes $\{n_0,\dots,n_M\}$ and $\Z^N$ with basis the edges $\{e^k_{ij}\}$. We consider the rank-$M$ submodule $\Z^M\subset\Z^{M+1}$ with basis $\{n_0-n_1,n_0-n_2,\dots,n_0-n_M\}$, i.e.\ the $\Z$-linear combinations of nodes whose coefficients sum to zero. The $(M\times N)$-matrix $\Delta$ is now defined by the map
\begin{equation*}
\Z^N\to\Z^M,\quad e^k_{ij}\mapsto n_i-n_j,
\end{equation*}
together with the given choice of bases. Because $Q$ is connected, this map is surjective. Explicitly,
\begin{equation*}
\Delta_{\ell,{}_{ij}^k}=\delta_{j\ell}-\delta_{i\ell}
\end{equation*}
where the index $\ell$ refers to the coordinate for the basis vector $n_0-n_\ell$, $\ell=1,\dots,M$ and the index ${}_{ij}^k$ to the coordinate for the edge $e_{ij}^k$ (in particular, the first term forces $j\neq0$ and the second $i\neq0$). Moreover, $\Delta$ is unimodular.

We then define a Hamiltonian action of $G=(\C^\times)^M$ on $T^*V=T^*\C^N\cong\C^{2N}$. Let the coordinates of $V$ and $V^*$ be $\smash{x_{ij}^{(k)}}$ and $\smash{y_{ij}^{(k)}}$, respectively, for edges $e_{ij}^k\in E$. The action on $T^*V\cong\C^{2N}$ is given by the action on the coordinates
\begin{align*}
(t_1,\dots,t_M)\cdot x_{ij}^{(k)}&=t_i^{-1}t_jx_{ij}^{(k)},&(t_1,\dots,t_M)\cdot y_{ij}^{(k)}&=t_it_j^{-1}y_{ij}^{(k)}\\
\intertext{if $i,j\neq0$ and}
(t_1,\dots,t_M)\cdot x_{0j}^{(k)}&=t_jx_{0j}^{(k)},&\quad (t_1,\dots,t_M)\cdot y_{0j}^{(k)}&=t_j^{-1}y_{0j}^{(k)},\\
(t_1,\dots,t_M)\cdot x_{i0}^{(k)}&=t_i^{-1}x_{i0}^{(k)},&\quad (t_1,\dots,t_M)\cdot y_{i0}^{(k)}&=t_iy_{i0}^{(k)}
\end{align*}
for $i,j\neq0$, for $(t_1,\dots,t_M)\in G=(\C^\times)^M$. Here, $t_i\in G$ corresponds to the basis vector $n_0-n_i\in\Z^M$ for $i=1,\dots,M$.

The corresponding moment map $\mu\colon T^*V\cong\C^{2N}\to\g^*\cong(\C^M)^*\cong\C^M$ is
\begin{align*}
&\mu((\dots,x_{ij}^{(k)},\dots,y_{ij}^{(k)},\dots))=\biggl(\sum_{e_{ij}^k\in E}(\delta_{j\ell}-\delta_{i\ell})x_{ij}^{(k)}y_{ij}^{(k)}\biggr)_{1\leq\ell\leq M}\\
&=\biggl(
\sum_{\substack{e_{ij}^k\in E\\i,j\neq0}}\!\!(\delta_{j\ell}-\delta_{i\ell})x_{ij}^{(k)}y_{ij}^{(k)}
+\!\!\sum_{e_{0j}^k\in E}\!\!\delta_{j\ell}x_{0j}^{(k)}y_{0j}^{(k)}
-\!\!\sum_{e_{i0}^k\in E}\!\!\delta_{i\ell}x_{i0}^{(k)}y_{i0}^{(k)}
\biggr)_{1\leq\ell\leq M}.
\end{align*}
We note here that an edge $e_{ij}^k\in E$ not incident to the node $n_0$ contributes two terms of the form $xy$, one in the component corresponding to the source node, one in the component corresponding to the target node. By contrast, edges incident to $n_0$ only appear once, namely in the component corresponding to the other node incident to that edge (recalling that we did not allow self-nodes).

The comoment map $\mu^*\colon\g\cong\C^M\to\C[T^*V]$ is
\begin{align*}
a_\ell&\mapsto\sum_{e_{ij}^k\in E}(\delta_{j\ell}-\delta_{i\ell})x_{ij}^{(k)}y_{ij}^{(k)}\\
&=\sum_{\substack{e_{i\ell}^k\in E\\i\neq0}}x_{i\ell}^{(k)}y_{i\ell}^{(k)}-\sum_{\substack{e_{\ell j}^k\in E\\j\neq0}}x_{\ell j}^{(k)}y_{\ell j}^{(k)}+\sum_{e_{0\ell}^k\in E}x_{0\ell}^{(k)}y_{0\ell}^{(k)}-\sum_{e_{\ell 0}^k\in E}x_{\ell 0}^{(k)}y_{\ell 0}^{(k)},
\end{align*}
where $\{a_1,\dots,a_M\}$ is the standard basis of $\g\cong\C^M$.

Given an effective stability parameter $\delta\in\Q^M$, we denote by
\begin{equation*}
Y_\delta(Q)\coloneqq Y_\delta(\Delta)
\end{equation*}
the (projective) \emph{hypertoric quiver variety} associated with the quiver $Q$. Recall that $\Delta$ is always unimodular in the quiver case. Hence, if $\delta$ is generic, $Y_\delta(Q)$ is smooth, of dimension $2(N-M)=2(|E|-|V|+1)$ and a symplectic resolution of $Y_0(Q)\coloneqq Y_0(\Delta)$.

Two examples of quiver hypertoric varieties are discussed in \autoref{sec:var1} and \autoref{sec:var2}; see \autoref{fig:ex1} and \autoref{fig:ex2}, respectively.

\medskip

We can bring the above hypertoric quiver varieties into the form of (framed) Nakajima quiver varieties. To this end, we consider the quiver $Q$ and ``expand'' it at $n_0$. Without loss of generality, we assume that the node $n_0$ is a \emph{source}, i.e.\ has only outgoing edges. Then, we remove the node $n_0$ and the nodes $\mathcal{V}=\{n_1,\dots,n_M\}$ become the \emph{gauge nodes} of the new quiver $\mathcal{Q}=(\mathcal{V},\mathcal{E})$. Here, $\mathcal{E}$ are the remaining edges between the gauge nodes, i.e.\ $\mathcal{E}=\{e_{ij}^k\in E\mid i,j\neq0\}\subset E$.

We assign the dimension vector $v=(1,\dots,1)^t\in\Ns^M$ to $\mathcal{Q}$, corresponding to the fact that we are acting with the group $G=\prod_{i=1,\dots,M}\GL(v_i)=\GL(1)^M=(\C^\times)^M$. For each edge in $Q$ from $n_0$ to some node, say $n_i$, we place a \emph{framing node} at the node $n_i$ with that edge, now going from the framing node to $n_i$. Thus, we end up with a framed quiver $\mathcal{Q}$ with framing vector $w=(w_1,\dots,w_M)^t\in\N^M$, where $w_i$ is the number of edges from $n_i$ to $n_0$.

Finally, we let $\mathcal{Q}^\#$ be the doubled quiver of $\mathcal{Q}$. Two examples of this procedure are shown in \autoref{fig:ex1} and \autoref{fig:ex2}. As is usual, we depict the gauge nodes by circles and the framing nodes by squares, with the dimensions $v_i$ and $w_i$, respectively, inscribed.

\begin{figure}[ht]
\begin{tikzpicture}
\node (1) at (0,0) {$n_0$};
\node (2) at (2,0) {$n_1$};
\draw[<-] (2) to [bend right] node[above] {$e^N$} (1);
\path (2) -- (1) node [midway] {$\raisebox{+1.5ex}{\vdots}$};
\draw[<-] (2) to [bend left] node[below] {$e^1$} (1);
\node at (3,0) {$\leadsto$};
\node (3) at (4,0) [rectangle,draw,minimum size=20pt,inner sep=0pt] {$N$};
\node (4) at (6,0) [circle,draw,minimum size=20pt,inner sep=0pt] {$1$};
\draw[<-] (4) -- (3);
\node at (7,0) {$\leadsto$};
\node (5) at (8,0) [rectangle,draw,minimum size=20pt,inner sep=0pt] {$N$};
\node (6) at (10,0) [circle,draw,minimum size=20pt,inner sep=0pt] {$1$};
\begin{scope}[transform canvas={yshift=2pt}]
\draw[->] (5) -- (6);
\end{scope}
\begin{scope}[transform canvas={yshift=-2pt}]
\draw[->] (6) -- (5);
\end{scope}
\node at (1,-1.5) {$Q$};
\node at (5,-1.5) {$\mathcal{Q}$};
\node at (9,-1.5) {$\mathcal{Q}^\#$};
\end{tikzpicture}
\caption{Quivers for the minimal nilpotent orbit closure for $\sl_N$.}
\label{fig:ex1}
\end{figure}

\begin{figure}[ht]
\begin{tikzpicture}
\node (1) at (4,1) {$n_0$};
\node (2) at (2,0) {$n_1$};
\node (3) at (4,0) {$n_2$};
\node (4) at (6,0) {$n_M$};
\draw[<-] (2) -- (1);
\draw[<-] (3) -- (2);
\draw[<-] (4) -- (3) node [midway, fill=white] {$\dots$};
\draw[<-] (4) -- (1);
\node at (4,-1) {\rotatebox{270}{$\leadsto$}};
\node (21) at (0,-2) [rectangle,draw,minimum size=20pt,inner sep=0pt] {$1$};
\node (22) at (2,-2) [circle,draw,minimum size=20pt,inner sep=0pt] {$1$};
\node (23) at (4,-2) [circle,draw,minimum size=20pt,inner sep=0pt] {$1$};
\node (24) at (6,-2) [circle,draw,minimum size=20pt,inner sep=0pt] {$1$};
\node (25) at (8,-2) [rectangle,draw,minimum size=20pt,inner sep=0pt] {$1$};
\draw[<-] (22) -- (21);
\draw[<-] (23) -- (22);
\draw[<-] (24) -- (23) node [midway, fill=white] {$\dots$};
\draw[<-] (24) -- (25);
\node at (4,-3) {\rotatebox{270}{$\leadsto$}};
\node (31) at (0,-4) [rectangle,draw,minimum size=20pt,inner sep=0pt] {$1$};
\node (32) at (2,-4) [circle,draw,minimum size=20pt,inner sep=0pt] {$1$};
\node (33) at (4,-4) [circle,draw,minimum size=20pt,inner sep=0pt] {$1$};
\node (34) at (6,-4) [circle,draw,minimum size=20pt,inner sep=0pt] {$1$};
\node (35) at (8,-4) [rectangle,draw,minimum size=20pt,inner sep=0pt] {$1$};
\begin{scope}[transform canvas={yshift=2pt}]
\draw[->] (31) -- (32);
\draw[->] (32) -- (33);
\draw[->] (33) -- (34) node [midway, fill=white] {$\dots$};
\draw[->] (34) -- (35);
\end{scope}
\begin{scope}[transform canvas={yshift=-2pt}]
\draw[->] (32) -- (31);
\draw[->] (33) -- (32);
\draw[->] (34) -- (33) node [midway, fill=white] {$\dots$};
\draw[->] (35) -- (34);
\end{scope}
\node at (-1.5,0) {$Q$};
\node at (-1.5,-2) {$\mathcal{Q}$};
\node at (-1.5,-4) {$\mathcal{Q}^\#$};
\end{tikzpicture}
\caption{Quivers for the Kleinian singularity of type $A_{N-1}$.}
\label{fig:ex2}
\end{figure}

Then the hypertoric quiver variety $Y_\delta(Q)$ is exactly the Nakajima quiver variety
\begin{equation*}
\mathcal{M}_\delta(\mathcal{Q}^\#,v,w)=Y_\delta(Q)
\end{equation*}
with dimension vector $v=(1,\dots,1)^t\in\Ns^M$. Similarly, $Y_0(Q)$ coincides with the affine quiver variety
\begin{equation*}
\mathcal{M}_0(\mathcal{Q}^\#,v,w)=Y_0(Q).
\end{equation*}
The Nakajima quiver variety $\mathcal{M}_\delta(\mathcal{Q}^\#,v,w)$ has dimension
\begin{equation*}
2(N-M)=2(N_\mathrm{g}+N_\mathrm{f}-M)=2(|\mathcal{E}|-\sum_{i=1}^Mw_i-|\mathcal{V}|),
\end{equation*}
where $M=|\mathcal{V}|$ is the number of nodes in $\mathcal{Q}$, $\smash{N_\mathrm{g}\coloneqq|\mathcal{E}|=N-\sum_{i=1}^Mw_i}$ the number of edges in $\mathcal{Q}$ and $\smash{N_\mathrm{f}\coloneqq\sum_{i=1}^Mw_i}$ the number of framing edges. By definition, $N=N_\mathrm{g}+N_\mathrm{f}$.

For convenience, we describe the quiver $\mathcal{Q}=(\mathcal{V},\mathcal{E})$ and the hypertoric quiver variety $\mathcal{M}_\delta(\mathcal{Q}^\#,v,w)$ explicitly. The vector space $V$ is of the form \begin{align*}
R(\mathcal{Q},v,w)&\coloneqq\bigoplus_{e_{ij}^k\in\mathcal{E}}\Hom(\C^{v_i},\C^{v_j})\oplus\bigoplus_{n_i\in\mathcal{V}}\Hom(\C^{w_i},\C^{v_i})\\
&=\bigoplus_{e_{ij}^k\in\mathcal{E}}\Hom(\C,\C)\oplus\bigoplus_{n_i\in\mathcal{V}}\Hom(\C^{w_i},\C)\\
&\cong\bigoplus_{e_{ij}^k\in\mathcal{E}}\C\oplus\bigoplus_{n_i\in\mathcal{V}}\C^{w_i}=\C^{N_\mathrm{g}}\oplus\C^{N_\mathrm{f}}=\C^N=V,
\end{align*}
using that the dimension vector is $v=(1,\dots,1)^t$. This is the usual form of the \emph{representation space} $R(\mathcal{Q},v,w)$ of a framed quiver with dimension vector $v\in\Ns^M$ and framing vector $w\in\N^M$ (see, e.g., \cite{Kir16}).

The coordinates for this representation space $R(\mathcal{Q},v,w)\cong V$ are $\smash{x_{ij}^{(k)}}$ for $e_{ij}^k\in\mathcal{E}$ as before and $\smash{\gamma_j^{(k)}}$ corresponding what was called $\smash{x_{0j}^{(k)}}$ as coordinate for $V$.

The cotangent bundle $T^*R(\mathcal{Q},v,w)$ is canonically isomorphic to the representation space of the doubled quiver $T^*R(\mathcal{Q},v,w)\cong R(\mathcal{Q}^\#,v,w)$, where the definition of the latter implicitly assumes that also the framing edges are doubled. This is the reason why, up to isomorphism of Poisson varieties, the quiver variety only depends on $\mathcal{Q}^\#$ and not on $\mathcal{Q}$.

The additional coordinates for the cotangent space $T^*R(\mathcal{Q},v,w)\cong T^*V$ are $\smash{y_{ij}^{(k)}}$ for $e_{ij}^k\in\mathcal{E}$ as before and $\smash{\beta_j^{(k)}}$ corresponding what was called $\smash{y_{0j}^{(k)}}$ for $V^*\subset T^*V$.

With the notation set up, the action of $G=(\C^\times)^M$ on $T^*R(\mathcal{Q},v,w)$ is given by the action on the coordinates
\begin{align*}
(t_1,\dots,t_M)\cdot x_{ij}^{(k)}&=t_i^{-1}t_jx_{ij}^{(k)},&(t_1,\dots,t_M)\cdot y_{ij}^{(k)}&=t_it_j^{-1}y_{ij}^{(k)}\\
\intertext{corresponding to the edges $e_{ij}^k\in\mathcal{E}$ and}
(t_1,\dots,t_M)\cdot\gamma_j^{(k)}&=t_j\gamma_j^{(k)},&(t_1,\dots,t_M)\cdot\beta_j^{(k)}&=t_j^{-1}\beta_j^{(k)}
\end{align*}
corresponding to the framing, for $(t_1,\dots,t_M)\in G$. The corresponding comoment map $\mu^*\colon\g\cong\C^M\to\C[T^*R(\mathcal{Q},v,w)]$ is given by
\begin{equation*}
a_\ell\mapsto\sum_{e_{i\ell}^k\in\mathcal{E}}x_{i\ell}^{(k)}y_{i\ell}^{(k)}-\sum_{e_{\ell j}^k\in\mathcal{E}}x_{\ell j}^{(k)}y_{\ell j}^{(k)}+\sum_{\ell=1}^M\sum_{k=1}^{w_\ell}\gamma_{\ell}^{(k)}\beta_{\ell}^{(k)},
\end{equation*}
where $\{a_1,\dots,a_M\}$ is the standard basis of $\g\cong\C^M$ (resembling the notation in \cite{AKM23}, and in \cite{CSYZ25} up to a sign).


\bibliographystyle{alpha_noseriescomma}
\bibliography{quellen}{}

\end{document}